\newcommand{\alg}{\mathbf}
\newcommand{\class}{\mathsf}
\newcommand{\set}[2]{\{ #1 \mid #2 \}}
\newcommand{\pair}[2]{\langle #1, #2 \rangle}
\newcommand{\tuple}{\overline}
\newcommand{\assign}{\mathrel{:=}}
\newcommand{\bs}{\backslash}
\newcommand{\iso}{\cong}
\newcommand{\1}{\mathsf{e}}
\newcommand{\emptyword}{\varepsilon}
\newcommand{\unit}[1]{\eta_{\scriptscriptstyle#1}}
\newcommand{\counit}[1]{\varepsilon_{\scriptscriptstyle#1}}
\newcommand{\unitmap}{\eta}
\newcommand{\counitmap}{\varepsilon}
\newcommand{\sqleq}{\sqsubseteq}
\newcommand{\nsqleq}{\not \sqsubseteq}
\newcommand{\preleq}{\preceq}
\newcommand{\leqK}{\preleq_{\class{K}}}
\newcommand{\leqL}{\preleq_{\class{L}}}
\newcommand{\bsps}{\mathop{\bs\!\!\bs}}
\newcommand{\sps}{\mathop{/\!\!/}}
\newcommand{\sigmaideal}{\sigma_{{\scriptscriptstyle\downarrow}}}
\newcommand{\Tm}{\alg{Tm}}
\newcommand{\Var}{\mathrm{Var}}
\newcommand{\qine}{\alpha}
\newcommand{\cdotgamma}{\cdot_{\gamma}}
\newcommand{\veegamma}{\vee_{\gamma}}
\newcommand{\bigveegamma}{{\textstyle\bigvee_{\gamma}}}
\newcommand{\below}{\mathop{\downarrow}}
\newcommand{\gammaid}{\gamma_{\omega}}
\newcommand{\Id}{\mathop{\mathrm{Id}_{\omega}}}
\newcommand{\IdAll}{\mathop{\mathrm{Id}}}
\newcommand{\Pomon}{\mathsf{Pomon}}
\newcommand{\PomonStar}{\Pomon_{\ast}}
\newcommand{\PomonSlStar}{\Pomon_{\ast \mathsf{s}\ell}}
\newcommand{\Slmon}{\mathsf{s}\ell\mathsf{\textrm{-}mon}}
\newcommand{\SlmonStar}{\Slmon_{\ast}}
\newcommand{\FreeSem}[1]{#1^{+}}
\newcommand{\FreeMon}[1]{#1^{*}}
\newcommand{\FreeUMon}[1]{#1^{+}}
\newcommand{\FreeGroup}[1]{\mathbf{G}(#1)}
\newcommand{\FreeRedGroup}[1]{\mathbf{G}(#1) / {\sqleq}}
\newcommand{\com}[1]{#1_{\mathsf{com}}}
\newcommand{\CFreeSem}[1]{\com{#1^{+}}}
\newcommand{\CFreeMon}[1]{\com{#1^{*}}}
\newcommand{\CFreeUMon}[1]{\com{#1^{+}}}
\newcommand{\CFreeRedMon}[1]{\com{\FreeMon{#1}}}
\newcommand{\CFreeRedUMon}[1]{\com{\FreeUMon{#1}}}
\theoremstyle{plain}
\newtheorem{theorem}{Theorem}[section]
\newtheorem{lemma}[theorem]{Lemma}
\newtheorem{proposition}[theorem]{Proposition}
\newtheorem{definition}[theorem]{Definition}
\newtheorem{fact}[theorem]{Fact}
\newtheorem*{theorem*}{Theorem}
\newtheorem*{theoremA*}{Theorem A}
\newtheorem*{theoremB*}{Theorem B}
\newtheorem*{theoremC*}{Theorem C}
\newtheorem*{theoremD*}{Theorem D}
\newtheorem*{theoremE*}{Theorem E}
\newtheorem*{theoremF*}{Theorem F}
\newtheorem*{theoremG*}{Theorem G}
\newtheorem*{conjecture*}{Conjecture}
\newtheorem*{conjectureA*}{Conjecture A}
\newtheorem*{conjectureB*}{Conjecture B}
\begin{document}

\begin{frontmatter}

\author[1]{Adam \texorpdfstring{P\v{r}enosil}{Prenosil}}
\ead[1]{adam.prenosil@vanderbilt.edu}
\address[1]{Department of Mathematics, Vanderbilt University, Nashville, TN, USA}
\title{From partially ordered monoids to partially ordered groups via~free~nuclear~preimages}

\begin{abstract}
  Two fundamental constructions operating on residuated lattices and partially ordered monoids (pomonoids) are nuclear images and conuclear images. Nuclear images allow us to construct many of the ordered algebras which arise in non-classical logic (such as pomonoids, semilattice-ordered monoids, and residuated lattices) from cancellative ordered algebras. Conuclear images then allow us to construct some of these cancellative algebras from partially ordered or lattice-ordered groups. Among other things, we show that finite (com\-mutative) integral residuated lattices are precisely the finite \mbox{nuclear} images of (com\-mutative) can\-cellative integral residuated lattices and that (com\-mutative) \mbox{integrally} closed pomonoids are precisely the nuclear \mbox{images} of subpomonoids of partially ordered (Abelian) groups. The key construction is the free nuclear preimage of a pomonoid. As a by-product of our study of free nuclear preimages, we obtain a syntactic characterization of quasivarieties of pomonoids and semilattice-ordered monoids closed under nuclear images.
\end{abstract}

\begin{keyword}
  residuated lattices \sep partially ordered monoids \sep $\ell$-groups \sep nuclei \sep conuclei
\end{keyword}

\end{frontmatter}

\section{Introduction}

  A classical result of Mundici states that MV-algebras (which form the algebraic counterpart of infinite-valued {\L}ukasiewicz logic) are precisely the unit intervals of negative cones of Abelian lattice-ordered groups (Abelian $\ell$-groups)~\cite{mundici86}. That is, every MV-algebra can be constructed from some Abelian $\ell$-group in two steps: we first restrict to the negative cone of the $\ell$-group (consisting of the elements below the multiplicative unit $\1$), and then we further restrict to some interval $[u, \1]$ in this negative cone, adjusting the operations of the $\ell$-group accordingly at each step. Pursuing this direction further, Dvure\v{c}enskij and Vetterlein later described the unit intervals of (not necessarily Abelian) $\ell$-groups as the so-called pseudo MV-algebras~\cite{dvurecenskij+vetterlein01a,dvurecenskij+vetterlein01b,dvurecenskij02}.

  Restricting to the negative cone and to the unit interval are special cases of two fundamental constructions in the theory of residuated lattices in particular and partially \mbox{ordered} monoids more generally: the \emph{conuclear image} and the \emph{nuclear image}. The main goal of the present paper is to investigate which algebras arise from lattice-ordered and partially ordered groups when we consider more general kinds of \mbox{conuclear} and nuclear images. This problem has a multitude of variants and subcases: in some cases we provide satisfactory answers, in other cases we only offer open problems.

  In posing this question, we continue the line of research of Galatos and Tsinakis~\cite{galatos+tsinakis05}, who obtained the answer for conuclear images of a particular kind. While a general \emph{conucleus} on a partially ordered monoid with a multiplicative unit $\1$ is an interior operator $\sigma$ such that ${\sigma(a) \cdot \sigma(b) \leq \sigma(a \cdot b)}$ and $\sigma(\1) = \1$ and a \emph{nucleus} is a closure operator $\gamma$ such that $\gamma(a) \cdot \gamma(b) \leq \gamma(a \cdot b)$, a \emph{kernel} is a conucleus whose image is downward closed. Galatos and Tsinakis introduced the variety of GMV-algebras as a generalization of MV-algebras and proved that the kernel images of $\ell$-groups are precisely the cancellative GMV-algebras and the nuclear images of cancellative GMV-algebras are precisely GMV-algebras. These algebras need not be commutative, integral, or even bounded, but they still behave very much like MV-algebras in some ways.

  At the object level, the proof of Galatos and Tsinakis relies on a construction due to Bosbach~\cite{bosbach82}, which embeds every so-called cone algebra into the negative cone of an $\ell$-group. (Cone algebras form a variety of \mbox{algebras} whose signature consists of two binary division-like operations. This variety contains in particular the division reducts, i.e.\ implication reducts, of MV-algebras and pseudo MV-algebras.) To obtain a categorical equivalence, they then extend Mundici's notion of a good sequence to GMV-algebras. The proof of Dvure\v{c}enskij~\cite{dvurecenskij02} also relies ultimately on Bosbach's cone algebra construction, reformulated in terms of so-called semiclans~\cite{bosbach81}. The proof of Dvure\v{c}enskij and Vetterlein~\cite{dvurecenskij+vetterlein01b}, on the other hand, bypasses cone algebras entirely and instead uses the construction of the free semigroup over a partial semigroup originally due to Baer~\cite{baer49}.

  We move beyond the setting of GMV-algebras here. This requires us to abandon techniques relying on good sequences and cone algebras. The key tool will be the construction of the \emph{free nuclear preimage} of a partially ordered or semilattice-ordered monoid, which is left adjoint to the functor which sends a partially ordered or semilattice-ordered monoid equipped with a nucleus to its nuclear image. This construction involves taking arbitrary sequences and quotienting by a suitable preorder, rather than restricting to well-behaved sequences as in the good sequence construction. In this regard, our construction is similar in spirit to the approach of Dvure\v{c}enskij and Vetterlein (see the remarks on p.~\pageref{page: baer} at the end of Section~\ref{sec: free po preimages}). It needs to be emphasized that when applied to a finite MV-algebra, the construction introduced here does \emph{not} coincide with Mundici's construction. Instead, the negative cone of Mundici's $\ell$-group is a proper homomorphic image of the (non-divisible) integral cancellative commutative residuated lattice constructed here.

  For the reader's convenience, let us summarize the above discussion in diagrammatic form. The original theorem of Mundici can (anachronistically) be thought of as a way of inverting the following sequence of constructions:
\begin{align*}
  \text{Abelian $\ell$-group} ~ \xrightarrow{\text{negative cone}} ~ \text{cancellative ICGMV} ~ \xrightarrow{\text{unit interval}} ~  \text{MV,}
\end{align*}
  where cancellative ICGMV-algebras (integral commutative GMV-algebras) are precisely the negative cones of Abelian $\ell$-groups. Galatos and Tsinakis then refined Mundici's construction and used it to invert a more general sequence of constructions:
\begin{align*}
  \text{$\ell$-group} ~ \xrightarrow{\text{kernel}} ~ \text{cancellative GMV} ~ \xrightarrow{\text{nucleus}} ~  \text{GMV.}
\end{align*}
  What we are after in this paper is a way of inverting a still more general sequence:
\begin{align*}
  \text{$\ell$-group} ~ \xrightarrow{\text{conucleus}} ~ \text{cancellative RL} ~ \xrightarrow{\text{nucleus}} ~  \text{integrally closed RL,}
\end{align*}
  where the variety of integrally closed residuated lattices, studied in detail in~\cite{gil-ferez+lauridsen+metcalfe20}, is axiomatized by the equations $x \bs x \approx \1 \approx x / x$.\footnote{Interestingly, the authors of the paper~\cite{gil-ferez+lauridsen+metcalfe20} establish a relationship between integrally closed residuated lattices and $\ell$-groups going in the opposite direction to the one considered here: they show that the image of the double negation nucleus of an integrally closed residuated lattice is an $\ell$-group.} We do not fully achieve this goal, but we do obtain some partial answers and develop some useful tools.

\subsection*{Preliminaries}

  Before we sketch the main results of the paper, let us briefly lay out some basic definitions. A \emph{partially ordered monoid (pomonoid)} $\alg{M} = \langle M, \leq, \cdot, \1 \rangle$ is a monoid $\langle M, \cdot, \1 \rangle$ with a partial order $\langle M, \leq \rangle$ such that the monoidal multiplication is isotone in both arguments with respect to this order. A \emph{semilattice-ordered monoid (s$\ell$-monoid)} is both a join semi\-lattice and a pomonoid with respect to the semilattice order such~that
\begin{align*}
  a \cdot (b \vee c) & = (a \cdot b) \vee (a \cdot c), & (a \vee b) \cdot c & = (a \cdot c) \vee (b \cdot c).
\end{align*}
  A pomonoid or s$\ell$-monoid is called \emph{integral} if $\1$ is the top element of $\alg{M}$.

  A \emph{conucleus} on a pomonoid $\alg{M}$ is an interior operator $\sigma$ on $\alg{M}$, i.e.\ a monotone map $\sigma\colon \alg{M} \to \alg{M}$ with $\sigma(\sigma(a)) = \sigma(a) \leq a$, such that
\begin{align*}
  \sigma(a) \cdot \sigma(b) & \leq \sigma(a \cdot b).
\end{align*}
  The \emph{conuclear image} of $\alg{M}$ (also called the \emph{conucleus image}) with respect to $\sigma$ is the pomonoid $\alg{M}_{\sigma}$ consisting of the $\sigma$-open elements of $\alg{M}$ with the order and multi\-plication inherited from~$\alg{M}$. In other words, $\alg{M}_{\sigma}$ is the subpomonoid of elements $a$ such that $\sigma(a) = a$. A \emph{nucleus} on $\alg{M}$, on the other hand, is a closure operator $\gamma$ on $\alg{M}$, i.e.\ a monotone map $\gamma\colon \alg{M} \to \alg{M}$ with $a \leq \gamma(a) = \gamma(\gamma(a))$, such that
\begin{align*}
  \gamma(a) \cdot \gamma(b) & \leq \gamma(a \cdot b).
\end{align*}
  The \emph{nuclear image} of $\alg{M}$ (also called the \emph{nucleus image}) with respect to $\gamma$ is the pomonoid $\alg{M}_{\gamma}$ consisting of the $\gamma$-closed elements of $\alg{M}$ with the order inherited from~$\alg{M}$, the multiplication
\begin{align*}
  a \cdot_{\gamma} b & \assign \gamma(a \cdot b),
\end{align*}
  and the multiplicative unit $\gamma(\1)$.

  If $\alg{M}$ is an s$\ell$-monoid, then so are $\alg{M}_{\sigma}$ and $\alg{M}_{\gamma}$. Joins in $\alg{M}_{\sigma}$ coincide with joins in~$\alg{M}$, while joins in $\alg{M}_{\gamma}$ are computed as $a \vee_{\gamma} b \assign \gamma(a \vee b)$. If $\alg{M}$ is a meet semilattice, then so are $\alg{M}_{\sigma}$ and $\alg{M}_{\gamma}$. Meets in $\alg{M}_{\sigma}$ are computed as $a \wedge_{\sigma} b \assign \sigma(a \wedge b)$, while meets in $\alg{M}_{\gamma}$ coincide with meets in $\alg{M}$.

  Crucially, both of these constructions preserve the existence of residuals of multiplication. A \emph{residuated po\-monoid} $\alg{M} = \langle M, \leq, \1, \cdot, \bs, / \rangle$ is a pomonoid $\langle M, \leq, \cdot, \1 \rangle$ with two additional binary operations $x \bs y$ and $y / x$, called the \emph{residuals} of multiplication, such that
\begin{align*}
  b \leq a \bs c \iff a \cdot b \leq c \iff a \leq c / b.
\end{align*}
  A \emph{residuated lattice} is then an algebra which is both a lattice and a residuated pomonoid with respect to the lattice order. If multiplication in $\alg{M}$ has residuals, then so does multiplication in $\alg{M}_{\sigma}$, although the residuals are in general different: $a \bs_{\sigma} b \assign \sigma(a \bs b)$ and $b /_{\!\sigma} a \assign \sigma(b / a)$. Moreover, $\alg{M}_{\gamma}$ is also a residuated pomonoid and the residuals of $\alg{M}_{\gamma}$ coincide with the residuals of~$\alg{M}$. In particular, if $\alg{M}$ is a residuated lattice, then so are $\alg{M}_{\sigma}$ and $\alg{M}_{\gamma}$, with
\begin{align*}
  & \alg{M}_{\sigma} \assign \langle M_{\sigma}, \leq, \cdot, \1, \wedge_{\sigma}, \vee, \bs_{\sigma}, /_{\!\sigma} \rangle, & & \alg{M}_{\gamma} \assign \langle M_{\gamma}, \leq, \cdot_{\gamma}, \gamma(\1), \wedge, \vee_{\gamma}, \bs, / \rangle.
\end{align*}

  Nuclei and conuclei can also defined for posemigroups. However, to avoid cluttering the paper with duplicate definitions and results, for the most part we restrict our explicit attention to monoidal structures. It is generally a simple matter to obtain analogous definitions and results for structures based on semigroups by omitting all reference to the multiplicative unit. We only discuss structures based on posemigroups if there is a divergence with structures based on pomonoids that is worth remarking on.

\subsection*{Main results}

  With the requisite definitions out of the way, we may now succinctly state our main question:
\begin{align*}
   \text{Which residuated lattices arise as $(\alg{G}_{\sigma})_{\gamma}$ for some (Abelian) $\ell$-group $\alg{G}$?}
\end{align*}
  This is really an entire family of questions, depending on the constraints we place on $\sigma$, $\gamma$, and $\alg{G}$. The results of Mundici, Dvure\v{c}enskij, and Vetterlein on MV-algebras and pseudo MV-algebras cover the case where $\sigma$ is the negative cone co\-nucleus $\sigma_{-}(a) \assign \1 \wedge a$ and $\gamma$ is a unit interval nucleus $\gamma_{u} \assign u \vee a$, while Galatos and Tsinakis cover the case where the image of $\sigma$ is a downward closed subset of $\alg{G}$.

  In these cases, not only can residuated lattices in the appropriate varieties be represented in terms of $\ell$-groups, but in fact a categorical equivalence connects these varieties of GMV-algebras to classes of $\ell$-groups with additional structure. We do not aim for such categorical equivalences here: while the construction studied in the present paper allows us to go significantly beyond the cases covered by the constructions of Mundici and others, it provides a substantially weaker link between the two sides.

  The above question naturally splits into two parts. The first part of the problem has been studied by \mbox{Montagna} and Tsinakis~\cite{montagna+tsinakis10}, who identified the (integral) conuclear images of Abelian $\ell$-groups as precisely the (integral) commutative cancellative residuated lattices. Here a residuated lattice, and more generally a pomonoid, is called \emph{cancellative} if it satisfies the two implications
\begin{align*}
  a \cdot x \leq b \cdot x & \implies a \leq b, &
  x \cdot a \leq x \cdot b & \implies a \leq b.
\end{align*}
  Beyond the commutative case, however, describing the conuclear images of $\ell$-groups is much more difficult. (Compare the straightfoward fact that each commutative cancellative monoid embeds into an Abelian group with the complicated description of cancellative monoids which embed into a group~\cite{clifford+preston67}.) We therefore opt to answer a different but related question, namely:
\begin{align*}
   \text{Which residuated lattices arise as $\alg{L}_{\gamma}$ for some cancellative residuated lattice $\alg{L}$?}
\end{align*}

  We shall in fact treat this as a problem concerning pomonoids and s$\ell$-monoids. The bulk of our work will focus on determining which pomonoids and s$\ell$-monoids are the nuclear images of cancellative pomonoids and s$\ell$-monoids. Results about residuated lattices will be derived as corollaries, once we observe that the cancellative s$\ell$-monoids obtained in this way from \emph{finite} residuated lattices are residuated lattices themselves.

  At the most basic level, our answers to the above questions thus rely on first answering the following question for partially ordered groups (pogroups):
\begin{align*}
  \text{Which pomonoids arise as $\alg{M}_{\gamma}$ for some subpomonoid $\alg{M}$ of a pogroup $\alg{G}$?}
\end{align*}
  These turn out to be precisely the \emph{integrally closed} pomonoids, i.e.\ pomonoids which satisfy the following two implications:
\begin{align*}
  a \cdot x \leq a & \implies x \leq \1, &
  x \cdot a \leq a & \implies x \leq \1.
\end{align*}
  Observe that in particular every integral pomonoid is integrally closed, and so is every cancellative pomonoid.

\begin{theoremA*}[cf.\ Theorem~\ref{thm: integrally closed pomonoids}]
  The nuclear images of subpomonoids of (Abelian) pogroups are precisely the (commutative) integrally closed pomonoids.
\end{theoremA*}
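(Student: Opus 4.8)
The plan is to prove the two inclusions separately; the inclusion of nuclear images of subpomonoids of pogroups into the integrally closed pomonoids is routine, whereas the reverse inclusion will be carried by the free nuclear preimage construction. For the routine direction I would record two facts. First, a subpomonoid $\alg{M}$ of a pogroup $\alg{G}$ is integrally closed: if $a \cdot x \leq a$ in $\alg{M}$, then $x = a^{-1} \cdot a \cdot x \leq a^{-1} \cdot a = \1$ in $\alg{G}$, hence $x \leq \1$ in $\alg{M}$, and symmetrically when $x \cdot a \leq a$. Second, integral closure is preserved by nuclear images: if $\alg{N}$ is integrally closed and $\gamma$ is a nucleus on $\alg{N}$, then $a \cdotgamma x \leq a$ with $a$ and $x$ being $\gamma$-closed says $\gamma(a \cdot x) \leq a$, whence $a \cdot x \leq a$ in $\alg{N}$ and so $x \leq \1 \leq \gamma(\1) = \unitgamma$; the case $x \cdotgamma a \leq a$ is symmetric, and commutativity is plainly inherited. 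Combining the two facts yields this inclusion, in both the plain and the Abelian/commutative versions.

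For the reverse inclusion, let $\alg{P}$ be integrally closed (commutative in the Abelian case). Here I would invoke the free nuclear preimage $\alg{M}$ of $\alg{P}$: the pomonoid of finite sequences over $P$ under concatenation, quotiented by a suitable preorder induced from the order and multiplication of $\alg{P}$, equipped with the nucleus $\gamma$ that collapses a sequence to the singleton sequence given by the product of its entries in $\alg{P}$. The basic properties of this construction should give that the canonical map $\alg{P} \to \alg{M}_{\gamma}$, $a \mapsto [(a)]$, is an isomorphism of pomonoids: the $\gamma$-closed elements are precisely the singleton sequences, $[(a)] \cdotgamma [(b)] = [(a \cdot b)]$, and the preorder restricted to singletons is the order of $\alg{P}$. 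It therefore suffices to embed $\alg{M}$ as a subpomonoid into a pogroup $\alg{G}$: then $\alg{M} \leq \alg{G}$ carrying the nucleus $\gamma$ exhibits $\alg{P} \iso \alg{M}_{\gamma}$ as a nuclear image of a subpomonoid of a pogroup, and $\alg{G}$ can be taken Abelian when $\alg{P}$, and hence $\alg{M}$, is commutative.

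To produce this embedding I would take $\alg{G}$ to be $\FreeRedGroup{\alg{P}}$, the free group $\FreeGroup{\alg{P}}$ on the underlying set of $\alg{P}$ equipped with the preorder induced from the order and multiplication of $\alg{P}$ and then reduced by the associated equivalence; this is a pogroup by construction. There is an evident homomorphism $\alg{M} \to \FreeRedGroup{\alg{P}}$ sending a sequence $(a_{1}, \dots, a_{m})$ to the group product $a_{1} \cdots a_{m}$ of the corresponding generators, and the main obstacle is to show that it is an order embedding, i.e.\ injective and order-reflecting; injectivity alone then already forces $\alg{M}$ to be cancellative. This is precisely where integral closure of $\alg{P}$ is used: clearing formal inverses reduces a comparison of reduced group words to a comparison of positive words together with side inequalities of the form $a \cdot x \leq a$ or $x \cdot a \leq a$, and the two quasi-identities defining integral closure are exactly what is needed to discharge these side conditions without creating new comparabilities or collapses among positive words, and to verify that the resulting relation on $\FreeRedGroup{\alg{P}}$ is antisymmetric.

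In the commutative case this final step is far more transparent and can replace the previous paragraph: the commutative free nuclear preimage of a commutative integrally closed pomonoid is a commutative cancellative pomonoid, and every such pomonoid $\alg{M}$ embeds as a subpomonoid into its Abelian group of fractions $\alg{G}$ under the relation $a b^{-1} \leq c d^{-1} \iff a \cdot d \leq b \cdot c$, which a short computation shows to be a well-defined, translation-invariant partial order whose restriction to $\alg{M}$ is the original order; the only substantive input is that $\alg{M}$ is cancellative, which is again where integral closure of $\alg{P}$ enters. The detour through $\alg{M}$ cannot be avoided: a nontrivial finite integral residuated lattice is integrally closed but is not a subpomonoid of any pogroup (subpomonoids of pogroups are cancellative), so the sequence pomonoid $\alg{M}$ is genuinely needed, and integral closure is exactly the hypothesis ensuring that $\alg{M}$ is cancellative and hence pogroup-embeddable.
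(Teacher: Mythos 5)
Your overall architecture is the same as the paper's: the easy inclusion via ``subpomonoids of pogroups are integrally closed'' plus preservation of the integrally-closed quasi-inequalities under nuclear images, and the converse via the free nuclear preimage, embedded into a pogroup of fractions in the commutative case and into a free pogroup in general. However, there are genuine gaps. First, the free nuclear preimage as you describe it (all finite sequences, including the empty one, with the collapse-to-product nucleus) is \emph{never} cancellative: $\emptyword \circ [\1]$ and $[\1] \circ [\1]$ are both equivalent to $[\1]$, while $\emptyword$ and $[\1]$ remain distinct in the quotient, so this pomonoid cannot embed into any pogroup. One must pass to the unital variant $\FreeUMon{\alg{P}}$, i.e.\ restrict to non-empty words and take $[\1]$ as the unit. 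Even then, the claim that $\FreeUMon{\alg{P}}$ is cancellative whenever $\alg{P}$ is integrally closed is not a formality: it is proved in the paper by an induction on the length of the word being cancelled, with a case analysis of how the witnessing decompositions can fall; your proposal asserts this cancellativity (``integral closure is exactly the hypothesis ensuring that $\alg{M}$ is cancellative'') but never argues it.

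Second, and more seriously, in the non-commutative case the inference ``cancellative and hence pogroup-embeddable'' is invalid: cancellative monoids need not embed into groups (Malcev), and cancellative pomonoids need not embed into pogroups, a point the paper makes explicitly. So order-cancellativity of $\FreeUMon{\alg{P}}$ does not by itself give the embedding into $\FreeRedGroup{\alg{P}}$. The paper closes this gap with the technical heart of the theorem: the preorder on $\FreeGroup{\alg{M}}$ is generated by explicit rules (positive and negative monotonicity, contraction, expansion, permutation), and a normal-form theorem — every derivable inequality has a proof in which inverse-eliminating rules precede inverse-preserving ones, which precede inverse-introducing ones — is established by a lengthy rule-permutation case analysis that uses cancellativity of $\FreeUMon{\alg{M}}$ at nearly every step; order-reflection of $\FreeUMon{\alg{M}} \to \FreeRedGroup{\alg{M}}$ then follows because a normal proof of an inequality between positive words can only use positive monotonicity. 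Your sketch (``clearing formal inverses \dots\ discharge these side conditions of the form $a \cdot x \leq a$'') gestures at this but supplies no argument, and the obstructions one actually meets are interactions between expansion, contraction and permutation steps, not side inequalities of that simple shape. The commutative half of your proposal is sound modulo the unital fix and the unproved cancellativity claim, since there the Abelian group of fractions does all the work.
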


\begin{theoremB*}[cf.\ Theorem~\ref{thm: integrally closed pomonoids}]
  The nuclear images of subpomonoids of negative cones of (Abelian) pogroups are precisely the (commutative) integral pomonoids.
\end{theoremB*}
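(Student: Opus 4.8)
The plan is to establish the two inclusions separately, the commutative case running in exact parallel with pogroups replaced throughout by Abelian pogroups. For soundness, let $\alg{G}$ be a pogroup with negative cone $\alg{G}^{-} = \set{a \in G}{a \leq \1}$, let $\alg{M}$ be a subpomonoid of $\alg{G}^{-}$, and let $\gamma$ be a nucleus on $\alg{M}$. Since $\1 \in M \subseteq G^{-}$, the unit $\1$ is the greatest element of $\alg{M}$, so $\alg{M}$ is integral; hence $\gamma(\1) \leq \1$, and since $\gamma$ is extensive also $\gamma(\1) \geq \1$, so $\gamma(\1) = \1$. Thus the unit of $\alg{M}_{\gamma}$ is $\1$, and since the order of $\alg{M}_{\gamma}$ is inherited from $\alg{M}$ it remains the greatest element there; therefore $\alg{M}_{\gamma}$ is integral. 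When $\alg{G}$ is Abelian, $\alg{M}$ is commutative and $a \cdot_{\gamma} b = \gamma(ab) = \gamma(ba) = b \cdot_{\gamma} a$, so $\alg{M}_{\gamma}$ is commutative as well.

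For the converse, let $\alg{P}$ be an integral pomonoid; then $\alg{P}$ is integrally closed, so Theorem~A supplies a pogroup $\alg{G}$, a subpomonoid $\alg{M}$ of $\alg{G}$, and a nucleus $\gamma$ on $\alg{M}$ with $\alg{M}_{\gamma} \iso \alg{P}$. The crucial reduction is that it suffices to arrange $\alg{M}$ to be \emph{integral}: for then, since the order of $\alg{M}$ is the restriction of that of $\alg{G}$ and $\1$ is greatest in $\alg{M}$, every element of $M$ lies below $\1$ in $\alg{G}$, whence $M \subseteq G^{-}$ and $\alg{M}$ is a subpomonoid of the negative cone of $\alg{G}$, as required. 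To obtain an integral witness I would revisit the structure underlying the proof of Theorem~A, namely the free nuclear preimage of $\alg{P}$ — which is built from finite sequences over $\alg{P}$ quotiented by a suitable preorder, with multiplication given by concatenation and the canonical nucleus sending a sequence to its product in $\alg{P}$ — and verify that when $\1$ is the top of $\alg{P}$ the defining preorder places every sequence below the empty sequence, so that this free nuclear preimage is already integral. Should that verification fail, the fallback is to pass to a suitable integral quotient of the free nuclear preimage carrying an induced nucleus with the same nuclear image $\alg{P}$; by the proof of Theorem~A this quotient again embeds into a pogroup.

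The step I expect to be the main obstacle is precisely this last one: tracking how integrality of $\alg{P}$ is transmitted through the free nuclear preimage and its embedding into a pogroup. Soundness, and the reduction from an integral witness to confinement in the negative cone, are routine, and the commutative refinement introduces nothing new; but the combinatorics of the preorder on the free nuclear preimage — already the engine behind Theorem~A — must now in addition be shown to keep the image of $\alg{P}$ inside the negative cone of the ambient pogroup.
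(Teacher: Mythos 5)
Your overall strategy is the paper's: the forward direction is routine, and the backward direction goes through the witness produced for Theorem~A, observing that when $\alg{P}$ is integral this witness is itself integral and hence sits inside the negative cone of the ambient pogroup. However, the specific verification you propose is mis-aimed, because you are looking at the wrong version of the free nuclear preimage. In $\FreeMon{\alg{P}}$, whose unit is the empty word, one has $u \sqleq \emptyword$ if and only if $u = \emptyword$, so no sequence other than $\emptyword$ ever lies below the empty sequence; in fact $\FreeMon{\alg{P}}$ is never integral (when $\alg{P}$ is integral the top is $[\1]$, not the unit $\emptyword$) and, as the paper notes, never cancellative. The witness actually used for Theorem~A is the \emph{unital} free nuclear preimage $\FreeUMon{\alg{P}}$, built from non-empty words with unit $[\1]$, which is cancellative and embeds into the pogroup $\FreeRedGroup{\alg{P}}$ exactly when $\alg{P}$ is integrally closed. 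With that structure your check goes through immediately: integrality of $\alg{P}$ gives $\gamma(u) \leq \1$, hence $u \sqleq [\1]$, for every non-empty word $u$, so $\FreeUMon{\alg{P}}$ is integral and its (order- and unit-preserving) image in $\FreeRedGroup{\alg{P}}$ lies in the negative cone; no ``integral quotient'' fallback is needed. One further small divergence: in the commutative case the paper does not rerun the normal-form embedding argument for Abelian pogroups but instead uses that the cancellative commutative pomonoid $\CFreeRedUMon{\alg{P}}$ embeds into its Abelian pogroup of fractions, which is simpler than the ``exact parallel'' you sketch.
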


  The same questions arise for s$\ell$-monoids and $\ell$-groups. In the Abelian case, we describe the appropriate class of nuclear images by an infinite set of s$\ell$-monoidal equations, which we call the \emph{square condition} (see Definition~\ref{def: square condition}). Beyond the Abelian case, the gap which arises between cancellative structures and submonoids of groups throws a wrench into our argument, forcing us to make do with cancellative s$\ell$-monoids rather than sub-s$\ell$-monoids of $\ell$-groups.

\begin{theoremC*}[cf.\ Theorem~\ref{thm: commutative integrally closed square sl-monoids}]
  The nuclear images of (integral) sub-s$\ell$-monoids of Abelian $\ell$-groups are precisely the commutative integrally closed (integral) s$\ell$-monoids which satisfy the square condition.
\end{theoremC*}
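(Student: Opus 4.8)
The plan is to prove the two inclusions separately, using Theorems~A and~B to supply the ``integrally closed'' (respectively ``integral'') half of each characterisation and isolating the square condition as the single feature peculiar to the semilattice-ordered setting.

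For \emph{necessity}, let $\alg{G}$ be an Abelian $\ell$-group, let $\alg{M}$ be a sub-s$\ell$-monoid of $\alg{G}$, and let $\gamma$ be a nucleus on $\alg{M}$. Commutativity of $\alg{M}_\gamma$ is immediate, since $a \cdotgamma b = \gamma(a \cdot b) = \gamma(b \cdot a) = b \cdotgamma a$. As an $\ell$-group is a pogroup and a sub-s$\ell$-monoid a subpomonoid, Theorem~A shows that $\alg{M}_\gamma$ is an integrally closed pomonoid, hence an integrally closed s$\ell$-monoid; when $\alg{M}$ is integral — equivalently, contained in the negative cone of $\alg{G}$ — Theorem~B yields integrality of $\alg{M}_\gamma$ instead. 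It then remains to verify the square condition in $\alg{M}_\gamma$. I would first check that the equations comprising the square condition hold in every Abelian $\ell$-group viewed as an s$\ell$-monoid — a routine calculation exploiting that $\ell$-groups are distributive lattices on which multiplication distributes over all finite joins and meets — so that they hold in the sub-s$\ell$-monoid $\alg{M}$. I would then pass them to the nuclear image $\alg{M}_\gamma$, either by a direct computation or, more conceptually, by appeal to the syntactic criterion for quasivarieties of s$\ell$-monoids closed under nuclear images promised in the abstract, noting that the square condition has the requisite syntactic shape.

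For \emph{sufficiency}, let $\alg{N}$ be a commutative integrally closed s$\ell$-monoid satisfying the square condition. I would form the commutative free nuclear preimage $\alg{M}$ of $\alg{N}$ together with its canonical nucleus $\gamma$; by the basic properties of the free nuclear preimage and the hypothesis that $\alg{N}$ is integrally closed, the counit of the adjunction is an isomorphism $\alg{M}_\gamma \iso \alg{N}$ (this is the mechanism underlying Theorem~A). It therefore suffices to realise $\alg{M}$ as a sub-s$\ell$-monoid of an Abelian $\ell$-group, and as a sub-s$\ell$-monoid of the negative cone of one when $\alg{N}$ is integral. Two facts are needed. First, $\alg{M}$ must be cancellative, which by the analysis of free nuclear preimages holds exactly because $\alg{N}$ is integrally closed. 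Second, the Abelian group of fractions $\alg{G}$ of the commutative cancellative monoid underlying $\alg{M}$ must carry a lattice order extending the semilattice order of $\alg{M}$ and turning $\alg{G}$ into an $\ell$-group in which $\alg{M}$ is a sub-s$\ell$-monoid; distributivity of this lattice then comes for free, every $\ell$-group being distributive, and restricting $\alg{G}$ to $\alg{M}$ recovers $\alg{M}$, so that $\alg{M}_\gamma \iso \alg{N}$ displays $\alg{N}$ as a nuclear image of the desired form. Constructing this order on $\alg{G}$ is where the square condition does its work: one specifies the intended positive cone — equivalently, defines meets of elements of $\alg{M}$ inside $\alg{G}$ by the $\ell$-group identity $a \wedge b = a \cdot b \cdot (a \vee b)^{-1}$ — and uses the square condition, transported from $\alg{N}$ to $\alg{M}$ along the free nuclear preimage, to verify that this yields a lattice order that is compatible with multiplication and agrees with the joins already present in $\alg{M}$.

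The main obstacle I anticipate is precisely this last point: establishing that the square condition is both necessary and sufficient for the group of fractions of the free nuclear preimage to become an $\ell$-group compatibly with its joins. In the pomonoid setting of Theorem~A no extra condition is needed, since a partial order extends to a pogroup structure rather freely; once we must also manufacture meets and keep them compatible with multiplication and with the given joins, a genuine constraint on $\alg{N}$ appears, and the substance of the theorem is that this constraint is exactly the square condition — together with the bookkeeping that the condition does transfer correctly through the free nuclear preimage. That the square condition is an infinite schema, indexed by the number of factors involved, reflects the need to control arbitrary finite products of joins rather than only binary ones.
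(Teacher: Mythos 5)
Your necessity direction is essentially the paper's: the square condition and integral closure are simple quasi-inequalities valid in all commutative cancellative s$\ell$-monoids (in particular in sub-s$\ell$-monoids of Abelian $\ell$-groups), hence preserved under nuclear images. The sufficiency direction, however, has a genuine gap at its very first load-bearing claim: you assert that the free nuclear preimage $\alg{M}$ of $\alg{N}$ is cancellative ``exactly because $\alg{N}$ is integrally closed.'' That equivalence holds only for the \emph{partially ordered} preimage $\CFreeRedUMon{\alg{N}}$. Since you need $\alg{M}$ to be an s$\ell$-monoid (it must sit inside an Abelian $\ell$-group as a sub-s$\ell$-monoid and have $\alg{N}$ as its nuclear image with joins $\gamma(x\vee y)$), the relevant object is the free \emph{semilattice-ordered} preimage $\Id \CFreeRedUMon{\alg{N}}$, and the paper proves that for a commutative pomonoid $\alg{M}$ the s$\ell$-monoid $\Id \alg{M}$ is cancellative only when $\alg{M}$ is trivial. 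So the structure you propose to embed into its group of fractions is (outside the trivial case) never cancellative, and the construction cannot even get started; no amount of work on ordering the group of fractions can repair this, because cancellativity is a precondition for the embedding into any group.

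Relatedly, you have misplaced where the square condition does its work. Lattice-ordering the group of fractions of a commutative cancellative s$\ell$-monoid needs no extra hypothesis at all: one sets $ab^{-1} \vee cd^{-1} \assign (ad \vee cb)(bd)^{-1}$, checks this is a join using order-cancellativity and distribution of multiplication over the existing joins, and obtains an Abelian $\ell$-group containing the monoid as a sub-s$\ell$-monoid. The square condition is instead what rescues the step you skipped: since $\Id \FreeUMon{\alg{N}}$ is not cancellative, the paper passes to its quotient by the smallest $\class{K}$-congruence $\leq_{\theta}$, where $\class{K}$ is the quasivariety of commutative cancellative s$\ell$-monoids, and must show that this quotient still has $\alg{N}$ as its nuclear image, i.e.\ that $u \leq_{\theta} v$ implies $\gamma(u) \leq \gamma(v)$. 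There $u \leq_{\theta} v$ amounts to $w \circ u \sqleq w \circ v$ for some $w$; chasing a cycle of inequalities among the joinands of $w$ and multiplying them (using commutativity and the cancellativity of $\FreeUMon{\alg{N}}$, which is where integral closure of $\alg{N}$ enters) yields $u_{i}^{n} \sqleq [a]^{n}$, and the square condition is precisely the hypothesis that forces $u_{i} \sqleq [a]$. Without this quotient-plus-square-condition argument, your proof establishes only the pomonoid-level statement you already know from Theorems~A and~B, not Theorem~C.
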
  

\begin{theoremD*}[cf.\ Theorem~\ref{thm: integral sl-monoids}]
  The nuclear images of integral cancellative s$\ell$-monoids are precisely the integral s$\ell$-monoids.
\end{theoremD*}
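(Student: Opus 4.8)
We prove the two inclusions separately. The easy one is that the nuclear image of any integral cancellative s$\ell$-monoid is an integral s$\ell$-monoid. Let $\gamma$ be a nucleus on an integral cancellative s$\ell$-monoid $\alg{M}$. By the preliminaries $\alg{M}_{\gamma}$ is an s$\ell$-monoid, so it suffices to check integrality. Since $\1$ is the top element of $\alg{M}$ we have $\gamma(\1) \leq \1$, and since $\gamma$ is a closure operator $\1 \leq \gamma(\1)$; hence $\gamma(\1) = \1$ is $\gamma$-closed and remains the top element of the subposet $\alg{M}_{\gamma}$ of $\alg{M}$. Thus $\alg{M}_{\gamma}$ is an integral s$\ell$-monoid. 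Note that cancellativity of $\alg{M}$ plays no role here, and is typically destroyed in passing to $\alg{M}_{\gamma}$, which is precisely why the hard direction is needed.

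For the converse, let $\alg{N}$ be an integral s$\ell$-monoid. The plan is to take $\alg{M}$ to be the free nuclear preimage of $\alg{N}$ formed among integral s$\ell$-monoids, which may be described concretely as follows: the elements of $\alg{M}$ are finite joins of finite sequences of elements of $\alg{N}$, taken modulo the least s$\ell$-monoid-compatible preorder $\sqleq$ for which (i) a contiguous factor of a sequence may be replaced by its product computed in $\alg{N}$, (ii) an entry $a$ of a sequence may be replaced by any $b \geq a$, and (iii) every sequence lies below the empty sequence $\1$; multiplication is componentwise concatenation, the join is union, and $\gamma$ collapses a set of sequences to the join of the products of its members. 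Using that $\alg{N}$ is integral one checks that $\sqleq$ is compatible with the operation of taking the product of a sequence (clause (iii) is where integrality is needed: the product of any sequence lies below $\1$), so that two length-one sequences satisfy $(a) \sqleq (b)$ only when $a \leq b$ in $\alg{N}$; together with the general properties of the free nuclear preimage this yields that $\gamma$ is a nucleus on $\alg{M}$ and that $\alg{M}_{\gamma} \cong \alg{N}$. By clause (iii) the unit $\1$ is the top element of $\alg{M}$, so $\alg{M}$ is an integral s$\ell$-monoid, and $\alg{N}$ is its nuclear image.

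The crux is to show that $\alg{M}$ is cancellative. Because multiplication of sets is computed componentwise and joins are unions, it is enough to prove left- and right-cancellation for single sequences. So suppose $\tuple{c} \cdot \tuple{x} \sqleq \tuple{d} \cdot \tuple{x}$. Unwinding clauses (i)--(iii), this relation is witnessed by an order-preserving partial assignment sending positions of $\tuple{c} \cdot \tuple{x}$ to positions of $\tuple{d} \cdot \tuple{x}$, each target position receiving, in order, a contiguous block of source entries whose product lies below it, with unassigned source positions permitted precisely because $\alg{N}$ is integral (every entry being below $\1$). The key step is to normalize such a witness so that the two copies of the common factor $\tuple{x}$ are matched identically to one another, all other source positions that had been drawn toward the trailing copy of $\tuple{x}$ being discarded from the assignment; restricting the normalized witness to the $\tuple{c}$- and $\tuple{d}$-parts then exhibits $\tuple{c} \sqleq \tuple{d}$, and left-cancellation is handled symmetrically. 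Making this normalization rigorous — that is, controlling how a witnessing assignment may interleave a common factor with the rest of the word, and showing that the interleaved material can always be pushed aside — is the main obstacle. It is here that integrality of $\alg{N}$ is used essentially: the discarding of source positions is exactly clause (iii), and the analogous cancellation statement fails for merely integrally closed s$\ell$-monoids, which is why this theorem is confined to the integral case.
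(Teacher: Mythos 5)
The easy direction is fine, but the hard direction has a genuine gap at its central step. You claim that ``because multiplication of sets is computed componentwise and joins are unions, it is enough to prove left- and right-cancellation for single sequences.'' This reduction is false, and it is exactly the delicate point the theorem turns on. In the s$\ell$-monoid $\Id \FreeUMon{\alg{N}}$ the relation $A \ast X \sqleq B \ast X$ with $B = \below\{b_{1}, \dots, b_{m}\}$ and $X = \below\{x_{1}, \dots, x_{n}\}$ only says that for each generator $x_{i}$ there are \emph{some} $b_{j}$ and \emph{some other} $x_{k}$ with $a \circ x_{i} \sqleq b_{j} \circ x_{k}$; the common factor need not match up with itself, so no sequence-level cancellation applies directly. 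The paper isolates precisely this obstacle: cancellativity of $\Id \alg{S}$ is equivalent not to cancellativity of $\alg{S}$ but to an infinite family of cyclic quasi-identities ($x_{1} y \leq x_{2} z_{2} ~\&~ \dots ~\&~ x_{n} y \leq x_{1} z_{1} \implies y \leq z_{i}$ for some $i$), and the real work of the proof is verifying these cyclic conditions in $\FreeUMon{\alg{N}}$ for integral $\alg{N}$, by chasing a cycle of decompositions and using product distributivity, integrality, and the pomonoid-level cancellativity of $\FreeUMon{\alg{N}}$. That the reduction you assert cannot be right in general is witnessed by the paper's observation that for a \emph{commutative} pomonoid $\alg{M}$ the s$\ell$-monoid $\Id \alg{M}$ is never cancellative unless $\alg{M}$ is trivial, even when $\alg{M}$ itself is cancellative; this is also why the commutative case of the theorem needs the extra square condition rather than the argument you sketch.

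Concerning the part you describe as the main obstacle, the cancellation of a common factor for single words in $\FreeUMon{\alg{N}}$: this is true and is proved in the paper by a short induction on the length of the word being cancelled against, using product distributivity of the preorder; no normalization of witnesses is needed. Moreover it holds whenever $\alg{N}$ is merely integrally closed, not just integral, so your closing remark locates the role of integrality in the wrong place: integrality is what makes the passage from word-level cancellation to cancellativity of the downset s$\ell$-monoid $\Id \FreeUMon{\alg{N}}$ go through (via the cyclic conditions), and that passage is the step your proposal skips.
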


  In the case of pomonoids, we closed the above-mentioned gap by means of a proof-theoretic argument. One may hope that with additional work, a similar proof-theoretic argument can be found at the level s$\ell$-monoids. This is one major task left open by the present paper. The other principal problem left open is to describe the nuclear images of cancellative s$\ell$-monoids (beyond the integral case and the commutative case). Again, one may then hope to use a proof-theoretic argument to extend this to a description of the nuclear images of sub-s$\ell$-monoids of $\ell$-groups. If these tasks can be accomplished, and if the nuclear images of cancellative s$\ell$-monoids turn out to be the integrally closed s$\ell$-monoids, then the payoff would be a proof of the following conjecture.

\begin{conjectureA*}
  The nuclear images of sub-s$\ell$-monoids of (negative cones of) $\ell$-groups are precisely the integrally closed (integral) s$\ell$-monoids.
\end{conjectureA*}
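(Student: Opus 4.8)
\smallskip
\noindent\textbf{Towards a proof.}
The inclusion from right to left is the easy half and does not use the group operation. If $\alg{G}$ is an $\ell$-group, then its negative cone $\set{a \in G}{a \leq \1}$ is a sub-s$\ell$-monoid of $\alg{G}$: it contains $\1$, it is closed under $\cdot$ and $\vee$, and multiplication distributes over joins because left and right translations are order automorphisms of $\alg{G}$. This cone is integral, hence so is every sub-s$\ell$-monoid $\alg{M}$ of it, and so is every nuclear image $\alg{M}_{\gamma}$: from $\1 \leq \gamma(\1) \leq \1$ the unit $\gamma(\1)$ of $\alg{M}_{\gamma}$ equals $\1$, and every $\gamma$-closed element lies below $\1$. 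Thus each such $\alg{M}_{\gamma}$ is an integral s$\ell$-monoid.

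For the reverse inclusion the plan is to transplant the proof-theoretic argument behind Theorem~B from pomonoids to s$\ell$-monoids. Given an integral s$\ell$-monoid $\alg{M}$, Theorem~D presents it as $\alg{L}_{\delta}$ for an integral cancellative s$\ell$-monoid $\alg{L}$ naturally built from $\alg{M}$ (through the free nuclear preimage); since $\alg{L}$ is integral, any s$\ell$-monoid embedding of $\alg{L}$ into an $\ell$-group automatically lands in the negative cone, so it suffices to embed this $\alg{L}$ into an $\ell$-group. For an \emph{arbitrary} integral cancellative s$\ell$-monoid this fails --- that is the gap flagged in the introduction --- but one expects the \emph{particular} $\alg{L}$ arising here to be representable, just as the free nuclear preimage of a pomonoid is, by construction, a subpomonoid of a pogroup. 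Concretely: present an $\ell$-group $\alg{G}_{\alg{M}}$ by generators $\set{x_a}{a \in M}$ subject to $x_{\1} = \1$, to $x_a \leq x_b$ whenever $a \leq b$ in $\alg{M}$, and to $x_a \cdot x_b \leq x_{a \cdot b}$ for all $a, b \in M$ --- we deliberately do not impose the reverse inequality $x_{a \cdot b} \leq x_a \cdot x_b$, so that the monoid operation of $\alg{M}$ is recovered only after applying a nucleus, while the joins of $\alg{M}$ are already encoded in the order relations among the generators. By integrality $x_a \leq x_{\1} = \1$, so the generators lie in $\alg{G}_{\alg{M}}^{-}$; let $\alg{N}$ be the sub-s$\ell$-monoid they generate there. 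It remains to furnish $\alg{N}$ with a nucleus $\gamma$ whose closed elements are exactly the $x_a$ and to verify that $a \mapsto x_a$ is an isomorphism $\alg{M} \to \alg{N}_{\gamma}$; surjectivity, and the matching of $\cdot_{\gamma}$ and $\vee_{\gamma}$ with the operations of $\alg{M}$, all reduce to a single no-collapse statement: for $a, b \in M$ one has $x_a \leq x_b$ in $\alg{G}_{\alg{M}}$ only if $a \leq b$ in $\alg{M}$, and more generally a word over the generators lies below $x_c$ only when this is forced by the relations.

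This no-collapse statement is the crux and the expected main obstacle. It calls for a sequent- or hypersequent-style calculus for $\ell$-groups (or for negative cones of $\ell$-groups) that retains cut elimination after the nonlogical axioms $x_a \leq x_b$ and $x_a \cdot x_b \leq x_{a \cdot b}$ are adjoined, together with a normal-form analysis of cut-free derivations showing that any derivable inequality between words over the $x_a$ is already derivable ``at the level of $\alg{M}$''. For pogroups the pertinent calculus is essentially a rewriting system on group words and the combinatorics are manageable; for $\ell$-groups the interplay of the lattice connectives with group inverses makes cut elimination substantially more delicate, the axioms $x_a \cdot x_b \leq x_{a \cdot b}$ are non-analytic (they link generators that share no subformula structure), and confining cuts against them forces one to rebuild, inside the proof system, the very combinatorics of arbitrary sequences modulo a preorder on which the free nuclear preimage and Theorem~D rest. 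Carrying this out --- preferably on top of an existing cut-free calculus for $\ell$-groups --- is the one piece left open; once no-collapse is secured, $\alg{N}_{\gamma} \cong \alg{M}$ follows, proving the conjecture.
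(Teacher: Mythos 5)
You are attempting to prove Conjecture A, which the paper explicitly leaves open: the authors prove the pomonoid analogue (Theorems A and B, via the normal-proof analysis for $\FreeRedGroup{\alg{M}}$) and the weaker Theorem D for s$\ell$-monoids, and state that adapting the proof-theoretic argument to s$\ell$-monoids ``is one major task left open by the present paper.'' Your proposal does not close this gap. The right-to-left inclusion you give is correct (and easy: integrality of the negative cone passes to sub-s$\ell$-monoids, and $\gamma(\1)=\1$ since $\1$ is the top, so nuclear images stay integral). But the substantive direction is only a programme: you reduce, via Theorem D, to embedding the particular cancellative integral s$\ell$-monoid $\alg{L}$ (essentially $\Id\FreeUMon{\alg{M}}$) into an $\ell$-group, recast this as a ``no-collapse'' statement for a presented $\ell$-group $\alg{G}_{\alg{M}}$, and then say yourself that the required cut-elimination/normal-form analysis is ``the one piece left open.'' That piece \emph{is} the conjecture; everything else in your sketch is already in the paper, so no progress beyond the paper's own discussion has been made.

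A secondary concern with the specific route you sketch: even granting the no-collapse statement (order-reflection on the generators $x_a$), you still need a nucleus on the sub-s$\ell$-monoid $\alg{N}$ of $\alg{G}_{\alg{M}}^{-}$ whose closed elements are exactly the $x_a$. This requires that every element of $\alg{N}$ --- an arbitrary lattice--monoid word in the generators, not just a product of them --- have a \emph{least} upper bound among the $x_a$, and that these closures interact correctly with multiplication and join; none of this follows from order-reflection on generators alone. The paper's pomonoid argument avoids this difficulty by constructing the nucleus first, on the free preimage $\FreeUMon{\alg{M}}$, and only afterwards proving that $\FreeUMon{\alg{M}}$ embeds into the free pogroup $\FreeRedGroup{\alg{M}}$ by the normal-proof theorem (which is where integral closedness, i.e.\ cancellativity of $\FreeUMon{\alg{M}}$, is used). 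The faithful s$\ell$-analogue of that strategy would be to show that $\Id\FreeUMon{\alg{M}}$ (or a suitable cancellative quotient of it) embeds, as an s$\ell$-monoid, into the free $\ell$-group it generates --- and that is exactly the hard step for which no argument is offered.
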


  Results about residuated lattices now follow from the above results about s$\ell$-monoids, provided that we restrict to nuclear images which are finite (or more generally, dually well-partially-ordered). Each finite (and more generally, each dually well-partially-ordered) integrally closed residuated lattice is in fact integral, which is why the integrally closed condition does not explicitly occur in the following theorems.

\begin{theoremE*}[cf.\ Theorem~\ref{thm: finite integral commutative square rls}]
  The \emph{finite} nuclear images of commutative conuclear \mbox{images} of (negative cones of) Abelian $\ell$-groups are precisely the finite commutative integral residuated lattices which satisfy the square condition.
\end{theoremE*}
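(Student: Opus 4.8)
The plan is to obtain the theorem by combining Theorem~C, the residuated-lattice consequence of Theorem~D, and the description by Montagna and Tsinakis~\cite{montagna+tsinakis10} of commutative cancellative residuated lattices as conuclear images of Abelian $\ell$-groups. We may handle the two variants of the statement at once: if $\sigma$ is a conucleus on the negative cone of an Abelian $\ell$-group $\alg{G}$, then $a \mapsto \sigma(\1 \wedge a)$ is a conucleus on $\alg{G}$ with the same image, so the conuclear images of negative cones of Abelian $\ell$-groups coincide with those conuclear images of Abelian $\ell$-groups that happen to be integral.

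For the forward inclusion, suppose $\alg{L} = (\alg{G}_{\sigma})_{\gamma}$ is finite, with $\alg{G}$ an Abelian $\ell$-group, $\sigma$ a conucleus, and $\gamma$ a nucleus. Joins in a conuclear image are inherited from the ambient structure, so $\alg{G}_{\sigma}$ is a sub-s$\ell$-monoid of $\alg{G}$; by the Montagna--Tsinakis theorem it is moreover a commutative cancellative residuated lattice. Hence $\alg{L}$, being the nuclear image of the residuated lattice $\alg{G}_{\sigma}$, is itself a commutative residuated lattice, and, viewed as the nuclear image of a sub-s$\ell$-monoid of an Abelian $\ell$-group, it is integrally closed and satisfies the square condition by Theorem~C. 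A finite integrally closed residuated lattice is integral, so $\alg{L}$ is as claimed.

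For the reverse inclusion, let $\alg{L}$ be a finite commutative integral residuated lattice satisfying the square condition. By the residuated-lattice form of Theorem~D in the commutative case --- every finite commutative integral residuated lattice is the nuclear image of a commutative cancellative integral residuated lattice --- there is a commutative cancellative integral residuated lattice $\alg{M}$ and a nucleus $\gamma$ on $\alg{M}$ with $\alg{M}_{\gamma} \iso \alg{L}$. Applying the Montagna--Tsinakis theorem to $\alg{M}$ and using the reduction above, $\alg{M}$ is a conuclear image of (the negative cone of) an Abelian $\ell$-group. Therefore $\alg{L} \iso \alg{M}_{\gamma}$ is a nuclear image of a commutative conuclear image of (the negative cone of) an Abelian $\ell$-group, which is of the required form.

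The crux --- already isolated in the remark that the cancellative s$\ell$-monoids built from finite residuated lattices are residuated lattices --- is the residuated-lattice consequence of Theorem~D: the free nuclear preimage of a finite commutative integral residuated lattice, constructed as an s$\ell$-monoid, is in fact a residuated lattice. This object is built from finite words over (a subset of) $L$ by quotienting by a suitable preorder; commutativity and cancellativity are built in, so the issue is the existence of binary meets and of residuals. I would prove it in two steps. First, show that finiteness of $\alg{L}$ forces the free nuclear preimage to be dually well-partially-ordered, by a Higman/Dickson-style argument bounding the preorder on words in terms of the finite data of $\alg{L}$; this is the genuinely combinatorial part. Second, observe that a dually well-partially-ordered s$\ell$-monoid is automatically a residuated lattice: given $x$ and $y$, the downward-closed set $\{ z \mid x \cdot z \leq y \}$ has only finitely many maximal elements $z_{1}, \dots, z_{k}$, and $x \cdot (z_{1} \vee \dots \vee z_{k}) = (x \cdot z_{1}) \vee \dots \vee (x \cdot z_{k}) \leq y$, so $z_{1} \vee \dots \vee z_{k}$ is the greatest such $z$, namely $x \bs y$; binary meets are found the same way from intersections of principal downsets. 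I expect the first step to carry the real weight, with the remaining assembly being formal.
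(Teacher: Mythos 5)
Your forward inclusion is fine and follows the paper's route (simple quasi-inequalities are preserved by nuclei, Montagna--Tsinakis identifies $\alg{G}_{\sigma}$ as a commutative cancellative residuated lattice, and finiteness upgrades integrally closed to integral). The reverse inclusion, however, rests on a false intermediate claim. You cite ``the residuated-lattice form of Theorem~D in the commutative case'' as saying that \emph{every} finite commutative integral residuated lattice is the nuclear image of a commutative cancellative integral residuated lattice. No such statement is available: Theorem~D concerns cancellative (not commutative cancellative) preimages, and the commutative analogue without the square condition cannot hold. Indeed, every commutative cancellative s$\ell$-monoid satisfies the square condition, which is a scheme of simple quasi-inequalities and hence is inherited by all nuclear images; so if your cited claim were true, every finite commutative integral residuated lattice would satisfy the square condition and the hypothesis of Theorem~E would be vacuous, which it is not (already $xy \leq x^{2} \vee y^{2}$ fails in some finite commutative integral residuated lattices). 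Symptomatically, your argument for the reverse inclusion never uses the square condition at all, even though producing a \emph{commutative} cancellative preimage is exactly where it is needed.

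The misdiagnosis shows up again in your ``crux'' paragraph, where you assert that for the commutative free construction ``commutativity and cancellativity are built in'' and that only lattice structure and residuation need work. The paper proves the opposite: $\Id$ applied to a nontrivial commutative pomonoid is \emph{never} cancellative, so $\Id \CFreeRedUMon{\alg{L}}$ cannot serve directly as the cancellative preimage. The actual argument quotients $\Id \FreeUMon{\alg{L}}$ by the smallest cancellative s$\ell$-congruence ($u \leq_{\theta} v$ iff $w \circ u \sqleq w \circ v$ for some $w$), and the square condition is precisely what shows this quotient does not disturb the nuclear image: from a cycle of inequalities $w_{j} \circ u_{i} \sqleq w_{k} \circ [a]$ one multiplies, uses commutativity and the cancellativity of $\FreeUMon{\alg{L}}$ (available because $\alg{L}$ is integrally closed) to get $u_{i}^{\,n} \sqleq [a]^{n}$, and then the square condition yields $u_{i} \sqleq [a]$. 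Your dually-wpo argument for recovering meets and residuals is correct and matches the paper, but it addresses the easier half of the difficulty; without the quotient-plus-square-condition step the reverse inclusion is not proved.
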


\begin{theoremF*}[cf.\ Theorem~\ref{thm: finite integral cancellative rls}]
  The \emph{finite} nuclear images of (integral) cancellative residuated lattices are precisely the finite integral residuated \mbox{lattices}.
\end{theoremF*}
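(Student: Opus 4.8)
\emph{Proof proposal.}
The plan is to reduce the statement to its s$\ell$-monoid analogue, Theorem~\ref{thm: integral sl-monoids}, and then to promote the cancellative s$\ell$-monoid produced there to a residuated lattice, exploiting finiteness of the nuclear image. The forward inclusion is the easier one. Suppose $\alg{R} = \alg{L}_{\gamma}$ is finite with $\alg{L}$ a cancellative residuated lattice. Every cancellative pomonoid is integrally closed, because $a \cdot x \leq a = a \cdot \1$ entails $x \leq \1$ by cancellation, and symmetrically; and nuclear images preserve integral closure, because $\gamma(a \cdot x) \leq a$ forces $a \cdot x \leq \gamma(a \cdot x) \leq a$ and hence $x \leq \1 \leq \gamma(\1)$. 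So $\alg{R}$ is a finite integrally closed residuated lattice; but in a residuated lattice with a top element $\top$ one has $x \cdot \top \leq \top$ for all $x$, so integral closure degenerates to $x \leq \1$ for all $x$, i.e.\ $\alg{R}$ is integral (the observation recalled just before the statement). Since integral cancellative residuated lattices are in particular cancellative, the parenthetical version of the inclusion needs nothing further.

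For the reverse inclusion, let $\alg{R}$ be a finite integral residuated lattice. Multiplication in a residuated lattice preserves all existing joins, so the reduct $\langle R, \vee, \cdot, \1 \rangle$ is an integral s$\ell$-monoid, and Theorem~\ref{thm: integral sl-monoids} furnishes an integral cancellative s$\ell$-monoid $\alg{L}$ with a nucleus $\gamma$ such that $\alg{L}_{\gamma} \iso \langle R, \vee, \cdot, \1 \rangle$; the proof of that theorem builds $\alg{L}$ from finite sequences over $R$ modulo a suitable preorder, with the $\gamma$-closed elements forming (as a poset) a copy of $R$. Two things then remain: (i) $\alg{L}$ is actually a residuated lattice, and (ii) the isomorphism respects meets and residuals too.

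Step (i) is the crux, and where finiteness of $\alg{R}$ cannot be dropped. Because multiplication distributes over binary joins, for fixed $a, c$ the set $\set{\tuple{x}}{a \cdot \tuple{x} \sqleq c}$ is a downward-closed join-subsemilattice of $\alg{L}$, and so is the set of common lower bounds of any pair (such a pair always having a common lower bound, since $\alg{R}$, being a finite lattice, has a least element $0$ and a sufficiently long power of the one-letter sequence $(0)$ lies below any two given sequences); a residual, resp.\ a meet, exists exactly when the relevant set is a principal downset. I would establish this via the combinatorial lemma that \emph{for finite $\alg{R}$ the free nuclear preimage is dually well-partially-ordered}: it has no infinite strictly ascending chains --- going up along the generating relations either shortens a sequence or strictly increases an entry within the finite poset of $n$-tuples over $R$ --- and no infinite antichains, the latter being a Higman/Kruskal-style fact about sequences over the finite alphabet $R$ and, I expect, the technically delicate point. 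In a dually well-partially-ordered s$\ell$-monoid every downward-closed set is a finite union of principal downsets, so a downward-closed set closed under binary joins is a single principal downset; this delivers all residuals and all binary meets.

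Step (ii) is routine. Meets in $\alg{L}_{\gamma}$ agree with meets in $\alg{L}$ and the residuals of $\alg{L}_{\gamma}$ agree with those of $\alg{L}$, so it suffices that the copy of $R$ inside $\alg{L}$ (the one-letter sequences) is closed under the meet and residuals of $\alg{L}$ and that these restrict to the operations of $\alg{R}$. This drops out of an inspection of which sequences lie below a one-letter sequence and which sequences $\tuple{x}$ satisfy $(r) \cdot \tuple{x} \sqleq (s)$, yielding $(r) \wedge_{\alg{L}} (s) = (r \wedge s)$ and $(r) \bs (s) = (r \bs s)$ (computed in $\alg{R}$) and their mirror images, so that $\alg{L}_{\gamma} \iso \alg{R}$ as residuated lattices, which completes the reverse inclusion and, since an integral cancellative residuated lattice is cancellative, its parenthetical strengthening as well.
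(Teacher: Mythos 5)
Your forward direction and your step (ii) are fine and agree with the paper. The genuine gap is in step (i), and it comes from conflating two different structures. The cancellative s$\ell$-monoid $\alg{L}$ supplied by Theorem~\ref{thm: integral sl-monoids} is not ``finite sequences over $R$ modulo a preorder'': that structure is the pomonoid $\FreeUMon{\alg{R}}$, which is not a join semilattice at all (already over the three-element G\"odel chain $0 < x < \1$ the words $[x,0]$ and $[0,x]$ have the two incomparable minimal upper bounds $[x,x]$ and $[0]$, so they have no join), so ``downward-closed join-subsemilattice of $\alg{L}$'' makes no sense if the elements of $\alg{L}$ are sequences. The s$\ell$-monoid actually used is $\alg{L} = \Id \FreeUMon{\alg{R}}$, the non-empty finitely generated downsets of the word pomonoid. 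Run at that level, your principle ``in a dually wpo s$\ell$-monoid every join-closed downset is principal'' requires $\Id \FreeUMon{\alg{R}}$ itself to be dually wpo, whereas the combinatorial lemma you propose (Higman-type dual wpo-ness of sequences over the finite alphabet $R$) concerns $\FreeUMon{\alg{R}}$ only. The transfer you are implicitly invoking --- $P$ dually wpo implies $\Id P$ dually wpo --- is false in general: Rado's example gives a wqo whose finitely generated upsets form an infinite bad sequence under reverse inclusion, i.e., dualizing, a dually wpo poset $P$ for which $\Id P$ is not dually wpo. So as written, the existence of meets and residuals in $\alg{L}$ is not established.

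The repair is essentially what the paper does, and it stays at the level of the word pomonoid. First, $\FreeUMon{\alg{R}}$ inherits residuation from the integral residuated lattice $\alg{R}$ (the paper gives explicit formulas, e.g.\ $[a]\circ u \sqleq [b]\circ v$ iff $u \sqleq [a\bs b]\circ v$), and a separate lemma shows that $\Id$ of an (ideally) residuated pomonoid has all residuals once it is a lattice: the point is that the solution set $\set{X \in \Id \FreeUMon{\alg{R}}}{A \ast X \subseteq C}$ is determined by a \emph{downset of the word pomonoid}, and dual wpo-ness of $\FreeUMon{\alg{R}}$ (Galatos--Hor\v{c}\'{i}k, your Higman-style fact) makes that downset finitely generated. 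Second, the lattice property of $\Id \FreeUMon{\alg{R}}$ is proved via down-directedness plus the observation that any common lower bound of two words of lengths $m$ and $n$ lies below a common lower bound of length at most $m+n-1$, together with dual wpo-ness of words of bounded length; in the integral case one may instead use dual wpo-ness of all of $\FreeUMon{\alg{R}}$ to see that intersections of finitely generated downsets are finitely generated. With these two lemmas your outline becomes the paper's proof; without them (or some genuinely new argument, e.g.\ via better-quasi-order theory to get dual wpo-ness of $\Id \FreeUMon{\alg{R}}$ itself), step (i) has a real hole.
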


  One might again wish to replace cancellative residuated lattices by conuclear images of $\ell$-groups in this theorem. We do not know whether this is possible, but let us record this as a conjecture.

\begin{conjectureB*}
  The \emph{finite} nuclear images of conuclear images of (negative cones of) $\ell$-groups are precisely the finite integral residuated lattices.
\end{conjectureB*}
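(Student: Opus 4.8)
The statement asserts a biconditional, one half of which is already in hand. If $\alg{G}$ is an $\ell$-group and $\sigma$ a conucleus on $\alg{G}$ (or on its negative cone $\alg{G}^{-}$), then $\alg{G}_{\sigma}$ is a subpomonoid of $\alg{G}$, hence cancellative, and in fact a cancellative residuated lattice, since the conuclear image preserves residuation. Any finite nuclear image of $\alg{G}_{\sigma}$ is therefore, by Theorem~F, a finite integral residuated lattice; note that $\alg{G}_{\sigma}$ itself need not be finite, which is harmless because Theorem~F only requires the \emph{final} nuclear image to be finite. Thus the real content of the conjecture is the converse: given a finite integral residuated lattice $\alg{A}$, produce an $\ell$-group $\alg{G}$, a conucleus $\sigma$, and a nucleus $\gamma$ with $(\alg{G}_{\sigma})_{\gamma} \iso \alg{A}$. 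Theorem~F already furnishes a cancellative residuated lattice $\alg{L}$ --- a suitable quotient of the free nuclear preimage of $\alg{A}$, which is a residuated lattice precisely because $\alg{A}$ is finite --- together with a nucleus $\gamma$ satisfying $\alg{L}_{\gamma} \iso \alg{A}$. What is missing is that $\alg{L}$, or some cancellative residuated lattice with the same nuclear image, can be taken to be a conuclear image of an $\ell$-group. This is exactly the gap the paper flags as open beyond the GMV-algebraic and commutative cases: not every cancellative residuated lattice --- not even every cancellative monoid --- is a conuclear image of (embeds into) a group.

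The main route I would take is to first establish Conjecture~A, by transporting the proof-theoretic argument behind Theorems~A and~B from pomonoids to s$\ell$-monoids. In the pomonoid setting one describes the order of the relevant free construction (the free nuclear preimage, which is cancellative) by a cut-free sequent-style calculus and reads off from cut elimination that this free pomonoid embeds into a pogroup. To handle s$\ell$-monoids one must enrich the calculus with left and right rules for joins and re-prove the relevant cut-elimination / conservativity property; this is the delicate part, since join rules routinely obstruct naive cut elimination and demand a more careful argument. Granting Conjecture~A, I would apply it to $\alg{A}$ viewed as an integral s$\ell$-monoid to obtain a sub-s$\ell$-monoid $\alg{M}$ of the negative cone of an $\ell$-group $\alg{G}$ with $\alg{M}_{\gamma} \iso \alg{A}$, then re-run the finiteness argument from the proof of Theorem~F to see that $\alg{M}$ is in fact a residuated lattice whose residuals agree with those of $\alg{G}$ and that the nucleus and residuals match on the finite image. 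Finally I would exhibit $\alg{M}$ as a genuine conuclear image: because $\alg{A}$ is finite, $\alg{M}$ can be chosen so that every element of $\alg{G}^{-}$ has a largest $\alg{M}$-element below it, which yields a conucleus $\sigma$ with $(\alg{G}^{-})_{\sigma} = \alg{M}$, and composing with the negative-cone kernel exhibits $\alg{M}$ as a conuclear image of $\alg{G}$ itself.

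An alternative, more hands-on route is to analyze the monoid reduct of the free nuclear preimage $\alg{L}$ directly. Since $\alg{L}$ is assembled from finite sequences over the finite algebra $\alg{A}$ modulo an explicit preorder, one can hope to verify an Ore-type (left/right reversibility) condition for this monoid, embed it into its group of two-sided fractions, equip that group with an $\ell$-order extending the order of $\alg{L}$ (for instance via a compatible right order and a Holland-style representation), and then check that $\alg{L}$ sits inside the resulting $\ell$-group as a $\sigma$-image. Here the obstacle is exactly the embedding: establishing the Ore condition for the free nuclear preimage and then controlling the lattice structure of the group of fractions tightly enough to recover $\alg{L}$.

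Either way the principal obstacle is the one the paper already confronts for pomonoids, now complicated by the presence of joins: pushing the proof-theoretic (or fraction-theoretic) machinery through the extra semilattice structure --- in effect, proving Conjecture~A. A secondary, largely bookkeeping obstacle is the passage from ``sub-s$\ell$-monoid of an $\ell$-group'' to ``conuclear image of an $\ell$-group'', which should be unproblematic in the finite case but still has to be checked.
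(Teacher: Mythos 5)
This statement is Conjecture~B of the paper: it is explicitly left open, and the paper contains no proof of it, so there is nothing to compare your argument against on the hard direction. Your assessment of the situation is accurate. The forward inclusion you give is indeed available from the paper's results: a conuclear image of an $\ell$-group (or of its negative cone) is a cancellative residuated lattice, cancellativity implies the integrally closed quasi-inequalities, these are simple and hence preserved by nuclear images, and a finite integrally closed residuated lattice is integral; equivalently, one half of Theorem~F applies. You also correctly locate the open content in the converse, and your first proposed route (prove Conjecture~A by extending the normal-form/proof-theoretic argument from pomonoids to s$\ell$-monoids, then pass from a sub-s$\ell$-monoid of a negative cone to a conuclear image) matches the paper's own stated hope for how the gap might eventually be closed.

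The gap, then, is simply that nothing in your proposal constitutes a proof: both routes terminate exactly at the obstacles the paper identifies. Two specific cautions. First, even granting Conjecture~A, the step you call ``largely bookkeeping'' --- choosing $\alg{M}$ so that every element of $\alg{G}^{-}$ has a largest element of $\alg{M}$ below it, yielding a genuine conucleus with $(\alg{G}^{-})_{\sigma} = \alg{M}$ --- is not justified by finiteness of $\alg{A}$ alone, since $\alg{M}$ itself is infinite and the paper's pomonoid analogue already has to retreat to \emph{ideal} conuclei precisely because restrictions of principal downsets need not stay principal; in the residuated, dually wpo setting the paper only recovers honest conuclei under an extra hypothesis on $\sigma$. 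Second, note that the paper's Theorem~F(iii) (distributivity plus $x(y\wedge z)=xy\wedge xz$ and $(x\wedge y)z=xz\wedge yz$ for the cancellative preimage) is its own suggested lever toward Conjecture~B, via conuclei satisfying $\sigma(x\wedge y)=\sigma(x)\wedge\sigma(y)$; any attempt along your lines should probably aim to preserve these equations, since they are what a meet-preserving conuclear image of an $\ell$-group must satisfy.
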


  We in fact exhibit each finite integral residuated lattice as a nuclear image of an integral cancellative residuated lattice which is moreover distributive and satisfies
\begin{align*}
  & x (y \wedge z) = xy \wedge xz, & & (x \wedge y) z = xz \wedge yz.
\end{align*}
  Montagna and Tsinakis~\cite{montagna+tsinakis10} showed that integral cancellative \emph{commutative} residuated lattices satisfying distributivity and the above equations are precisely the conuclear images of Abelian $\ell$-groups with respect to a conucleus $\sigma$ such that $\sigma(x \wedge y) = \sigma(x) \wedge \sigma(y)$. This leads us to suspect that an even stronger conjecture than the last one stated above might hold, namely that each finite integral residuated lattice may be a nuclear image of an image of an $\ell$-group with respect to a conucleus $\sigma$ such that $\sigma(x \wedge y) = \sigma(x) \wedge \sigma(y)$.

\subsection*{Free nuclear preimages}

  How do we prove the above theorems? The key con\-struction is the \emph{free nuclear preimage} of a pomonoid. This functorial construction is left adjoint to the nuclear image functor, which takes a pomonoid equipped with a nucleus $\pair{\alg{M}}{\gamma}$ to the nuclear image~$\alg{M}_{\gamma}$.

  We start by taking the free pomonoid $\alg{F}({\alg{M}}) = \langle \FreeMon{M}, \leq, \circ, \emptyword \rangle$ over the underlying poset of $\alg{M}$. That is, $\alg{F}(\alg{M})$ is the monoid of words over $\alg{M}$ ordered as follows:
\begin{align*}
  [a_{1}, \dots, a_{m}] \leq [b_{1}, \dots, a_{n}] & \iff m = n \text{ and } a_{i} \leq b_{i} \text{ for } 1 \leq i \leq m = n.
\end{align*}
 There is a canonical multiplication homomorphism $\gamma\colon \alg{F}(\alg{M}) \to \alg{M}$:
\begin{align*}
  \gamma ([a_{1}, \dots, a_{n}]) & \assign a_{1} \cdot \ldots \cdot a_{n}, & \gamma(\emptyword) & \assign \1.
\end{align*}
  This determines an isotone map $[\gamma]\colon \alg{F}(\alg{M}) \to \alg{F}(\alg{M})$ which reduces words of arbitrary length to words of length one, namely $[\gamma](w) \assign [\gamma(w)]$.

  There is a smallest order congruence $\sqleq$ on $\alg{F}(\alg{M})$, i.e.\ a preorder compatible with multiplication which extends the partial order of $\alg{F}(\alg{M})$, such that $[\gamma]$ is a nucleus with respect to $\sqleq$, i.e.\
\begin{align*}
  u \sqleq [\gamma](u) \sqleq [\gamma]([\gamma](u)), & &[\gamma](u) \circ [\gamma](v) \sqleq [\gamma](u \circ v).
\end{align*}
  The free nuclear preimage of $\alg{M}$, denoted $\FreeMon{\alg{M}}$, is obtained from the preordered structure $\langle \FreeMon{M}, \sqleq, \circ, \emptyword, [\gamma]$ by taking its quotient with respect to the equivalence relation associated with the preorder.

  The \emph{free semilattice-ordered nuclear preimage} of an s$\ell$-monoid turns out to be the s$\ell$-monoid $\Id \FreeMon{\alg{M}}$ of non-empty finitely generated down\-sets of the free partially ordered nuclear preimage $\FreeMon{\alg{M}}$. The results about finite residuated lattices stated above then follow from results about s$\ell$-monoids once we observe that the free semilattice-ordered nuclear preimage of a \emph{finite} (or more generally, dually well-partially-ordered) residuated lattice is a residuated lattice.

  The free partially ordered nuclear preimage $\FreeMon{\alg{M}}$ is cancellative if and only if $\alg{M}$ is integrally closed. In fact, in that case $\FreeMon{\alg{M}}$ embeds into a partially ordered group. The cancellativity of the free semilattice-ordered nuclear preimage $\Id \FreeMon{\alg{M}}$ is a more delicate question, as we shall see. Nevertheless, even if $\Id \FreeMon{\alg{M}}$ itself is not cancellative, we may use it to show that every integral s$\ell$-monoid $\alg{M}$ is the nuclear image of some cancellative s$\ell$-monoid (not necessarily of $\Id \FreeMon{\alg{M}}$).

  A welcome by-product of the above line of investigation is that the free \mbox{nuclear} preimage construction \mbox{allows} us to provide a syntactic description of ordered quasivarieties of pomonoids and s$\ell$-monoids closed under nuclear images. A~\emph{po\-monoidal (s$\ell$-monoidal) quasi-inequation} is an implication of the form
\begin{align*}
  t_{1} \leq u_{1} ~ \& ~ \dots ~ \& ~ t_{n} \leq u_{n} \implies t \leq u,
\end{align*}
  where $t_{i}$, $u_{i}$, $t$, $u$ are terms in the signature of monoids (s$\ell$-monoids). An \emph{ordered quasivariety} of pomonoids (s$\ell$-monoids) is then a class axiomatized by a set of quasi-inequations. Of course, ordered quasi\-varieties of s$\ell$-monoids are simply quasi\-varieties in the ordinary sense of the word.

  We call a quasi-inequation \emph{simple} if it each $u_{i}$ is a variable. That is, products, units, and joins are not allowed to occur in the right-hand sides of the premises.

\begin{theoremG*}[cf.\ Theorems~\ref{thm: nuclear images of pomonoids} and \ref{thm: nuclear images of sl-monoids}]
  An ordered quasivariety of pomonoids (a quasivariety of s$\ell$-monoids) is closed under nuclear images if and only if it is axiomatized by simple quasi-inequations.
\end{theoremG*}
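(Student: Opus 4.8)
The plan is to prove both directions using the free nuclear preimage functor $\FreeMon{(-)}$ as the main technical device, together with the adjunction between $\FreeMon{(-)}$ and the nuclear image functor. I will treat the pomonoid case in detail; the s$\ell$-monoid case is parallel, replacing $\FreeMon{\alg{M}}$ by $\Id \FreeMon{\alg{M}}$ and working with joins throughout.

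\emph{The easy direction (simple quasi-inequalities are preserved by nuclear images).} Suppose $\mathcal{Q}$ is axiomatized by simple quasi-inequalities and let $\alg{M}_{\gamma}$ be a nuclear image of some $\alg{M} \in \mathcal{Q}$. The point is that a simple quasi-inequality $t_1 \le x_1 \mathrel{\&} \dots \mathrel{\&} t_n \le x_n \implies t \le u$ can be ``pulled back'' along the nucleus: if an assignment in $\alg{M}_{\gamma}$ satisfies all premises, then since the order of $\alg{M}_{\gamma}$ is inherited from $\alg{M}$ and the right-hand sides $x_i$ are variables (hence interpreted as $\gamma$-closed elements), we have $t_i^{\alg{M}}(\tuple{a}) \le a_i$ already in $\alg{M}$, where $t_i^{\alg{M}}$ uses the original multiplication and $t_i^{\alg{M}_\gamma}$ uses $\cdotgamma$. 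One checks by induction on term structure that $t^{\alg{M}}(\tuple a) \le \gamma(t^{\alg{M}}(\tuple a)) = t^{\alg{M}_\gamma}(\tuple a)$ and, using $\gamma(a)\cdot\gamma(b) \le \gamma(ab)$ together with $\gamma$ being a closure operator, that $t^{\alg{M}_\gamma}(\tuple a) \le \gamma(t^{\alg{M}}(\tuple a))$; so the two differ only by an application of $\gamma$, and crucially the premises $t_i \le a_i$ survive because applying $\gamma$ to both sides and using $\gamma(a_i) = a_i$ gives $\gamma(t_i^{\alg{M}}(\tuple a)) \le a_i$, i.e.\ $t_i^{\alg{M}_\gamma}(\tuple a) \le a_i$ fails upward but holds in the needed direction — here is exactly where we use that the $u_i$ are variables and not compound terms. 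Then $t^{\alg{M}}(\tuple a) \le u^{\alg{M}}(\tuple a)$ in $\alg{M}$; applying $\gamma$ and monotonicity yields $t^{\alg{M}_\gamma}(\tuple a) \le \gamma(t^{\alg{M}}(\tuple a)) \le \gamma(u^{\alg{M}}(\tuple a)) = u^{\alg{M}_\gamma}(\tuple a)$. Hence $\alg{M}_\gamma \in \mathcal{Q}$.

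\emph{The hard direction (closure under nuclear images forces simple axiomatizability).} Suppose $\mathcal{Q}$ is an ordered quasivariety of pomonoids closed under nuclear images. Let $\mathcal{Q}'$ be the ordered quasivariety axiomatized by all simple quasi-inequalities valid in $\mathcal{Q}$; clearly $\mathcal{Q} \subseteq \mathcal{Q}'$, and we must show $\mathcal{Q}' \subseteq \mathcal{Q}$. Take $\alg{N} \in \mathcal{Q}'$. The key move is to realize $\alg{N}$ as a nuclear image of its free nuclear preimage: by the adjunction, $[\gamma]$ induces a nucleus $\gamma_{\alg N}$ on $\FreeMon{\alg{N}}$ whose nuclear image $(\FreeMon{\alg{N}})_{\gamma_{\alg N}}$ is isomorphic to $\alg{N}$ (the unit of the adjunction is an isomorphism here because $\alg N$ already carries the trivial nucleus). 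So it suffices to show $\FreeMon{\alg{N}} \in \mathcal{Q}$: then closure under nuclear images gives $\alg{N} \cong (\FreeMon{\alg N})_{\gamma_{\alg N}} \in \mathcal{Q}$. To show $\FreeMon{\alg{N}} \in \mathcal{Q}$, I verify every quasi-inequality $\Phi$ of $\mathcal{Q}$ in $\FreeMon{\alg{N}}$. Given an assignment of variables to elements of $\FreeMon{\alg{N}}$ — i.e.\ to $\sqleq$-classes of words over $N$ — satisfying the premises of $\Phi$, the crucial observation is that an inequality $s \le s'$ holds in $\FreeMon{\alg N}$ iff $s \sqleq s'$ holds in $\alg{F}(\alg N)$, and by the inductive generation of $\sqleq$ this can be ``traced back'' to a finite derivation using the pomonoid order of $\alg F(\alg N)$ (componentwise order on words over $\alg N$), compatibility with $\circ$, and the nucleus axioms for $[\gamma]$. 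The nucleus-axiom steps are precisely places where a word $w$ gets related to $[\gamma(w)]$, i.e.\ where the product in $\alg N$ is invoked; but these are exactly the kind of steps that a \emph{simple} quasi-inequality valid in $\mathcal{Q}$ can certify, since such steps always have a variable (a length-one word) on the appropriate side. Assembling these, I show that the conclusion $t \le u$ of $\Phi$ reduces to a conjunction of instances of simple quasi-inequalities valid in $\mathcal Q$, which hold in $\alg N$ because $\alg N \in \mathcal Q'$; pushing these back up through $\sqleq$ gives $t \le u$ in $\FreeMon{\alg N}$.

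\emph{Main obstacle.} The delicate point is the ``tracing back'' argument in the hard direction: one must show precisely that whatever can be derived in $\FreeMon{\alg{N}}$ from premises (with variables assigned to words) is already a consequence, over $\mathcal Q'$, of \emph{simple} quasi-inequalities — i.e.\ that the generators of $\sqleq$ never force a genuinely non-simple implication. This requires a careful syntactic analysis of the inductively defined order congruence $\sqleq$ on $\alg F(\alg M)$: each generating clause ($u \sqleq [\gamma](u)$, $[\gamma]([\gamma](u)) \sqleq [\gamma](u)$, $[\gamma](u)\circ[\gamma](v) \sqleq [\gamma](u\circ v)$, plus reflexivity, transitivity, compatibility with $\circ$, and the componentwise poset order) must be matched with a simple quasi-inequality schema, and then a cut-elimination-style induction on the length of a $\sqleq$-derivation must show these schemas suffice. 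I expect this proof-theoretic bookkeeping — essentially showing the ``simple fragment'' is closed under the relevant cuts — to be where the real work lies; the s$\ell$-monoid case adds the further subtlety that joins in $\Id \FreeMon{\alg M}$ are computed via finitely generated downsets, so one must also check that the downset construction does not smuggle in non-simple consequences.
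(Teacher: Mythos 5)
Your easy direction is essentially the paper's argument (premises transfer from $\alg{M}_{\gamma}$ down to $\alg{M}$ because the right-hand sides are variables, hence closed elements, and the conclusion transfers back up by applying $\gamma$), and despite some garbled phrasing it is fine. The hard direction, however, has a genuine gap: you reduce the problem to showing $\FreeMon{\alg{N}} \in \class{Q}$, and this is false in general, so no amount of ``tracing back'' through the generation of $\sqleq$ can establish it. Concretely, take $\class{Q}$ to be the class of integral pomonoids, axiomatized by the (premise-free, hence simple) inequality $x \leq \1$; it is closed under nuclear images, and any integral $\alg{N}$ lies in your $\class{Q}'$. But in $\FreeMon{\alg{N}}$ one has $u \sqleq \emptyword$ only for $u = \emptyword$, so $[a] \nsqleq \emptyword$ and $\FreeMon{\alg{N}}$ is never integral unless $\alg{N}$ is trivial. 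The free nuclear preimage is deliberately ``too big'' to satisfy the axioms of $\class{Q}$; closure of $\class{Q}$ under nuclear images gives you no control over preimages.

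The paper's route, which your sketch does not recover, is to quotient $\FreeMon{\alg{N}}$ by the \emph{smallest $\class{Q}$-congruence} $\leq_{\theta}$; the quotient lies in $\class{Q}$ by construction, and the whole burden shifts to showing that this quotient still has nuclear image $\alg{N}$, i.e.\ that $v \leq_{\theta} w$ implies $\gamma(v) \leq \gamma(w)$ in $\alg{N}$ (Lemma~\ref{lemma: restriction on free} and its pomonoidal analogue). That implication is where the hypothesis $\alg{N} \in \class{Q}'$ actually gets used, via two steps your proposal has no counterpart for: (1) every instance of $\leq_{\theta}$ is witnessed by some quasi-inequality valid in $\class{Q}$, and by product distributivity of $\FreeMon{\alg{N}}$ (if $u \sqleq v \circ w$ then $u$ decomposes as $u_{1} \circ u_{2}$ with $u_{1} \sqleq v$, $u_{2} \sqleq w$) the witness can be converted into a \emph{simple} one with an interpretation over singleton words (Lemma~\ref{lemma: witnessed by simple}); (2) a simple witness over singleton words transfers to $\alg{N}$ because $u \sqleq [a]$ iff $\gamma(u) \leq a$ (Lemma~\ref{lemma: witness transfer}), and $\alg{N}$ satisfies all simple quasi-inequalities valid in $\class{Q}$. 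Your ``cut-elimination'' obstacle is thus aimed at the wrong target: the syntactic analysis needed is of arbitrary $\class{Q}$-congruence witnesses on the free preimage, not of the generating clauses of $\sqleq$ itself, and the same correction applies verbatim to the s$\ell$-monoid case with $\Id \FreeMon{\alg{N}}$ and distributivity in place of product distributivity.
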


  This theorem ultimately reflects the following property of free partially ordered nuclear preimages:
\begin{align*}
  u \sqleq v \circ w \implies u_{1} \sqleq v \text{ and } u_{2} \sqleq w \text{ for some decomposition } u_{1} \circ u_{2} = u,
\end{align*}
  and the following property of free semilattice-ordered nuclear preimages:
\begin{align*}
  u \sqleq v \vee w \implies u_{1} \sqleq v \text{ and } u_{2} \sqleq w \text{ for some decomposition } u_{1} \vee u_{2} = u.
\end{align*}

  The reader will have observed that simple quasi-inequations can be defined in any signature. The problem of describing the ordered subquasivarieties which are axiomatizable by simple quasi-inequations therefore arises for any ordered quasivariety. However, we shall not pursue this general problem in the present paper, given that our main goal here is to relate pomonoids and pogroups.

  Observe also that the analogues of the above theorems and constructions for conuclear images instead of nuclear images are trivial: the free conuclear image of a pomonoid (s$\ell$-monoid) is the pomonoid (s$\ell$-monoid) itself with the identity conucleus, and every quasi-inequation in the signature of pomonoids (s$\ell$-monoids) is preserved under conuclear images, since it is preserved under subpomonoids (sub-s$\ell$-monoids).

\section{Free nuclear preimages of pomonoids}
\label{sec: free po preimages}

  In this section we study \emph{free nuclear preimages} of pomonoids. As described above, the \mbox{nuclear} image construction assigns to each a pair consisting of a pomonoid $\alg{M}$ and a nucleus $\gamma$ a pomonoid $\alg{M}_{\gamma}$. Such pairs $\pair{\alg{M}}{\gamma}$ will be called \emph{nuclear pomonoids}. A homomorphism of nuclear pomonoids $h\colon \langle \alg{M}, \gamma \rangle \to \langle \alg{N}, \delta \rangle$ is then a homomorphism of pomonoids ${h\colon \alg{M} \to \alg{N}}$ such that $h(\gamma(x)) = \delta(h(x))$. Such a homomorphism then restricts to a homomorphism of pomonoids $h {\mid_{\scriptscriptstyle\alg{M}_{\gamma}}}\colon \alg{M}_{\gamma} \to \alg{N}_{\delta}$. This yields the \emph{nuclear image functor} from the category $\PomonStar$ of nuclear pomonoids to the category $\Pomon$ of pomonoids. The free nuclear preimage functor is the left adjoint of this functor.

  Nuclei satisfying the additional condition that $\gamma(\1) = \1$ will be of particular interest to us. Such a nucleus will be called \emph{unital} and the nuclear pomonoid $\pair{\alg{M}}{\gamma}$ will be called a \emph{unital} nuclear pomonoid. Restricting to such nuclear pomonoids yields the \emph{unital nuclear image functor}. We also describe the left adjoint of this functor, which we call the \emph{free unital nuclear preimage functor}.

  Starting with a pomonoid $\alg{M}$, we consider the free monoid (the monoid of words) over M and endow it with a pre\-order~$\sqleq$ and a map~$[\gamma]$. This yields a preordered structure~$\FreeMon{\alg{M}} \assign {\langle \FreeMon{M}, \sqleq, \circ, \emptyword, [\gamma] \rangle}$. The~free nuclear pre\-image of $\alg{M}$ is then obtained by collapsing this preordered structure to partially ordered one, which we also denote by $\FreeMon{\alg{M}}$. If we restrict to non-empty words and shift the multi\-plicative unit from the empty word $\emptyword$ to the singleton word $[\1]$, we obtain the free unital nuclear preimage $\FreeUMon{\alg{M}}$, which behaves better with respect to residuation and cancellation than $\FreeMon{\alg{M}}$.

  If the pomonoid $\alg{M}$ is commutative, we can replace the free monoid by the free commutative monoid in the above constructions, resulting in the free commutative nuclear preimage $\CFreeRedMon{\alg{M}}$ and the free commutative unital nuclear preimage $\CFreeRedUMon{\alg{M}}$.

  Throughout the following, let $\alg{M} = \langle M, \leq, \cdot, \1 \rangle$ be a pomonoid and let $\FreeMon{M}$ be the set of words over the alphabet $M$. The empty word will be denoted by~$\emptyword$ and the word consisting of the letters $a_{1}, \dots, a_{n} \in \alg{M}$ in this order will be denoted by $[a_{1}, \dots, a_{n}]$. Thus $[a_{1}, \dots, a_{m}] \circ [b_{1}, \dots, b_{n}] = [a_{1}, \dots, a_{m}, b_{1}, \dots, b_{n}]$.

  We now repeat some definitions which were already stated in the introduction. We can partially order the set of words $\FreeMon{M}$ as follows:
\begin{align*}
  [a_{1}, \dots, a_{m}] \leq [b_{1}, \dots, a_{n}] & \iff m = n \text{ and } a_{i} \leq b_{i} \text{ for } 1 \leq i \leq m = n.
\end{align*}
  This yields the pomonoid $\alg{F}(\alg{M}) \assign \langle \FreeMon{M}\!, \leq, \circ, \emptyword \rangle$. This is in fact the free pomonoid generated by the poset reduct of~$\alg{M}$. It is equipped with an \emph{evaluation map}~$\gamma$ which sends each word to the corresponding product in $\alg{M}$. In other words, we have a homomorphism of pomonoids $\gamma\colon \alg{F}(\alg{M}) \to \alg{M}$ defined as:
\begin{align*}
  \gamma (\emptyword) & = \1, \\
  \gamma ([a_{1}, \dots, a_{n}]) & = a_{1} \cdot \ldots \cdot a_{n}.
\end{align*}
  The map $[\gamma]\colon \alg{F}(\alg{M}) \to \alg{F}(\alg{M})$ is then defined as $[\gamma](u) \assign [\gamma(u)]$. The range of the map $[\gamma]$ is therefore the set of all singleton words (see Figure~\ref{fig: nuclear preimage}).

  We now define a preorder $\sqleq$ on $\alg{F}(\alg{M})$ with respect to which $[\gamma]$ is a nucleus:
\begin{align*}
  u \sqleq \emptyword \iff & u = \emptyword, \\
  u \sqleq [a_{1}, \dots, a_{n}] \iff & \text{there are } u_{1}, \dots, u_{n} \in \alg{F}(\alg{M}) \text{ such that } \\
  & u = u_{1} \circ \ldots \circ u_{n} \text{ and } \gamma(u_{i}) \leq a_{i} \text{ for each } u_{i}.
\end{align*}
  Note that the words $u_{i}$ in the decomposition of $u$ may be empty. Equivalently, this is the smallest preorder compatible with the multiplication of $\alg{F}(\alg{M})$ which extends $\leq$ and satisfies also $u \sqleq [\gamma](u)$ for each word $u$.

  The following observations are now immediate.

\begin{figure}
\caption{The free nuclear preimage of $\alg{M}$}
\label{fig: nuclear preimage}
\smallskip
\begin{center}
\begin{tikzpicture}[scale=1.25, dot/.style={circle,fill,inner sep=1.5pt,outer sep=1.5pt}]

  \draw [line width=0.25mm,thick] {(0,0.4) ellipse (2.5 and 2.75)};
  \draw [line width=0.25mm,name path=classellipse,dashed,thick] {(0,1) ellipse (1 and 1.45)};
  \node at (-1,2.2) {$[\alg{M}]$};
  \node at (-1.6,2.9) {$\FreeMon{\alg{M}}$};
  \node at (-0.5,1.75) {$[a]$};
  \node at (0.5,1.75) {$[b]$};
  \node at (0,0.5) {$[a \cdot b]$};
  \node at (0,-1.5) {$[a, b] = [a] \circ [b]$};
  \node (a) at (-0.5,1.45) [dot] {};
  \node (b) at (0.5,1.45) [dot] {};
  \node (adotb) at (0,0.2) [dot] {};
  \node (acircb) at (0,-1.2) [dot] {};
  \draw[->] (acircb.north east) to [out=45,in=-45] (adotb.east);
  \node (gamma) at (0.6,-0.6) {$[\gamma]$};

\end{tikzpicture}
\end{center}
\end{figure}
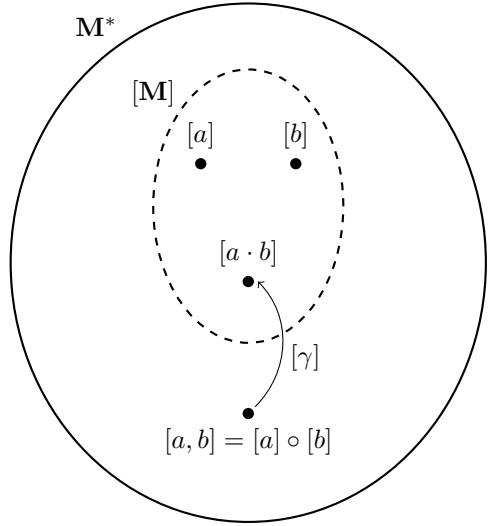

\begin{fact}
  $u \sqleq [a]$ if and only if $\gamma(u) \leq a$ in $\alg{M}$.
\end{fact}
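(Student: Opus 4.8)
The statement is precisely the case $n = 1$ of the defining clause for $u \sqleq [a_{1}, \dots, a_{n}]$, so the plan is simply to unfold that definition. For the forward direction, suppose $u \sqleq [a]$. By definition there exist words $u_{1}, \dots, u_{n} \in \alg{F}(\alg{M})$ with $n = 1$ — that is, a single word $u_{1}$ — such that $u = u_{1}$ and $\gamma(u_{1}) \leq a$. Since $u = u_{1}$ forces $u_{1} = u$, this yields $\gamma(u) \leq a$. For the converse, given $\gamma(u) \leq a$, take the trivial one-block decomposition $u = u$ (i.e.\ set $u_{1} \assign u$); then $u = u_{1}$ and $\gamma(u_{1}) \leq a$, so $u \sqleq [a]$ by the same clause. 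Note that the definition does permit the blocks $u_{i}$ to be empty, but this is irrelevant here since there is only one block.

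A couple of small sanity checks round this off. In the degenerate case $u = \emptyword$ we have $\gamma(\emptyword) = \1$, and the argument above applies verbatim, giving $\emptyword \sqleq [a] \iff \1 \leq a$; this is consistent with the separate clause $u \sqleq \emptyword \iff u = \emptyword$, which is not in play because $[a]$ is a singleton word, never the empty word. One should also observe that the equivalence is symmetric in form: $[a] \sqleq [b]$ unfolds to $\gamma([a]) = a \leq b$, matching the embedding of $\alg{M}$ into $\FreeMon{\alg{M}}$ via singletons.

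There is no real obstacle: the content is entirely definitional, which is why the paper flags it as immediate. The only place a subtlety could hide would be if one instead treated the ``smallest preorder'' characterization as primary — then one would first have to verify that the two-clause relation is a preorder compatible with $\circ$ that extends $\leq$ and satisfies $u \sqleq [\gamma](u)$ — but since the displayed clauses constitute the definition of $\sqleq$, no such verification is needed for this Fact.
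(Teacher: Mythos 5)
Your proof is correct and matches the paper's treatment: the paper lists this Fact among those that are ``immediate'' from the defining clause of $\sqleq$, and your argument is exactly that unfolding, namely the $n=1$ case where the only decomposition is $u = u_{1}$ and the condition reads $\gamma(u) \leq a$. The sanity checks (the case $u = \emptyword$ and the embedding $[a] \sqleq [b] \iff a \leq b$) are consistent with the paper and add nothing problematic.
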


\begin{fact}
  $\emptyword \sqleq u \circ v$ if and only if $\emptyword \sqleq u \text{ and } \emptyword \sqleq v$.
\end{fact}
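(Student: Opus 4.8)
The plan is to first isolate an auxiliary description of when the empty word sits $\sqleq$-below an arbitrary word, and then read off the Fact from it by elementary bookkeeping on letters. Concretely, I claim that $\emptyword \sqleq w$ holds exactly when every letter of $w$ is $\geq \1$ in $\alg{M}$ (so in particular $\emptyword \sqleq \emptyword$, vacuously). This is proved by unwinding the defining clauses for $\sqleq$: if $w = \emptyword$ it is just the clause $u \sqleq \emptyword \iff u = \emptyword$; and if $w = [a_{1}, \dots, a_{n}]$ with $n \geq 1$, then $\emptyword \sqleq w$ means $\emptyword = u_{1} \circ \dots \circ u_{n}$ for some words $u_{i}$ with $\gamma(u_{i}) \leq a_{i}$, which — since in the free monoid a product of words is empty only when every factor is empty — amounts to $\1 = \gamma(\emptyword) \leq a_{i}$ for each $i$.

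Granting this, the Fact is immediate. The letters of $u \circ v$ are precisely the letters of $u$ followed by those of $v$, and $u \circ v = \emptyword$ iff $u = v = \emptyword$; hence ``every letter of $u \circ v$ is $\geq \1$'' is equivalent to the conjunction of ``every letter of $u$ is $\geq \1$'' and ``every letter of $v$ is $\geq \1$''. Translating both sides back through the auxiliary description gives $\emptyword \sqleq u \circ v$ if and only if $\emptyword \sqleq u$ and $\emptyword \sqleq v$. If one prefers, the right-to-left direction can be obtained even more directly from compatibility of $\sqleq$ with $\circ$: from $\emptyword \sqleq u$ and $\emptyword \sqleq v$ one gets $\emptyword = \emptyword \circ \emptyword \sqleq u \circ v$, so only the left-to-right direction actually needs the letterwise analysis.

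I do not expect any genuine obstacle here — this is exactly the sort of statement meant by ``immediate''. The only mild care needed is in the handling of empty words: that a factor $u_{i}$ appearing in a decomposition of $\emptyword$ is itself forced to be $\emptyword$ (so that $\gamma(u_{i}) = \1$), and that $u$ or $v$ may themselves be empty, in which case the condition ``every letter is $\geq \1$'' holds vacuously. Once the auxiliary characterization of $\emptyword \sqleq w$ is recorded, everything else is a one-line concatenation argument.
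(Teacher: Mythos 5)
Your proof is correct, and it is exactly the direct unwinding of the definition of $\sqleq$ that the paper has in mind when it labels this fact ``immediate'' (the paper gives no separate argument): a decomposition of $\emptyword$ into factors forces every factor to be $\emptyword$, so $\emptyword \sqleq w$ amounts to every letter of $w$ lying above $\1$, and concatenation splits this condition between $u$ and $v$. The optional right-to-left argument via isotonicity of $\circ$ is also fine, though that isotonicity is only established in a subsequent fact, so the letterwise version is the one to keep at this point in the development.
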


\begin{fact}
  The relation $\sqleq$ is a preorder. Multiplication is isotone with respect to this preorder: if $u \sqleq u'$ and $v \sqleq v'$, then $u \circ v \sqleq u' \circ v'$. The map $[\gamma]$ is also isotone with respect to this preorder: if $u \sqleq v$, then $[\gamma(u)] \sqleq [\gamma(v)]$.
\end{fact}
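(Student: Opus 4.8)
The plan is to read off each of the three assertions directly from the definition of $\sqleq$, using only that $\gamma\colon \alg{F}(\alg{M}) \to \alg{M}$ is a homomorphism of pomonoids together with the already-noted fact that $u \sqleq [a]$ if and only if $\gamma(u) \leq a$ in $\alg{M}$. The single observation that makes everything run is: \emph{if $u \sqleq v$ then $\gamma(u) \leq \gamma(v)$ in $\alg{M}$}. This is immediate by cases on $v$: when $v = \emptyword$ we have $u = \emptyword$ and $\gamma(u) = \1 = \gamma(v)$; when $v = [a_1, \dots, a_n]$ and $u = u_1 \circ \dots \circ u_n$ with each $\gamma(u_i) \leq a_i$, multiplicativity of $\gamma$ and isotonicity of $\cdot$ in $\alg{M}$ give $\gamma(u) = \gamma(u_1) \cdots \gamma(u_n) \leq a_1 \cdots a_n = \gamma(v)$.

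Two of the three claims follow quickly from this. For reflexivity, $\emptyword \sqleq \emptyword$ holds by definition, and for $u = [a_1, \dots, a_n]$ the decomposition $u_i \assign [a_i]$ witnesses $u \sqleq u$, since $\gamma([a_i]) = a_i \leq a_i$. For the isotonicity of $[\gamma]$, recall $[\gamma](u) = [\gamma(u)]$, so by the fact that $w \sqleq [a]$ iff $\gamma(w) \leq a$, the desired $[\gamma(u)] \sqleq [\gamma(v)]$ amounts to $\gamma([\gamma(u)]) = \gamma(u) \leq \gamma(v)$, which is exactly the observation above.

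The one step that needs care is transitivity. I would first isolate a decomposition property of $\sqleq$: \emph{if $u \sqleq v_1 \circ \dots \circ v_k$ for words $v_1, \dots, v_k$, then there is a factorization $u = u_1 \circ \dots \circ u_k$ with $u_l \sqleq v_l$ for every $l$}. To prove it, write each $v_l$ as a word $[b^l_1, \dots, b^l_{n_l}]$ (allowing $n_l = 0$), apply the definition of $\sqleq$ to the concatenated word $v_1 \circ \dots \circ v_k$ to obtain a factorization of $u$ with one factor per letter, and regroup these factors into consecutive blocks matching the $v_l$, a block of length $0$ contributing $u_l = \emptyword$; the case where the whole concatenation equals $\emptyword$ (which forces $u = \emptyword$) is handled separately and trivially. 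Granting this, suppose $u \sqleq v$ and $v \sqleq w$. If $w = \emptyword$ then $v = \emptyword$, hence $u = \emptyword \sqleq w$. Otherwise $w = [c_1, \dots, c_k]$ and $v = v_1 \circ \dots \circ v_k$ with $\gamma(v_l) \leq c_l$; the decomposition property yields $u = u_1 \circ \dots \circ u_k$ with $u_l \sqleq v_l$, and then $\gamma(u_l) \leq \gamma(v_l) \leq c_l$ by the observation, so this factorization of $u$ witnesses $u \sqleq w$.

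For the isotonicity of multiplication, assume $u \sqleq u'$ and $v \sqleq v'$. If $u' = \emptyword$ then $u = \emptyword$ and $u \circ v = v \sqleq v' = u' \circ v'$, and the case $v' = \emptyword$ is symmetric. Otherwise $u' = [a_1, \dots, a_m]$ and $v' = [b_1, \dots, b_n]$ with corresponding factorizations $u = u_1 \circ \dots \circ u_m$ and $v = v_1 \circ \dots \circ v_n$ satisfying $\gamma(u_i) \leq a_i$ and $\gamma(v_j) \leq b_j$; then $u \circ v = u_1 \circ \dots \circ u_m \circ v_1 \circ \dots \circ v_n$ is a factorization witnessing $u \circ v \sqleq [a_1, \dots, a_m, b_1, \dots, b_n] = u' \circ v'$. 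I expect the only genuine obstacle to be the bookkeeping in the regrouping step used for transitivity; everything else is a direct unwinding of the definitions.
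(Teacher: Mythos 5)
Your proposal is correct and follows essentially the same route as the paper: the transitivity argument is exactly the paper's regrouping of the factors of $u$ into blocks matching the decomposition of $v$, and the remaining claims are the same direct unwindings of the definition. The only difference is organizational -- you package the regrouping as a standalone decomposition property and isolate the observation that $u \sqleq v$ implies $\gamma(u) \leq \gamma(v)$, whereas the paper performs both inline -- but the underlying argument is identical and complete.
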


\begin{proof}
  The relation $\sqleq$ is reflexive: by definition $\emptyword \sqleq \emptyword$, and the decomposition of $u = [a_{1}, \dots, a_{n}]$ as $[a_{1}] \circ \ldots \circ [a_{n}]$ witnesses that $u \sqleq u$. The relation $\sqleq$ is transitive: suppose that $u \sqleq v \sqleq w$. If $w = \emptyword$, then $v = \emptyword$. If $v = \emptyword$, then $u = \emptyword = v$, hence $u \sqleq w$. We may therefore assume that $v = [b_{1}, \dots, b_{m}]$ and $w = [c_{1}, \dots, c_{n}]$. Let $u = u_{1} \circ \ldots \circ u_{m}$ and $v = v_{1} \circ \ldots \circ v_{n}$ be decompositions of $u$ and $v$ witnessing the inequalities $u \sqleq v$ and $v \sqleq w$. To each $v_{i} = [b_{p}, \dots, b_{q}]$ we can assign the word $u'_{i} = u_{p} \circ \ldots \circ u_{q}$ so that $u = u'_{1} \circ \ldots \circ u'_{n}$, taking $u'_{i} = \emptyword$ if $v_{i} = \emptyword$. Then $\gamma(u'_{i}) = \gamma(u_{p}) \cdot \ldots \cdot \gamma(u_{q}) \leq b_{p} \cdot \ldots \cdot b_{q} = \gamma (v_{i}) \leq c_{i}$ if $u'_{i}$ is non-empty and $\gamma(u'_{i}) = \1 = \gamma(v_{i}) \leq c_{i}$ if $u'_{i} = \emptyword$, hence $u \sqleq w$. Multiplication is isotone: combining the decompositions which witness that $u \sqleq u'$ and $v \sqleq v'$ yields a decomposition witnessing that $u \circ v \sqleq u' \circ v'$. Finally, the map $[\gamma]$ is isotone: if~$u \sqleq v = \emptyword$, then $u = \emptyword$ and $[\gamma(u)] = [\1] = [\gamma(v)]$, and if the decomposition $u = u_{1} \circ \ldots \circ u_{n}$ witnesses that $u \sqleq v = [b_{1}, \dots, b_{n}]$, then $[\gamma (u)] = [\gamma(u_{1}) \cdot \ldots \cdot \gamma(u_{n})] \sqleq \linebreak[1] [b_{1} \cdot \ldots \cdot b_{n}] = [\gamma (v)]$.
\end{proof}

  This preorder is never a partial order: $[a] \sqleq [\1, a] \sqleq [a]$ even though $[a]$ and $[\1, a]$ are distinct words for each $a \in \alg{M}$. To obtain a pomonoid from the preordered structure $\langle M^{*}, \sqleq, \circ, \emptyword \rangle$, we need to take the quotient by the equivalence relation induced by the preorder, i.e.\ by the relation $u \sim v \iff u \sqleq v \text{ and } v \sqleq u$. This yields a pomonoid denoted $\FreeMon{\alg{M}}$.

  The elements of $\FreeMon{\alg{M}}$ are thus equivalence classes of words over $M$. However, we shall talk about elements of $\FreeMon{\alg{M}}$ as if they were words over~$\alg{M}$, with the implicit understanding that we are in fact referring to the corresponding equivalence classes. For example, we shall say that the unit of $\FreeMon{\alg{M}}$ is $\emptyword$, meaning that it is the equivalence class $[\emptyword]_{\sim} = \{ \emptyword \}$ of $\emptyword$ with respect to the above equivalence relation. This convention will greatly simplify our proofs by allowing us to work directly with the preordered structure $\langle M^{*}, \sqleq, \circ, \emptyword \rangle$.

\begin{fact}
  The map $[\gamma]$ is a nucleus on the pomonoid $\FreeMon{\alg{M}}$.
\end{fact}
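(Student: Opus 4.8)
The plan is to check that $[\gamma]$, viewed as an operation on the partially ordered skeleton $\FreeMon{\alg{M}}$, satisfies the four defining properties of a nucleus: it is monotone, it is extensive ($u \sqleq [\gamma](u)$), it is idempotent ($[\gamma]([\gamma](u)) \sim [\gamma](u)$), and it is sub-multiplicative ($[\gamma](u) \circ [\gamma](v) \sqleq [\gamma](u \circ v)$). The bulk of the work is already available: the preceding Fact records that $[\gamma]$ is isotone with respect to $\sqleq$, so in particular $u \sim v$ forces $[\gamma](u) \sim [\gamma](v)$, which is exactly what is needed for $[\gamma]$ to induce a well-defined isotone map on the skeleton. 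The three remaining conditions will each reduce, in one line, to the first Fact (``$u \sqleq [a]$ iff $\gamma(u) \leq a$'') together with the fact that $\gamma \colon \alg{F}(\alg{M}) \to \alg{M}$ is a homomorphism of pomonoids.

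For extensivity I would apply the first Fact with $a \assign \gamma(u)$: the trivial inequality $\gamma(u) \leq \gamma(u)$ in $\alg{M}$ yields $u \sqleq [\gamma(u)] = [\gamma](u)$; this is merely the observation already made right after the definition of $\sqleq$. For idempotency I would point out that it actually holds on the nose, not just up to $\sim$: since $[\gamma](u) = [\gamma(u)]$ is a singleton word and $\gamma$ evaluates a singleton word to its unique letter, we have $\gamma([\gamma(u)]) = \gamma(u)$, so $[\gamma]([\gamma](u)) = [\gamma([\gamma(u)])] = [\gamma(u)] = [\gamma](u)$.

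Finally, for sub-multiplicativity, note that $[\gamma](u) \circ [\gamma](v) = [\gamma(u)] \circ [\gamma(v)] = [\gamma(u), \gamma(v)]$, and that this two-letter word evaluates under $\gamma$ to $\gamma(u) \cdot \gamma(v)$, which equals $\gamma(u \circ v)$ because $\gamma$ is a homomorphism; the first Fact (with $a \assign \gamma(u \circ v)$) then gives $[\gamma(u), \gamma(v)] \sqleq [\gamma(u \circ v)] = [\gamma](u \circ v)$. I do not expect any genuine obstacle here — all the conceptual content sits in the construction of the preorder $\sqleq$ and in the first Fact — and the only points deserving a moment's care are the well-definedness of $[\gamma]$ on the skeleton (inherited from isotonicity) and the fact that idempotency is strict rather than merely up to equivalence. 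One might also remark that $[\gamma]$ is in general a non-unital nucleus, since $[\gamma](\emptyword) = [\1] \neq \emptyword$ whenever $\alg{M}$ is non-trivial; the unital variant is precisely what motivates the later construction $\FreeUMon{\alg{M}}$.
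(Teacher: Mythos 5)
Your proof is correct and takes essentially the same route as the paper's: extensivity from the trivial one-block decomposition (equivalently, the fact that $u \sqleq [a]$ iff $\gamma(u) \leq a$), idempotency holding on the nose since $\gamma([a]) = a$, and sub-multiplicativity by evaluating the two-letter word $[\gamma(u), \gamma(v)]$ and comparing with $[\gamma(u \circ v)]$. The only (cosmetic) difference is that your uniform use of the first Fact dispenses with the paper's separate treatment of the case where $u$ or $v$ is the empty word.
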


\begin{proof}
  The map $[\gamma]$ is increasing: the inequality $u \sqleq [\gamma (u)]$ is witnessed by the de\-com\-position of $u$ into a single block $u = u_1$. The map $[\gamma]$ is idempotent: $[\gamma([\gamma(u)])] = [\gamma(u)]$ because $[\gamma([a])] = [a]$ for $a \in \alg{M}$. Finally, if $u = [a_{1}, \dots, a_{m}]$ and $v = [b_{1}, \dots, b_{n}]$, then $[\gamma (u)] \circ [\gamma (v)] = [a_{1} \cdot \ldots \cdot a_{m}, b_{1} \cdot \ldots \cdot b_{n}] \sqleq [a_{1} \cdot \ldots \cdot a_{m} \cdot b_{1} \cdot \ldots \cdot b_{n}] = [\gamma (u \circ v)]$. If $u = \emptyword$, then $[\gamma(u)] \circ [\gamma (v)] = [\gamma(\emptyword)] \circ [\gamma (v)] = [\1] \circ [\gamma(v)] \sqleq [\gamma(v)] = [\gamma(\emptyword \circ v)] = [\gamma(u \circ v)]$, and likewise for $v = \emptyword$.
\end{proof}

\begin{fact}
  The nuclear image of $[\gamma]$ on $\FreeMon{\alg{M}}$ is isomorphic to $\alg{M}$ via $a \mapsto [a]$.
\end{fact}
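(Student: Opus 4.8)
The plan is to verify directly that the assignment $a \mapsto [a]$ is an isomorphism of pomonoids from $\alg{M}$ onto the nuclear image $\bigl(\FreeMon{\alg{M}}\bigr)_{[\gamma]}$. First I would pin down the underlying set. By definition this set consists of the $[\gamma]$-closed elements of $\FreeMon{\alg{M}}$, i.e.\ the (equivalence classes of) words $u$ with $u \sim [\gamma](u)$. Since $[\gamma](u) = [\gamma(u)]$ is always a singleton word, and conversely every singleton word satisfies $[\gamma]([a]) = [\gamma([a])] = [a]$, the $[\gamma]$-closed elements are precisely the equivalence classes of singleton words $[a]$, $a \in M$. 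Hence $a \mapsto [a]$ is a well-defined surjection onto the carrier of $\bigl(\FreeMon{\alg{M}}\bigr)_{[\gamma]}$.

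Next I would invoke the Fact that $u \sqleq [a]$ holds iff $\gamma(u) \leq a$ in $\alg{M}$. Specializing to $u = [b]$ gives $[b] \sqleq [a] \iff b \leq a$, so $a \mapsto [a]$ is an order embedding. In particular it is injective, since $[a] \sim [b]$ forces $a \leq b$ and $b \leq a$, hence $a = b$; combined with surjectivity onto the closed elements, this makes $a \mapsto [a]$ an order isomorphism between $\alg{M}$ and $\bigl(\FreeMon{\alg{M}}\bigr)_{[\gamma]}$.

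Finally I would check compatibility with the monoid operations. The unit of the nuclear image is $[\gamma](\emptyword) = [\gamma(\emptyword)] = [\1]$, which is the image of $\1$. For the product, recall that the multiplication in a nuclear image is $u \cdot_{[\gamma]} v = [\gamma](u \circ v)$; here this reads $[a] \cdot_{[\gamma]} [b] = [\gamma]\bigl([a] \circ [b]\bigr) = [\gamma]([a, b]) = [\gamma([a,b])] = [a \cdot b]$, the image of $a \cdot b$. Thus $a \mapsto [a]$ is a pomonoid isomorphism. If $\alg{M}$ moreover carries join- or residuated structure, the corresponding operations on $\bigl(\FreeMon{\alg{M}}\bigr)_{[\gamma]}$ are computed from $\circ$, $\sqleq$ and $[\gamma]$ in exactly the same fashion, and the identical bookkeeping shows they are transported correctly as well.

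I expect no genuine obstacle here: the only point requiring a moment's care is that two distinct singleton words could in principle be identified in the partially ordered skeleton, but the displayed Fact characterizing $u \sqleq [a]$ rules this out immediately, reducing the whole statement to the routine verifications above.
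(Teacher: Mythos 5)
Your proposal is correct and follows essentially the same route as the paper: identify the $[\gamma]$-closed elements as (the classes of) singleton words, use the fact that $[b] \sqleq [a] \iff b \leq a$ to get an order isomorphism, and verify that the unit and the multiplication $[a] \circ_{[\gamma]} [b] = [\gamma]([a] \circ [b]) = [a \cdot b]$ are transported correctly. The only difference is that you spell out the identification of closed elements and the injectivity argument slightly more explicitly than the paper does.
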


\begin{proof}
  The range of $[\gamma]$ consists of the elements of the form $[a]$ for some $a \in \alg{M}$, and $[a] \sqleq [b] \iff a \leq b$, therefore the map is an order isomorphism. Moreover, $[a] \circ_{[\gamma]} [b] = [\gamma ([a] \circ [b])] = [a \cdot b]$ and $[\gamma(\emptyword)] = [\1]$.
\end{proof}

  Observe that $\emptyword$ and $[\1]$ behave almost identically in $\FreeMon{\alg{M}}$:
\begin{align*}
  \emptyword \circ w = [\1] \circ w = w = w \circ [\1] = w \circ \emptyword
\end{align*}
  whenever $w$ is a non-empty word. Structure $\FreeUMon{\alg{M}}$ obtained from~$\FreeMon{\alg{M}}$ by restricting to (the equivalence classes of) non-empty words is thus a pomonoid whose unit is $[\1]$ rather than $\emptyword$. In fact, it is a unital nuclear pomonoid, since $[\gamma]([\1]) = [\1]$.

  If $\alg{M}$ is a commutative pomonoid, we can replace the free monoid over $M$ by the free commutative monoid in the above construction, resulting in the commutative pomonoid $\CFreeMon{\alg{M}}$. We can still use the notation $[a_{1}, \dots, a_{n}]$ for elements of $\CFreeMon{\alg{M}}$ if we keep in mind $[a_{1}, \dots, a_{n}] = [b_{1}, \dots, b_{n}]$ in $\CFreeMon{\alg{M}}$ if and only if $[b_{1}, \dots, b_{n}]$ is a permutation of $[a_{1}, \dots, a_{n}]$. The above proofs then go through word for word and we can again collapse $\CFreeMon{\alg{M}}$ to a commutative nuclear pomonoid $\CFreeRedMon{\alg{M}}$. Restricting to non-empty words yields the preordered structure $\CFreeUMon{\alg{M}}$ and the unital commutative nuclear pomonoid $\CFreeRedUMon{\alg{M}}$.
 
\begin{fact}
  Let $\alg{M}$ be a (commutative) integral pomonoid. Then the image of the nucleus $[\gamma]$ is an upset of $\FreeMon{\alg{M}}$ and $\FreeUMon{\alg{M}}$ (of $\CFreeRedMon{\alg{M}}$ and $\CFreeRedUMon{\alg{M}}$).
\end{fact}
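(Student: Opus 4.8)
The plan is to unwind the definitions and argue directly. Recall (from the fact identifying the nuclear image of $[\gamma]$ with $\alg{M}$) that the image of $[\gamma]$ on $\FreeMon{\alg{M}}$ is exactly the set of $\sim$-classes of singleton words $[b]$ with $b \in \alg{M}$. So to show that this image is an upset, I would take an arbitrary word $w$ with $[a] \sqleq w$ and prove that $w$ is $\sim$-equivalent to a singleton word; equivalently, that $[\gamma](w) \sqleq w$, since the reverse inequality $w \sqleq [\gamma](w)$ always holds.

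First I would dispose of the degenerate case $w = \emptyword$: this cannot occur, since $u \sqleq \emptyword$ forces $u = \emptyword$ whereas $[a]$ is non-empty. So write $w = [b_{1}, \dots, b_{n}]$ with $n \geq 1$. By the definition of $\sqleq$ there is a decomposition $[a] = u_{1} \circ \ldots \circ u_{n}$ with $\gamma(u_{i}) \leq b_{i}$ for each $i$; since $[a]$ is a single letter, exactly one block, say $u_{j}$, equals $[a]$, while every other block is empty. For $i \neq j$ we then have $\1 = \gamma(u_{i}) \leq b_{i}$, and integrality of $\alg{M}$ (that is, $\1$ being the top element) forces $b_{i} = \1$. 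Consequently $[\gamma](w) = [b_{1} \cdot \ldots \cdot b_{n}] = [b_{j}]$, and the decomposition of $[b_{j}]$ which puts $[b_{j}]$ in the $j$-th block and $\emptyword$ in every other block witnesses $[\gamma](w) = [b_{j}] \sqleq w$ (using $\gamma(\emptyword) = \1 = b_{i}$ for $i \neq j$ and $\gamma([b_{j}]) = b_{j} \leq b_{j}$). Hence $w \sim [b_{j}]$ lies in the image of $[\gamma]$, as desired.

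The same argument applies verbatim to $\FreeUMon{\alg{M}}$ (the ambient definition of $\sqleq$ on $\alg{F}(\alg{M})$ is unchanged, and decompositions into blocks of $\alg{F}(\alg{M})$ that happen to use the empty word cause no trouble) and, rereading ``word'' as ``element of the free commutative monoid'', to $\CFreeRedMon{\alg{M}}$ and $\CFreeRedUMon{\alg{M}}$. I do not expect any genuine obstacle here; the one point worth stating carefully is that one must verify the reverse inequality $[b_{j}] \sqleq w$, not merely $w \sqleq [b_{j}]$, so that $w$ is actually an element of the image of $[\gamma]$ rather than just a word bounded above by such an element.
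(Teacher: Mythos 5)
Your proof is correct and follows essentially the same route as the paper: both unpack the definition of $\sqleq$ for $[a] \sqleq w$, use integrality to force every letter of $w$ other than the one dominating $a$ to equal $\1$, and conclude that $w$ is equivalent to a singleton word, hence lies in the image of $[\gamma]$. Your version merely spells out the letter-by-letter bookkeeping and the verification of $[b_j] \sqleq w$ that the paper leaves implicit.
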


\begin{proof}
  The image of $[\gamma]$ is the set of all words of length one. If~$[a] \sqleq w$, then there is a decomposition $w = w_{1} \circ [b] \circ w_{2}$ such that $\emptyword \sqleq w_{1}$ and $a \leq b$ and $\emptyword \sqleq w_{2}$. By the integrality of $\alg{M}$, $\emptyword \sqleq w_{1}$ implies $w_{1} = \emptyword$ or $w_{1} = [\1]$, and likewise for $w_{2}$. In either case $w$ is equivalent to $[b]$.
\end{proof}

\begin{fact}
  Each equivalence class in $\FreeMon{\alg{M}}$ has a unique shortest representative. 
\end{fact}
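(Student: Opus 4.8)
The plan is to reduce the claim to a uniqueness statement: I will show that any two words $u, v$ of the same length $n$ that lie in a common $\sim$-class and realise the minimum length $n$ over that class must in fact be equal as words over $M$. Existence of a shortest representative is then free, since the length of a word is a non-negative integer, so the set of lengths of words in a given class attains a minimum.

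For the uniqueness part, first observe that the class of $\emptyword$ is $\{\emptyword\}$, by the clause $u \sqleq \emptyword \iff u = \emptyword$; so I may assume $n \geq 1$ and write $u = [a_1, \dots, a_n]$ and $v = [b_1, \dots, b_n]$. The engine of the argument is a length-reduction observation: if a word $w$ has a decomposition $w = w_1 \circ \dots \circ w_k$ in which some block $w_i$ has length $\geq 2$, then the word $w'$ obtained by replacing $w_i$ with the singleton $[\gamma(w_i)]$ satisfies $w \sqleq w'$ (since $w_i \sqleq [\gamma](w_i)$ and multiplication is isotone for $\sqleq$) and has strictly smaller length; moreover, whenever the original decomposition witnesses $w \sqleq z$ for some word $z$, the modified decomposition witnesses $w' \sqleq z$, because $\gamma([\gamma(w_i)]) = \gamma(w_i)$. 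Now apply this to $w = u$: the relation $u \sqleq v$ yields a decomposition $u = u_1 \circ \dots \circ u_n$ with $\gamma(u_i) \leq b_i$. If some $u_i$ had length $\geq 2$, we would obtain a word $u'$ with $u \sqleq u' \sqleq v \sqleq u$ (the last step from $u \sim v$) and $|u'| < n$, contradicting the minimality of $n$. Hence each $u_i$ has length $\leq 1$; since the $n$ blocks concatenate to the length-$n$ word $u$, each has length exactly $1$, so $u_i = [a_i]$ and $a_i = \gamma(u_i) \leq b_i$. Running the same argument from $v \sqleq u$, using that $v$ is also of minimal length, gives $b_i \leq a_i$, whence $a_i = b_i$ by antisymmetry of $\leq$ in $\alg{M}$, i.e.\ $u = v$.

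I do not expect a genuine obstacle here — this is stated as a fact and the proof is essentially bookkeeping. The two points that need a little care are: (i) checking that the modified decomposition in the length-reduction step really witnesses $w' \sqleq z$, which comes down to the identity $\gamma([\gamma(w_i)]) = \gamma(w_i)$ together with the inequalities $\gamma(w_j) \leq b_j$ for the untouched blocks; and (ii) noticing that the minimality of $n$ must be invoked on \emph{both} sides $u \sqleq v$ and $v \sqleq u$, since only then does the comparability $a_i \leq b_i$ get upgraded to equality.
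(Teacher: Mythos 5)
Your proof is correct and follows essentially the same route as the paper: both arguments take two shortest representatives with $u \sqleq v \sqleq u$, show that the decompositions witnessing these inequalities must consist of singleton blocks (else minimality of the length is violated), and then conclude $a_i \leq b_i \leq a_i$ componentwise. The only difference is in the device producing the contradiction: you collapse an oversized block $u_i$ to $[\gamma(u_i)]$ to get a strictly shorter word equivalent to $u$, whereas the paper deletes the letter of $v$ facing an empty block (using that $\emptyword \sqleq v_2$ there); since a block of length at least two occurs exactly when an empty block occurs, the two devices trigger in the same situation and yield the same conclusion.
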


\begin{proof}
  Suppose that $u = [a_{1}, \dots, a_{n}]$ and $v = [b_{1}, \dots, b_{n}]$ are two shortest representatives of some equivalence class in $\FreeMon{\alg{M}}$. Then $u \sqleq v \sqleq u$. If the empty word occurs in the decomposition which witnesses one of these inequalities, say $u = u_{1} \circ \emptyword \circ u_{2}$ where $u_{1} \sqleq v_{1}$, $\emptyword \sqleq v_{2}$, $u_{2} \sqleq v_{3}$ and $v = v_{1} \circ v_{2} \circ v_{3}$ for non-empty $v_{2}$, then $u = u_{1} \circ u_{2} \sqleq v_{1} v_{3} \sqleq v_{1} v_{2} v_{3} = v \sqleq u$, hence $v_{1} v_{3}$ is a strictly shorter word in the same equivalence class. It follows that $\emptyword$ does not occur in the decompositions witnessing $u \sqleq v \sqleq u$, hence $a_{i} \leq b_{i} \leq a_{i}$ for $1 \leq i \leq n$ and $u = v$.
\end{proof}

  In case $\alg{M}$ is integral, this shortest representative is obtained by removing all instances of the letter $\1$ from a word, unless the word happens to be $[\1]$. We call such words \emph{$\1$-reduced}. That is, a word $w$ is $\1$-reduced if either $w = [\1]$ or $w$ does not contain $[\1]$ as a subword. If $u$ is non-empty and $v$ is a $\1$-reduced word, then each decomposition of $u$ witnessing that $u \sqleq v$ consists of \emph{non-empty} words. The following facts are immediate consequences of this observation.

\begin{fact}
  Let $\alg{M}$ be an integral pomonoid. If $u = [a_{1}, \dots, a_{m}] \sqleq [b_{1}, \dots, b_{n}] = v$ in $\FreeMon{\alg{M}}$, where $u$, $v$ are $\1$-reduced, then $m \geq n$. 
\end{fact}

\begin{fact} \label{fact: order on integral freeredmon}
  Let $\alg{M}$ be an integral pomonoid. Then $[a_{1}, \dots, a_{n}] \sqleq [b_{1}, \dots, b_{n}] = v$ in $\FreeMon{\alg{M}}$, where $u$, $v$ are $\1$-reduced, if and only if $a_{i} \leq b_{i}$ in $\alg{M}$ for all ${1 \leq i \leq n}$.
\end{fact}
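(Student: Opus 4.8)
The plan is to prove the two implications separately. The backward implication is essentially free: if $a_i \leq b_i$ in $\alg{M}$ for every $i$, then $[a_1, \dots, a_n] \leq [b_1, \dots, b_n]$ already holds in the free pomonoid $\alg{F}(\alg{M})$, and since $\sqleq$ extends the order of $\alg{F}(\alg{M})$ we immediately get $[a_1, \dots, a_n] \sqleq [b_1, \dots, b_n]$. Note that this half of the argument uses neither integrality nor the hypothesis that $v$ is $\1$-reduced.

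For the forward implication, write $u = [a_1, \dots, a_n]$ and $v = [b_1, \dots, b_n]$. If $n = 0$ there is nothing to prove, so I would assume $n \geq 1$; then $u$ is a non-empty word (hence not equivalent to the empty word, whose class is a singleton) and $v$ is a non-empty $\1$-reduced word. Unfolding the definition of $u \sqleq v$ yields a decomposition $u = u_1 \circ \dots \circ u_n$ with $\gamma(u_i) \leq b_i$ for each $i$. The key point, which is exactly the observation recorded just before the statement, is that every $u_i$ must be non-empty: were some $u_i$ empty, we would have $\1 = \gamma(u_i) \leq b_i$, so $b_i = \1$ by integrality of $\alg{M}$, which contradicts $\1$-reducedness of $v$ unless $v = [\1]$ — but in that case $n = 1$ and $u = u_1 = \emptyword$, contradicting $n \geq 1$. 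Now $u$ has exactly $n$ letters and is the concatenation of the $n$ non-empty words $u_1, \dots, u_n$, so each $u_i$ consists of a single letter; comparing with $u = [a_1, \dots, a_n]$ forces $u_i = [a_i]$, and hence $a_i = \gamma(u_i) \leq b_i$ for every $i$, as desired.

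The only step requiring any real care is ruling out empty blocks in the witnessing decomposition, and this is precisely where the hypotheses (integrality together with $\1$-reducedness of $v$) get consumed; once that is in place, the length count is completely mechanical. So I do not anticipate a genuine obstacle, only the bookkeeping of the degenerate cases $n = 0$ and $v = [\1]$.
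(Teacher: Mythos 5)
Your proof is correct and follows essentially the same route as the paper, which treats this fact as an immediate consequence of the observation that when $v$ is $\1$-reduced and $u$ is non-empty, every decomposition witnessing $u \sqleq v$ consists of non-empty blocks; you simply re-derive that observation (via integrality plus $\1$-reducedness, with the degenerate case $v = [\1]$ handled separately) and then perform the same length count forcing each block to be a single letter.
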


  Similar remarks apply to integral commutative pomonoids.

\begin{fact}
  Let $\alg{M}$ be an integral commutative pomonoid. If $u = [a_{1}, \dots, a_{m}] \sqleq [b_{1}, \dots, b_{n}] = v$ in $\CFreeRedMon{\alg{M}}$ and $u$, $v$ are $\1$-reduced, then $m \geq n$. 
\end{fact}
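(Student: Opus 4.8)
The plan is to transfer, essentially word for word, the observation recorded just before the non-commutative version of this fact (together with the remark that the same reasoning applies in the commutative setting), replacing the notion of ``decomposition of a word into blocks'' by ``partition of a multiset into sub-multisets''. Unwinding the definition of $\sqleq$ in $\CFreeRedMon{\alg{M}}$, the hypothesis $u = [a_1, \dots, a_m] \sqleq [b_1, \dots, b_n] = v$ means that, in the free commutative monoid over $\alg{M}$, there is a decomposition $u = u_1 \circ \dots \circ u_n$ with $\gamma(u_i) \leq b_i$ for each $i$. Writing $|w|$ for the length (number of letters, with multiplicity) of a word $w$, we then have $m = |u| = \sum_{i=1}^{n} |u_i|$, since the multiset of letters of $u$ is the disjoint union of the multisets of letters of $u_1, \dots, u_n$. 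Hence it suffices to prove that each $u_i$ is non-empty, for then $m = \sum_{i=1}^{n} |u_i| \geq n$.

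To see that each $u_i$ is non-empty, I would argue by contradiction. Suppose $u_i = \emptyword$ for some $i$. Then $\gamma(u_i) = \1$, so the inequality $\1 = \gamma(u_i) \leq b_i$ holds; since $\alg{M}$ is integral, $\1$ is its top element, and therefore $b_i = \1$. If $n \geq 2$, this means the letter $\1$ occurs in $v = [b_1, \dots, b_n]$, contradicting the assumption that $v$ is $\1$-reduced. If $n = 1$, then $u = u_1 = \emptyword$; but (as in the non-commutative version of the fact) $u$ is assumed to be non-empty, so this is impossible as well. Thus every $u_i$ is non-empty and $m \geq n$, as required.

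I expect no real obstacle here: the statement is the commutative counterpart of a fact already established, and its proof is a routine adaptation. The only points that merit a moment's attention are that in an integral pomonoid $\1$ is the \emph{top} element (so $\1 \leq b_i$ forces $b_i = \1$, not the other way round), the degenerate case $n = 1$ with $v = [\1]$ (which is precisely why the hypothesis that $u$ is non-empty cannot be dropped, since $\emptyword \sqleq [\1]$ always holds), and the fact that ``decomposition'' must be read multiset-theoretically in the free commutative monoid --- but none of this affects the length bookkeeping $m = \sum_{i=1}^{n} |u_i| \geq n$.
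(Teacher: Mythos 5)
Your proof is correct and matches the paper's reasoning: the paper treats this fact as an immediate consequence of the observation that, for $u$ non-empty and $v$ $\1$-reduced, every decomposition witnessing $u \sqleq v$ consists of non-empty words, which is exactly what you establish (using integrality to force $b_i = \1$ from $\1 \leq b_i$) before counting $m = \sum_i |u_i| \geq n$. Your attention to the degenerate case $v = [\1]$ and to the implicit non-emptiness of $u$ (stated explicitly only in the non-commutative version) is exactly right.
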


\begin{fact} \label{fact: order on integral cfreeredmon}
  Let $\alg{M}$ be an integral commutative pomonoid. Then $[a_{1}, \dots, a_{n}] \sqleq [b_{1}, \dots, b_{n}] = v$ in $\CFreeRedMon{\alg{M}}$, where $u$, $v$ are $\1$-reduced, if and only if $a_{i} \leq c_{i}$ in $\alg{M}$ for all $1 \leq i \leq n$ for some permutation $[c_{1}, \dots, c_{n}]$ of $[b_{1}, \dots, b_{n}]$.
\end{fact}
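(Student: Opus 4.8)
The plan is to prove the two implications separately. The right-to-left direction will be immediate from the definition of $\sqleq$, whereas the left-to-right direction will rest on the observation recorded above for integral pomonoids, and (as remarked there) equally valid for integral commutative pomonoids, that every decomposition witnessing $u \sqleq v$ with $u$ non-empty and $v$ being $\1$-reduced consists of non-empty words. This is precisely what upgrades the length inequality $m \geq n$ of the preceding fact to an exact matching once both sides have length $n$.

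For the backward direction, suppose $[c_1, \dots, c_n]$ is a permutation of $[b_1, \dots, b_n]$ with $a_i \leq c_i$ in $\alg{M}$ for every $i$. Decomposing $[a_1, \dots, a_n] = [a_1] \circ \dots \circ [a_n]$ and using $\gamma([a_i]) = a_i \leq c_i$, the definition of $\sqleq$ gives $[a_1, \dots, a_n] \sqleq [c_1, \dots, c_n]$ at once; since $[c_1, \dots, c_n]$ and $[b_1, \dots, b_n]$ are the same element of $\CFreeRedMon{\alg{M}}$, namely the same multiset over $\alg{M}$, we conclude $[a_1, \dots, a_n] \sqleq [b_1, \dots, b_n]$. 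This step uses nothing about the order of $\CFreeRedMon{\alg{M}}$ beyond the definition of $\sqleq$ and does not even require commutativity.

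For the forward direction, assume $[a_1, \dots, a_n] \sqleq [b_1, \dots, b_n] = v$ with $v$ being $\1$-reduced; the case $n = 0$ is trivial, so take $n \geq 1$, so that the left-hand side is non-empty. Unfolding the definition of $\sqleq$ yields $u_1, \dots, u_n$ with $[a_1, \dots, a_n] = u_1 \circ \dots \circ u_n$ and $\gamma(u_i) \leq b_i$ for each $i$. By the non-empty-decomposition observation every $u_i$ is non-empty, and since the $u_i$ jointly contain exactly $n$ letters while there are $n$ of them, each $u_i$ must be a singleton, say $u_i = [d_i]$ with $d_i = \gamma(u_i) \leq b_i$. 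As $\circ$ is multiset union in the free commutative monoid, the multiset $\{d_1, \dots, d_n\}$ equals $\{a_1, \dots, a_n\}$, so there is a permutation $\pi$ with $d_i = a_{\pi(i)}$, whence $a_{\pi(i)} \leq b_i$ for all $i$. Setting $c_j \assign b_{\pi^{-1}(j)}$ then exhibits a permutation $[c_1, \dots, c_n]$ of $[b_1, \dots, b_n]$ with $a_j \leq c_j$ for all $j$, as required.

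I do not expect a genuine obstacle here: the argument is a short unfolding of definitions together with the already-established non-empty-decomposition phenomenon. The only point calling for a little care is the combinatorial bookkeeping at the end of the forward direction --- noticing that in a commutative free monoid an $n$-letter word split into $n$ non-empty pieces must split into singletons, and then re-indexing the matching $a_{\pi(i)} \leq b_i$ into the form ``$a_j \leq c_j$ with $[c_1, \dots, c_n]$ a permutation of $[b_1, \dots, b_n]$'' demanded by the statement, by passing to the inverse permutation.
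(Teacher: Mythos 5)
Your proof is correct and follows exactly the route the paper intends: the paper leaves this fact as an immediate consequence of the observation that a decomposition witnessing $u \sqleq v$ with $u$ non-empty and $v$ $\1$-reduced consists of non-empty blocks, which (with $n$ letters in $n$ blocks) forces singleton blocks, and your argument is just a careful unfolding of that, plus the easy converse. The re-indexing via $\pi^{-1}$ at the end is also handled correctly.
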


  If $\alg{M}$ is integral, then $\FreeMon{\alg{M}}$ ($\FreeUMon{\alg{M}}$) is thus isomorphic to the pomonoid of (non-empty) $\1$-reduced words over $M$ where multiplication coincides with multi\-plication in $\alg{F}(\alg{M})$ except that $[\1] \circ u = u = u \circ [\1]$ if $u$ is non-empty. The same can be said about $\CFreeRedMon{\alg{M}}$ ($\CFreeRedUMon{\alg{M}}$) if the pomonoid $\alg{M}$ is moreover commutative.

\begin{fact}
  Let $\alg{M}$ be a (commutative) integral pomonoid. Then $\FreeUMon{\alg{M}}$ ($\CFreeUMon{\alg{M}}$) satisfies only equations which hold in all (commutative) monoids.
\end{fact}

\begin{proof} \label{fact: no equations preserved}
  The unordered reduct of $\FreeUMon{\alg{M}}$ ($\CFreeUMon{\alg{M}}$) is simply the free (commutative) monoid over the underlying set of $\alg{M}$ by Facts~\ref{fact: order on integral freeredmon} and \ref{fact: order on integral cfreeredmon}.
\end{proof}

  Let us now sum up all of the above constructions.

\begin{theorem}
  Let $\alg{M}$ be a pomonoid. Then
\begin{enumerate}[(i)]
\item $\FreeMon{\alg{M}} \assign \langle \FreeMon{M}, \sqleq, \circ, \emptyword, [\gamma] \rangle$ is a nuclear pomonoid,
\item $\FreeUMon{\alg{M}} \assign \langle \FreeUMon{M}, \sqleq, \circ, [\1], [\gamma] \rangle$ is a unital nuclear pomonoid,
\item $\CFreeRedMon{\alg{M}}$ is a commutative nuclear pomonoid (for $\alg{M}$ commutative),
\item $\CFreeRedUMon{\alg{M}}$\,is a unital commutative nuclear pomonoid (for $\alg{M}$ \mbox{commutative}).
\end{enumerate}
  In each case the nuclear image is isomorphic to $\alg{M}$ via the map $a \mapsto [a]$.
\end{theorem}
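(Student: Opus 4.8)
The plan is to assemble the theorem from the Facts established above; no genuinely new argument is needed, only careful bookkeeping of which structure each Fact applies to. First I would dispose of (i). The Fact that $\sqleq$ is a preorder compatible with $\circ$ shows that the partially ordered skeleton of $\langle \FreeMon{M}, \sqleq, \circ, \emptyword \rangle$ is a pomonoid, which is what we have named $\FreeMon{\alg{M}}$; here one uses that $\circ$ descends to the $\sim$-quotient precisely because $\sim$ is the equivalence induced by the preorder $\sqleq$. The same Fact, via the isotonicity of $[\gamma]$, shows that $[\gamma]$ also descends to a well-defined monotone map on $\FreeMon{\alg{M}}$, and the Fact that $[\gamma]$ is a nucleus on $\FreeMon{\alg{M}}$ then gives that $\pair{\FreeMon{\alg{M}}}{[\gamma]}$ is a nuclear pomonoid.

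Next I would treat (ii). I would observe that the non-empty words are closed under $\circ$ and that, by the displayed identities $\emptyword \circ w = [\1] \circ w = w = w \circ [\1] = w \circ \emptyword$ for non-empty $w$, the element $[\1]$ is a two-sided identity for this submonoid; associativity and the order are inherited from $\FreeMon{\alg{M}}$, so $\FreeUMon{\alg{M}}$ is a pomonoid with unit $[\1]$. Since $[\gamma]$ sends every non-empty word to a (non-empty) singleton word, it restricts to a map on $\FreeUMon{\alg{M}}$, and the three nucleus conditions are simply the restrictions of those already verified for $\FreeMon{\alg{M}}$ — in particular the inequality $[\gamma](u) \circ [\gamma](v) \sqleq [\gamma](u \circ v)$ never invokes the empty word once $u$ and $v$ are non-empty. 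Finally $[\gamma]([\1]) = [\1]$, so the nucleus is unital.

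For (iii) and (iv) I would invoke the remark already made above: replacing the free monoid over $\alg{M}$ by the free commutative monoid leaves all of the preceding proofs intact word for word, since the decomposition arguments use only associativity and, where relevant, commutativity. Hence $\CFreeMon{\alg{M}}$ carries the preorder $\sqleq$ with $[\gamma]$ a nucleus, its skeleton $\CFreeRedMon{\alg{M}}$ is a commutative nuclear pomonoid, and restricting to non-empty words yields the unital commutative nuclear pomonoid $\CFreeRedUMon{\alg{M}}$ exactly as in (ii). For the final sentence, the Fact that the nuclear image of $[\gamma]$ on $\FreeMon{\alg{M}}$ is isomorphic to $\alg{M}$ via $a \mapsto [a]$ already handles (i); in each of the remaining three cases the $[\gamma]$-closed elements are again precisely the (equivalence classes of) singleton words, with the same induced operations, so the same map $a \mapsto [a]$ is the required isomorphism, and in the commutative cases it remains injective because a permutation of a length-one word is that word itself.

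I do not expect a substantial obstacle: the whole theorem is a summary. The only points that need any care are (a) confirming that $\circ$ and $[\gamma]$ genuinely descend to the partially ordered skeleton, which follows from compatibility and isotonicity of $\sqleq$, and (b) confirming that $\FreeUMon{\alg{M}}$ and $\CFreeRedUMon{\alg{M}}$ are pomonoids with the shifted unit $[\1]$ rather than $\emptyword$, i.e.\ that $[\1]$ acts as a two-sided identity on all non-empty words; both are immediate from the displayed identities together with the Facts already proved.
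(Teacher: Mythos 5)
Your proposal is correct and matches the paper's intent exactly: the theorem is stated there as a summary of the preceding Facts (the preorder and isotonicity Fact, the nucleus Fact, the isomorphism-of-the-image Fact, the remark that $\emptyword$ and $[\1]$ act identically on non-empty words with $[\gamma]([\1])=[\1]$, and the remark that the commutative case goes through word for word), which is precisely the bookkeeping you carry out. No gap; nothing further is needed.
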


  Each of these pomonoids is \emph{(upper) product distributive} in the following sense:
\begin{align*}
  \text{if $u \sqleq v \circ w$, then $u_{1} \sqleq v$ and $u_{2} \sqleq w$ for some decomposition $u_{1} \circ u_{2} = u$.}
\end{align*}
  This turns out to be a crucial property of free nuclear preimages.\footnote{This definition, or more precisely its order dual, is due to Varlet~\cite{varlet69}. It should remind the reader of the definition of a distributive join semilattice.}

  Each of these constructions extends to a functor. The \emph{free nuclear preimage functor} from the category $\Pomon$ of pomonoids and their homomorphisms to the category $\PomonStar$ of nuclear pomonoids and their homomorphisms maps a pomonoid $\alg{M}$ to the nuclear pomonoid $\FreeMon{\alg{M}}$ and a homomorphism of pomonoids $h\colon \alg{M} \to \alg{N}$ to the map $\FreeMon{h}\colon \FreeMon{\alg{M}} \to \FreeMon{\alg{N}}$ such that
\begin{align*}
  \FreeMon{h} (\emptyword) & = \emptyword, \\
  \FreeMon{h} ([a_{1}, \dots, a_{n}]) & = [h(a_{1}), \dots, h(a_{n})].
\end{align*}
  Recall that $[a_{1}, \dots, a_{n}]$ and $[h(a_{1}), \dots, h(a_{n})]$ really represent equivalence classes of words here, so we need to verify that $\FreeMon{h}$ is well-defined. But indeed applying the map $h$ to a decomposition witnessing that $[a_{1}, \dots, a_{n}] \sqleq [b_{1}, \dots, b_{m}]$ yields a decomposition witnessing that $[h(a_{1}), \dots, h(a_{n})] \sqleq [h(b_{1}), \dots, h(b_{m})]$.

  The free \emph{unital} nuclear preimage functor maps $\alg{M}$ to $\FreeUMon{\alg{M}}$ instead of $\FreeMon{\alg{M}}$ and it maps $h\colon \alg{M} \to \alg{N}$ to the restriction of $\FreeMon{h}$ to $\FreeUMon{\alg{M}}$. Replacing $\FreeMon{\alg{M}}$ and $\FreeUMon{\alg{M}}$ by $\CFreeRedMon{\alg{M}}$ and $\CFreeRedUMon{\alg{M}}$ where $\alg{M}$ ranges over all commutative pomonoids yields the free \emph{commutative} nuclear preimage functor and the free \emph{commutative unital} nuclear preimage functor. 

  Each of these functors is the left adjoint of the nuclear image functor between the appropriate categories. For nuclear pomonoids the unit of this adjunction is the \mbox{natural} transformation $\unitmap$ consisting of the maps $\unit{\alg{M}}\colon \alg{M} \to (\FreeMon{\alg{M}})_{[\gamma]}$ such that $\unit{\alg{M}}\colon a \mapsto [a]$. The counit\footnote{The counit of this adjunction and the empty word are denoted by the same symbol. There is no threat of confusion, since they occur in completely different contexts. More\-over, we will seldom have occasion to talk about the counit of this adjunction.} is the natural transformation $\counitmap$ consisting of the maps $\counit{\pair{\alg{N}}{\delta}}\colon \FreeMon{(\alg{N}_{\delta})} \to \pair{\alg{N}}{\delta}$ such that
\begin{align*}
  \counit{\pair{\alg{N}}{\delta}}(\emptyword) & = \1, \\
  \counit{\pair{\alg{N}}{\delta}}([a_{1}, \dots, a_{n}]) & = a_{1} \cdot \ldots \cdot a_{n}.
\end{align*}
  In the unital case we replace $\FreeMon{\alg{M}}$ and $\FreeMon{(\alg{N}_{\delta})}$ by $\FreeUMon{\alg{M}}$ and $\FreeUMon{(\alg{N}_{\delta})}$ and we replace the maps $\unitmap$ and $\counitmap$ by their restrictions. In the commutative cases we replace the free monoids by the free commutative monoids.

\begin{theorem}
  The free (unital) nuclear preimage functor is left adjoint to the (unital) nuclear image functor. The unit and counit are $\unitmap$~and~$\counitmap$. The same holds for the free commutative (unital) nuclear preimage and image functors.
\end{theorem}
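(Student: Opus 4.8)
The plan is to verify directly that each component $\unit{\alg{M}}\colon \alg{M} \to (\FreeMon{\alg{M}})_{[\gamma]}$ of $\unitmap$ is a universal arrow from $\alg{M}$ to the nuclear image functor; by the standard correspondence between universal arrows and adjunctions this yields the desired adjunction with $\unitmap$ as unit, whereupon the counit at $\pair{\alg{N}}{\delta}$ is forced to be the unique factorization of $\mathrm{id}_{\alg{N}_{\delta}}$, which one computes to be $\counit{\pair{\alg{N}}{\delta}}$. So fix a nuclear pomonoid $\pair{\alg{N}}{\delta}$ and a pomonoid homomorphism $f\colon \alg{M} \to \alg{N}_{\delta}$, and define $\hat{f}\colon \FreeMon{\alg{M}} \to \alg{N}$ on representatives by $\hat{f}(\emptyword) \assign \1$ and $\hat{f}([a_{1}, \dots, a_{n}]) \assign f(a_{1}) \cdot \ldots \cdot f(a_{n})$, where the product is taken in $\alg{N}$, \emph{not} in $\alg{N}_{\delta}$.

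The one computation that does all the work is the identity $\delta(\hat{f}(w)) = f(\gamma(w))$, valid for every word $w$. For non-empty $w = [a_{1}, \dots, a_{n}]$ it follows from the standard property of nuclei that $\delta(\delta(x) \cdot y) = \delta(x \cdot y) = \delta(x \cdot \delta(y))$ (immediate from $x \leq \delta(x)$, $\delta(x) \cdot \delta(y) \leq \delta(x \cdot y)$, and idempotence of $\delta$): iterating it rewrites the $\alg{N}_{\delta}$-product $f(a_{1}) \cdot_{\delta} \dots \cdot_{\delta} f(a_{n}) = f(a_{1} \cdot \ldots \cdot a_{n}) = f(\gamma(w))$ as $\delta(f(a_{1}) \cdot \ldots \cdot f(a_{n})) = \delta(\hat{f}(w))$; for $w = \emptyword$ it reduces to $\delta(\1) = f(\1) = f(\gamma(\emptyword))$, since $f$ preserves units and the unit of $\alg{N}_{\delta}$ is $\delta(\1)$. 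From this identity we extract two things. (i) $\hat{f}$ respects $\sqleq$, hence descends to $\FreeMon{\alg{M}}$ and is isotone: if $u = u_{1} \circ \dots \circ u_{n}$ witnesses $u \sqleq [b_{1}, \dots, b_{n}]$ with $\gamma(u_{i}) \leq b_{i}$ for each $i$, then $\hat{f}(u_{i}) \leq \delta(\hat{f}(u_{i})) = f(\gamma(u_{i})) \leq f(b_{i})$ by the identity and the isotonicity of $f$, and multiplying these inequalities in $\alg{N}$ gives $\hat{f}(u) = \hat{f}(u_{1}) \cdot \ldots \cdot \hat{f}(u_{n}) \leq \hat{f}([b_{1}, \dots, b_{n}])$ (the case $v = \emptyword$ forces $u = \emptyword$ and is trivial). (ii) $\hat{f}$ commutes with the nuclei: $\hat{f}([\gamma](u)) = \hat{f}([\gamma(u)]) = f(\gamma(u)) = \delta(\hat{f}(u))$. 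Since $\hat{f}$ is a monoid homomorphism straight from its defining formula (concatenation to product, $\emptyword$ to $\1$), it is a morphism of nuclear pomonoids, and $\hat{f}(\unit{\alg{M}}(a)) = \hat{f}([a]) = f(a)$ for all $a$. Uniqueness is immediate: any morphism $g$ with $g([a]) = f(a)$ must send $[a_{1}, \dots, a_{n}] = [a_{1}] \circ \dots \circ [a_{n}]$ to $f(a_{1}) \cdot \ldots \cdot f(a_{n})$ and $\emptyword$ to $\1$, so $g = \hat{f}$.

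This establishes the adjunction with unit $\unitmap$. Its counit at $\pair{\alg{N}}{\delta}$ is by construction the unique extension of $\mathrm{id}_{\alg{N}_{\delta}}$ along $\unit{\alg{N}_{\delta}}$, namely $\widehat{\mathrm{id}_{\alg{N}_{\delta}}}$, which by the defining formula for $\hat{f}$ is exactly $\counit{\pair{\alg{N}}{\delta}}$. Naturality of $\unitmap$ and $\counitmap$ is a one-line diagram chase: in either naturality square, applying a homomorphism $h$ letterwise either before or after forming the relevant word or product yields the same element of the target. The unital version is the restriction of the above to non-empty words, with $\hat{f}([\1]) = f(\1) = \delta(\1) = \1$ showing that $\hat{f}$ is unital; the commutative versions go through verbatim once one notes that the defining formula for $\hat{f}$ is permutation-invariant because the target pomonoid $\alg{N}$ is commutative, so $\hat{f}$ is well-defined on the free commutative monoid. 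The one place needing genuine care — the step I would be most careful to get right — is part (i): since $\hat{f}$ lands in $\alg{N}$ rather than $\alg{N}_{\delta}$, one must pass uniformly from $\alg{N}_{\delta}$-products to $\delta$ applied to ordinary $\alg{N}$-products via the nucleus identity, and one must not overlook the possibly-empty sub-blocks $u_{i}$ in a $\sqleq$-witnessing decomposition, where $\gamma(u_{i}) = \1 \leq b_{i}$ still yields $\hat{f}(u_{i}) = \1 \leq \delta(\1) = f(\1) \leq f(b_{i})$.
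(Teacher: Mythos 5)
Your proposal is correct, but it reaches the adjunction by a different route than the paper. The paper exhibits both $\unitmap$ and $\counitmap$ explicitly, declares functoriality and naturality ``easy to verify,'' and then checks only the two triangle identities $\counit{\pair{\alg{M}_{\delta}}{\gamma}} \circ \unit{\alg{M}_{\delta}}(a) = a$ and $\counit{\pair{\FreeMon{\alg{M}}}{\gamma}} \circ \unit{\FreeMon{\alg{M}}}([a_1,\dots,a_n]) = [a_1,\dots,a_n]$. You instead verify that each $\unit{\alg{M}}$ is a universal arrow to the nuclear image functor, constructing the adjunct $\hat{f}$ explicitly and proving the identity $\delta(\hat{f}(w)) = f(\gamma(w))$, from which well-definedness on $\sim$-classes, isotonicity with respect to $\sqleq$, compatibility with the nuclei, and uniqueness all follow; the counit is then forced to be $\widehat{\mathrm{id}_{\alg{N}_{\delta}}}$, which agrees with the paper's $\counit{\pair{\alg{N}}{\delta}}$. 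The trade-off: the paper's route is short because the substantive verifications (that $\counit{\pair{\alg{N}}{\delta}}$ is even a well-defined homomorphism of nuclear pomonoids, i.e.\ respects $\sqleq$ and commutes with $[\gamma]$ and $\delta$) are absorbed into the ``easy to verify'' step, whereas your route makes exactly those checks explicit — your key identity with $f = \mathrm{id}_{\alg{N}_{\delta}}$ is precisely what the paper's claim about $\counitmap$ rests on — and in exchange you get the triangle identities for free from uniqueness. Your handling of the delicate points (passing from $\alg{N}_{\delta}$-products to $\delta$ of $\alg{N}$-products via $\delta(\delta(x)\cdot y) = \delta(x \cdot y)$, the empty sub-blocks in a witnessing decomposition, and the unital and commutative variants) is accurate.
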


\begin{proof}
  It is easy to verify that these are indeed functors and that the maps $\unitmap$ and $\counitmap$ are natural transformations. We therefore only verify the triangle identities. If $\alg{M}$ is a pomonoid with a nucleus $\delta$ and $\gamma$ is the nucleus of~$\FreeMon{(\alg{M}_{\delta})}$, then $\counit{\pair{\alg{M}_{\delta}}{\gamma}} \circ \unit{\alg{M}_{\delta}} (a) = \counit{\pair{\alg{M}_{\delta}}{\gamma}} ([a]) = a$. On the other hand, if $\alg{M}$ is a pomonoid and $\gamma$ is the nucleus of $\FreeMon{\alg{M}}$, then for each word $[a_{1}, \dots, a_{n}] \in \FreeMon{\alg{M}}$
\begin{align*}
  \counit{\pair{\FreeMon{\alg{M}}}{\gamma}} \circ \unit{\FreeMon{\alg{M}}} ([a_{1}, \dots, a_{n}]) & = \counit{\pair{\FreeMon{\alg{M}}}{\gamma}} ([[a_{1}], \dots, [a_{n}]]) \\ & = [a_{1}] \circ \dots \circ [a_{n}] = [a_{1}, \dots, a_{n}]. \qedhere
\end{align*}
\end{proof}

  The above constructions work equally well for posemigroups instead of po\-monoids. Let $\alg{S} = \langle S, \leq, \cdot \rangle$ be a posemigroup and let $\FreeSem{\alg{S}} = \langle \FreeSem{S}, \circ \rangle$ be the free semigroup generated by~$\alg{S}$. Elements of $\FreeSem{\alg{S}}$ are thus non-empty words over~$\alg{S}$. This free semigroup is equipped with a semigroup homomorphism $\gamma\colon \FreeSem{\alg{S}} \to \alg{S}$ defined~as:
\begin{align*}
  \gamma ([a_{1} \cdots a_{n}]) & = a_{1} \cdot \ldots \cdot a_{n}.
\end{align*}
  The map $[\gamma]\colon \FreeSem{\alg{S}} \to \FreeSem{\alg{S}}$ is then defined as $[\gamma](u) = [\gamma(u)]$. The preorder $\sqleq$ on~$\FreeSem{S}$ is defined by the same condition as before:
\begin{align*}
  u \sqleq [a_{1} \dots a_{n}] \iff & \text{there are } u_{1}, \dots, u_{n} \in \FreeSem{\alg{S}} \text{ such that } \\
  & u = u_{1} \circ \dots \circ u_{n} \text{ and } \gamma(u_{i}) \leq a_{i} \text{ for each } u_{i}.
\end{align*}

  There is, however, an important difference between the posemigroup case and the pomonoidal case: in $\FreeSem{\alg{S}}$ we only admit non-empty words in the decomposition of $u$. We therefore obtain a substantially different \mbox{structure} even if $\alg{S}$ happens to be the semigroup reduct of a monoid $\alg{M}$. In particular, $\FreeSem{\alg{S}}$ never has a multiplicative unit. Moreover, the following facts hold for $\FreeSem{\alg{S}}$.

\begin{fact}
  If $[a_{1}, \dots, a_{m}] \sqleq [b_{1}, \dots, b_{n}]$ in $\FreeSem{\alg{S}}$, then $m \geq n$. 
\end{fact}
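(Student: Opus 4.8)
The plan is to unfold the definition of the preorder $\sqleq$ on $\FreeSem{\alg{S}}$ and then simply count lengths of words. Suppose $[a_{1}, \dots, a_{m}] \sqleq [b_{1}, \dots, b_{n}]$ in $\FreeSem{\alg{S}}$. By the definition of $\sqleq$ in the free semigroup there are words $u_{1}, \dots, u_{n} \in \FreeSem{\alg{S}}$ with $[a_{1}, \dots, a_{m}] = u_{1} \circ \dots \circ u_{n}$ and $\gamma(u_{i}) \leq b_{i}$ for every $i$. The essential observation is that, by construction, the carrier $\FreeSem{S}$ consists only of \emph{non-empty} words, so $|u_{i}| \geq 1$ for each $i$; since concatenation adds lengths, $m = |u_{1}| + \dots + |u_{n}| \geq n$, which is the claim. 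If one prefers, the same follows by an immediate induction on $n$, the base case $n = 1$ holding because any element of $\FreeSem{\alg{S}}$ has length at least $1$.

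I do not expect any genuine obstacle here: the statement is essentially the posemigroup shadow of the earlier Fact for integral pomonoids, but now we need neither integrality of $\alg{S}$ nor any $\1$-reducedness hypothesis on the right-hand word. The one thing to be careful about is to use the \emph{semigroup} version of the construction rather than the monoidal one: in the monoidal setting empty words are allowed among the $u_{i}$, and indeed the relations $[a] \sqleq [\1, a]$ witness that $m \geq n$ can fail in $\FreeMon{\alg{M}}$. So the content of the Fact is precisely the remark already made in the text, namely that $\FreeSem{\alg{S}}$ never acquires a multiplicative unit and hence admits no length-decreasing collapse of the form $[\1] \circ u \sim u$.
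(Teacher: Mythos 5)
Your proof is correct and is exactly the argument the paper intends (the Fact is stated without proof as an immediate consequence of the definition): unfold the decomposition $[a_1,\dots,a_m]=u_1\circ\dots\circ u_n$ with each $u_i$ a necessarily non-empty word of $\FreeSem{\alg{S}}$ and count lengths. Your closing remark contrasting this with the monoidal case, where empty blocks make the bound fail, matches the paper's own discussion.
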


\begin{fact}
  $[a_{1}, \dots, a_{n}] \sqleq [b_{1}, \dots, b_{n}]$ in $\FreeSem{\alg{S}}$ if and only if $a_{i} \leq b_{i}$ for all $1 \leq i \leq n$.
\end{fact}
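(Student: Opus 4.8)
The plan is to prove both directions directly from the definition of $\sqleq$ on $\FreeSem{\alg{S}}$, using the crucial feature of the posemigroup case that every word appearing in a decomposition must be non-empty. The ``if'' direction will be immediate, and the ``only if'' direction will follow by a length-counting argument that is essentially already contained in the preceding fact.

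First I would handle the ``if'' direction. Assuming $a_{i} \leq b_{i}$ for all $1 \leq i \leq n$, I take the decomposition $[a_{1}, \dots, a_{n}] = [a_{1}] \circ \dots \circ [a_{n}]$ into singleton (hence non-empty) words. Since $\gamma([a_{i}]) = a_{i} \leq b_{i}$ for each $i$, this decomposition witnesses $[a_{1}, \dots, a_{n}] \sqleq [b_{1}, \dots, b_{n}]$.

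For the ``only if'' direction, suppose $[a_{1}, \dots, a_{n}] \sqleq [b_{1}, \dots, b_{n}]$ in $\FreeSem{\alg{S}}$. By definition there are non-empty words $u_{1}, \dots, u_{n} \in \FreeSem{\alg{S}}$ with $[a_{1}, \dots, a_{n}] = u_{1} \circ \dots \circ u_{n}$ and $\gamma(u_{i}) \leq b_{i}$ for each $i$. The concatenation $u_{1} \circ \dots \circ u_{n}$ has length $n$ and is a concatenation of $n$ words each of length at least one, so each $u_{i}$ has length exactly one; comparing letters gives $u_{i} = [a_{i}]$, whence $a_{i} = \gamma(u_{i}) \leq b_{i}$ for all $1 \leq i \leq n$.

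There is no real obstacle here: the only point worth isolating is the observation that in the semigroup setting the decomposition blocks are forced to be non-empty, which together with the length bound of the preceding fact pins each block down to a single letter. If anything, I would simply remark that this fact is the semigroup analogue of the earlier statement that $[a_{1}, \dots, a_{n}] \sqleq [b_{1}, \dots, b_{n}] = v$ holds in $\FreeMon{\alg{M}}$, for $v$ a $\1$-reduced word, exactly when $a_{i} \leq b_{i}$ for all $i$, the difference being that in the semigroup case no reducedness hypothesis is needed because empty blocks are unavailable from the outset.
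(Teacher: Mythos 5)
Your proof is correct and follows exactly the argument the paper has in mind (the paper states this fact without proof, as an immediate consequence of the definition of $\sqleq$ on $\FreeSem{\alg{S}}$ together with the observation that all blocks in a decomposition are non-empty): singletons witness the ``if'' direction, and in the ``only if'' direction the $n$ non-empty blocks of a length-$n$ word must each be a single letter, giving $a_{i} = \gamma(u_{i}) \leq b_{i}$. No gaps.
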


  The preorder of $\FreeSem{\alg{S}}$ is therefore always antisymmetric, while the preorder of~$\FreeMon{\alg{M}}$ is never antisymmetric. In other words, there is no need to take a partially ordered quotient of $\FreeSem{\alg{S}}$. This difference notwithstanding, the above proofs for $\FreeMon{\alg{M}}$ go through for $\FreeSem{\alg{S}}$ if we drop all reference to multiplicative units.

  If~the posemigroup $\alg{S}$ is commutative, we may replace the free semigroup in the definition of $\FreeSem{\alg{S}}$ by the free commutative semigroup. This results in the commutative nuclear posemigroup $\CFreeSem{\alg{S}}$. The following facts again imply that the preorder on $\CFreeSem{\alg{S}}$ is always antisymmetric.

\begin{fact}
  If $[a_{1}, \dots, a_{m}] \sqleq [b_{1}, \dots, b_{n}]$ in $\CFreeSem{\alg{S}}$, then $m \geq n$.
\end{fact}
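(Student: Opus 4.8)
The plan is to simply unfold the definition of $\sqleq$ on $\CFreeSem{\alg{S}}$, exactly as one would for $\FreeSem{\alg{S}}$, and then count letters. By definition, $[a_{1}, \dots, a_{m}] \sqleq [b_{1}, \dots, b_{n}]$ holds precisely when there are words $u_{1}, \dots, u_{n} \in \CFreeSem{\alg{S}}$ such that $[a_{1}, \dots, a_{m}] = u_{1} \circ \dots \circ u_{n}$ and $\gamma(u_{i}) \leq b_{i}$ for each $i$. Crucially, as in the posemigroup case generally, the words $u_{i}$ occurring in such a decomposition are required to be \emph{non-empty}, since $\CFreeSem{\alg{S}}$ is the free \emph{commutative semigroup} over $\alg{S}$ and contains no empty word.

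Next I would observe that the total number of letters (counted with multiplicity) is additive under $\circ$ in the free commutative semigroup: although an element of $\CFreeSem{\alg{S}}$ is a multiset rather than a sequence, its cardinality is still well-defined, and $u_{1} \circ \dots \circ u_{n}$ is the multiset-union of $u_{1}, \dots, u_{n}$, so its cardinality is the sum of the cardinalities of the $u_{i}$. Hence $m = |u_{1}| + \dots + |u_{n}|$. Since each $u_{i}$ is non-empty, $|u_{i}| \geq 1$, and therefore $m = |u_{1}| + \dots + |u_{n}| \geq n$, as desired.

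There is essentially no obstacle here: the argument is the same short counting argument that establishes the corresponding fact for $\FreeSem{\alg{S}}$, and the only point worth checking is that passing from sequences to multisets does not disturb the additivity of length, which it does not. (If one wishes, this can be phrased uniformly by noting that the quotient map from $\FreeSem{\alg{S}}$ onto $\CFreeSem{\alg{S}}$ preserves length and sends the preorder $\sqleq$ onto the preorder $\sqleq$, so the inequality in $\CFreeSem{\alg{S}}$ lifts to one in $\FreeSem{\alg{S}}$ between words of the same lengths.)
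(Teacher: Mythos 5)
Your argument is correct and is exactly the immediate counting justification the paper has in mind: it states this fact without proof, since unfolding the definition of $\sqleq$ gives a decomposition of $[a_{1},\dots,a_{m}]$ into $n$ necessarily non-empty words of $\CFreeSem{\alg{S}}$, and additivity of multiset cardinality under $\circ$ yields $m \geq n$. Your remark that only non-emptiness of the $u_{i}$ (the key difference from the pomonoid case) and length-additivity for multisets are needed matches the paper's intent, so there is nothing to add.
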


\begin{fact}
  $[a_{1}, \dots, a_{n}] \sqleq [b_{1}, \dots, b_{n}]$ in $\CFreeSem{\alg{S}}$ if and only if $a_{i} \leq c_{i}$ for all $1 \leq i \leq n$ for some permutation $[c_{1}, \dots, c_{n}]$ of $[b_{1}, \dots, b_{n}]$.
\end{fact}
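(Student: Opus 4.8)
The plan is to establish both implications directly, with the nontrivial direction resting on the length Fact proved immediately above for $\CFreeSem{\alg{S}}$ (namely that $[a_{1}, \dots, a_{m}] \sqleq [b_{1}, \dots, b_{n}]$ in $\CFreeSem{\alg{S}}$ forces $m \geq n$) together with the fact that in the semigroup construction the blocks of a witnessing decomposition are always non-empty.

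For the backward implication, suppose $a_{i} \leq c_{i}$ for all $i$, where $(c_{1}, \dots, c_{n})$ is a permutation of $(b_{1}, \dots, b_{n})$. Since $[b_{1}, \dots, b_{n}] = [c_{1}, \dots, c_{n}]$ as elements of the free commutative semigroup, I would use $[c_{1}, \dots, c_{n}]$ as the chosen representative in the definition of $\sqleq$ and exhibit the decomposition $[a_{1}, \dots, a_{n}] = [a_{1}] \circ \dots \circ [a_{n}]$ into the singleton blocks $u_{i} = [a_{i}]$; then $\gamma(u_{i}) = a_{i} \leq c_{i}$, so $[a_{1}, \dots, a_{n}] \sqleq [b_{1}, \dots, b_{n}]$. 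This step is essentially immediate.

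For the forward implication, suppose $[a_{1}, \dots, a_{n}] \sqleq [b_{1}, \dots, b_{n}]$, witnessed by non-empty words $u_{1}, \dots, u_{n}$ with $[a_{1}, \dots, a_{n}] = u_{1} \circ \dots \circ u_{n}$ and $\gamma(u_{i}) \leq b_{i}$ for each $i$. The total length of $u_{1} \circ \dots \circ u_{n}$ is $n$, it is split into $n$ non-empty blocks, so each $u_{i}$ is a singleton word $[d_{i}]$ with $d_{i} = \gamma(u_{i}) \leq b_{i}$. Moreover $u_{1} \circ \dots \circ u_{n} = [d_{1}, \dots, d_{n}] = [a_{1}, \dots, a_{n}]$ in the free commutative semigroup means precisely that $(d_{1}, \dots, d_{n})$ is a permutation of $(a_{1}, \dots, a_{n})$, say $d_{i} = a_{\sigma(i)}$ for some permutation $\sigma$. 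Reindexing, $a_{j} \leq b_{\sigma^{-1}(j)}$ for all $j$, so $(c_{1}, \dots, c_{n}) \assign (b_{\sigma^{-1}(1)}, \dots, b_{\sigma^{-1}(n)})$ is a permutation of $(b_{1}, \dots, b_{n})$ with $a_{j} \leq c_{j}$ for all $j$, as required.

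I do not expect any genuine obstacle here; the only points requiring a little care are the bookkeeping of the permutation indices and the observation — already implicit in the preceding Facts — that in $\CFreeSem{\alg{S}}$ the decomposition blocks are forced to be non-empty, which combined with the length count pins each block down to a single letter.
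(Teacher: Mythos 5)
Your proof is correct and matches the paper's reasoning: the paper leaves this Fact as an immediate consequence of the definition of $\sqleq$ in $\CFreeSem{\alg{S}}$ (non-empty blocks plus the length count force every block of the witnessing decomposition to be a singleton), which is exactly the argument you spell out, including the permutation bookkeeping.
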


  The free nuclear preimage for posemigroups maps a posemigroup $\alg{S}$ to $\FreeSem{\alg{S}}$ and maps a homomorphism of posemigroups $h\colon \alg{S} \to \alg{T}$ to $\FreeSem{h}\colon \FreeSem{\alg{S}} \to \FreeSem{\alg{T}}$ such that $\FreeSem{h}([a_{1}, \dots, a_{n}) = [h(a_{1}), \dots, h(a_{n})]$. The unit and counit are again defined as $\unit{\alg{S}}\colon a \mapsto [a]$ and $\counit{\langle \alg{T}, \delta \rangle}\colon [a_{1}, \dots, a_{n}] \mapsto a_{1} \cdot \ldots \cdot a_{n}$. In the commutative case we replace the free semigroups by the free commutative semigroups.

\begin{theorem}
  Let $\alg{S}$ be a (commutative) posemigroup. Then $\FreeSem{\alg{S}}$ ($\CFreeSem{\alg{S}}$) is a (commutative) nuclear posemigroup. Its nuclear image is isomorphic to $\alg{S}$ via the map~$a \mapsto [a]$.
\end{theorem}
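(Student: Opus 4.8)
\textit{Approach.} The statement is the posemigroup counterpart of the theorems already proved for $\FreeMon{\alg{M}}$, so the plan is simply to transcribe the relevant Facts above, deleting every reference to the empty word and to the multiplicative unit, and to check that the restriction to \emph{non-empty} words in the witnessing decompositions never obstructs any of those arguments. Concretely I would verify, in order: (1) $\sqleq$ is a preorder on $\FreeSem{S}$; (2) $\circ$ and $[\gamma]$ are isotone with respect to $\sqleq$; (3) $[\gamma]$ is a nucleus; (4) $\sqleq$ is antisymmetric, so that $\langle \FreeSem{S}, \sqleq, \circ, [\gamma]\rangle$ is already a nuclear posemigroup with no skeleton quotient needed; and (5) the nuclear image of $[\gamma]$ is isomorphic to $\alg{S}$ via $a \mapsto [a]$.

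\textit{The preorder and the nucleus.} Reflexivity holds because the decomposition $[a_1,\dots,a_n] = [a_1]\circ\dots\circ[a_n]$ into singletons is a legitimate witness (every block is non-empty). For transitivity, given $u \sqleq v \sqleq w$ with $v = [b_1,\dots,b_m]$, $w = [c_1,\dots,c_n]$ and witnessing decompositions $u = u_1\circ\dots\circ u_m$, $v = v_1\circ\dots\circ v_n$, I would coarsen the first decomposition along the second exactly as in the proof for $\FreeMon{\alg{M}}$: to $v_i = [b_p,\dots,b_q]$ assign $u'_i = u_p\circ\dots\circ u_q$, so that $u = u'_1\circ\dots\circ u'_n$ and $\gamma(u'_i) = \gamma(u_p)\cdot\ldots\cdot\gamma(u_q) \leq b_p\cdot\ldots\cdot b_q = \gamma(v_i) \leq c_i$; since each $u'_i$ is a nonempty concatenation of nonempty $u_j$'s, no appeal to units is required. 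Isotonicity of $\circ$ (concatenate witnessing decompositions) and of $[\gamma]$ (if $u \sqleq v = [b_1,\dots,b_n]$ then $[\gamma(u)] = [\gamma(u_1)\cdot\ldots\cdot\gamma(u_n)] \sqleq [b_1\cdot\ldots\cdot b_n] = [\gamma(v)]$) carry over verbatim. That $[\gamma]$ is a nucleus follows as before: it is increasing because the single nonempty block $u = u_1$ witnesses $u \sqleq [\gamma(u)]$; it is idempotent because $[\gamma([a])] = [a]$; and $[\gamma](u)\circ[\gamma](v) = [\gamma(u),\gamma(v)] \sqleq [\gamma(u)\cdot\gamma(v)] = [\gamma(u\circ v)]$ using that $\gamma$ is a semigroup homomorphism. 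None of these steps mentions $\emptyword$.

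\textit{Antisymmetry and the nuclear image.} Here the semigroup case diverges from — and is easier than — the monoid case. By the two Facts recorded above for $\FreeSem{\alg{S}}$ (and their commutative analogues for $\CFreeSem{\alg{S}}$), $u \sqleq v$ forces $\mathrm{length}(u) \geq \mathrm{length}(v)$; hence $u \sqleq v \sqleq u$ forces equal lengths, and then $a_i \leq b_i \leq a_i$ for all $i$, so $u = v$ (respectively, in $\CFreeSem{\alg{S}}$, $u$ is a permutation of $v$, i.e.\ the same element). Thus $\sqleq$ is a partial order and no skeleton quotient is needed. Finally, the range of $[\gamma]$ is exactly the set of singleton words $[a]$, $a \in \alg{S}$; since $[a] \sqleq [b] \iff \gamma([a]) \leq b \iff a \leq b$, the map $a \mapsto [a]$ is an order isomorphism onto $(\FreeSem{\alg{S}})_{[\gamma]}$, and it is a semigroup homomorphism because $[a] \circ_{[\gamma]} [b] = [\gamma([a]\circ[b])] = [a\cdot b]$. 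The commutative case is identical, reading $[a_1,\dots,a_n]$ up to permutation throughout. I do not anticipate a genuine obstacle: the only point needing care is confirming that forbidding empty blocks never spoils a witnessing decomposition, and the observations above show it does not — if anything it makes the preorder antisymmetric outright.
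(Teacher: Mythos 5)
Your proposal is correct and follows essentially the same route as the paper, which proves this theorem by observing that the proofs of the preceding facts for $\FreeMon{\alg{M}}$ go through verbatim once all reference to the multiplicative unit (and to empty blocks in witnessing decompositions) is dropped, and that the two length-comparison facts make $\sqleq$ antisymmetric so that no skeleton quotient is needed. Your treatment of the commutative case is as terse as the paper's own (strictly, antisymmetry of $\CFreeSem{\alg{S}}$ needs the small observation that mutual componentwise domination up to permutations of a finite multiset forces equality, via iterating the composed permutation), but this is the same level of detail the paper itself leaves implicit.
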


\begin{theorem}
  The free nuclear preimage functor on (commutative) posemigroups is left adjoint to the nuclear image functor on (commutative) nuclear posemigroups. The unit and counit are $\unitmap$ and $\counitmap$.
\end{theorem}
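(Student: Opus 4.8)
The plan is to mirror the proof of the corresponding theorem for pomonoids essentially verbatim, simply dropping every reference to multiplicative units; by the facts recorded above, the preorder $\sqleq$ on $\FreeSem{\alg{S}}$ (and on $\CFreeSem{\alg{S}}$) is already antisymmetric, so unlike in the monoidal case there is no skeleton to form and the maps below act directly on words. First I would check that $\alg{S} \mapsto \FreeSem{\alg{S}}$ together with $h \mapsto \FreeSem{h}$ is a functor: functoriality is immediate from the letterwise formula for $\FreeSem{h}$, and $\FreeSem{h}$ is a homomorphism of nuclear posemigroups because applying $h$ to a decomposition witnessing $u \sqleq v$ yields a decomposition witnessing $\FreeSem{h}(u) \sqleq \FreeSem{h}(v)$, while $\FreeSem{h} \circ [\gamma] = [\gamma] \circ \FreeSem{h}$ holds on the nose. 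The commutative variant is identical once the free semigroup is replaced by the free commutative semigroup.

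Next I would verify that $\unitmap$ and $\counitmap$ are natural transformations. The one point worth spelling out is that the counit $\counit{\pair{\alg{T}}{\delta}}\colon \FreeSem{(\alg{T}_{\delta})} \to \pair{\alg{T}}{\delta}$, which sends a word $[a_{1}, \dots, a_{n}]$ to the product $a_{1} \cdot \ldots \cdot a_{n}$ computed in $\alg{T}$, really is a homomorphism of nuclear posemigroups. It is multiplicative by construction. It is monotone because the letters of a word in $\FreeSem{(\alg{T}_{\delta})}$ lie in $\alg{T}_{\delta}$, and the product of the letters of a subword computed in $\alg{T}_{\delta}$ equals $\delta$ applied to their product in $\alg{T}$ and hence dominates that product; thus any decomposition witnessing $u \sqleq v$ in $\FreeSem{(\alg{T}_{\delta})}$ already forces the corresponding inequality between the $\alg{T}$-products. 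And it intertwines the two nuclei because the value of $\counit{\pair{\alg{T}}{\delta}}$ on $[\gamma](u)$ is precisely $\delta$ applied to its value on $u$. Naturality of $\unitmap$ and $\counitmap$ then follows by unwinding the defining formulas, exactly as in the monoidal case.

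Finally I would check the two triangle identities. For a posemigroup $\alg{S}$, the composite $\counit{\pair{\FreeSem{\alg{S}}}{[\gamma]}} \circ \FreeSem{\unit{\alg{S}}}$ sends a word $[a_{1}, \dots, a_{n}]$ first to $[[a_{1}], \dots, [a_{n}]]$ and then to $[a_{1}] \circ \dots \circ [a_{n}] = [a_{1}, \dots, a_{n}]$, so it is the identity on $\FreeSem{\alg{S}}$. For a nuclear posemigroup $\pair{\alg{T}}{\delta}$, the restriction of $\counit{\pair{\alg{T}}{\delta}}$ to the nuclear image, composed with $\unit{\alg{T}_{\delta}}$, sends $a \in \alg{T}_{\delta}$ first to $[a]$ and then back to $a$, so it too is the identity. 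This establishes the adjunction, and the commutative case is obtained identically by replacing the free semigroups with free commutative semigroups.

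I do not expect a genuine obstacle here: the argument is almost a word-for-word transcription of the pomonoidal one, modulo the omission of units. The only spot that calls for a little attention — and the only place where the bookkeeping differs from the monoidal setting — is the verification in the second step that the counit is monotone and intertwines the nuclei, which hinges on consistently distinguishing the product of the letters of a word taken in $\alg{T}$ from the same product taken in $\alg{T}_{\delta}$, and on recalling that the two are related by a single application of $\delta$.
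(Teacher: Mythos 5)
Your proposal is correct and follows essentially the same route as the paper: the paper gives no separate argument for the posemigroup case, remarking only that the pomonoid proofs go through once all reference to units is dropped, and its pomonoid proof verifies exactly the two triangle identities you check (with functoriality and naturality left as routine). Your extra care in checking that the counit is monotone and intertwines the nuclei, via the observation that the product in $\alg{T}_{\delta}$ is $\delta$ applied to the product in $\alg{T}$, fills in a detail the paper leaves implicit and is accurate.
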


  The construction described in this section bears some similarity to the construction of Baer~\cite{baer49}, which Dvure\v{c}enskij and Vetterlein~\cite{dvurecenskij+vetterlein01b} used in their investigation of pseudoeffect algebras and pseudo MV-algebras. Baer\label{page: baer} in effect embeds a partial semigroup into a total semigroup which is free with respect to preserving all existing equalities of the form $a \cdot b = c$, whereas we embed a total posemigroup into another total posemigroup which is free with respect to preserving all inequalities of the form $a \cdot b \leq c$. The construction of Baer allows for finer control over products, at the cost of having to rely on divisibility to define a partial order from the semigroup structure. The construction of the free nuclear preimage does not rely on divisibility, but it gives up virtually all control over products: no equalities of the form $a \cdot b = c$ are preserved in the integral case.

  In the remainder of this section, we state some facts which will not be needed in the following, but which are nonetheless worth observing: $\FreeMon{\alg{M}}$ ($\CFreeRedMon{\alg{M}}$) always satisfies a limited form of cancellativity, but it almost never satisfies divisibility. We also describe the free unital nuclear preimage of the multiplicative reduct of the two-element Boolean algebra and the three-element {\L}ukasiewicz chain.

\begin{fact}
  If $w \sqleq u \circ w$ in $\FreeMon{\alg{M}}$ ($\CFreeRedMon{\alg{M}}$ for $\alg{M}$ commutative), then~$\emptyword \sqleq u$.
\end{fact}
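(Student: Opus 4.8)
The plan is to prove, equivalently, that for \emph{arbitrary} words $u, w$ over $\alg{M}$ satisfying $w \sqleq u \circ w$ we have $\emptyword \sqleq u$ --- that is, every letter of $u$ lies above $\1$ in $\alg{M}$. (Recall that for $u = [c_1, \dots, c_k]$ the condition $\emptyword \sqleq u$ just says $\1 \leq c_i$ for each $i$, so this is indeed a reformulation, and we may argue with word representatives in $\FreeMon{\alg{M}}$ per the paper's convention.) The argument will be a strong induction on the length $|w|$ of the word $w$, using only the first three Facts above: the characterization $u \sqleq [a] \iff \gamma(u) \leq a$, the fact that $\emptyword \sqleq u \circ v$ forces $\emptyword \sqleq u$ and $\emptyword \sqleq v$, and the isotonicity of $\circ$ and $[\gamma]$ with respect to $\sqleq$.

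For the induction step, if $w = \emptyword$ (or $u = \emptyword$) there is nothing to prove, so suppose $w = [a_1, \dots, a_n]$ and $u = [c_1, \dots, c_k]$ with $n, k \geq 1$. Unravelling the definition of $\sqleq$ for $w \sqleq u \circ w = [c_1, \dots, c_k, a_1, \dots, a_n]$ yields a decomposition $w = w_1 \circ \dots \circ w_k \circ w_{k+1} \circ \dots \circ w_{k+n}$ with $\gamma(w_i) \leq c_i$ for $1 \leq i \leq k$ and $\gamma(w_{k+j}) \leq a_j$ for $1 \leq j \leq n$. Write $v := w_1 \circ \dots \circ w_k$ and $v' := w_{k+1} \circ \dots \circ w_{k+n}$, so that $w = v \circ v'$, and note that the second batch of inequalities says precisely $v' \sqleq [a_1, \dots, a_n] = w = v \circ v'$. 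If $v = \emptyword$, then each $w_i$ with $i \leq k$ is empty, so $\1 = \gamma(w_i) \leq c_i$ and we are done. Otherwise $|v'| = |w| - |v| < |w|$, and applying the induction hypothesis to the pair $(v', v)$ in place of $(w, u)$ --- legitimate since $v' \sqleq v \circ v'$ --- gives $\emptyword \sqleq v = w_1 \circ \dots \circ w_k$. Iterating the second Fact gives $\emptyword \sqleq w_i$ for each $i \leq k$; then isotonicity of $[\gamma]$ together with the first Fact yields $\1 = \gamma(\emptyword) \leq \gamma(w_i) \leq c_i$ for each $i \leq k$, so $\emptyword \sqleq u$, as desired.

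The only point requiring care is arranging the recursion so that it strictly decreases $|w|$: this is exactly what forces the case split on whether $v$ is empty, the empty case being immediate and the non-empty case feeding a strictly shorter word back into the hypothesis (morally, this is how one recovers the ``missing'' right-cancellation of $w$ one block at a time). I do not anticipate a genuine obstacle. Finally, the commutative variant is proved by literally the same argument carried out inside $\CFreeMon{\alg{M}}$ (hence inside $\CFreeRedMon{\alg{M}}$), reading $\circ$ as commutative concatenation and $|{\cdot}|$ as multiset size; no step of the argument uses non-commutativity.
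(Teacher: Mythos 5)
Your proof is correct; the difference from the paper's argument is one of bookkeeping, but it is a genuine reorganization of the induction. The paper inducts on the \emph{sum} $|u|+|w|$ and peels off the first letter of $u$: writing $u = [a]\circ u'$, it extracts $w = w_1 \circ w_2$ with $w_1 \sqleq [a]$ and $w_2 \sqleq u' \circ w_1 \circ w_2$, then splits on whether $w_1$ is empty and recurses on $w_2 \sqleq (u'\circ w_1)\circ w_2$. You instead induct on $|w|$ alone and make a single cut at the boundary between $u$ and $w$ inside $u\circ w$: the block decomposition yields $w = v\circ v'$ with the blocks of $v$ lying under the letters of $u$ and $v' \sqleq w = v\circ v'$, so the whole suffix recurses in one step, and a final transfer ($\emptyword \sqleq w_i$ and $\gamma(w_i)\leq c_i$ give $\1 \leq c_i$) converts $\emptyword \sqleq v$ into $\emptyword \sqleq u$. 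Your version buys a weaker induction measure and a single case split (is $v$ empty?), at the price of that explicit transfer step; the paper's version obtains $\emptyword \sqleq [a]$ letter by letter but must carry the combined word $u'\circ w_1$ into the recursive call and handle two subcases. All the facts you invoke are available at that point in the paper, your reading of $\emptyword \sqleq [c_1,\dots,c_k]$ as $\1\leq c_i$ for all $i$ is the correct unravelling of the definition, working with word representatives is exactly the paper's stated convention, and indeed no step uses noncommutativity, so the same argument covers $\CFreeRedMon{\alg{M}}$.
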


\begin{proof}
  We prove the claim by induction over the sum of lengths of $u$ and $w$. It holds trivially if $u = \emptyword$ or $w = \emptyword$. Suppose therefore that $u = [a] \circ u'$. Then $w_{1} \sqleq [a]$ and $w_{2} \sqleq u' \circ w = u' \circ w_{1} \circ w_{2}$ for some $w_{1} \circ w_{2} = w$. If $w_{1} = \emptyword$, then $\emptyword \sqleq [a]$ and $w_{2} \sqleq u' \circ \emptyword \circ w_{2} \sqleq u' \circ w_{2}$, hence $\emptyword \sqleq u'$ by induction and $\emptyword = \emptyword \circ \emptyword \sqleq [a] \circ u' = u$. On the other hand, if $w_{1}$ is non-empty, then $w_{2} \sqleq u' \circ w_{1} \circ w_{2}$ implies $\emptyword \sqleq u' \circ w_{1}$ by induction, so $\emptyword \sqleq u'$ and $\emptyword \sqleq w_{1}$. Since $w_{1} \sqleq [a]$, this implies that $\emptyword \sqleq [a]$ and $\emptyword \sqleq [a] \circ u' = u$.
\end{proof}

  We shall call a pomonoid $\alg{M}$ \emph{divisible} if $a \leq b$ for $a, b \in \alg{M}$ implies that $b \cdot c = a = d \cdot b$ for some $c, d \in \alg{M}$.\footnote{Often, the further condition $c, d \leq \1$ is required in the definition of divisibility. Since our goal is to show that free unital nuclear preimages are almost never divisible, we use the weaker definition.} The pomonoid $\FreeMon{\alg{M}}$ ($\CFreeMon{\alg{M}}$ for $\alg{M}$ commutative) is never divisible, on account of the empty word.

\begin{fact}
  $\FreeUMon{\alg{M}}$ is divisible if and only if the partial order of $\alg{M}$ is the equality relation or $\alg{M}$ is the two-element meet semilattice. $\CFreeUMon{\alg{M}}$ for a commutative pomonoid $\alg{M}$ is divisible if and only if the partial order of $\alg{M}$ is the equality relation, except possibly for a single comparable pair $a < \1$.
\end{fact}

\begin{proof}
  We first show that if $\FreeUMon{\alg{M}}$ or $\CFreeUMon{\alg{M}}$ is divisible, then $\1$ is a maximal element of $\alg{M}$. If $\1$ is not maximal in $\alg{M}$, then $\1 < a$ for some $a \in \alg{M}$, hence by divisibility $[\1] = u \circ [a]$ in $\FreeUMon{\alg{M}}$ for some word $u$. But $\emptyword \sqleq [\1] \sqleq u \circ [a]$ implies that $\emptyword \sqleq u$, so $[a] \sqleq \emptyword \circ [a] \sqleq u \circ [a] \sqleq [\1]$ and $a \leq \1$, contradicting $\1 < a$.

  Secondly, we show that if $\FreeUMon{\alg{M}}$ or $\CFreeUMon{\alg{M}}$ is divisible, then $a < b$ in $\alg{M}$ implies that $\1 \leq b$ and $a \cdot b = a = b \cdot a$. If $a < b$, then $[a] \sqleq [b]$, so $[a] \sqleq u \circ [b]$ and $u \circ [b] \sqleq [a]$ for some word $u$. Then either $\emptyword \sqleq u$ and $[a] \sqleq [b]$, or $[a] \sqleq u_{1}$ and $\emptyword \sqleq u_{2} \circ [b]$ for some decomposition $u_{1} \circ u_{2} = u$. In the former case, $u \circ [b] \sqleq [a]$ implies $[b] \sqleq [a]$, which contradicts $a < b$. In the latter case, $\emptyword \sqleq u_{2}$ and $\emptyword \sqleq [b]$, so $[a] \circ [b] \sqleq u_{1} \circ [b] \sqleq u_{1} \circ u_{2} \circ [b] \sqleq [a]$. Thus $a \cdot b \leq a$ and $\1 \leq b$, hence also $a \leq a \cdot b$ and $a \cdot b = b$. Similarly, we obtain that $b \cdot a = b$.

  It follows that every pair of elements $\alg{M}$ not equal to $\1$ is incomparable, and every element other than $\1$ is either incomparable to $\1$ or strictly below $\1$. If $a < \1$ and $b < \1$ for $a \neq b$, then $a \cdot b \leq a$ and $a \cdot b \leq b$, so $a = a \cdot b = b$. The partial order of $\alg{M}$ is therefore the equality relation, except possibly for a single element $a$ such that $a < 1$. Suppose that such an element $a$ exists. Then $u \sqleq v$ for non-empty words $u$, $v$ if and only if $v$ is obtained from $u$ by removing some instances of $[a]$ or $[\1]$. In the commutative case, this results in a divisible order. In the non-commutative case, $[b] \circ [a] \sqleq [b]$ for each $b \in \alg{M}$, so $[b] \circ [a] = u \circ [b]$ for some non-empty word $u$. If $b \neq \1$, it follows that $u = [a] = [b]$.
\end{proof}

  Finally, let us explicitly describe two of the simplest examples of free (commutative) unital nuclear preimages. If $\alg{M}$ is the two-element unital meet semilattice, then $\FreeUMon{\alg{M}} = \CFreeUMon{\alg{M}}$ is a chain of the form
\begin{align*}
  \cdots < [a] \circ [a] \circ [a] < [a] \circ [a] < [a] < [\1].
\end{align*}
  If $\alg{M}$ is the multiplicative reduct of the three-element {\L}ukasiewicz chain, i.e. the chain $b < a < \1$ with $a \cdot a = b$ (hence $x \cdot b = b = b \cdot x$ for each $x \in \alg{M}$), then every element of $\CFreeUMon{\alg{M}}$ has the form $[a]^{i} \circ [b]^{j}$ for some $i, j \in \omega$. The monoid reduct of $\CFreeUMon{\alg{M}}$ is therefore isomorphic to $\mathbb{N}^{2}$, where $\mathbb{N}$ is the additive monoid of non-negative integers. The partial order on $\CFreeUMon{\alg{M}}$ is then defined by the following equivalence:
\begin{align*}
  [a]^{i} \circ [b]^{j} \sqleq [a]^{m} \circ [b]^{n} \iff m \leq i \text{ and } 2n \leq 2j + (i-m),
\end{align*}
  which essentially states that two instances of $[a]$ may be traded for a single instance of $[b]$ when going up in the order. As for $\FreeUMon{\alg{M}}$, already for this three-element pomonoid it does not seem to admit a description which would be substantially simpler than just restating the definition of $\FreeUMon{\alg{M}}$. That is, one obtains a partial order on the set of words on a two-element alphabet consisting of $a, b$ such that when going up, one can either erase a letter, or trade a single $[b]$ for a single $[a]$, or trade two instances of $[a]$ for a single $[b]$.

\section{Free nuclear preimages of \texorpdfstring{s$\ell$-monoids}{sl-monoids}}

  Like the free partially ordered nuclear preimage functor, the free \emph{semilattice-ordered} nuclear preimage functor is the left adjoint of a nuclear image functor, namely the functor from the category of nuclear s$\ell$-monoids $\SlmonStar$ to the \mbox{category} of s$\ell$-monoids $\Slmon$. We show that the free semilattice-ordered nuclear preimage can be obtained from the free partially ordered nuclear preimage as the s$\ell$-monoid of non-empty finitely generated downsets. This relies on the observation that the free partially ordered nuclear pre\-image of an s$\ell$-monoid belongs to a category inter\-mediate between nuclear pomonoids and nuclear s$\ell$-monoids.

  A nucleus $\gamma$ on a pomonoid $\alg{M}$ will be called an \emph{s$\ell$-nucleus} if its nuclear \mbox{image} $\alg{M}_{\gamma}$ is a join semi\-lattice. We use $x \veegamma y$ for $x, y \in \alg{M}_{\gamma}$ to denote joins in $\alg{M}_{\gamma}$. This extends to a total operation on $\alg{M}$: $x \veegamma y \assign \gamma(x) \veegamma \gamma(y)$ for arbitrary $x, y \in \alg{M}$. (Note that $x \veegamma y$ should not be interpreted as $\gamma (x \vee y)$ here, since $x \vee y$ need not exist.) An \emph{s$\ell$-nuclear pomonoid} is a pomonoid $\alg{M}$ equipped with an s$\ell$-nucleus $\gamma$ and with the operation $x \veegamma y$. Their homomorphisms are homomorphisms of nuclear pomonoids which preserve~$\veegamma$. The category of s$\ell$-nuclear pomonoids and their homomorphisms will be denoted $\PomonSlStar$.

  The nuclear image functor $\SlmonStar \to \Slmon$ is the composition of two functors: the forgetful functor $\SlmonStar \to \PomonSlStar$ and the nuclear image functor $\PomonSlStar \to \Slmon$. To find the left adjoint of the nuclear image functor $\SlmonStar \to \Slmon$, it therefore suffices to find the left adjoints of these two functors and compose them.

  The left adjoint of the nuclear image functor $\PomonSlStar \to \Slmon$ co\-incides with the restriction to s$\ell$-monoids of the free nuclear preimage functor for pomonoids. This is because the free partially ordered nuclear preimage functor sends (homomorphisms of) s$\ell$-monoids to (homomorphisms of) s$\ell$-nuclear pomonoids, and moreover if $\langle \alg{N}, \delta, \vee_{\delta} \rangle$ is an s$\ell$-nuclear pomonoid, then the counit $\counit{\langle \alg{N}, \delta \rangle}\colon \FreeMon{\alg{N}_{\delta}} \to \langle \alg{N}, \delta \rangle$ is a homomorphism of s$\ell$-nuclear pomonoids. \emph{Mutatis mutandis} the same holds for the unital, commutative, and unital commutative free nuclear preimages.  

  It therefore only remains to find the left adjoint of the forgetful functor from nuclear s$\ell$-monoids to s$\ell$-nuclear pomonoids. Let us start with the more modest task of describing the left adjoint of the forgetful functor from s$\ell$-monoids to pomonoids.

  A downset $X$ of a poset $P$ is called \emph{finitely generated} if it is the downward closure of a finite set of points. That is, $X = \below \{ x_{1}, \dots, x_{m} \}$, where $\below A$ denotes the downward closure of a set $A \subseteq P$. The non-empty finitely generated downsets of $P$ yield a join semilattice $\Id P$ where binary joins are unions.

  It is common to call a join semilattice distributive \label{page: distributive} if $a \leq b \vee c$ implies that there are $b' \leq b$ and $c' \leq c$ such that $a = b' \vee c'$. This definition, however, forces the semilattice to be down-directed. For semilattices which need not be down-directed, the following definition of Katri\v{n}\'{a}k~\cite{katrinak68} is more appropriate. We call a join semilattice \emph{distributive} if $a \leq b \vee c$ implies that either $a \leq b$ or $a \leq c$ or there are $b' \leq b$ and $c' \leq c$ such that $a = b' \vee c'$. The relation between the two definitions is simple: a join semilattice is distributive in the stronger sense if and only if it is down-directed and distributive in the weaker sense. If the semilattice is a lattice, both of these definitions agree with the usual definition of a distributive lattice.

\begin{fact}
  $\Id P$ is a distributive semilattice.
\end{fact}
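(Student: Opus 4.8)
The plan is to argue directly from the concrete description of $\Id P$: its elements are the non-empty finitely generated downsets of $P$, ordered by inclusion, and the binary join of two such downsets is their union. So I would start by fixing $X, Y, Z \in \Id P$ with $X \subseteq Y \cup Z$, the goal being to establish one of the three alternatives in Katri\v{n}\'{a}k's definition of distributivity: $X \subseteq Y$, or $X \subseteq Z$, or $X = Y' \cup Z'$ for some $Y', Z' \in \Id P$ with $Y' \subseteq Y$ and $Z' \subseteq Z$.

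The crux is to exploit a finite generating set of $X$ rather than trying to intersect $X$ with $Y$ and $Z$ directly. Write $X = \below \{ x_1, \dots, x_m \}$. Since each generator $x_i$ lies in $Y \cup Z$, I would choose for each $i$ whether to place it in $Y$ or in $Z$, obtaining a partition $\{ 1, \dots, m \} = I \cup J$ with $x_i \in Y$ for $i \in I$ and $x_i \in Z$ for $i \in J$. If $J$ is empty then $X = \below \{ x_i : i \in I \} \subseteq Y$, because $Y$ is a downset; symmetrically, if $I$ is empty then $X \subseteq Z$. In the remaining case both $I$ and $J$ are non-empty, and I set $Y' \assign \below \{ x_i : i \in I \}$ and $Z' \assign \below \{ x_i : i \in J \}$. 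These are non-empty finitely generated downsets, hence elements of $\Id P$; they satisfy $Y' \subseteq Y$ and $Z' \subseteq Z$ since $Y$ and $Z$ are downsets; and $Y' \cup Z' = \below \{ x_i : i \in I \cup J \} = \below \{ x_1, \dots, x_m \} = X$, which is precisely the third alternative.

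I do not expect any genuine obstacle here. The only subtlety worth noting is the reason the argument is phrased in terms of generators: the naive candidates $Y' = X \cap Y$ and $Z' = X \cap Z$ are downsets whose union is $X$, but they need not be finitely generated even though $X$ is, so they need not belong to $\Id P$. Partitioning the generators of $X$ sidesteps this, since for each $i$ the whole principal downset $\below x_i$ lies inside $Y$ or inside $Z$.
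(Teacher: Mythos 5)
Your argument is correct: the paper states this fact without proof, and your verification — partitioning a finite generating set of $X$ according to whether each generator lies in $Y$ or in $Z$, with the first two alternatives of Katri\v{n}\'{a}k's definition covering the degenerate cases where one part of the partition is empty (so that the corresponding $\below\{x_i : i \in I\}$ would fail to be a non-empty downset) — is exactly the straightforward check the paper leaves implicit. Your remark about why $X \cap Y$ and $X \cap Z$ are not adequate candidates (they need not be finitely generated) is also well taken.
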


  The non-empty finitely generated downsets of a (commutative) pomonoid $\alg{M}$ moreover form a (commutative) s$\ell$-monoid $\Id \alg{M}$ with the multiplication
\begin{align*}
  X \ast Y & \assign \below (X \cdot Y).
\end{align*}
  More explicitly, multiplication in $\Id {\alg{M}}$ is defined as
\begin{align*}
  \below \{ x_1, \dots, x_m \} \ast \below \{ y_1, \dots, y_n \} \assign \below \set{x_{i} y_{j}}{1 \leq i \leq m, 1 \leq j \leq n},
\end{align*}
  If $\alg{M}$ has a multiplicative unit $\1$, then $\Id \alg{M}$ has a multiplicative unit $\below \1$, where $\below a \assign \below \{ a \}$. This map $\below$ embeds the pomonoid $\alg{M}$ into the s$\ell$-monoid $\Id \alg{M}$. In~fact, it exhibits $\Id \alg{M}$ as the free s$\ell$-monoid over the pomonoid $\alg{M}$.

\begin{fact}
  $\Id \alg{M}$ is the free s$\ell$-monoid over the pomonoid $\alg{M}$.
\end{fact}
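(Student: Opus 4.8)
The plan is to verify the relevant universal property directly. Given an s$\ell$-monoid $\alg{N}$ and a pomonoid homomorphism $f\colon \alg{M} \to \alg{N}$ into the pomonoid reduct of $\alg{N}$, we must produce a unique s$\ell$-monoid homomorphism $\bar{f}\colon \Id \alg{M} \to \alg{N}$ with $\bar{f} \circ {\below} = f$. Since $\below \{ x_{1}, \dots, x_{m} \} = \below x_{1} \cup \dots \cup \below x_{m}$ and unions are the joins of $\Id \alg{M}$, any such $\bar{f}$ is forced to satisfy
\[
  \bar{f}(\below \{ x_{1}, \dots, x_{m} \}) = f(x_{1}) \vee \dots \vee f(x_{m});
\]
this takes care of uniqueness, and reduces the task to showing that the displayed formula \emph{defines} an s$\ell$-monoid homomorphism.

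First I would check that $\bar{f}$ is well defined and monotone. If $\below \{ x_{1}, \dots, x_{m} \} \subseteq \below \{ y_{1}, \dots, y_{n} \}$, then each $x_{i}$ lies below some $y_{j}$, so $f(x_{i}) \leq f(y_{j}) \leq f(y_{1}) \vee \dots \vee f(y_{n})$ by monotonicity of $f$, whence $f(x_{1}) \vee \dots \vee f(x_{m}) \leq f(y_{1}) \vee \dots \vee f(y_{n})$. Applying this with the inclusion holding in both directions yields well-definedness, and as it stands it yields monotonicity. Preservation of joins is then immediate, since $\below \{ x_{1}, \dots, x_{m} \} \cup \below \{ y_{1}, \dots, y_{n} \} = \below \{ x_{1}, \dots, x_{m}, y_{1}, \dots, y_{n} \}$; preservation of the unit is just $\bar{f}(\below \1) = f(\1) = \1$; and $\bar{f} \circ {\below} = f$ holds by definition.

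The one step that is more than bookkeeping is preservation of multiplication. Using the explicit description of $\ast$ recalled above, $\below \{ x_{1}, \dots, x_{m} \} \ast \below \{ y_{1}, \dots, y_{n} \} = \below \set{x_{i} y_{j}}{1 \leq i \leq m,\ 1 \leq j \leq n}$, so its image under $\bar{f}$ is $\bigvee_{i,j} f(x_{i} y_{j}) = \bigvee_{i,j} f(x_{i}) f(y_{j})$, the last equality because $f$ preserves multiplication. On the other hand $\bar{f}(\below \{ x_{1}, \dots, x_{m} \}) \cdot \bar{f}(\below \{ y_{1}, \dots, y_{n} \}) = \bigl( \bigvee_{i} f(x_{i}) \bigr) \cdot \bigl( \bigvee_{j} f(y_{j}) \bigr)$, which equals $\bigvee_{i,j} f(x_{i}) f(y_{j})$ by iterating the two s$\ell$-monoid distributivity laws $a(b \vee c) = ab \vee ac$ and $(a \vee b)c = ac \vee bc$ over the finite joins involved. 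Hence $\bar{f}$ is an s$\ell$-monoid homomorphism, and the universal property is verified.

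The main, and quite mild, obstacle is thus just the careful iteration of the distributivity laws over finite joins in the multiplication step; every other part is routine manipulation of non-empty finitely generated downsets. The same argument, read with all references to the multiplicative unit deleted, covers the analogous claim for posemigroups.
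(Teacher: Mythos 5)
Your proof is correct and is essentially the paper's argument in a different but equivalent packaging: the paper exhibits the adjunction via the unit $a \mapsto \below a$ and the counit $\below \{x_{1},\dots,x_{n}\} \mapsto x_{1} \vee \dots \vee x_{n}$ and declares the triangle identities routine, while you verify the corresponding universal property directly, your extension $\bar{f}$ being exactly the counit composed with $\Id f$. The only difference is that you spell out the routine details (well-definedness, monotonicity, and the distributivity computation for products) that the paper leaves implicit.
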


\begin{proof}
  The unit of the adjunction (between the category of pomonoids and the category of s$\ell$-monoids) is the map $a \mapsto \below a$. The counit is the map $\below \{ x_{1}, \dots, x_{n} \} \mapsto x_{1} \vee \dots \vee x_{n}$. Verifying the triangle identities is straight\-forward.
\end{proof}

\begin{fact}
  Let $\gamma$ be an s$\ell$-nucleus on a pomonoid $\alg{M}$. Then the following is a nucleus on the s$\ell$-monoid~$\Id \alg{M}$:
\begin{align*}
  \gammaid(\below \{ x_1, \dots, x_n \}) & \assign \below (\gamma(x_{1}) \veegamma \dots \veegamma \gamma(x_{n})).
\end{align*}
\end{fact}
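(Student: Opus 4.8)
The plan is to check in turn that $\gammaid$ is well-defined, monotone, inflationary, idempotent, and satisfies the nucleus inequality $\gammaid(X)\ast\gammaid(Y)\subseteq\gammaid(X\ast Y)$; the first four are routine and the last is the crux. Write a generic element of $\Id\alg{M}$ as $X=\below\{x_1,\dots,x_m\}$ and set $p_X\assign\gamma(x_1)\veegamma\dots\veegamma\gamma(x_m)$, so that $\gammaid(X)=\below p_X$ is a \emph{principal} downset of $\alg{M}$, hence a member of $\Id\alg{M}$. The key remark is that $p_X$ is the join in $\alg{M}_\gamma$ of the $\gamma$-closed elements $\gamma(x_i)$, i.e.\ the least $\gamma$-closed upper bound in $\alg{M}$ of the $\gamma(x_i)$. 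From this and monotonicity of $\gamma$: if $X\subseteq Y$ then each generator $x_i$ of $X$ lies below some generator $y_j$ of $Y$, so $\gamma(x_i)\leq\gamma(y_j)\leq p_Y$; thus $p_Y$ is a $\gamma$-closed upper bound of the $\gamma(x_i)$ and therefore $p_X\leq p_Y$. Applying this with $X=Y$ (two generating sets of the same downset) gives well-definedness, and in general it gives monotonicity. Inflationarity is immediate from $x_i\leq\gamma(x_i)\leq p_X$, so $X\subseteq\below p_X=\gammaid(X)$. Idempotence follows from $\gamma(p_X)=p_X$ (as $p_X\in\alg{M}_\gamma$), since then $\gammaid(\gammaid(X))=\gammaid(\below\{p_X\})=\below(\gamma(p_X))=\below p_X=\gammaid(X)$. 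Hence $\gammaid$ is a closure operator on $\Id\alg{M}$.

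For the nucleus inequality, fix $X=\below\{x_1,\dots,x_m\}$ and $Y=\below\{y_1,\dots,y_n\}$. Using the formula for $\ast$ on finitely generated downsets, $\gammaid(X)\ast\gammaid(Y)=\below p_X\ast\below p_Y=\below(p_Xp_Y)$ while $X\ast Y=\below\{x_iy_j\mid 1\leq i\leq m,\ 1\leq j\leq n\}$, so $\gammaid(X\ast Y)=\below r$, where $r$ is the join in $\alg{M}_\gamma$ of all the $\gamma(x_iy_j)$. Since $r$ is $\gamma$-closed and $p_Xp_Y\leq\gamma(p_Xp_Y)=p_X\cdotgamma p_Y$, it suffices to prove $p_X\cdotgamma p_Y\leq r$. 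A one-line computation with the nucleus inequality for $\gamma$ — from $\gamma(x_i)\gamma(y_j)\leq\gamma(x_iy_j)$ and $x_iy_j\leq\gamma(x_i)\gamma(y_j)$ — gives $\gamma(x_i)\cdotgamma\gamma(y_j)=\gamma(\gamma(x_i)\gamma(y_j))=\gamma(x_iy_j)$, so $r$ is the join of the $\gamma(x_i)\cdotgamma\gamma(y_j)$ and the required inequality reads
\[
  \bigl(\gamma(x_1)\veegamma\dots\veegamma\gamma(x_m)\bigr)\cdotgamma\bigl(\gamma(y_1)\veegamma\dots\veegamma\gamma(y_n)\bigr)\leq r,
\]
which is exactly (the nontrivial half $\leq$ of) the distributivity of multiplication over $\veegamma$ in $\alg{M}_\gamma$.

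The main obstacle is precisely this last inequality: all the closure-operator conditions use only monotonicity of $\gamma$ and the fact that $\veegamma$ computes joins in $\alg{M}_\gamma$, whereas the nucleus inequality genuinely rests on $\cdotgamma$ distributing over $\veegamma$ in the image. When $\alg{M}$ is an s$\ell$-monoid this is immediate because $\alg{M}_\gamma$ is then an s$\ell$-monoid (Preliminaries); for a general pomonoid it is what one must ask of an s$\ell$-nucleus. A cleaner way to package the whole argument, which isolates exactly this dependency, is to factor $\gammaid=k\circ h$, where $h\colon\Id\alg{M}\to\alg{M}_\gamma$ sends $\below\{x_1,\dots,x_m\}$ to $\gamma(x_1)\veegamma\dots\veegamma\gamma(x_m)$ and $k\colon\alg{M}_\gamma\to\Id\alg{M}$ sends $a$ to $\below a$. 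Here $h$ is a homomorphism of s$\ell$-monoids — it is the composite of the functorially induced map $\Id(\gamma)\colon\Id\alg{M}\to\Id(\alg{M}_\gamma)$ (note $\gamma\colon\alg{M}\to\alg{M}_\gamma$ is a pomonoid homomorphism, since $\gamma(xy)=\gamma(\gamma(x)\gamma(y))=\gamma(x)\cdotgamma\gamma(y)$) with the counit $\Id(\alg{M}_\gamma)\to\alg{M}_\gamma$ of the free-s$\ell$-monoid adjunction, and it is this last map being a homomorphism that uses the s$\ell$-monoid structure of $\alg{M}_\gamma$ — while $k$ is monotone and satisfies $k(a)\ast k(b)=\below(ab)\subseteq\below(a\cdotgamma b)=k(a\cdotgamma b)$. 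One has $h\circ k=\mathrm{id}$ and $k\circ h\geq\mathrm{id}$, whence $\gammaid=k\circ h$ is monotone and idempotent (from $hk=\mathrm{id}$), inflationary (from $kh\geq\mathrm{id}$), and a nucleus: $\gammaid(X)\ast\gammaid(Y)=k(h(X))\ast k(h(Y))\subseteq k\bigl(h(X)\cdotgamma h(Y)\bigr)=k\bigl(h(X\ast Y)\bigr)=\gammaid(X\ast Y)$.
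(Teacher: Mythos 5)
Your main argument is essentially the paper's: its proof consists exactly of your second paragraph, namely the chain $\gammaid(X)\ast\gammaid(Y)=\below(p_Xp_Y)\subseteq\below(p_X\cdotgamma p_Y)=\below\bigveegamma\set{\gamma(x_i)\cdotgamma\gamma(y_j)}{i,j}$, followed by the observation that this last join lies below $\gammaid(X\ast Y)$ because $\gamma(x_i)\cdot\gamma(y_j)\leq\gamma(x_i\cdot y_j)$; well-definedness and the closure-operator conditions, which you verify, are left tacit there. What you add is the explicit isolation of the hypothesis behind the equality $p_X\cdotgamma p_Y=\bigveegamma\set{\gamma(x_i)\cdotgamma\gamma(y_j)}{i,j}$: this is precisely distributivity of $\cdotgamma$ over $\veegamma$ in $\alg{M}_\gamma$, which the paper uses without comment. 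You are right that this does not follow from the literal definition of an s$\ell$-nucleus (image a join semilattice): for instance, the identity nucleus on $\langle M_3,\leq,\wedge,\top\rangle$ (the five-element modular lattice with meet as multiplication and $\top$ as unit) has a join-semilattice image, yet there $\gammaid(\below\{b,c\})\ast\gammaid(\below\{a\})=\below a\not\subseteq\{\bot\}=\gammaid(\below\{b,c\}\ast\below\{a\})$ for atoms $a,b,c$; so the definition has to be read as requiring $\alg{M}_\gamma$ to be an s$\ell$-monoid, which is harmless where the Fact is applied, since the images arising in the paper are isomorphic to s$\ell$-monoids. Your alternative packaging $\gammaid=k\circ h$, with $h$ the composite of $\Id(\gamma)$ and the counit of the free s$\ell$-monoid adjunction and $k\colon a\mapsto\below a$ laxly multiplicative, is also correct and meshes with the paper's own use of $\Id$ as a left adjoint; it buys a cleaner view of exactly where the s$\ell$-monoid structure of $\alg{M}_\gamma$ enters, at the price of more machinery than the paper's one-display computation.
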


\begin{proof}
  Let $a \assign \below \{ x_1, \dots, x_m \}$ and $b \assign \below \{ y_1, \dots, y_n \}$. Then
\begin{align*}
  \gamma(a) \cdot \gamma(b) & = \below (\gamma(x_1) \veegamma \dots \veegamma \gamma(x_m)) \cdot \below (\gamma(y_1) \veegamma \dots \veegamma \gamma(y_n)) \\
  & = \below [(\gamma(x_1) \veegamma \dots \veegamma \gamma(x_m)) \cdot (\gamma(y_1) \veegamma \dots \veegamma \gamma(y_n))] \\
  & \leq \below [(\gamma(x_1) \veegamma \dots \veegamma \gamma(x_m)) \cdotgamma (\gamma(y_1) \veegamma \dots \veegamma \gamma(y_n))] \\
  & = \below \bigveegamma \set{\gamma(x_i) \cdotgamma \gamma(y_j)}{1 \leq i \leq m \text{ and } 1 \leq j \leq n}.
\end{align*}
  Let us abbreviate this final join by $z$. Then $z \leq \gamma(a \cdot b)$ if and only if $\gamma(x_i) \cdotgamma \gamma(y_j) \leq \gamma(a \cdot b)$ for each $x_i$ and $y_j$, or equivalently $\gamma(x_i) \cdot \gamma(y_j) \leq \gamma(a \cdot b)$ for each $x_i$ and $y_j$. But $\gamma(x_i) \cdot \gamma(y_j) \leq \gamma (x_i \cdot y_j) \leq \gamma(a \cdot b)$.
\end{proof}

\begin{fact}
  Let $\gamma$ be a (unital) s$\ell$-nucleus on a pomonoid $\alg{M}$. Then $\gammaid$ is a (unital) nucleus on~the s$\ell$-monoid $\Id \alg{M}$.
\end{fact}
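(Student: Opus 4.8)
The plan is to reduce the statement to the preceding fact, which already carries all the weight: it verifies the only substantive requirement on $\gammaid$, namely compatibility with the multiplication $\ast$ of $\Id \alg{M}$, i.e.\ $\gammaid(X) \ast \gammaid(Y) \leq \gammaid(X \ast Y)$ for all $X, Y \in \Id \alg{M}$ and any s$\ell$-nucleus $\gamma$. Granting that, the only genuinely new content of the present fact is the parenthetical, that $\gammaid$ is \emph{unital} when $\gamma$ is. For completeness I would also spell out the well-definedness of $\gammaid$ and its closure-operator properties, which the preceding proof left implicit.

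Those verifications rest on three elementary properties of the s$\ell$-nucleus $\gamma$: monotonicity, $x \leq \gamma(x)$ for all $x$, and $\gamma(z) = z$ whenever $z \in \alg{M}_{\gamma}$ (in particular for the element $z \assign \gamma(x_{1}) \veegamma \dots \veegamma \gamma(x_{n})$, which is a join computed inside $\alg{M}_{\gamma}$). For well-definedness: if $\below \{x_{1}, \dots, x_{m}\} = \below \{y_{1}, \dots, y_{n}\}$, then each $x_{i}$ lies below some $y_{j}$ and conversely, so applying the monotone map $\gamma$ and bounding by the relevant $\veegamma$-joins yields $\gamma(x_{1}) \veegamma \dots \veegamma \gamma(x_{m}) = \gamma(y_{1}) \veegamma \dots \veegamma \gamma(y_{n})$; the same computation shows $\gammaid$ is monotone for inclusion. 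It is increasing since $x_{i} \leq \gamma(x_{i}) \leq \gamma(x_{1}) \veegamma \dots \veegamma \gamma(x_{n})$, so $\below \{x_{1}, \dots, x_{n}\} \subseteq \gammaid(\below \{x_{1}, \dots, x_{n}\})$, and idempotent since $\gammaid(\below z) = \below \gamma(z) = \below z$ for the $\gamma$-fixed element $z$ above. Together with the preceding fact this gives that $\gammaid$ is a nucleus on $\Id \alg{M}$.

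For the unital case, if $\gamma(\1) = \1$ then $\gammaid(\below \1) = \below \gamma(\1) = \below \1$, and since $\below \1$ is the unit of $\Id \alg{M}$ this is precisely the assertion that $\gammaid$ is unital. I do not anticipate any real obstacle in this fact: the one step with genuine content, the multiplicativity of $\gammaid$, is already supplied by the preceding fact, and the only place in the present argument that repays a moment's attention is the well-definedness, which once unwound is nothing more than monotonicity of $\gamma$ and of $\veegamma$.
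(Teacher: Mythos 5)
Your proposal is correct and matches the paper's treatment: the paper gives no separate proof of this fact, regarding it as immediate from the preceding fact (which supplies the multiplicativity of $\gammaid$), with the unital case being the one-line observation $\gammaid(\below \1) = \below \gamma(\1) = \below \1$. Your supplementary checks of well-definedness, monotonicity, and the closure-operator laws are exactly the routine verifications the paper leaves implicit, and they are carried out correctly.
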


\begin{theorem}
  Let $\alg{M}$ be an s$\ell$-monoid. Then
\begin{enumerate}[(i)]
\item $\Id \FreeMon{\alg{M}}$ is a nuclear s$\ell$-monoid,
\item $\Id \FreeUMon{\alg{M}}$ is a unital nuclear s$\ell$-monoid,
\item $\Id \CFreeRedMon{\alg{M}}$ is a commutative nuclear s$\ell$-monoid (for $\alg{M}$ commutative),
\item $\Id \CFreeRedUMon{\alg{M}}$\,is a unital commutative nuclear s$\ell$-monoid (for $\alg{M}$ \mbox{commutative}).
\end{enumerate}
  In each case the nuclear image is isomorphic to $\alg{M}$ via the map $a \mapsto \below [a]$.
\end{theorem}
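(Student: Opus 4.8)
The plan is to construct each of these nuclear s$\ell$-monoids by applying the functor $\Id$ to the corresponding free partially ordered nuclear preimage and transporting the nucleus $[\gamma]$ along the way, and then to read the nuclear image off the explicit formula for the induced nucleus.

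First I would observe that whenever $\alg{M}$ is an s$\ell$-monoid, the nucleus $[\gamma]$ on $\FreeMon{\alg{M}}$ is an s$\ell$-nucleus: by the theorem summing up the pomonoidal constructions its nuclear image is isomorphic to $\alg{M}$, hence is a join semilattice, so $[\gamma]$ carries the total operation $\vee_{[\gamma]}$, which on singleton words satisfies $[a] \vee_{[\gamma]} [b] = [a \vee b]$ and in general $u \vee_{[\gamma]} v = [\gamma(u) \vee \gamma(v)]$. Thus $\FreeMon{\alg{M}}$, and likewise $\FreeUMon{\alg{M}}$, $\CFreeRedMon{\alg{M}}$, $\CFreeRedUMon{\alg{M}}$, is an s$\ell$-nuclear pomonoid; moreover on $\FreeUMon{\alg{M}}$ and $\CFreeRedUMon{\alg{M}}$ this s$\ell$-nucleus is unital, since $[\gamma]([\1]) = [\gamma([\1])] = [\1]$. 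Applying the Fact on $\gammaid$ (with the s$\ell$-nuclear pomonoid $\FreeMon{\alg{M}}$ in the role of $\alg{M}$ and $[\gamma]$ in the role of the s$\ell$-nucleus $\gamma$) then shows that the induced map $\gammaid$ is a nucleus, unital in cases (ii) and (iv), on the s$\ell$-monoid $\Id \FreeMon{\alg{M}}$ (respectively $\Id \FreeUMon{\alg{M}}$, $\Id \CFreeRedMon{\alg{M}}$, $\Id \CFreeRedUMon{\alg{M}}$), which is commutative in cases (iii)--(iv) by the commutative version of the $\Id$ construction. This establishes items (i)--(iv).

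It then remains to identify the nuclear image. By the defining formula for $\gammaid$, the value $\gammaid(\below \{ w_{1}, \dots, w_{n} \})$ equals $\below ([\gamma(w_{1})] \vee_{[\gamma]} \dots \vee_{[\gamma]} [\gamma(w_{n})]) = \below [\gamma(w_{1}) \vee \dots \vee \gamma(w_{n})]$, a principal downset generated by a singleton word; hence every $\gammaid$-closed element has the form $\below [a]$ for some $a \in \alg{M}$, and conversely each $\below [a]$ is $\gammaid$-closed, since $\gammaid(\below [a]) = \below [\gamma([a])] = \below [a]$. So the underlying poset of the nuclear image is $\{ \below [a] \mid a \in \alg{M} \}$ ordered by inclusion, and I would finish by checking that $a \mapsto \below [a]$ is an s$\ell$-monoid isomorphism onto it. It is an order embedding (hence injective) because $\below [a] \subseteq \below [b]$ iff $[a] \sqleq [b]$ iff $a \leq b$; it preserves products because $\below [a] \cdot_{\gammaid} \below [b] = \gammaid(\below ([a] \circ [b])) = \gammaid(\below [a,b]) = \below [\gamma([a,b])] = \below [a \cdot b]$, using that $\below u \ast \below v = \below (u \circ v)$ for principal downsets; it preserves joins because $\below [a] \vee_{\gammaid} \below [b] = \gammaid(\below [a] \cup \below [b]) = \below ([a] \vee_{[\gamma]} [b]) = \below [a \vee b]$; and it sends $\1$ to the unit of the nuclear image, namely $\gammaid$ applied to the unit of $\Id \FreeMon{\alg{M}}$, which is $\gammaid(\below \emptyword) = \below [\gamma(\emptyword)] = \below [\1]$ (in the unital cases the unit $\below [\1]$ of $\Id \FreeUMon{\alg{M}}$ is already $\gammaid$-closed). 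The commutative cases (iii)--(iv) run identically, with the free commutative (unital) nuclear preimages in place of the non-commutative ones.

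I do not anticipate a genuine obstacle here, only some bookkeeping: one must keep apart the two roles of $[\gamma]$ --- as the nucleus recovering $\alg{M}$ on $\FreeMon{\alg{M}}$, and as the datum from which $\gammaid$ on $\Id \FreeMon{\alg{M}}$ is built --- and in particular must not conflate $\vee_{[\gamma]}$ (a total operation on $\FreeMon{\alg{M}}$ agreeing with the join of $\alg{M}$ on singleton words) with set-theoretic union of downsets, nor forget that $\below \emptyword \neq \below [\1]$ in general, so that the unit of $\Id \FreeMon{\alg{M}}$ is genuinely moved by $\gammaid$.
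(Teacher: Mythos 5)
Your proposal is correct and follows the same route the paper intends: the theorem is stated without proof precisely because it is the composite of the preceding facts — that the free partially ordered nuclear preimage of an s$\ell$-monoid is an s$\ell$-nuclear pomonoid (with $u \vee_{[\gamma]} v = [\gamma(u) \vee \gamma(v)]$, unital on $\FreeUMon{\alg{M}}$ and $\CFreeRedUMon{\alg{M}}$) and that $\gammaid$ is then a (unital) nucleus on $\Id$ of it — together with the routine identification of the $\gammaid$-closed elements as the downsets $\below[a]$. Your explicit verifications of the order embedding, product, join, and unit computations are exactly the bookkeeping the paper leaves implicit, and they are carried out correctly, including the point that the unit $\below\emptyword$ of $\Id\FreeMon{\alg{M}}$ is sent by $\gammaid$ to $\below[\1]$.
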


  The above construction is in fact functorial. Given a homomorphism of s$\ell$-nuclear pomonoids ${h\colon \alg{M} \to \alg{N}}$, we define $\Id h\colon \Id \alg{M} \to \Id \alg{N}$ to be the homomorphism of nuclear s$\ell$-monoids
\begin{align*}
  \Id h\colon \below \{ x_{1}, \dots, x_{n} \} \mapsto \below \{ h(x_{1}), \dots, h(x_{n}) \}. 
\end{align*}
  (The requirement that homomorphisms of s$\ell$-nuclear pomonoids must preserve the operation $x \veegamma y$ is needed here.) This yields a functor $\Id$ from the category of s$\ell$-nuclear monoids $\PomonSlStar$ to the category of nuclear s$\ell$-monoids $\SlmonStar$.

  This functor is left adjoint to the forgetful functor $\SlmonStar \to \PomonSlStar$. The unit of this adjunction consists of the maps $\unit{\alg{M}}\colon \langle \alg{M}, \gamma \rangle \to \langle \Id \alg{S}, \gammaid \rangle$ such that $\unit{\alg{M}}(a) = \below a$, while the counit consists of the maps $\counit{\langle \alg{S}, \gamma \rangle}\colon \langle \Id \alg{M}, \gammaid \rangle \to \langle \alg{M}, \gamma \rangle$ such that
\begin{align*}
  \counit{\langle \alg{M}, \gamma \rangle}(\below \{ x_1, \dots, x_n \}) & \assign x_1 \vee \dots \vee x_n.
\end{align*}

\begin{fact}
  The functor $\Id$ on s$\ell$-nuclear pomonoids is left adjoint to the forget\-ful functor from $\SlmonStar$ to $\PomonSlStar$. The~unit and counit are $\unitmap$ and $\counitmap$.
\end{fact}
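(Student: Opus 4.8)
The plan is to invoke the triangle-identity characterization of adjunctions: it suffices to check that $\Id$ is a functor and that $\unitmap$, $\counitmap$ are natural transformations with the stated domains and codomains, and then that the two triangle identities hold. The latter are one-line computations, so the substance lies in the former. Write $U\colon\SlmonStar\to\PomonSlStar$ for the forgetful functor.

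First I would confirm that $\Id$ maps $\PomonSlStar$ into $\SlmonStar$. That $\Id\alg{M}$ is an s$\ell$-monoid and $\gammaid$ a nucleus on it has already been recorded; that $\Id h$ is well defined (independent of the finite set generating a downset) follows from $h$ being isotone, and its preservation of multiplication, joins, and the unit $\below\1$ is immediate from the formulas for these operations in $\Id\alg{M}$. The only clause needing care is that $\Id h$ commutes with the nuclei. Unwinding the definitions,
\[
  \Id h\bigl(\gammaid(\below\{x_1,\dots,x_n\})\bigr)=\below h\bigl(\gamma(x_1)\veegamma\dots\veegamma\gamma(x_n)\bigr),
\]
and this equals $\delta_{\omega}\bigl(\Id h(\below\{x_1,\dots,x_n\})\bigr)=\below\bigl(\delta(h(x_1))\vee_{\delta}\dots\vee_{\delta}\delta(h(x_n))\bigr)$ precisely because $h$ commutes with $\gamma$ and $\delta$ \emph{and} preserves $\veegamma$ --- which is exactly the reason the definition of a homomorphism of s$\ell$-nuclear pomonoids contains that clause.

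Next I would check $\unitmap$ and $\counitmap$. The map $\unit{\alg{M}}\colon a\mapsto\below a$ preserves order, product, and unit because $\below$ embeds the pomonoid $\alg{M}$ into $\Id\alg{M}$; it commutes with the nuclei since $\gammaid(\below a)=\below\gamma(a)=\unit{\alg{M}}(\gamma(a))$; and it preserves $\veegamma$ since, using $\gamma\gamma=\gamma$,
\[
  \unit{\alg{M}}(a\veegamma b)=\below(\gamma(a)\veegamma\gamma(b))=\gammaid\bigl(\below\gamma(a)\cup\below\gamma(b)\bigr),
\]
which is the value at $\below a,\below b$ of the total join operation of $U\Id\langle\alg{M},\gamma\rangle$. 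Dually, $\counit{\langle\alg{N},\delta\rangle}\colon\below\{x_1,\dots,x_n\}\mapsto x_1\vee\dots\vee x_n$ preserves joins trivially, preserves products by distributivity of product over joins in $\alg{N}$, preserves the unit, and commutes with the nuclei because $\counit{\langle\alg{N},\delta\rangle}\bigl(\delta_{\omega}(\below\{x_1,\dots,x_n\})\bigr)=\delta(x_1)\vee_{\delta}\dots\vee_{\delta}\delta(x_n)=\delta(x_1\vee\dots\vee x_n)$, using that $\delta$ is a nucleus. Naturality of each transformation is then a one-line substitution.

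Finally I would verify the triangle identities. For $\langle\alg{M},\gamma\rangle$ the composite $\counit{\Id\langle\alg{M},\gamma\rangle}\circ\Id\unit{\alg{M}}$ sends $\below\{x_1,\dots,x_n\}$ to $\below x_1\vee\dots\vee\below x_n=\below\{x_1,\dots,x_n\}$, since joins in $\Id\alg{M}$ are unions; for $\langle\alg{N},\delta\rangle$ the composite $\counit{\langle\alg{N},\delta\rangle}\circ\unit{U\langle\alg{N},\delta\rangle}$ sends $x$ to $\counit{\langle\alg{N},\delta\rangle}(\below x)=x$. Both composites are identities, so $\Id$ is left adjoint to $U$ with unit $\unitmap$ and counit $\counitmap$; and since restricting $\Id$ to the unital, commutative, or unital commutative subcategories leaves every computation above unchanged, the same argument yields those variants. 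I expect the only mildly delicate aspect to be bookkeeping: keeping apart the semilattice join of the ambient s$\ell$-monoid, the join $\veegamma$ of its nuclear image, and the set-theoretic union inside $\Id$, and tracking how each interacts with $\gamma$ as opposed to $\gammaid$. Once the defining formula for $\gammaid$ is unwound, every verification reduces to $\gamma\gamma=\gamma$, the identity $\gamma(\gamma x\vee\gamma y)=\gamma(x\vee y)$, and distributivity of product over joins.
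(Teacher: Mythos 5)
Your proposal is correct and takes essentially the same route as the paper: a direct verification of the unit, counit, and triangle identities, with the one non-routine point being that the counit commutes with the nuclei via $\gamma(x_1)\veegamma\dots\veegamma\gamma(x_n)=\gamma(x_1\vee\dots\vee x_n)$, which is exactly the computation the paper singles out while declaring the rest routine. You simply spell out the routine parts (functoriality of $\Id$, the role of the $\veegamma$-preservation clause, naturality) in more detail.
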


\begin{proof}
  The proof is routine. We only verify that $\counitmap$ indeed preserves the nucleus:
\begin{align*}
  \counit{\pair{\alg{S}}{\gamma}}(\gammaid \below \{ x_1, \dots, x_n \}) & = \counit{\pair{\alg{S}}{\gamma}}(\below (\gamma(x_1) \veegamma \dots \veegamma \gamma(x_n))) \\ & = \gamma(x_1) \veegamma \dots \veegamma \gamma(x_n) \\ & = \gamma(x_1 \vee \dots \vee x_n) \\ & = \gamma (\counit{\pair{\alg{S}}{\gamma}} (\below \{ x_1, \dots, x_n \})). \qedhere
\end{align*}
\end{proof}

  Putting the above observations together yields the following theorem.

\begingroup
\tolerance=400

\begin{theorem}
  The free semilattice-ordered (unital) nuclear preimage of an s$\ell$-monoid $\alg{M}$ is $\Id \FreeMon{\alg{M}}$ ($\Id \FreeUMon{\alg{M}})$. Similarly, the free commutative semilattice-ordered (unit\-al) nuclear preimage of a commutative s$\ell$-monoid $\alg{M}$ is $\Id \CFreeRedMon{\alg{M}}$ ($\Id \CFreeRedUMon{\alg{M}})$.
\end{theorem}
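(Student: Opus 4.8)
The plan is to deduce the theorem purely formally from the adjunctions already in hand, using the fact that a composite of left adjoints is left adjoint to the composite of the corresponding right adjoints. First I would record the factorization of the nuclear image functor $\SlmonStar \to \Slmon$ as the composite of the forgetful functor $\SlmonStar \to \PomonSlStar$ followed by the nuclear image functor $\PomonSlStar \to \Slmon$; this is immediate from the definitions, since a nuclear s$\ell$-monoid is in particular an s$\ell$-nuclear pomonoid and the nuclear image is computed in the same way in both categories. Consequently it suffices to exhibit left adjoints of these two functors and compose them.

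For the forgetful functor $\SlmonStar \to \PomonSlStar$, the left adjoint is the functor $\Id$, with unit $\unit{\alg{M}}\colon a \mapsto \below a$ and counit $\counit{\pair{\alg{M}}{\gamma}}\colon \below \{ x_1, \dots, x_n \} \mapsto x_1 \vee \dots \vee x_n$; this is exactly the last Fact above, and it rests on the preceding Facts showing that $\gammaid$ is a (unital) nucleus on $\Id \alg{M}$. For the nuclear image functor $\PomonSlStar \to \Slmon$, I claim the left adjoint is the restriction to s$\ell$-monoids of the free partially ordered nuclear preimage functor for pomonoids, i.e.\ $\alg{M} \mapsto \FreeMon{\alg{M}}$. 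The only point requiring attention is that this functor actually takes values in $\PomonSlStar$, not merely in $\PomonStar$: but the nuclear image of $\FreeMon{\alg{M}}$ is isomorphic to $\alg{M}$ by an earlier theorem, and $\alg{M}$ is a join semilattice because it is an s$\ell$-monoid, so $[\gamma]$ is an s$\ell$-nucleus with $[a] \veegamma [b] = [a \vee b]$ via the isomorphism $(\FreeMon{\alg{M}})_{[\gamma]} \iso \alg{M}$. One then checks that $\FreeMon{h}$ preserves the operation $\veegamma$ whenever $h$ is an s$\ell$-monoid homomorphism, and that each counit $\counit{\pair{\alg{N}}{\delta}}$ is a homomorphism of s$\ell$-nuclear pomonoids; once this is in place, the triangle identities are inherited verbatim from the pomonoid case, since the maps $\unitmap$ and $\counitmap$ are literally the same.

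Composing the two adjunctions then yields that the left adjoint of $\SlmonStar \to \Slmon$ is $\alg{M} \mapsto \Id \FreeMon{\alg{M}}$, with unit and counit the evident composites of those of the two constituent adjunctions; this is by definition the free semilattice-ordered nuclear preimage. The unital case is identical with $\FreeMon{\alg{M}}$ replaced by $\FreeUMon{\alg{M}}$ throughout, using the unital versions of both adjunctions, and the commutative and unital commutative cases are obtained the same way with $\CFreeRedMon{\alg{M}}$ and $\CFreeRedUMon{\alg{M}}$, noting that the commutative free nuclear preimage of a commutative s$\ell$-monoid is again an s$\ell$-nuclear pomonoid by the same argument. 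The only genuine work, and hence the main obstacle, is this last bookkeeping: verifying that the various flavours of the free partially ordered nuclear preimage functor indeed land in $\PomonSlStar$ and are compatible with the $\veegamma$ operation on morphisms and with the counit. Everything else is the formal composition of adjoint functors.
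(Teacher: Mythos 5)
Your proposal is correct and follows essentially the same route as the paper: factor the nuclear image functor $\SlmonStar \to \Slmon$ through $\PomonSlStar$, observe that the restriction of the free partially ordered nuclear preimage functor lands in $\PomonSlStar$ with the counit an s$\ell$-nuclear pomonoid homomorphism, take $\Id$ as the left adjoint of the forgetful functor, and compose the two adjunctions. The verification points you flag (compatibility with $\veegamma$ on morphisms and on the counit) are exactly the ones the paper records before stating the theorem.
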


\endgroup

\section{Nuclear images and simple quasi-inequations}

  The properties of the free nuclear preimage reflect in a very direct way which pomonoidal quasi-inequations are preserved under nuclear images. We devote this section to finding a syntactic description of such quasi-inequations.

  Let us first review some preliminaries concerning ordered algebras. Consider a fixed algebraic signature and an infinite set of variables $\Var$. The term algebra, i.e.\ the absolutely free algebra over $\Var$, will be denoted $\Tm$. An \emph{interpretation} on an algebra $\alg{A}$ is a homomorphism $h\colon \Tm \to \alg{A}$. If the range of $h$ is a subset of some $X \subseteq \alg{A}$, we say that $h$ is an interpretation \emph{over} $X$. Each map $f\colon \Var \to \alg{A}$ extends to a unique interpretation $h\colon \Tm \to \alg{A}$.

  An \emph{ordered algebra} is an algebra equipped with a partial order. (One may bake monotonicity conditions for each operation into the definition of an ordered algebra, but for our purposes this is not necessary.) An \emph{ordered quasivariety} is a class of ordered algebras axiomatized by a set of \emph{quasi-inequations}, i.e.\ universally quantified sentences of the form
\begin{align*}
  t_{1} \leq u_{1} ~ \& ~ \dots ~ \& ~ t_{n} \leq u_{n} & \implies t \leq u,
\end{align*}
  where $t_{1}, \dots, t_{n}, u_{1}, \dots, u_{n}, t, u$ are terms in the appropriate algebraic signature. Equivalently, an ordered quasivariety is a class of ordered algebras which is closed under isomorphic images, substructures, products, and ultraproducts.

  An \emph{order congruence} on the ordered algebra $\alg{A}$ ordered by $\leq_{\alg{A}}$ is a preorder $\preleq$ which extends the partial order ${\leq_{\alg{A}}}$ such that the equivalence relation $\sim$ corresponding to $\preleq$ is a congruence of $\alg{A}$. (If we were to impose some monotonicity conditions in the definition of an ordered algebra, then we would also need to require that the preorder satisfy these monotonicity conditions.) In~that case $\alg{A} / {\preleq}$ denotes the algebra $\alg{A} / {\sim}$ ordered by $\leqK$, i.e.\ $[a]_{\sim} \leq [b]_{\sim}$ in $\alg{A} / {\sim}$ if and only if $a \leqK b$.

  Order congruences, ordered quasivarieties, and pomonoids are related in much the same way that congruences, quasivarieties, and monoids are. If $\class{K}$ is an ordered quasivariety, a \emph{$\class{K}$-congruence} on $\alg{A}$ is an ordered congruence ${\leqK}$ on $\alg{A}$ such that $\alg{A} / {\leqK} \in \class{K}$. Each algebra $\alg{A}$ has a smallest $\class{K}$-congruence. If $h\colon \alg{A} \to \alg{B} \in \class{K}$ is a surjective monotone homomorphism of ordered algebras, then ${\preleq} \assign h^{-1}[\leq_{\alg{B}}]$ is a $\class{K}$-congruence on $\alg{A}$ and $\alg{B} \iso \alg{A} / {\preleq}$. The following lemma is almost a special case of a lemma of Blok and Raftery~\cite[Lemma~4.2]{blok+raftery99}, except that they consider ordinary quasivarieties rather than ordered quasivarieties.

\begin{lemma}
  Let $\class{K}$ be an ordered quasivariety and ${\leqK}$ the smallest $\class{K}$-congruence of $\alg{M}$. Then $a \leqK b$ if and only if there is a quasi-inequation $\qine$ valid in $\class{K}$ and an interpretation $h$ on $\alg{M}$ such that $h(t_{i}) \leq h(u_{i})$ in $\alg{M}$ for each premise $t_{i} \leq u_{i}$ of $\qine$ and moreover $h(t) = a$ and $h(u) = b$ for the conclusion $t \leq u$ of $\qine$.
\end{lemma}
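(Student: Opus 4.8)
The plan is to identify the relation defined by the right-hand side of the biconditional — write $a \leq_{\sigma} b$ to mean that some quasi-inequality $\qine$ valid in $\class{K}$ and some interpretation $h$ on $\alg{M}$ witness the stated conditions — with the smallest $\class{K}$-congruence $\leq_{\theta}$, by proving the two inclusions separately.

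The inclusion $\leq_{\sigma} \subseteq \leq_{\theta}$ is the ($\Leftarrow$) direction and is immediate: if $(\qine, h)$ witnesses $a \leq_{\sigma} b$, compose $h$ with the canonical monotone homomorphism $q\colon \alg{M} \to \alg{M}/{\leq_{\theta}}$. Since $q$ is monotone it carries each premise $h(t_{i}) \leq h(u_{i})$ over to $q \circ h$, and since $\qine$ is valid in $\class{K}$ and $\alg{M}/{\leq_{\theta}} \in \class{K}$, the conclusion of $\qine$ yields $q(a) \leq q(b)$, that is, $a \leq_{\theta} b$.

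For $\leq_{\theta} \subseteq \leq_{\sigma}$ it suffices, by minimality of $\leq_{\theta}$, to check that $\leq_{\sigma}$ is a $\class{K}$-congruence. That it is reflexive and extends $\leq_{\alg{M}}$ follows by taking, respectively, the quasi-inequality with no premises and conclusion $x \leq x$, and the quasi-inequality $x \leq y \implies x \leq y$. Transitivity and the congruence property both rest on one device: after renaming variables so that the quasi-inequalities involved use pairwise disjoint alphabets, one \emph{glues} them together into a single quasi-inequality. For transitivity, from witnesses $(\qine_{1}, h_{1})$ for $a \leq_{\sigma} b$ and $(\qine_{2}, h_{2})$ for $b \leq_{\sigma} c$ one forms the quasi-inequality whose premises are all premises of $\qine_{1}$ and of $\qine_{2}$ together with the inequality between the right-hand side of the conclusion of $\qine_{1}$ and the left-hand side of the conclusion of $\qine_{2}$; this is valid in $\class{K}$ by chaining, and the disjoint union of $h_{1}$ and $h_{2}$ witnesses $a \leq_{\sigma} c$. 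For the congruence property we must show that $c_{i} \sim_{\sigma} d_{i}$ for $1 \leq i \leq n$ implies $f(c_{1}, \dots, c_{n}) \sim_{\sigma} f(d_{1}, \dots, d_{n})$ for every $n$-ary operation $f$; because ordered algebras are not assumed to carry monotone operations, we glue the witnesses for $c_{i} \leq_{\sigma} d_{i}$ and for $d_{i} \leq_{\sigma} c_{i}$ onto fresh variables $x_{i}$ and $y_{i}$ in such a way that the combined premises \emph{force} $x_{i}$ and $y_{i}$ to be interpreted by equal elements in any algebra of $\class{K}$ — this step uses antisymmetry of the partial orders in $\class{K}$ — so that the conclusion $f(x_{1}, \dots, x_{n}) \leq f(y_{1}, \dots, y_{n})$ becomes valid in $\class{K}$; extending the glued interpretations by $x_{i} \mapsto c_{i}$ and $y_{i} \mapsto d_{i}$ then witnesses $f(c_{1}, \dots, c_{n}) \leq_{\sigma} f(d_{1}, \dots, d_{n})$, and symmetry gives the reverse inequality. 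Finally $\alg{M}/{\leq_{\sigma}} \in \class{K}$: given a quasi-inequality valid in $\class{K}$ and an interpretation on $\alg{M}/{\leq_{\sigma}}$ satisfying its premises, lift the interpretation to $\alg{M}$, where the premises now hold only modulo $\leq_{\sigma}$; replacing each such premise by a witnessing quasi-inequality and gluing everything yields a quasi-inequality valid in $\class{K}$ whose conclusion forces the original conclusion to hold modulo $\leq_{\sigma}$.

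The main obstacle is bookkeeping rather than conceptual: one must manage disjoint variable sets across several glued quasi-inequalities and, in each case, verify that the assembled quasi-inequality is genuinely valid in $\class{K}$. Two facts make this work: premises of quasi-inequalities may contain arbitrary terms on both sides of $\leq$, and the orders in question are genuine partial orders, so that a pair of opposite inequalities between interpreted terms collapses to an equality. No single step is deep; the care lies in keeping the gluing uniform.
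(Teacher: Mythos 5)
Your proof is correct and follows essentially the same route as the paper: verify the easy direction via the quotient map, then show the defined relation is itself a $\class{K}$-congruence by gluing variable-disjoint quasi-inequalities for reflexivity, transitivity, compatibility, and membership of the quotient in $\class{K}$. Your explicit antisymmetry argument for compatibility with the operations simply fills in a step the paper dispatches with ``by combining quasi-inequalities in a similar way,'' and it does so correctly.
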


\begin{proof}
  Clearly if $a$ and $b$ satisfy this condition, then $a \leqK b$. Conversely, let $\sqleq$ be the relation defined by this condition. If $\sqleq$ is a $\class{K}$-congruence on $\alg{A}$, then it must be the smallest $\class{K}$-congruence on~$\alg{A}$.

  To prove that $\sqleq$ is reflexive, take $\alpha$ to be the inequality $x \leq x$. To prove transitivity, if the quasi-inequation $\alpha \assign E \implies t \leq u$ witnesses $a \leq b$ and the quasi-inequation $\alpha' \assign E' \implies t' \leq u'$ witnesses $b \leq c$ and these quasi-inequations without loss of generality do not share any variables, then the quasi-inequation $E ~ \& ~ E' ~ \& ~ u \leq t' \implies t \leq u'$ witnesses $a \leq c$. The fact that $\sqleq$ is an order congruence follows by combining quasi-inequations in a similar way. Finally, suppose that $\alpha \assign t_{1} \leq u_{1} ~ \& ~ \dots ~ \& ~ t_{n} \leq u_{n} \implies t \leq u$ is a quasi-inequation valid in $\class{K}$. To prove that $\sqleq$ is a $\class{K}$-congruence, we must prove that for each interpretation $h$ on $\alg{A}$ we have $h(t) \sqleq h(u)$ whenever $h(t_{i}) \sqleq h(u_{i})$ for $1 \leq i \leq n$. Each of the inequalities $h(t_{i}) \sqleq h(u_{i})$ is witnessed by some quasi-inequation $E'_{i} \implies t'_{i} \leq u'_{i}$. We may assume without loss of generality that the $n+1$ quasi-inequations $\alpha$ and $\alpha'_{i}$ do not share any variables. Then the quasi-inequation $E'' \implies t \leq u$ is valid in $\class{K}$ and witnesses $h(t_{i}) \sqleq h(u_{i})$, where $E''$ is the conjunction of the inequalities $E'_{i}$, $t_{i} \leq t'_{i}$, $u'_{i} \leq u_{i}$ for $1 \leq i \leq n$. 
\end{proof}

  We say that the quasi-inequation $\qine$ in the above lemma \emph{witnesses} that $a \leqK b$ with respect to the inter\-pretation~$h$. We say that $\leqK$ is witnessed by a certain class of quasi-inequations with respect to a certain class of interpretations if for each $a \leqK b$ we may choose $\alpha$ and $h$ from these classes.

  Returning to the ordered quasivariety of nuclear pomonoids (s$\ell$-monoids), a quasi-inequation in the signature of nuclear pomonoids (s$\ell$-monoids) will be called \emph{simple} if in each premise $t_{i} \leq u_{i}$ the term $u_{i}$ is either a variable $x_{i}$ or it has the form $u_{i} = \gamma(v_{i})$ for some term $v_{i}$. The \emph{simple quasi-inequational theory} of a class $\class{K}$ of nuclear pomonoids (s$\ell$-monoids) is the set of all simple quasi-inequations valid in each ordered algebra in $\class{K}$.

  In the following, let $\alg{M}$ be a pomonoid and $\gamma$ be a nucleus on $\alg{M}$. We shall view the nuclear image $\alg{M}_{\gamma}$ as a nuclear pomonoid where the nucleus is the identity map.

\begin{fact} \label{fact: simple quasi-inequations}
  Each simple quasi-inequation valid in $\langle \alg{M}, \gamma \rangle$ also holds in $\alg{M}_{\gamma}$.
\end{fact}

\begin{proof}
  Firstly, observe that $t^{\alg{M}_{\gamma}}(\gamma (\tuple{a})) \leq \gamma(t^{\alg{M}}(\tuple{a}))$, where $\tuple{a} = \langle a_{1}, \dots, a_{k} \rangle$ is a tuple of elements of $\alg{M}$ and $\gamma(\tuple{a}) = \langle \gamma(a_{1}), \dots, \gamma(a_{k}) \rangle$. This is straightforward to prove by induction over the complexity of the term~$t$. Moreover, $\gamma(t^{\alg{M}}(\tuple{a})) = t^{\alg{M}_{\gamma}}(\tuple{a})$ for each tuple $\tuple{a}$ of elements of $\alg{M}_{\gamma}$.

  Now consider a premise $t_{i}(\tuple{x}) \leq u_{i}(\tuple{x})$ of a simple quasi-inequation~$\alpha$ which holds in~$\langle \alg{M}, \gamma \rangle$. If $t^{\alg{M}_{\gamma}}_{i}(\tuple{a}) \leq u^{\alg{M}_{\gamma}}_{i}(\tuple{a})$ for a tuple $\tuple{a}$ of elements of $\alg{M}_{\gamma}$, then $t^{\alg{M}}_{i}(\tuple{a}) \leq \gamma(t^{\alg{M}}_{i}(\tuple{a})) = t^{\alg{M}_{\gamma}}_{i}(\tuple{a}) \leq u^{\alg{M}_{\gamma}}_{i}(\tuple{a}) = u^{\alg{M}}_{i}(\tuple{a})$, where the last equality holds because either $u_{i} = x_{i}$ or $u_{i} = \gamma(v_{i})$. Each premise of $\alpha$ is thus satisfied in $\langle \alg{M}, \gamma \rangle$ by the tuple $\tuple{a}$, hence so is the conclusion $t(\tuple{x}) \leq u(\tuple{x})$, i.e.\ $t^{\alg{M}}(\tuple{a}) \leq u^{\alg{M}}(\tuple{a})$ holds in $\alg{M}$. It follows that $t^{\alg{M}_{\gamma}}(\tuple{a}) = \gamma(t^{\alg{M}}(\tuple{a})) \leq \gamma(u^{\alg{M}}(\tuple{a})) = u^{\alg{M}_{\gamma}}(\tuple{a})$. The quasi-inequation $\alpha$ therefore holds in $\alg{M}_{\gamma}$.
\end{proof}

  Let $\class{K}$ be an ordered quasivariety of nuclear pomonoids and let $\leqK$ be the smallest $\class{K}$-congruence on $\FreeMon{\alg{M}}$.

\begin{lemma} \label{lemma: restriction on free}
  $\alg{M}$ is the nuclear image of a nuclear pomonoid in $\class{K}$ if and only if $v \leqK w$ in $\FreeMon{\alg{M}}$ implies $\gamma(v) \leq \gamma(w)$ in $\alg{M}$.
\end{lemma}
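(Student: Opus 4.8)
The plan is to unwind the adjunction between the free nuclear preimage functor and the nuclear image functor, relativized to the quasivariety $\class{K}$. First I would observe that $\FreeMon{\alg{M}}/{\leq_{\theta}}$ is, by construction, the largest quotient of $\FreeMon{\alg{M}}$ lying in $\class{K}$, and that taking the nuclear image commutes with this quotient in the appropriate sense: the nucleus $[\gamma]$ on $\FreeMon{\alg{M}}$ descends to a nucleus on $\FreeMon{\alg{M}}/{\leq_{\theta}}$ precisely because $\leq_{\theta}$ is an order congruence of the nuclear pomonoid $\FreeMon{\alg{M}}$ (the signature includes $\gamma$, so $\class{K}$-congruences respect it). Denote this quotient nuclear pomonoid by $\pair{\alg{N}}{\delta}$; then $\alg{N}_{\delta}$ is the nuclear image of a nuclear pomonoid in~$\class{K}$, and it is a quotient of the nuclear image $(\FreeMon{\alg{M}})_{[\gamma]} \iso \alg{M}$.

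For the forward direction, suppose $\alg{M} = \alg{P}_{\epsilon}$ for some $\pair{\alg{P}}{\epsilon} \in \class{K}$. By the adjunction (Theorem on the free nuclear preimage), the identity-ish map $\alg{M} \to \alg{P}_{\epsilon}$ lifts to a homomorphism of nuclear pomonoids $g\colon \FreeMon{\alg{M}} \to \pair{\alg{P}}{\epsilon}$, namely the counit $\counit{\pair{\alg{P}}{\epsilon}}$, sending $[a_1,\dots,a_n] \mapsto a_1 \cdots a_n$. Since $\pair{\alg{P}}{\epsilon} \in \class{K}$, the kernel order congruence $g^{-1}[\leq_{\alg{P}}]$ is a $\class{K}$-congruence on $\FreeMon{\alg{M}}$, hence contains $\leq_{\theta}$. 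So $v \leq_{\theta} w$ implies $g(v) \leq g(w)$ in $\alg{P}$; applying $\epsilon$ and using that $g$ is a nuclear homomorphism, $\gamma(v) = [\gamma](v) \overset{g}{\mapsto} \epsilon(g(v)) \leq \epsilon(g(w)) \overset{g}{\mapsfrom} [\gamma](w) = \gamma(w)$, and since $g$ restricted to singleton words is the isomorphism $\FreeMon{\alg{M}}_{[\gamma]} \iso \alg{M}$, this says exactly $\gamma(v) \leq \gamma(w)$ in $\alg{M}$.

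For the converse, assume $v \leq_{\theta} w$ always implies $\gamma(v) \leq \gamma(w)$. The point is to show that the natural map $\alg{M} = (\FreeMon{\alg{M}})_{[\gamma]} \to \alg{N}_{\delta}$ induced by the quotient $\FreeMon{\alg{M}} \twoheadrightarrow \alg{N}$ is an isomorphism; then $\alg{M} \iso \alg{N}_{\delta}$ exhibits $\alg{M}$ as the nuclear image of $\pair{\alg{N}}{\delta} \in \class{K}$. Surjectivity is automatic since $[\gamma]$ is surjective onto singletons and the quotient is onto. For injectivity and order-reflection, it suffices to check that for singleton words $[a], [b]$, $[a] \leq_{\theta} [b]$ implies $a \leq b$ in $\alg{M}$ — but $\gamma([a]) = a$ and $\gamma([b]) = b$, so this is exactly the hypothesis. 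Hence the map is an order isomorphism, and it is a pomonoid isomorphism because the quotient map and the nucleus are pomonoid homomorphisms.

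The main obstacle I anticipate is being careful about the bookkeeping in the first direction: one must identify the right homomorphism $\FreeMon{\alg{M}} \to \pair{\alg{P}}{\epsilon}$ and verify it is genuinely a morphism of \emph{nuclear} pomonoids (so that it interacts correctly with $[\gamma]$ and $\epsilon$), rather than merely of pomonoids. Using the counit of the established adjunction sidesteps most of this, since its naturality and the nuclear-homomorphism property come for free from the earlier theorem; the remaining work is just the diagram-chase through $\epsilon$ and the identification of $\FreeMon{\alg{M}}_{[\gamma]}$ with $\alg{M}$.
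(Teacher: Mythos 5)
Your proposal is correct and takes essentially the same route as the paper: the forward direction uses the counit $\counit{\pair{\alg{P}}{\epsilon}}\colon \FreeMon{\alg{M}} \to \pair{\alg{P}}{\epsilon}$ as a homomorphism of nuclear pomonoids into a member of $\class{K}$, whose kernel order congruence must contain $\leq_{\theta}$, while the converse identifies the nuclear image of $\FreeMon{\alg{M}} / {\leq_{\theta}}$ with $\alg{M}$ by observing that the hypothesis forces $\leq_{\theta}$ restricted to singleton words to coincide with the order of $\alg{M}$. Only a cosmetic remark: do not literally equate $\gamma(v) \in \alg{M}$ with the singleton word $[\gamma](v)$; the intended chain $\gamma(v) = \counit{\pair{\alg{P}}{\epsilon}}([\gamma](v)) = \epsilon(\counit{\pair{\alg{P}}{\epsilon}}(v)) \leq \epsilon(\counit{\pair{\alg{P}}{\epsilon}}(w)) = \gamma(w)$ is exactly the paper's computation.
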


\begin{proof}
  If $v \leqK w$ implies $\gamma(v) \leq \gamma(w)$, then the restriction of $\leqK$ to singleton words coincides with $\sqleq$. The nuclear image of $(\FreeMon{\alg{M}}) / {\leqK}$, which is a nuclear pomonoid in $\class{K}$, and therefore coincides with the nuclear image of $\FreeMon{\alg{M}}$, i.e.\ it is isomorphic to $\alg{M}$. Because $\class{K}$ is closed under isomorphisms, we may assume that this nuclear image is in fact equal to $\alg{M}$.

  Conversely, suppose that $\alg{M} = \alg{N}_{\delta}$ for some nuclear pomonoid ${\pair{\alg{N}}{\delta} \in \class{K}}$. There is a surjective homomorphism $\counit{\pair{\alg{N}}{\delta}} \colon \FreeMon{\alg{M}} \to \pair{\alg{N}}{\delta}$. If $v \leqK w$ in $\FreeMon{\alg{M}}$, then $\counit{\pair{\alg{N}}{\delta}}(v) \leq \counit{\pair{\alg{N}}{\delta}}(w)$ in $\alg{N}$, and therefore $\gamma(v) = \counit{\pair{\alg{N}}{\delta}}([\gamma(v)]) = \delta(\counit{\pair{\alg{N}}{\delta}}(v)) \leq \delta(\counit{\pair{\alg{N}}{\delta}}(v)) = \counit{\pair{\alg{N}}{\delta}}(\gamma(w)) = \gamma(w)$.
\end{proof}

\begin{lemma} \label{lemma: witnessed by simple}
  The $\class{K}$-congruence $\leqK$ is witnessed by simple quasi-inequations with respect to interpretations over singleton words.
\end{lemma}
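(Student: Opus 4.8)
The plan is to show that the smallest $\class{K}$-congruence $\leq_\theta$ on $\FreeMon{\alg{M}}$ can be generated, in the sense of Lemma~\ref{lemma: restriction on free}'s surrounding machinery, using only simple quasi-inequalities and only interpretations whose values are singleton words $[a]$ with $a \in \alg{M}$. By the lemma characterizing $\leq_\theta$ (the one stating $a \leq_\theta b$ iff some valid quasi-inequality witnesses it), we know abstractly that $\leq_\theta$ is witnessed by \emph{some} valid quasi-inequalities and \emph{some} interpretations; the task is to improve both. First I would recall that $\FreeMon{\alg{M}}$ is generated as a nuclear pomonoid by the singleton words $[a]$, since every word $[a_1,\dots,a_n]$ equals $[a_1] \circ \dots \circ [a_n]$ and the nucleus $[\gamma]$ sends singletons to singletons. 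So it suffices to produce, for each pair of \emph{words} $v \leq_\theta w$, a simple quasi-inequality and a singleton-valued interpretation witnessing it; the general case then follows by the closure properties of the witnessing relation (reflexivity, transitivity, compatibility with $\circ$ and $[\gamma]$) established in the proof of the characterization lemma.

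The key step is to handle the generators of $\leq_\theta$. The congruence $\leq_\theta$ is generated (as a $\class{K}$-congruence) by the defining relations of $\FreeMon{\alg{M}}$ together with the instances of the axioms of $\class{K}$. For the defining relations of $\FreeMon{\alg{M}}$: the relation $\sqleq$ itself is generated by $u \sqleq [\gamma](u)$ and by the order $\leq$ of $\alg{F}(\alg{M})$, i.e.\ by instances $[a] \leq [b]$ when $a \leq b$ in $\alg{M}$, plus the pomonoid axioms. The inequality $[a] \sqleq [b]$ (for $a \leq b$) is witnessed by the trivial quasi-inequality $x \leq y \implies x \leq y$ under the interpretation $x \mapsto [a]$, $y \mapsto [b]$ — both values singletons, and this quasi-inequality is vacuously simple since its premise $x \leq y$ has a variable on the right. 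The increasing law $u \sqleq [\gamma](u)$ is, \emph{in the nuclear signature}, an instance of the valid inequality $x \leq \gamma(x)$, which is a simple quasi-inequality with empty premise set. For the $\class{K}$-axioms: each is a simple quasi-inequality by hypothesis, so any instance of it used to generate $\leq_\theta$ is witnessed by that very axiom. The one subtlety is that these generating steps are witnessed under interpretations sending variables to \emph{arbitrary} words, not just singletons — but here the product-distributivity of $\FreeMon{\alg{M}}$ enters: whenever a premise $t_i \leq u_i$ of a simple quasi-inequality is satisfied by a word-valued interpretation, with $u_i$ either a variable or of the form $\gamma(v_i)$, one can rewrite the witness so that the relevant variable is instantiated by a product of singletons and then split the premise $t_i \leq u_i$ along that product, using exactly the property "$u \sqleq v\circ w$ implies $u_1 \sqleq v$, $u_2 \sqleq w$ for some $u_1 \circ u_2 = u$" together with the fact that $[\gamma]$ reduces everything to singletons. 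This reduction to singleton-valued interpretations is where the hypothesis that the $u_i$ are variables or $\gamma$-terms is essential, and it is the main obstacle.

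Concretely, I would argue as follows. Suppose $v \leq_\theta w$ with $v = [a_1,\dots,a_m]$ and $w = [b_1,\dots,b_n]$. Using the characterization lemma, pick a valid quasi-inequality $\qine$ and an interpretation $h$ (into words) witnessing this. Compose $h$ with the nucleus-respecting retraction idea: replace each word-value $h(z)$ by the corresponding product $[\gamma](h(z)) = [\gamma(h(z))]$ is \emph{not} legitimate in general since it changes the value, so instead I would proceed inductively on the structure of the witness, pushing the use of product-distributivity through each premise. For a premise $t_i \leq u_i$ with $u_i$ a variable $x_i$: if $h(x_i) = [c_1,\dots,c_k]$, decompose $h(t_i) = u_1' \circ \dots \circ u_k'$ with $\gamma(u_j') \leq c_j$; then split this single premise into $k$ premises, each of the form $t_{i,j} \leq x_{i,j}$ with $x_{i,j}$ interpreted by the singleton $[c_j]$, using that $\gamma(u_j') \leq c_j$ means $u_j' \sqleq [c_j]$, which is itself witnessable recursively. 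For a premise with $u_i = \gamma(v_i)$: then $h(u_i) = [\gamma(\cdot)]$ is already a singleton, and $h(t_i) \sqleq h(u_i)$ gives a decomposition of $h(t_i)$ all of whose pieces reduce into that singleton, so the same splitting applies. Iterating, every premise becomes a conjunction of simple premises witnessed under singleton-valued interpretations, and the conclusion $t \leq u$ of $\qine$ — whose endpoints we may take to be $[v]$-ish, i.e.\ we arrange $h(t) = v$, $h(u) = w$ — is handled the same way by breaking $v$ and $w$ into their singleton constituents and using compatibility of the witnessing relation with $\circ$. The resulting aggregated quasi-inequality is simple, valid in $\class{K}$, and witnesses $v \leq_\theta w$ under a singleton-valued interpretation, as required. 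The bookkeeping of combining the many sub-quasi-inequalities into one (renaming variables apart and conjoining premises) is exactly the routine manipulation already carried out in the proof of the characterization lemma, so I would cite that rather than repeat it.
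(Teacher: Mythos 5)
Your toolkit is the right one --- product distributivity of $\FreeMon{\alg{M}}$, generation by singleton words, refining a variable whose value is $[a_1,\dots,a_n]$ into a product of fresh variables valued at singletons, and splitting premises along such decompositions --- and this matches the spirit of the paper's argument. But as written the proposal assumes the hard case away. You write that each $\class{K}$-axiom ``is a simple quasi-inequality by hypothesis''; there is no such hypothesis: $\class{K}$ is an arbitrary ordered quasivariety of nuclear pomonoids, and the whole content of the lemma is that witnesses can be chosen simple even though the quasi-inequalities axiomatizing (or valid in) $\class{K}$ need not be. Correspondingly, in your concrete argument you take an arbitrary valid quasi-inequality $\qine$ from the characterization lemma but then only analyse premises $t_i \leq u_i$ in which $u_i$ is a variable or of the form $\gamma(v_i)$ --- i.e.\ you treat $\qine$ as if it were already simple. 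The cases that actually must be eliminated, namely $u_i$ a product $u_{i,1}\cdot u_{i,2}$ (which, note, are also created by your own substitution of a variable by $x_1\cdots x_n$) and $u_i = \1$, are never addressed. The paper handles exactly these: after the substitution making $h$ singleton-valued, product distributivity yields $h(t_i) = r_{i,1}\circ r_{i,2}$ with $r_{i,1}\sqleq h(u_{i,1})$ and $r_{i,2}\sqleq h(u_{i,2})$, and --- because every factor of $t_i$ now evaluates to a singleton --- this word decomposition lifts to a term decomposition $t_i = t_{i,1}\cdot t_{i,2}$, so the premise may be replaced by the two stronger premises $t_{i,1}\leq u_{i,1}$ and $t_{i,2}\leq u_{i,2}$; the resulting quasi-inequality is a consequence of the old one, hence still valid in $\class{K}$, and iterating removes all product right-hand sides. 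Premises with right-hand side $\1$ force $h(t_i) = \emptyword$ and are either redundant or reduce to premises on single variables, which disappear after the substitution step.

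A second, related problem is the ``generators plus closure'' framing of your second paragraph. The closure operations established in the characterization lemma do not preserve simplicity: composing witnesses for transitivity introduces the extra premise $u \leq t'$, whose right-hand side $t'$ is an arbitrary term, so simple witnesses for generators of $\leq_\theta$ do not automatically combine into simple witnesses for arbitrary pairs. This is why the paper does not argue by generation at all: it takes an arbitrary witness for the specific pair $p \leq_\theta q$ and massages that single witness (substitution to a singleton-valued interpretation, then premise splitting via product distributivity, then the $\1$-premise case) into a simple one. Your splitting technique is precisely the right tool for that massaging --- it just has to be applied to the right-hand sides of the premises of $\qine$ itself, not only to the values assigned to variables.
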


\begin{proof}
  This holds because $\FreeMon{\alg{M}}$ is (upper) product distributive and generated as a monoid by singleton words. More explicitly, suppose that the quasi-inequation $\alpha$ valid in $\class{K}$ witnesses that $p \leqK q$ with respect to some inter\-pretation $h$. If $h(x) = [a_{1}, \dots, a_{n}]$ where $x$ is a variable, we substitute $x$ in~$\alpha$ by the term $x_{1} \cdot \ldots \cdot x_{n}$ and take $h(x_{i}) = a_{i}$. If $h(x) = \emptyword$, we substitute $x$ by the term~$\1$. This yields an interpretation over atomic words and a quasi-inequation valid in $\class{K}$ witnessing $p \leqK q$.

  Now consider some premise $t_{i} \leq u_{i}$ of $\alpha$. If the term $u_{i}$ has the form $u_{i} = u_{i,1} \cdot u_{i,2}$, then $h(t_{i}) \sqleq h(u_{i,1}) \circ h(u_{i,2})$, so by product distributivity there is a decomposition $h(t_{i}) = r_{i,1} \circ r_{i,2}$ such that $r_{i,1} \sqleq h(u_{i,1})$ and $r_{i,2} \sqleq h(u_{i,2})$. The term $t_{i}$ is a (possibly empty) product of variables and terms of the form $\gamma(v)$. Because $h$ is an interpretation over atomic words, there are terms $t_{i,1}$ and $t_{i,2}$ such that $t_{i} = t_{i,1} \cdot t_{i,2}$ holds in all nuclear pomonoids and moreover $h(t_{i,1}) = r_{i,1}$ and $h(t_{i,2}) = r_{i,2}$. It~follows that in the quasi-inequation $\alpha$ we may replace the premise $t_{i} \leq u_{i}$ by the two premises $t_{i,1} \leq u_{i,1}$ and $t_{i,2} \leq u_{i,2}$. This results in a new quasi-inequation $\beta$. Clearly $\beta$ is a consequence of $\alpha$, since $t_{i,1} \leq u_{i,1} ~ \& ~ t_{i,2} \leq u_{i,2} \implies t_{i} \leq u_{i}$ holds in all nuclear pomonoids. In particular, $\beta$ is valid in $\class{K}$. Moreover, $\beta$ still witnesses that $p \leqK q$ in $\FreeMon{\alg{M}}$ with respect to $h$.

  If the term $u_{i}$ has the form $u_{i} = \1$, then for each interpretation $h$ on $\FreeMon{\alg{M}}$ we have $h(t_{i}) \leq h(u_{i}) = \emptyword$ only if $h(t_{i}) = \emptyword$, which in turn only holds if either $t_{i} = \1$ (in which case the premise is redundant) or $t_{i}$ is a product of variables $t_{i} = x_{i,1} \cdot \ldots \cdot x_{i,k}$ and $h(x_{i,j}) = \emptyword$ for each of them. In that case, we may replace the premise $t_{i} \leq u_{i}$ by the premises $x_{i,1} \leq u_{i}$, \dots, $x_{i,k} \leq u_{i}$ to again obtain a new quasi-inequation $\beta$ valid in $\class{K}$ which witnesses $p \leqK q$ with respect to $h$.
\end{proof}

\begin{lemma} \label{lemma: witness transfer}
  Let $\qine$ be a simple quasi-inequation and $g$ be an interpretation on~$\alg{M}$. If $\qine$ witnesses that $v \leqK w$ in $\FreeMon{\alg{M}}$ with respect to the interpretation $h(x) \assign [g(x)]$, then $\qine$ also witnesses that $\gamma(v) \leq \gamma(w)$ in $\alg{M}$ with respect to $g$.
\end{lemma}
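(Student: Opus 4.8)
The plan is to push the witnessing interpretation $h$ on $\FreeMon{\alg{M}}$ down to $\alg{M}$ along the evaluation map, the whole point being the identity $\gamma \circ h = g$.

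First I would record that the evaluation map $\gamma\colon \FreeMon{\alg{M}} \to \alg{M}$ is a homomorphism of nuclear pomonoids, where $\alg{M}$ is regarded (as is implicit here) as a nuclear pomonoid with the identity nucleus. It is a monoid homomorphism sending $\emptyword$ to $\1$ by construction, it is isotone with respect to $\sqleq$ by the facts collected in Section~2, and it commutes with the nucleus because $\gamma([\gamma](u)) = \gamma([\gamma(u)]) = \gamma(u)$ for every word $u$. Hence $\gamma \circ h\colon \Tm \to \alg{M}$ is again a homomorphism; since it agrees with $g$ on every variable, namely $\gamma(h(x)) = \gamma([g(x)]) = g(x)$, and $\Tm$ is freely generated by $\Var$, it follows that $\gamma \circ h = g$ on all terms.

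With this identity in hand the rest is bookkeeping. Write $\qine$ as $t_1 \leq u_1 ~\&~ \dots ~\&~ t_n \leq u_n \implies t \leq u$. To say that $\qine$ witnesses $v \leq_\theta w$ in $\FreeMon{\alg{M}}$ with respect to $h$ is to say that $\qine$ is valid in $\class{K}$, that $h(t) = v$ and $h(u) = w$, and that $h(t_i) \sqleq h(u_i)$ in $\FreeMon{\alg{M}}$ for each $i$. Applying $\gamma$ and using $\gamma \circ h = g$, the conclusion side becomes $g(t) = \gamma(v)$ and $g(u) = \gamma(w)$, while for each premise the isotonicity of $\gamma$ with respect to $\sqleq$ gives $g(t_i) = \gamma(h(t_i)) \leq \gamma(h(u_i)) = g(u_i)$ in $\alg{M}$. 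Since $\qine$ is literally the same quasi-inequality, still valid in $\class{K}$, this is exactly the assertion that $\qine$ witnesses $\gamma(v) \leq \gamma(w)$ in $\alg{M}$ with respect to $g$.

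I expect the only step needing a moment's attention to be the first one: one must make sure the evaluation map respects the unary symbol $\gamma$, and not only the monoid operations, so that $\gamma \circ h$ coincides with $g$ on all terms and not merely on variables. After that the transfer is routine. It is perhaps worth remarking that simplicity of $\qine$ plays no role in this particular argument; simplicity enters the broader development only through Lemma~\ref{lemma: witnessed by simple}, which is what guarantees that $\leq_\theta$ is witnessed by interpretations of the special shape $h(x) = [g(x)]$ assumed here.
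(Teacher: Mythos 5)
Your proof is correct and is in substance the same transfer-along-the-evaluation-map argument as the paper's: you establish $\gamma \circ h = g$ (the paper phrases this concretely as $\gamma(t([a_{1}], \dots, [a_{n}])) = t(a_{1}, \dots, a_{n})$ in $\alg{M}$) and then push the premises and the conclusion of $\qine$ down to $\alg{M}$. The one genuine divergence is in how the premises are handled. The paper uses the equivalence $u \sqleq [a] \iff \gamma(u) \leq a$, which applies to each premise $t_{i} \leq u_{i}$ precisely because simplicity of $\qine$ guarantees that $h(u_{i})$ is a singleton word (either $u_{i}$ is a variable, so $h(u_{i}) = [g(x_{i})]$, or $u_{i} = \gamma(v_{i})$, so $h(u_{i}) = [\gamma(h(v_{i}))]$). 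You instead invoke isotonicity of the evaluation map with respect to $\sqleq$ on arbitrary words, which is indeed available from the facts collected in the construction of $\FreeMon{\alg{M}}$. Your route is thus marginally more general: as you observe, it shows the lemma holds for arbitrary quasi-inequalities, with simplicity genuinely needed only in Lemma~\ref{lemma: witnessed by simple}, whereas the paper's own wording does tacitly use simplicity at exactly this point. Both arguments are sound; the only detail worth making explicit in yours is that the evaluation map is constant on $\sim$-classes (hence well defined and isotone on the skeleton $\FreeMon{\alg{M}}$), which follows from the same isotonicity fact you cite.
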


\begin{proof}
  Recall that $u \sqleq [a]$ in $\FreeMon{\alg{M}}$ if and only if $\gamma(u) \leq a$ in $\alg{M}$. Moreover, $\gamma(t([a_{1}], \dots, [a_{n}])) = t(a_{1}, \dots, a_{n})$ in $\alg{M}$. Consequently, $t([a_{1}], \dots, [a_{n}]) \leq [b]$ holds in $\FreeMon{\alg{M}}$ only if $t(a_{1}, \dots, a_{n}) \leq b$ holds in $\alg{M}$, therefore $g(t_{i}) \leq g(u_{i})$ holds in $\alg{M}$ for each premise $t_{i} \leq u_{i}$ of $\qine$, and $g(t) = \gamma(h(t)) = \gamma(v)$ and $g(u) = \gamma(h(u)) = \gamma(w)$ hold for the conclusion $t \leq u$ of $\qine$.
\end{proof}

  We remind the reader that in the following two theorems we identify pomonoids (s$\ell$-monoids) with nuclear pomonoids (s$\ell$-monoids) where $\gamma(x) = x$ for each $x$.

\begin{theorem} 
  Let $\class{K}$ be an ordered quasivariety of nuclear pomonoids. Then $\class{K}_{\gamma} \assign \set{\alg{M}_{\gamma}}{\langle \alg{M}, \gamma \rangle \in \class{K}}$ is an ordered quasivariety of pomonoids axiomatized by the simple quasi-inequational theory of $\class{K}$.
\end{theorem}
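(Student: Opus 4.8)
The plan is to identify $\class{K}_{\gamma}$ with the class of all pomonoids which, viewed as nuclear pomonoids with the identity nucleus, satisfy the simple quasi-inequational theory of $\class{K}$. Since a class of ordered algebras axiomatized by quasi-inequalities is by definition an ordered quasivariety, establishing this identification proves the theorem. I would prove the two inclusions separately, the first being routine.

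For the inclusion of $\class{K}_{\gamma}$ into the models of the simple quasi-inequational theory: given $\alg{N} = \alg{M}_{\gamma}$ with $\pair{\alg{M}}{\gamma} \in \class{K}$ and a simple quasi-inequality $\alpha$ valid in $\class{K}$, one notes that $\alpha$ is valid in $\pair{\alg{M}}{\gamma}$ and invokes Fact~\ref{fact: simple quasi-inequalities} to conclude that $\alpha$ holds in $\alg{M}_{\gamma} = \alg{N}$.

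The substantive direction is the converse. Starting from a pomonoid $\alg{M}$ that satisfies every simple quasi-inequality valid in $\class{K}$, I would exhibit $\alg{M}$ as a nuclear image of a member of $\class{K}$ via the free nuclear preimage. Let ${\leq_{\theta}}$ be the smallest $\class{K}$-congruence on the nuclear pomonoid $\FreeMon{\alg{M}}$. By Lemma~\ref{lemma: restriction on free} it suffices to check that $v \leq_{\theta} w$ in $\FreeMon{\alg{M}}$ implies $\gamma(v) \leq \gamma(w)$ in $\alg{M}$. Given such $v \leq_{\theta} w$, Lemma~\ref{lemma: witnessed by simple} supplies a simple quasi-inequality $\alpha$ valid in $\class{K}$ together with an interpretation over singleton words, say $h(x) = [g(x)]$ for an interpretation $g$ on $\alg{M}$, witnessing $v \leq_{\theta} w$. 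Lemma~\ref{lemma: witness transfer} then upgrades this to a witness, with respect to $g$, that $\gamma(v) \leq \gamma(w)$ in $\alg{M}$: every premise of $\alpha$ is satisfied by $g$ in $\alg{M}$, and the conclusion of $\alpha$ evaluates under $g$ to $\gamma(v) \leq \gamma(w)$. Since $\alpha$ lies in the simple quasi-inequational theory of $\class{K}$, it holds in $\alg{M}$ by assumption, so feeding it the interpretation $g$ delivers $\gamma(v) \leq \gamma(w)$. Lemma~\ref{lemma: restriction on free} then gives $\alg{M} \in \class{K}_{\gamma}$.

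Combining the two inclusions shows that $\class{K}_{\gamma}$ is axiomatized by the simple quasi-inequational theory of $\class{K}$, hence is an ordered quasivariety. I expect the main obstacle to have been dealt with already in Lemmas~\ref{lemma: restriction on free}, \ref{lemma: witnessed by simple}, and \ref{lemma: witness transfer} --- the remaining argument is just threading these together --- so the crux is really the structural input that $\FreeMon{\alg{M}}$ is (upper) product distributive and generated as a monoid by singleton words, which is exactly what lets every witnessing quasi-inequality be taken simple and every witnessing interpretation be taken over singleton words.
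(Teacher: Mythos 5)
Your proposal is correct and follows the same route as the paper's proof: the easy inclusion via Fact~\ref{fact: simple quasi-inequalities}, and the converse by combining Lemma~\ref{lemma: restriction on free}, Lemma~\ref{lemma: witnessed by simple}, and Lemma~\ref{lemma: witness transfer} applied to the smallest $\class{K}$-congruence on $\FreeMon{\alg{M}}$. Your final step, explicitly invoking the validity of the witnessing simple quasi-inequality in $\alg{M}$ to extract $\gamma(v) \leq \gamma(w)$, only spells out what the paper leaves implicit in the notion of a witness.
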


\begin{proof}
  If $\langle \alg{M}, \gamma \rangle \in \class{K}$, then $\alg{M}_{\gamma}$ satisfies the simple quasi-inequational theory of~$\class{K}$ by Fact~\ref{fact: simple quasi-inequations}. Conversely, suppose that a pomonoid $\alg{M}$ satisfies the simple quasi-inequational theory of $\class{K}$. By Lemma~\ref{lemma: restriction on free} it suffices to show that $u \leqK v$ in $\FreeMon{\alg{M}}$ implies $\gamma(u) \leq \gamma(v)$ in $\alg{M}$, where ${\leqK}$ is the smallest $\class{K}$-congruence of $\FreeMon{\alg{M}}$. By Lemma~\ref{lemma: witnessed by simple}, if $u \leqK v$, then this is witnessed by some simple quasi-inequation $\qine$ with respect to a valuation $h\colon \Tm \to \FreeMon{\alg{M}}$ which assigns a singleton word to each variable. Then $\qine$ also witnesses that $\gamma(u) \leq \gamma(v)$ in~$\alg{M}$ by Lemma~\ref{lemma: witness transfer}.
\end{proof}

  An entirely analogous theorem holds for s$\ell$-monoids. The only change required in the proof consists in replacing the free pomonoidal \mbox{nuclear} preimage $\FreeMon{\alg{M}}$ by the free s$\ell$-monoidal nuclear preimage $\Id \FreeMon{\alg{M}}$ and using the distributivity of $\Id \FreeMon{\alg{M}}$ in a manner entirely analogous to the product distributivity of $\FreeMon{\alg{M}}$ to eliminate joins from the right-hand sides of the premises of simple quasi-inequations.

\begin{theorem} 
  Let $\class{K}$ be a quasivariety of nuclear s$\ell$-monoids. Then $\class{K}_{\gamma} \assign \set{\alg{M}_{\gamma}}{\langle \alg{M}, \gamma \rangle \in \class{K}}$ is a quasi\-variety of s$\ell$-monoids axiomatized by the simple quasi-inequational theory of $\class{K}$.
\end{theorem}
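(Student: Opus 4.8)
The plan is to adapt the proof of the preceding theorem for pomonoids, replacing the free partially ordered nuclear preimage $\FreeMon{\alg{M}}$ throughout by the free semilattice-ordered nuclear preimage $\Id \FreeMon{\alg{M}}$ and replacing ``interpretation over singleton words'' by ``interpretation over the principal downsets $\below [a]$ with $a \in \alg{M}$''. The soundness half is painless: the s$\ell$-monoidal analogue of Fact~\ref{fact: simple quasi-inequalities}, that each simple s$\ell$-monoidal quasi-inequality valid in $\pair{\alg{M}}{\gamma}$ holds in $\alg{M}_{\gamma}$, goes through by the same induction on term complexity, the only new case being a join term $t_{1} \vee t_{2}$, handled using the identity $\gamma(a \vee b) = \gamma(\gamma(a) \vee \gamma(b))$ valid in every nuclear s$\ell$-monoid together with the fact that joins in $\alg{M}_{\gamma}$ are computed as $\veegamma$. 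Hence every member of $\class{K}_{\gamma}$ satisfies the simple quasi-inequational theory of $\class{K}$, and since that theory, read with $\gamma$ interpreted as the identity, is just a set of ordinary s$\ell$-monoidal quasi-inequalities, the converse inclusion will also establish that $\class{K}_{\gamma}$ is a quasivariety.

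For the converse I would set up the s$\ell$-monoidal versions of Lemmas~\ref{lemma: restriction on free}, \ref{lemma: witnessed by simple}, and~\ref{lemma: witness transfer}. The analogue of Lemma~\ref{lemma: restriction on free} reduces the claim to showing that, for the smallest $\class{K}$-congruence $\leq_{\theta}$ on $\Id \FreeMon{\alg{M}}$, the relation $v \leq_{\theta} w$ implies $\gamma(v) \leq \gamma(w)$ in $\alg{M}$, where $\gamma$ now denotes the canonical map $\Id \FreeMon{\alg{M}} \to \alg{M}$ (equivalently, $\gammaid(v) = \below[\gamma(v)]$); its proof is unchanged, using that the nuclear image of $\Id \FreeMon{\alg{M}}$ is $\alg{M}$ via $a \mapsto \below [a]$ and that the counit $\counit{\pair{\alg{N}}{\delta}} \colon \Id \FreeMon{(\alg{N}_{\delta})} \to \pair{\alg{N}}{\delta}$ is a surjective homomorphism of nuclear s$\ell$-monoids. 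The analogue of Lemma~\ref{lemma: witness transfer} needs the identity $\gamma(h(t)) = t(\tuple{a})$ for an interpretation $h$ with $h(x_{i}) = \below [a_{i}]$ (and $\gamma$ read as the identity on the right-hand side), which follows by a routine induction from the nucleus identities $\gamma(a \cdot b) = \gamma(\gamma(a) \cdot \gamma(b))$ and $\gamma(a \vee b) = \gamma(\gamma(a) \vee \gamma(b))$ together with the order isomorphism $(\FreeMon{\alg{M}})_{[\gamma]} \iso \alg{M}$. Granting the analogue of Lemma~\ref{lemma: witnessed by simple}, the converse follows exactly as before: for $v \leq_{\theta} w$, take a simple quasi-inequality witnessing this over principal downsets $\below [a]$ and transfer it to a witness of $\gamma(v) \leq \gamma(w)$ in $\alg{M}$, which holds because $\alg{M}$ satisfies the simple quasi-inequational theory of $\class{K}$.

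The real work, and the main obstacle, is the analogue of Lemma~\ref{lemma: witnessed by simple}: every $v \leq_{\theta} w$ is witnessed by a simple quasi-inequality with respect to an interpretation over $\set{\below [a]}{a \in \alg{M}}$. Starting from any witnessing quasi-inequality $\qine$ valid in $\class{K}$ and any interpretation $h$, I would proceed in three stages. First, since every non-empty finitely generated downset of $\FreeMon{\alg{M}}$ is a finite join of downsets of the form $\below [a_{1}] \ast \dots \ast \below [a_{k}] = \below [a_{1}, \dots, a_{k}]$ (and of $\below \emptyword$), substitute each variable $x$ with $h(x) = \below\{w_{1}, \dots, w_{n}\}$ by an appropriate join of products of fresh variables, one product per generator $w_{j}$, and set the new variables' values to the singleton principal downsets $\below[a_{j,l}]$; this makes $h$ an interpretation over the generators $\below[a]$ without disturbing validity in $\class{K}$ or the witnessing. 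Second, using that multiplication distributes over join in every s$\ell$-monoid, rewrite each left-hand side $t_{i}$ of a premise as a join $\bigvee_{k} p_{k}$ of products of variables and $\gamma$-terms and replace the premise $t_{i} \leq u_{i}$ by the premises $p_{k} \leq u_{i}$; the key point is that each $h(p_{k})$ is now a \emph{principal} downset $\below[b_{1}, \dots, b_{m}]$, since $h$ sends variables and $\gamma$-terms to principal downsets of singleton words and products of these are again principal. Third, recurse on the structure of $u_{i}$: when $u_{i} = u_{i,1} \cdot u_{i,2}$, principality of $h(p_{k})$ together with $h(p_{k}) \leq h(u_{i,1}) \ast h(u_{i,2})$ lets one invoke the product distributivity of $\FreeMon{\alg{M}}$ itself to obtain a \emph{syntactic} splitting $p_{k} = p_{k,1} \cdot p_{k,2}$ with $h(p_{k,j}) \leq h(u_{i,j})$; when $u_{i} = u_{i,1} \vee u_{i,2}$, principality of $h(p_{k})$ and the distributivity of the semilattice $\Id \FreeMon{\alg{M}}$ force $h(p_{k}) \leq h(u_{i,1})$ or $h(p_{k}) \leq h(u_{i,2})$, so the premise is replaced by $p_{k} \leq u_{i,1}$ or $p_{k} \leq u_{i,2}$; when $u_{i} = \1$, the inequality $h(p_{k}) \leq \below \emptyword$ forces $p_{k}$ to be the empty product, making the premise redundant; and when $u_{i}$ is a variable or has the form $\gamma(v_{i})$, the premise is already as required. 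The recursion terminates on the complexity of $u_{i}$, and the resulting quasi-inequality is simple and still witnesses $v \leq_{\theta} w$. The one genuinely delicate point — the counterpart of ``a word decomposes into subwords at an integer position'' in the pomonoid argument — is precisely that, after normalising $t_{i}$, its value under $h$ is a principal downset, so that the product distributivity of $\FreeMon{\alg{M}}$ (which holds) rather than that of $\Id \FreeMon{\alg{M}}$ (which may fail) is what is needed, and the decomposition it produces can be matched by a genuine syntactic decomposition of $p_{k}$.
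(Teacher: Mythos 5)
Your proposal is correct and follows essentially the same route as the paper, which proves the s$\ell$-monoid case simply by declaring the pomonoid argument (Lemmas~\ref{lemma: restriction on free}--\ref{lemma: witness transfer}) to carry over with $\FreeMon{\alg{M}}$ replaced by $\Id \FreeMon{\alg{M}}$ and with the distributivity of $\Id \FreeMon{\alg{M}}$ used to eliminate joins from the right-hand sides of premises. Your added details --- rewriting the left-hand sides in disjunctive normal form so that their values under an interpretation over the downsets $\below [a]$ are principal, and then invoking product distributivity of $\FreeMon{\alg{M}}$ for product right-hand sides and join-primeness of principal downsets for join right-hand sides --- are exactly the adaptation the paper leaves implicit.
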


  The above theorems, however, do not directly provide us with a characterization of ordered quasivarieties of pomonoids which are closed under nuclear images. In that case, there is an ordered quasivariety $\class{L}$ of pomonoids such that $\pair{\alg{A}}{\gamma} \in \class{K}$ if and only if $\alg{A} \in \class{L}$. The above theorem then tells us that the nuclear images of pomonoids in $\class{L}$ are axiomatized by the simple quasi-inequations in the signature of nuclear pomonoids valid in $\class{K}$. However, it is not immediately obvious how to relate these to the simple quasi-inequations in the signature of pomonoids valid in~$\class{L}$. We therefore go through the above sequence of above lemmas and theorems and note where they have to be modified to obtain their analogues for pomonoids.

  In the following, let $\class{L}$ be an ordered quasivariety of pomonoids. A quasi-inequation in the signature of pomonoids (s$\ell$-monoids) will be called \emph{simple} if each premise of $\alpha$ has the form $t_{i} \leq x_{i}$ for some variable $x_{i}$.

\begin{lemma}
  Let $\alg{M}$ be a pomonoid and ${\leqL}$ be the smallest $\class{L}$-congruence on the pomonoidal reduct of $\FreeMon{\alg{M}}$. Then $\alg{M}$ is a nuclear image of some pomonoid in $\class{L}$ if and only if $v \leqL w$ in $\FreeMon{\alg{M}}$ implies $\gamma(v) \leq \gamma(w)$ in $\alg{M}$.
\end{lemma}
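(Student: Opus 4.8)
The plan is to follow the proof of Lemma~\ref{lemma: restriction on free} almost line for line. The one genuinely new point is that $\leq_{\theta}$ is now an $\class{L}$-congruence on the \emph{pomonoidal reduct} of $\FreeMon{\alg{M}}$, so — unlike the $\class{K}$-congruence in Lemma~\ref{lemma: restriction on free} — it is not automatically compatible with the map $[\gamma]$; the hypothesis is exactly what is needed to extract this compatibility.

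For the right-to-left implication, assume $v \leq_{\theta} w$ in $\FreeMon{\alg{M}}$ implies $\gamma(v) \leq \gamma(w)$ in $\alg{M}$. First I would check that $[\gamma]$ is compatible with $\leq_{\theta}$: if $v \leq_{\theta} w$, then $\gamma(v) \leq \gamma(w)$ in $\alg{M}$, hence $[\gamma](v) = [\gamma(v)] \sqleq [\gamma(w)] = [\gamma](w)$ by the Fact that $u \sqleq [a] \iff \gamma(u) \leq a$, hence $[\gamma](v) \leq_{\theta} [\gamma](w)$ since $\leq_{\theta}$ extends the order of $\FreeMon{\alg{M}}$. So $[\gamma]$ descends to a monotone map on $\alg{N} \assign (\FreeMon{\alg{M}}) / {\leq_{\theta}}$, and it is a nucleus there because $u \leq [\gamma](u)$, $[\gamma]([\gamma](u)) = [\gamma](u)$, and $[\gamma](u) \circ [\gamma](v) \leq [\gamma](u \circ v)$ already hold in $\FreeMon{\alg{M}}$. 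By the choice of $\leq_{\theta}$, $\alg{N} \in \class{L}$. It then remains to identify $\alg{N}_{[\gamma]}$ with $\alg{M}$: the $[\gamma]$-closed elements of $\alg{N}$ are precisely the $\leq_{\theta}$-classes of singleton words $[a]$, and on these the hypothesis gives $[a] \leq_{\theta} [b] \Rightarrow a \leq b$, while conversely $a \leq b \Rightarrow [a] \sqleq [b] \Rightarrow [a] \leq_{\theta} [b]$; thus $a \mapsto [a]$ is an order isomorphism onto $\alg{N}_{[\gamma]}$, and the monoid structure matches since $[a] \circ_{[\gamma]} [b] = [a \cdot b]$ and $[\gamma](\emptyword) = [\1]$. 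Hence $\alg{M} \iso \alg{N}_{[\gamma]}$ is a nuclear image of a pomonoid in $\class{L}$.

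For the converse, suppose $\alg{M} = \alg{N}_{\delta}$ for a nucleus $\delta$ on some pomonoid $\alg{N} \in \class{L}$. The counit $\counit{\pair{\alg{N}}{\delta}} \colon \FreeMon{\alg{M}} \to \pair{\alg{N}}{\delta}$, viewed as a homomorphism of pomonoids into $\alg{N}$, induces the order-congruence $R$ on $\FreeMon{\alg{M}}$ given by $v \mathrel{R} w \iff \counit{\pair{\alg{N}}{\delta}}(v) \leq \counit{\pair{\alg{N}}{\delta}}(w)$ in $\alg{N}$; its quotient embeds into $\alg{N}$ as an ordered submonoid, and since $\class{L}$ is closed under substructures this quotient lies in $\class{L}$, so $R$ is an $\class{L}$-congruence and $\leq_{\theta} \subseteq R$. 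Consequently $v \leq_{\theta} w$ yields $\counit{\pair{\alg{N}}{\delta}}(v) \leq \counit{\pair{\alg{N}}{\delta}}(w)$ in $\alg{N}$, whence $\delta(\counit{\pair{\alg{N}}{\delta}}(v)) \leq \delta(\counit{\pair{\alg{N}}{\delta}}(w))$; and since the product of the letters of a word computed in $\alg{M} = \alg{N}_{\delta}$ is $\delta$ of their product in $\alg{N}$, this reads $\gamma(v) \leq \gamma(w)$ in $\alg{M}$, as desired.

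The step that truly departs from Lemma~\ref{lemma: restriction on free}, and the one I expect to be the crux, is verifying that the hypothesis alone forces $[\gamma]$ to pass to the quotient $\alg{N}$ as a nucleus; once that is in hand, identifying $\alg{N}_{[\gamma]}$ with $\alg{M}$ and carrying out the counit computation in the converse are routine transcriptions of the earlier argument, using only that $(\FreeMon{\alg{M}}) / {\leq_{\theta}} \in \class{L}$ and that quasivarieties are closed under subalgebras.
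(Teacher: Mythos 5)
Your proof is correct and takes essentially the paper's route: the paper's own proof just notes that the hypothesis makes $\leq_{\theta}$ compatible with $[\gamma]$ and then invokes Lemma~\ref{lemma: restriction on free} with $\class{K} \assign \set{\pair{\alg{N}}{\delta}}{\alg{N} \in \class{L}}$, using for the converse that $\leq_{\theta}$ lies below the smallest $\class{K}$-congruence. You have simply unfolded that reduction, re-proving both directions of Lemma~\ref{lemma: restriction on free} inline (the quotient by $\leq_{\theta}$ with the descended nucleus in one direction, the counit together with closure of $\class{L}$ under substructures and minimality of $\leq_{\theta}$ in the other), which is the same argument in expanded form.
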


\begin{proof}
  If $v \leqL w$ in $\FreeMon{\alg{M}}$ implies $\gamma(v) \leq \gamma(w)$ in $\alg{M}$, then ${\leqL}$ is in fact an order congruence also with respect to the nucleus $[\gamma]$. Lemma~\ref{lemma: restriction on free} thus applies, taking $\class{K} \assign \set{\pair{\alg{N}}{\delta}}{\alg{N} \in \class{L}}$. For the opposite direction, apply Lemma~\ref{lemma: restriction on free} taking into account that the smallest $\class{L}$-congruence on the pomonoidal reduct of $\FreeMon{\alg{M}}$ lies below the smallest $\class{K}$-congruence on $\FreeMon{\alg{M}}$.
\end{proof}

  The proofs of the following lemmas and theorems now carry over word for word from their analogues for nuclear pomonoids, keeping in mind that by simple quasi-inequations we now mean simple quasi-inequations in the signature of pomonoids.

\begin{lemma}
  Let $\alg{M}$ be a pomonoid and let $\leqL$ be the smallest $\class{L}$-congruence on the pomonoidal reduct of $\FreeMon{\alg{M}}$. Then $\leqL$ is witnessed by simple quasi-inequations with respect to interpretations over singleton words.
\end{lemma}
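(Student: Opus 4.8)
The plan is to transfer the proof of Lemma~\ref{lemma: witnessed by simple} essentially verbatim, the only real change being that ``simple'' now refers to the signature of pomonoids, so that each right-hand side of a premise must be pushed down all the way to a variable rather than being permitted to stop at a term of the form $\gamma(v)$. By the basic witnessing lemma applied to the ordered quasivariety $\class{L}$ and the pomonoidal reduct of $\FreeMon{\alg{M}}$, whenever $a \leq_{\theta} b$ there is a pomonoidal quasi-inequality $\qine$ valid in $\class{L}$ and an interpretation $h\colon \Tm \to \FreeMon{\alg{M}}$ that witnesses $a \leq_{\theta} b$ with respect to $h$. I would then rewrite the pair $(\qine, h)$ in two stages, each step producing a quasi-inequality that is a substitution instance or a logical consequence of its predecessor, hence still valid in $\class{L}$, and that still witnesses $a \leq_{\theta} b$ with respect to the modified interpretation.

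\emph{Stage one} makes $h$ an interpretation over singleton words: replace each variable $x$ with $h(x) = [c_{1}, \dots, c_{k}]$, $k \geq 1$, by a product $x_{1} \cdot \ldots \cdot x_{k}$ with $h(x_{j}) \assign [c_{j}]$, and replace each $x$ with $h(x) = \emptyword$ by $\1$. Since $[c_{1}] \circ \dots \circ [c_{k}] = [c_{1}, \dots, c_{k}]$, the new interpretation agrees with the old one on the original terms, so the substituted quasi-inequality still witnesses $a \leq_{\theta} b$, and it is a substitution instance of $\qine$, hence still valid in $\class{L}$. After this stage every variable is sent to a singleton word, and because $u \sqleq \emptyword$ forces $u = \emptyword$, a term is sent to $\emptyword$ only if it equals $\1$ modulo the monoid identities.

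\emph{Stage two} makes each premise simple. Given a premise $t_{i} \leq u_{i}$, put $u_{i}$ into monoid normal form $y_{1} \cdot \ldots \cdot y_{m}$, a product of variables (the units introduced in stage one disappear). If $m = 0$ then $h(t_{i}) \sqleq h(u_{i}) = \emptyword$, so $h(t_{i}) = \emptyword$, whence $t_{i} = \1$ modulo monoid identities and the premise is trivially valid and may be deleted. If $m \geq 1$ then, because $h$ is over singleton words, $h(u_{i})$ is the genuine length-$m$ word $[d'_{1}, \dots, d'_{m}]$; writing $t_{i}$ likewise as $z_{1} \cdot \ldots \cdot z_{\ell}$ with $h(z_{r}) = [d_{r}]$, the relation $h(t_{i}) \sqleq h(u_{i})$ unwinds, by the very definition of $\sqleq$, into a decomposition of the word $[d_{1}, \dots, d_{\ell}]$ into contiguous blocks $w_{1} \circ \dots \circ w_{m}$ with $\gamma(w_{k}) \leq d'_{k}$ for each $k$. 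Taking $t_{i,k}$ to be the corresponding sub-product of $z_{1} \cdot \ldots \cdot z_{\ell}$ gives $t_{i,1} \cdot \ldots \cdot t_{i,m} = t_{i}$ in all pomonoids and $h(t_{i,k}) = w_{k} \sqleq [d'_{k}] = h(y_{k})$, using the Fact that $u \sqleq [a]$ iff $\gamma(u) \leq a$. I would therefore replace the premise $t_{i} \leq u_{i}$ by the $m$ premises $t_{i,1} \leq y_{1}, \dots, t_{i,m} \leq y_{m}$: each has a variable on the right, each is satisfied by $h$, and the new quasi-inequality is a consequence of the old one since $u_{i} = y_{1} \cdot \ldots \cdot y_{m}$ holds in all pomonoids. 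Doing this for every premise produces the required simple quasi-inequality, valid in $\class{L}$, witnessing $a \leq_{\theta} b$ with respect to $h$ over singleton words.

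The argument is essentially bookkeeping, and I do not expect a genuine obstacle; the one point that diverges from Lemma~\ref{lemma: witnessed by simple} is the treatment of a premise with $u_{i} = \1$, which in the nuclear-pomonoid signature could be retained by rewriting it with $\gamma$ but which here must instead be observed to have become redundant once $h$ has been normalised in stage one --- so the mild care needed is to check that deleting and splitting premises preserves both validity in $\class{L}$ and the witnessing property. As in the nuclear case, the structural fact driving the whole reduction is the description of $\sqleq$ against a singleton word together with the fact that $\FreeMon{\alg{M}}$ is generated as a monoid by singleton words, which is exactly the role played there by (upper) product distributivity.
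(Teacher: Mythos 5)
Your proposal is correct and matches the paper's argument, which simply carries the proof of Lemma~\ref{lemma: witnessed by simple} over to the pomonoid signature: the same two-stage reduction, first normalising the interpretation to singleton words, then splitting each premise along the decomposition supplied by the definition of $\sqleq$ (i.e.\ the product distributivity of $\FreeMon{\alg{M}}$). Your observation that after stage one any premise whose right-hand side reduces to $\1$ forces $h(t_{i}) = \emptyword$, hence $t_{i} = \1$ up to monoid identities, so the premise is universally valid and may be deleted, is exactly the adjustment the paper's ``word for word'' transfer requires in order to end with simple quasi-inequalities in the signature of pomonoids.
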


\begin{lemma}
  Let $\qine$ be a simple quasi-inequation and $g$ be an interpretation on~$\alg{M}$. If $\qine$ witnesses that $v \leqL w$ in the pomonoidal reduct of $\FreeMon{\alg{M}}$ with respect to the interpretation $h(x) \assign [g(x)]$, then $\qine$ also witnesses that $\gamma(v) \leq \gamma(w)$ in $\alg{M}$ with respect to $g$.
\end{lemma}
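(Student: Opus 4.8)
The plan is to follow the proof of Lemma~\ref{lemma: witness transfer} almost verbatim; the pomonoid-signature case is in fact slightly easier, since the right-hand side of each premise of a simple quasi-inequality is now a variable rather than possibly a term of the form $\gamma(v_i)$.

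First I would recall two ingredients. The first is the basic fact that $u \sqleq [a]$ in $\FreeMon{\alg{M}}$ if and only if $\gamma(u) \leq a$ in $\alg{M}$. The second is that $\gamma$ descends to a monoid homomorphism from the skeleton $\FreeMon{\alg{M}}$ onto $\alg{M}$ with $\gamma([a]) = a$: if $u \sim v$ then $u \sqleq v \sqleq u$, so by isotonicity of $[\gamma]$ and the first fact $\gamma(u) \leq \gamma(v) \leq \gamma(u)$, hence $\gamma$ is well defined on $\sim$-classes, and $\gamma(u \circ v) = \gamma(u) \cdot \gamma(v)$ and $\gamma(\emptyword) = \1$. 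A routine induction on monoid terms then yields $\gamma(h(s)) = g(s)$ for every term $s$, where $h(x) \assign [g(x)]$, because $\gamma$ is a homomorphism and $\gamma([g(x)]) = g(x)$.

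It then remains to check the three clauses in the definition of ``$\qine$ witnesses $\gamma(v) \leq \gamma(w)$ in $\alg{M}$ with respect to $g$''. Validity of $\qine$ in $\class{L}$ is part of the hypothesis. For the conclusion $t \leq u$ of $\qine$ we have $h(t) = v$ and $h(u) = w$, so $g(t) = \gamma(h(t)) = \gamma(v)$ and $g(u) = \gamma(h(u)) = \gamma(w)$ by the identity above. For a premise $t_i \leq x_i$ of $\qine$ (here simplicity is used: the right-hand side is the variable $x_i$), the hypothesis that $\qine$ witnesses $v \leq_{\theta} w$ with respect to $h$ gives $h(t_i) \leq h(x_i) = [g(x_i)]$ in $\FreeMon{\alg{M}}$, i.e.\ $h(t_i) \sqleq [g(x_i)]$; by the first fact this is equivalent to $\gamma(h(t_i)) \leq g(x_i)$, which by the identity $\gamma(h(t_i)) = g(t_i)$ is exactly $g(t_i) \leq g(x_i)$. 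Hence $g$ satisfies every premise of $\qine$, and we are done.

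There is no genuine obstacle. The only step deserving a moment's care is that $\gamma$ be well defined on the $\sim$-classes constituting $\FreeMon{\alg{M}}$, which is immediate as above; everything else is pure bookkeeping, which is precisely why the argument transfers word for word from the nuclear-pomonoid case (and is in fact marginally shorter, the case $u_i = \gamma(v_i)$ not arising in the pomonoid signature).
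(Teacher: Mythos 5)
Your proposal is correct and follows essentially the same route as the paper, which simply observes that the proof of Lemma~\ref{lemma: witness transfer} carries over word for word: the key facts are that $u \sqleq [a]$ in $\FreeMon{\alg{M}}$ iff $\gamma(u) \leq a$ in $\alg{M}$ and that $\gamma(h(s)) = g(s)$ for every pomonoid term $s$, from which the premises (with variables on the right) and the conclusion transfer exactly as you describe. Your extra remark checking that $\gamma$ is well defined on $\sim$-classes is harmless bookkeeping that the paper leaves implicit.
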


\begin{theorem}
  Let $\class{L}$ be an ordered quasivariety of pomonoids. Then the class of all nuclear images of pomonoids from $\class{L}$ is an ordered quasi\-variety of pomonoids axiomatized by the simple quasi-inequational theory of $\class{L}$.
\end{theorem}

\begin{theorem} \label{thm: nuclear images of pomonoids}
  An ordered quasivariety of pomonoids is closed under nuclear images if and only if it is axiomatized by simple quasi-inequations.
\end{theorem}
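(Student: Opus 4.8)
The plan is to derive Theorem~\ref{thm: nuclear images of pomonoids} from the preceding theorem, which already identifies $\class{L}_\gamma$ as the ordered quasivariety of pomonoids axiomatized by the simple quasi-inequational theory of $\class{L}$. The one non-trivial input I still need is that the simple quasi-inequational theory of a quasivariety $\class{L}$ \emph{already axiomatizes} $\class{L}$ as soon as $\class{L}$ happens to be closed under nuclear images; this closes the loop between ``$\class{L}_\gamma$ is axiomatized by simple quasi-inequalities'' and ``$\class{L}$ is axiomatized by simple quasi-inequalities.''

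First, for the easy direction, suppose $\class{L}$ is axiomatized by a set $\Sigma$ of simple pomonoidal quasi-inequalities. Given a nuclear pomonoid $\pair{\alg{M}}{\gamma}$ with $\alg{M} \in \class{L}$, I want to show $\alg{M}_\gamma \in \class{L}$, i.e.\ that $\alg{M}_\gamma$ satisfies every member of $\Sigma$. Here I invoke Fact~\ref{fact: simple quasi-inequalities}: a simple pomonoidal quasi-inequality, read as a simple quasi-inequality in the signature of nuclear pomonoids (in which each right-hand side of a premise is a variable, hence trivially of the allowed form), is valid in $\pair{\alg{M}}{\gamma}$ whenever it is valid in $\alg{M}$ — since nothing in the statement mentions $\gamma$, it holds in $\alg{M}$ precisely because it is valid in $\alg{M}$ as a pomonoid. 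Fact~\ref{fact: simple quasi-inequalities} then transfers it to $\alg{M}_\gamma$. Thus $\alg{M}_\gamma \in \class{L}$ and $\class{L}$ is closed under nuclear images.

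Second, for the converse, suppose $\class{L}$ is closed under nuclear images. By the theorem immediately preceding, $\class{L}_\gamma = \set{\alg{M}_\gamma}{\alg{M} \in \class{L},\ \gamma \text{ a nucleus on } \alg{M}}$ is axiomatized by the simple quasi-inequational theory of $\class{L}$, call it $\Sigma_{\mathrm{s}}$. But each $\alg{M} \in \class{L}$ is itself a nuclear image of a pomonoid in $\class{L}$ — namely of $\alg{M}$ equipped with the identity nucleus $\gamma(x) = x$, exactly the convention flagged just before the theorem — so $\class{L} \subseteq \class{L}_\gamma$; and since $\class{L}$ is closed under nuclear images, $\class{L}_\gamma \subseteq \class{L}$, whence $\class{L} = \class{L}_\gamma$. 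Therefore $\class{L}$ is axiomatized by $\Sigma_{\mathrm{s}}$, a set of simple quasi-inequalities, as desired.

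The only subtlety — and the step I would be most careful about — is the bookkeeping in the converse: making sure that ``simple quasi-inequality'' in the sense used for the preceding theorem (the pomonoidal-signature sense, where each premise has a variable on the right) is exactly what we want to conclude, and that passing through the identity nucleus does not smuggle in the nuclear-signature notion of simplicity (where $\gamma(v_i)$ is also permitted on the right). Since we are only using $\class{L}$-congruences on pomonoidal reducts of $\FreeMon{\alg{M}}$ and the identity nucleus makes $[\gamma]$-premises collapse to variable premises, the two notions coincide in the relevant instances, so no genuine obstacle arises; it is purely a matter of citing the right prior results. I would therefore keep the proof short, essentially: ``One direction is Fact~\ref{fact: simple quasi-inequalities}; the other follows from the previous theorem together with the observation $\class{L} = \class{L}_\gamma$ when $\class{L}$ is closed under nuclear images.''
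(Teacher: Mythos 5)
Your proposal is correct and follows essentially the same route as the paper: both directions rest on the preceding theorem identifying $\class{L}_{\gamma}$ as the class axiomatized by the simple quasi-inequational theory of $\class{L}$, together with the identification of a pomonoid with itself under the identity nucleus. Your minor reorganizations---deriving the easy direction straight from Fact~\ref{fact: simple quasi-inequalities} rather than via $\class{L}_{\gamma} = \class{L}$, and stating the converse directly instead of contrapositively as the paper does---are only cosmetic differences, and your remark that pomonoidal simplicity (variables on the right of premises) is a special case of nuclear simplicity is exactly the bookkeeping needed.
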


\begin{proof}
  If $\class{L}$ is an ordered quasivariety of pomonoids axiomatized by a set of simple quasi-inequations, then the previous theorem implies that $\class{L}_{\gamma} = \class{L}$. Conversely, let $\class{L}$ be an ordered quasivariety of pomonoids not axiomatized by any set of simple quasi-inequations. Then there is a pomonoid $\alg{M} \notin \class{K}$ which satisfies the simple quasi-inequational theory of $\class{K}$. By the previous theorem, $\alg{M}$ is a nuclear image of some algebra in $\class{L}$, therefore $\class{L}$ is not closed under nuclear images.
\end{proof}

  Entirely analogous theorems hold for s$\ell$-monoids. Observe that each simple s$\ell$-monoidal quasi-inequation is equivalent (over the class of all s$\ell$-monoids) to a set of simple pomonoidal quasi-inequations: the conclusion $t \leq u$ may be replaced by $u \leq x \implies t \leq x$, where $x$ is a fresh variable, and joins may be eliminated from the left-hand side of each constituent inequality.

\begin{theorem}
  Let $\class{L}$ be a quasivariety of s$\ell$-monoids. Then the class of all nuclear images of s$\ell$-monoids from $\class{L}$ is a quasi\-variety of s$\ell$-monoids axiomatized by the simple quasi-inequational theory of $\class{L}$.
\end{theorem}

\begin{theorem} \label{thm: nuclear images of sl-monoids}
  A quasivariety of s$\ell$-monoids is closed under nuclear images if and only if it is axiomatized by simple quasi-inequations.
\end{theorem}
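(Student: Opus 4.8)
The plan is to obtain Theorem~\ref{thm: nuclear images of sl-monoids} from the immediately preceding theorem (the s$\ell$-monoidal version of the statement that $\class{L}_{\gamma}$ is a quasivariety axiomatized by the simple quasi-inequational theory of $\class{L}$) in exactly the way Theorem~\ref{thm: nuclear images of pomonoids} is derived from its own predecessor. Observe first that $\class{L}_{\gamma}$ is by definition the class of \emph{all} nuclear images of members of $\class{L}$, so the assertion ``$\class{L}$ is closed under nuclear images'' is precisely $\class{L}_{\gamma} = \class{L}$. Observe also that $\class{L} \subseteq \class{L}_{\gamma}$ always holds, since $\alg{M} = \alg{M}_{\mathrm{id}}$ and the identity map is a nucleus; hence the content of the theorem is the equivalence between the inclusion $\class{L}_{\gamma} \subseteq \class{L}$ and axiomatizability by simple quasi-inequalities.

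For the forward direction I would assume $\class{L}$ is axiomatized by a set $\Sigma$ of simple s$\ell$-monoidal quasi-inequalities. Each member of $\Sigma$ is valid in $\class{L}$, hence belongs to the simple quasi-inequational theory of $\class{L}$; since by the preceding theorem $\class{L}_{\gamma}$ is axiomatized by that theory, every algebra in $\class{L}_{\gamma}$ satisfies $\Sigma$ and therefore lies in $\class{L}$. Together with $\class{L} \subseteq \class{L}_{\gamma}$ this yields $\class{L}_{\gamma} = \class{L}$, i.e.\ closure under nuclear images.

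For the converse I would assume $\class{L}$ is not axiomatized by any set of simple quasi-inequalities and let $\Sigma$ be the simple quasi-inequational theory of $\class{L}$. Then $\class{L} \subseteq \mathrm{Mod}(\Sigma)$ trivially, and the inclusion must be proper (otherwise $\Sigma$ would axiomatize $\class{L}$), so there is an s$\ell$-monoid $\alg{M} \in \mathrm{Mod}(\Sigma) \setminus \class{L}$. By the preceding theorem $\class{L}_{\gamma} = \mathrm{Mod}(\Sigma)$, so $\alg{M}$ is the nuclear image of some s$\ell$-monoid in $\class{L}$ despite $\alg{M} \notin \class{L}$; hence $\class{L}$ is not closed under nuclear images. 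This completes the formal argument.

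The only genuine work, and the step I expect to be the real obstacle, lies not in this theorem but in its predecessor: proving that $\class{L}_{\gamma}$ is axiomatized by the simple quasi-inequational theory of $\class{L}$ requires the s$\ell$-monoidal analogues of Lemmas~\ref{lemma: witnessed by simple} and~\ref{lemma: witness transfer}. These are obtained by replacing the free partially ordered nuclear preimage $\FreeMon{\alg{M}}$ by the free semilattice-ordered nuclear preimage $\Id \FreeMon{\alg{M}}$ and invoking its distributivity (each $\Id P$ being a distributive semilattice) in place of product distributivity, so as to push joins out of the right-hand sides of premises just as products were eliminated in the pomonoidal case; one also uses the remark that a simple s$\ell$-monoidal quasi-inequality is equivalent over all s$\ell$-monoids to a set of simple pomonoidal ones, in order to reconcile the two notions of ``simple''. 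Once that machinery is granted, the proof of Theorem~\ref{thm: nuclear images of sl-monoids} itself is purely formal, as sketched above.
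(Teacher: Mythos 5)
Your proposal is correct and follows essentially the same route as the paper: the paper derives Theorem~\ref{thm: nuclear images of sl-monoids} from the immediately preceding theorem exactly as Theorem~\ref{thm: nuclear images of pomonoids} is derived from its predecessor, with the genuine work deferred to the s$\ell$-monoidal analogues obtained by replacing $\FreeMon{\alg{M}}$ with $\Id \FreeMon{\alg{M}}$ and using its distributivity to eliminate joins from the right-hand sides of premises. Your identification of ``closed under nuclear images'' with $\class{L}_{\gamma} = \class{L}$ and the two-directional argument via the simple quasi-inequational theory match the paper's intended proof.
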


  We may also consider what happens when we restrict to unital nuclei. These preserve more than just the simple quasi-inequations. Let us call a quasi-inequation in the signature of pomonoids (s$\ell$-monoids) \emph{unital--simple} if each premise has either the form $t_{i} \leq x_{i}$ for some variable $x_{i}$ or the form $t_{i} \leq \1$. An analogous sequence of lemmas and theorems may be stated, where we replace nuclei by unital nuclei, simple quasi-inequations by unital--simple quasi-inequations, and the free nuclear preimage $\FreeMon{\alg{M}}$ by the free unital nuclear preimage $\FreeUMon{\alg{M}}$.

\begin{theorem}
  An ordered quasivariety of pomonoids is closed under unital nuclear images if and only if it is axiomatized by unital--simple quasi-inequations.
\end{theorem}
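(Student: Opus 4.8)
The plan is to replay, \emph{mutatis mutandis}, the development culminating in Theorem~\ref{thm: nuclear images of pomonoids}, systematically replacing the free nuclear preimage $\FreeMon{\alg{M}}$ by the free unital nuclear preimage $\FreeUMon{\alg{M}}$, nuclei by unital nuclei, and simple quasi-inequalities by unital--simple ones. Fix an ordered quasivariety $\class{L}$ of pomonoids and, for a pomonoid $\alg{M}$, let $\leq_{\theta}$ be the smallest $\class{L}$-congruence on the pomonoidal reduct of $\FreeUMon{\alg{M}}$. The first step is the unital analogue of Lemma~\ref{lemma: restriction on free}: $\alg{M}$ is a unital nuclear image of a pomonoid in $\class{L}$ if and only if $v \leq_{\theta} w$ in $\FreeUMon{\alg{M}}$ implies $\gamma(v) \leq \gamma(w)$ in $\alg{M}$. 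The proof is as for Lemma~\ref{lemma: restriction on free}: the key point is that $\FreeUMon{\alg{M}}$ carries the \emph{unital} nucleus $[\gamma]$, with $[\gamma]([\1]) = [\1]$, so that when the displayed implication holds, $\FreeUMon{\alg{M}} / {\leq_{\theta}}$ equipped with $[\gamma]$ is a unital nuclear pomonoid whose pomonoidal reduct lies in $\class{L}$ and whose nuclear image is still isomorphic to $\alg{M}$; conversely, if $\alg{M} = \alg{N}_{\delta}$ for $\alg{N} \in \class{L}$ and $\delta$ unital, the counit $\counit{\pair{\alg{N}}{\delta}} \colon \FreeUMon{\alg{M}} \to \pair{\alg{N}}{\delta}$ transports $\leq_{\theta}$ to $\leq_{\alg{N}}$ and hence, through $\delta$, witnesses the implication.

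The substantive step is the unital analogue of Lemma~\ref{lemma: witnessed by simple}: $\leq_{\theta}$ is witnessed by unital--simple quasi-inequalities with respect to interpretations over singleton words. As before this rests on two features of $\FreeUMon{\alg{M}}$ recorded earlier --- it is (upper) product distributive, and it is generated as a monoid by singleton words, with the singleton $[\1]$ as its unit. Starting from a quasi-inequality $\alpha$ valid in $\class{L}$ witnessing $p \leq_{\theta} q$ with respect to an interpretation $h$, I would first substitute each variable $x$ with $h(x) = [a_{1}, \dots, a_{n}]$ by the (non-empty) product $x_{1} \cdot \ldots \cdot x_{n}$, setting $h(x_{i}) \assign [a_{i}]$, to make $h$ an interpretation over singleton words. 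Then, for any premise $t_{i} \leq u_{i}$ in which $u_{i}$ is neither a variable nor $\1$, I would normalise $u_{i}$ to a product $u_{i,1} \cdot u_{i,2}$, invoke product distributivity to split $h(t_{i}) = r_{i,1} \circ r_{i,2}$ with $r_{i,j} \sqleq h(u_{i,j})$, decompose $t_{i} = t_{i,1} \cdot t_{i,2}$ accordingly --- possible since $t_{i}$ is a product of variables and occurrences of $\1$, all interpreted by singleton words --- and replace the premise by the two premises $t_{i,1} \leq u_{i,1}$ and $t_{i,2} \leq u_{i,2}$. Iterating drives every premise right-hand side down to a variable or to $\1$; the resulting quasi-inequality is a consequence of $\alpha$ in all pomonoids, hence valid in $\class{L}$, and still witnesses $p \leq_{\theta} q$ with respect to $h$.

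It remains to assemble these with the unital analogue of the witness-transfer lemma (if a unital--simple quasi-inequality witnesses $v \leq_{\theta} w$ with respect to $h(x) \assign [g(x)]$, then it witnesses $\gamma(v) \leq \gamma(w)$ in $\alg{M}$ with respect to $g$, using the Fact that $u \sqleq [a] \iff \gamma(u) \leq a$) and with the unital analogue of Fact~\ref{fact: simple quasi-inequalities} (a unital--simple quasi-inequality valid in $\alg{M}$ holds in $\alg{M}_{\gamma}$ for $\gamma$ unital). These yield that the class of unital nuclear images of pomonoids in $\class{L}$ is exactly the class of pomonoids satisfying the unital--simple quasi-inequational theory of $\class{L}$. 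The theorem then follows in both directions: if $\class{L}$ is axiomatised by unital--simple quasi-inequalities it coincides with this class and so is closed under unital nuclear images; and if $\class{L}$ is closed under unital nuclear images, then any pomonoid satisfying the unital--simple quasi-inequational theory of $\class{L}$ is a unital nuclear image of some member of $\class{L}$, hence lies in $\class{L}$, so $\class{L}$ is axiomatised by that theory.

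The main obstacle --- really the reason the syntactic class must be widened from ``simple'' to ``unital--simple'' --- is the bookkeeping around the constant $\1$. In $\FreeMon{\alg{M}}$ the unit is the empty word $\emptyword$, so a premise $t_{i} \leq \1$ forces $h(t_{i}) = \emptyword$ and, once every variable mapped to $\emptyword$ has been substituted by $\1$, degenerates to a trivial inequality that can be discarded; this is exactly why plain ``simple'' quasi-inequalities suffice in the non-unital case. In $\FreeUMon{\alg{M}}$ the unit is the genuine singleton word $[\1]$, and a premise $t_{i} \leq \1$ carries the real constraint $\gamma(h(t_{i})) \leq \1$ in $\alg{M}$, which cannot in general be eliminated; such premises must therefore be allowed to persist, precisely as the definition of ``unital--simple'' permits. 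The remaining points of care --- that these premises survive the reduction of the previous paragraph, transfer correctly via the witness-transfer step (where $g(t_{i}) = \gamma(h(t_{i})) \leq \1 = g(\1)$), and behave well in the unital version of Fact~\ref{fact: simple quasi-inequalities}, using $\gamma(\1) = \1$ so that $\alg{M}_{\gamma}$ and $\alg{M}$ have the same unit --- are all routine.
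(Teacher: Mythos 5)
Your proposal is correct and takes essentially the same route the paper prescribes: replaying Lemmas~\ref{lemma: restriction on free}--\ref{lemma: witness transfer} and Fact~\ref{fact: simple quasi-inequalities} with $\FreeUMon{\alg{M}}$ in place of $\FreeMon{\alg{M}}$, unital nuclei in place of nuclei, and unital--simple quasi-inequalities in place of simple ones. Your explanation of why premises of the form $t_{i} \leq \1$ must now be retained --- because the unit of $\FreeUMon{\alg{M}}$ is the genuine singleton word $[\1]$ rather than $\emptyword$, so such a premise expresses the nontrivial constraint $\gamma(h(t_{i})) \leq \1$ --- is exactly the point that forces the passage from simple to unital--simple quasi-inequalities.
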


\begin{theorem}
  A quasivariety of s$\ell$-monoids is closed under unital nuclear images if and only if it is axiomatized by unital--simple quasi-inequations.
\end{theorem}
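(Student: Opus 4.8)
The plan is to reproduce the argument that establishes Theorem~\ref{thm: nuclear images of pomonoids} and its s$\ell$-monoidal counterpart Theorem~\ref{thm: nuclear images of sl-monoids}, carrying out three systematic substitutions throughout: nuclei become unital nuclei, simple quasi-inequalities become unital--simple quasi-inequalities, and the free s$\ell$-monoidal nuclear preimage $\Id \FreeMon{\alg{M}}$ is replaced by the free \emph{unital} s$\ell$-monoidal nuclear preimage $\Id \FreeUMon{\alg{M}}$, equipped with its nucleus $\gammaid$ (which is unital, since $[\gamma]([\1]) = [\1]$). As in the non-unital development, the equivalence in the theorem reduces to the auxiliary statement that the class $\class{L}_{\gamma} \assign \set{\alg{M}_{\gamma}}{\alg{M} \in \class{L} \text{ and $\gamma$ is a unital nucleus on $\alg{M}$}}$ is a quasivariety of s$\ell$-monoids axiomatized by the unital--simple quasi-inequational theory of $\class{L}$. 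Granting this: if $\class{L}$ is axiomatized by unital--simple quasi-inequalities then $\class{L}_{\gamma} = \class{L}$, so $\class{L}$ is closed under unital nuclear images; and if $\class{L}$ is not so axiomatized, there is an s$\ell$-monoid $\alg{M} \notin \class{L}$ satisfying the unital--simple quasi-inequational theory of $\class{L}$, which by the auxiliary statement is a unital nuclear image of some algebra in $\class{L}$, so $\class{L}$ is not closed under unital nuclear images.

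For the soundness half --- every unital--simple quasi-inequality valid in a unital nuclear s$\ell$-monoid $\pair{\alg{M}}{\gamma}$ holds in $\alg{M}_{\gamma}$ --- I would run the proof of Fact~\ref{fact: simple quasi-inequalities}. The inductive inequality $t^{\alg{M}_{\gamma}}(\gamma(\tuple{a})) \leq \gamma(t^{\alg{M}}(\tuple{a}))$ and the identity $\gamma(t^{\alg{M}}(\tuple{a})) = t^{\alg{M}_{\gamma}}(\tuple{a})$ for tuples from $\alg{M}_{\gamma}$ hold for s$\ell$-monoidal terms just as for pomonoidal ones. The only case beyond the simple one is a premise of the form $t_{i} \leq \1$; here the point is that, $\gamma$ being unital, the multiplicative unit of $\alg{M}_{\gamma}$ is $\gamma(\1) = \1$, so from $t_{i}^{\alg{M}_{\gamma}}(\tuple{a}) \leq \1$ in $\alg{M}_{\gamma}$ we obtain $t_{i}^{\alg{M}}(\tuple{a}) \leq \gamma(t_{i}^{\alg{M}}(\tuple{a})) = t_{i}^{\alg{M}_{\gamma}}(\tuple{a}) \leq \1$ in $\alg{M}$ --- exactly the inference already used for premises $t_{i} \leq x_{i}$. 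The remainder of the argument is unchanged.

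For the completeness half I would establish the unital, s$\ell$-monoidal analogues of Lemmas~\ref{lemma: restriction on free}, \ref{lemma: witnessed by simple}, and \ref{lemma: witness transfer}, working over $\Id \FreeUMon{\alg{M}}$ and the smallest $\class{L}$-congruence $\leq_{\theta}$ on its s$\ell$-monoidal reduct. The analogue of Lemma~\ref{lemma: restriction on free} says that $\alg{M}$ is a unital nuclear image of some algebra in $\class{L}$ if and only if $v \leq_{\theta} w$ implies $\gamma(v) \leq \gamma(w)$ in $\alg{M}$; its proof goes through because the nuclear image of $\gammaid$ on $\Id \FreeUMon{\alg{M}}$ is $\alg{M}$ via $a \mapsto \below [a]$, and the counit onto a given unital nuclear s$\ell$-monoid is surjective. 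The analogue of Lemma~\ref{lemma: witness transfer} transfers verbatim, using $u \sqleq [a] \iff \gamma(u) \leq a$ and $\gamma(t([a_{1}], \dots, [a_{n}])) = t(a_{1}, \dots, a_{n})$. Combining these with the analogue of Lemma~\ref{lemma: witnessed by simple}, for every $v \leq_{\theta} w$ there is a unital--simple quasi-inequality valid in $\class{L}$ that witnesses $\gamma(v) \leq \gamma(w)$ in $\alg{M}$; since $\alg{M}$ satisfies the unital--simple quasi-inequational theory of $\class{L}$ by hypothesis, this inequality holds in $\alg{M}$, and the analogue of Lemma~\ref{lemma: restriction on free} finishes the proof.

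The genuinely new point, and the one step to watch, is the unital, s$\ell$-monoidal version of Lemma~\ref{lemma: witnessed by simple}: that $\leq_{\theta}$ is witnessed by unital--simple quasi-inequalities with respect to interpretations sending variables to singleton words. One strips products off the right-hand sides of premises using the (upper) product distributivity of $\FreeUMon{\alg{M}}$ and joins using the distributivity of the semilattice $\Id P$, exactly as in the proofs of Theorems~\ref{thm: nuclear images of pomonoids} and \ref{thm: nuclear images of sl-monoids}. The difference is the treatment of the unit: in $\FreeUMon{\alg{M}}$ the unit is the bona fide singleton word $[\1]$ rather than the empty word, so a residual premise $t_{i} \leq \1$ is not, and in general cannot be, reduced further --- but this is harmless precisely because unital--simple quasi-inequalities permit premises of the form $t_{i} \leq \1$, which is exactly what distinguishes the present situation from the plain simple one and what forces the use of $\FreeMon{\alg{M}}$ there. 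I expect the verification of these distributivity reductions for $\Id \FreeUMon{\alg{M}}$ to be routine given the machinery of Sections~2 and 3, amounting to a transcription of the earlier proofs with $\1$ permitted on the right; and, as with the remark preceding Theorem~\ref{thm: nuclear images of sl-monoids}, one records that a unital--simple s$\ell$-monoidal quasi-inequality is equivalent over the class of all s$\ell$-monoids to a finite set of unital--simple pomonoidal quasi-inequalities, keeping the pomonoidal and s$\ell$-monoidal developments in step.
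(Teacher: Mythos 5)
Your proposal is correct and follows exactly the route the paper intends: the paper proves this theorem by the announced substitution scheme (unital nuclei for nuclei, unital--simple for simple quasi-inequalities, $\FreeUMon{\alg{M}}$ and $\Id \FreeUMon{\alg{M}}$ for $\FreeMon{\alg{M}}$ and $\Id \FreeMon{\alg{M}}$) applied to the sequence of lemmas culminating in Theorems~\ref{thm: nuclear images of pomonoids} and~\ref{thm: nuclear images of sl-monoids}, which is precisely what you carry out. You also correctly isolate the one genuinely new point, namely that premises $t_{i} \leq \1$ are preserved by unital nuclear images and transfer along the witness lemmas because in $\FreeUMon{\alg{M}}$ the unit is $[\1]$ and $u \sqleq [\1]$ iff $\gamma(u) \leq \1$ in $\alg{M}$.
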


\section{Residuals and order in free nuclear preimages}

  We now return to our main concern, which is to describe the nuclear images of cancellative residuated lattices. The hope is that, given suitable input, the free semilattice-ordered nuclear preimage will turn out to be a cancellative residuated lattice. Our first order of business is to determine under what conditions the free semilattice-ordered nuclear preimage is lattice-ordered and residuated. These two conditions turn out to be connected: to prove that it is residuated, we will need to use the fact that the order is a lattice.

  Let us start with the residuation properties of free partially ordered nuclear preimages. Unfortunately, none of the free partially ordered nuclear preimage constructions preserve full residuation. Nonetheless, some residuals are preserved. We recall that $\alg{M}$ is a pomonoid and $\gamma$ is a nucleus on $\alg{M}$ in the following.

\begin{fact}
  If the residual $\gamma(u) \bs a$ exists in $\alg{M}$, then the residual $u \bs [a] = [\gamma(u) \bs a]$ exists in $\FreeMon{\alg{M}}$. If $a / \gamma(u)$ exists in $\alg{M}$, then $[a] / u = [a / \gamma(u)]$ exists in $\FreeMon{\alg{M}}$. The same holds for $\FreeUMon{\alg{M}}$, $\FreeSem{\alg{S}}$, and their commutative variants.
\end{fact}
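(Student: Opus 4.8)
The plan is to unfold the defining property of the residual in $\FreeMon{\alg{M}}$ and reduce it, via the Fact that $w \sqleq [b]$ holds in $\FreeMon{\alg{M}}$ if and only if $\gamma(w) \leq b$ holds in $\alg{M}$, to the residuation adjunction already present in $\alg{M}$. Recall that $u \bs [a]$ is, by definition, the $\sqleq$-largest word $w$ (up to $\sim$) such that $u \circ w \sqleq [a]$, provided such a word exists. Since $[\gamma]$ is a nucleus and $\gamma$ is a monoid homomorphism, whether $u \circ w \sqleq [a]$ depends only on $\gamma(w)$, which suggests guessing the singleton word $[\gamma(u) \bs a]$ as the residual. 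To confirm this guess I would simply chain biconditionals:
\begin{align*}
  u \circ w \sqleq [a] &\iff \gamma(u \circ w) \leq a \text{ in } \alg{M} \\
  &\iff \gamma(u) \cdot \gamma(w) \leq a \text{ in } \alg{M} \\
  &\iff \gamma(w) \leq \gamma(u) \bs a \text{ in } \alg{M} \\
  &\iff w \sqleq [\gamma(u) \bs a] \text{ in } \FreeMon{\alg{M}},
\end{align*}
where the first and last steps use the cited Fact, the second uses that $\gamma$ is a homomorphism, and the third uses the hypothesis that $\gamma(u) \bs a$ exists in $\alg{M}$. This is exactly the universal property exhibiting $[\gamma(u) \bs a]$ as $u \bs [a]$. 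The right residual is handled symmetrically, starting from $w \circ u \sqleq [a] \iff \gamma(w) \cdot \gamma(u) \leq a$, to obtain $[a] / u = [a / \gamma(u)]$.

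For $\FreeUMon{\alg{M}}$ the same chain applies without change: the candidate residuals $[\gamma(u) \bs a]$ and $[a / \gamma(u)]$ are singleton words, hence non-empty and thus legitimate elements of $\FreeUMon{\alg{M}}$, and restricting $w$ to non-empty words leaves every equivalence in the chain intact. For the posemigroup $\FreeSem{\alg{S}}$ I would drop all mention of $\emptyword$ and $\1$: there $\gamma$ is a semigroup homomorphism, every word is non-empty, and the semigroup analogue of the Fact (established together with the other facts about $\FreeMon{\alg{M}}$) still reads $w \sqleq [a] \iff \gamma(w) \leq a$, so the argument goes through verbatim. In the commutative variants $\CFreeRedMon{\alg{M}}$, $\CFreeRedUMon{\alg{M}}$, and $\CFreeSem{\alg{S}}$ the two computations coincide and $\gamma(u) \bs a = a / \gamma(u)$, so $u \bs [a] = [a] / u = [\gamma(u) \bs a]$.

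I do not expect any real obstacle: the entire argument is a short chain of biconditionals, and the only thing one must be careful about is that the Fact ``$w \sqleq [b] \iff \gamma(w) \leq b$'' is indeed available in each structure under consideration, which has already been arranged in the preceding development. It is worth stressing, however, that the argument makes essential use of the right-hand side $[a]$ being a \emph{singleton} word; for a general word $v$ the collection of $w$ with $u \circ w \sqleq v$ need not have a largest element, which is precisely why the free nuclear preimage constructions fail to preserve full residuation.
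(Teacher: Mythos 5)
Your chain of biconditionals is exactly the argument the paper gives: reduce $u \circ w \sqleq [a]$ via the fact $w \sqleq [a] \iff \gamma(w) \leq a$ and the homomorphism property of $\gamma$ to the residuation adjunction in $\alg{M}$, and read off $[\gamma(u) \bs a]$ as the residual. Your additional remarks on the unital, semigroup, and commutative variants are correct and simply spell out what the paper leaves implicit.
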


\begin{proof}
  Recall that $w \sqleq [a]$ if and only if $\gamma(w) \leq a$, therefore
\begin{align*}
  u \circ v \sqleq [a] & \iff \gamma(u \circ v) \leq a \\ & \iff \gamma(u) \cdot \gamma(v) \leq a \\ & \iff \gamma(v) \leq \gamma(u) \bs a \\ & \iff v \sqleq [\gamma(u) \bs a]. \qedhere
\end{align*}
\end{proof}

  While full residuation is too much to hope for, it turns out that the free (commutative) unital nuclear preimage construction preserves a weaker property which we call ideal residuation. While residuation requires that the solutions $x$ of the inequality $a \cdot x \leq b$ form a principal downset, ideal residuation only requires that they form a non-empty finitely generated downset.

  The multiplication operation of a pomonoid $\alg{M}$ is \emph{ideally residuated} if there are two binary operations $\bsps\colon \alg{M} \times \alg{M} \to \Id \alg{M}$ and $\sps\colon \alg{M} \times \alg{M} \to \Id \alg{M}$, called the \emph{ideal residuals} of multiplication, such that for all $a, b, c \in \alg{M}$ 
\begin{align*}
  \below a \subseteq c \sps b & \iff a \cdot b \leq c \iff \below b \subseteq a \bsps c.
\end{align*}
  In other words, $\alg{M}$ is ideally residuated if for each $a, c \in \alg{M}$ the downsets
\begin{align*}
  & \set{x \in \alg{M}}{a \cdot x \leq c} & & \text{and} & & \set{x \in \alg{M}}{x \cdot a \leq c}
\end{align*}
  are non-empty and finitely generated.\footnote{Borrowing the terminology used in the theory of equational unification, we might also say that \mbox{residuated} pomonoids have unary residuation type while ideally residuated pomonoids have finitary residuation type, meaning in the latter case that the inequalities $a \cdot x \leq b$ and $x \cdot a \leq b$ have a finite number of maximal elements and each element lies below a maximal one.}

\begin{fact}
  Let us define the following operations on $\Id \alg{M}$:
\begin{align*}
  a \bsps \below \{ x_{1}, \dots, x_{m} \} & \assign \bigcup \set{a \bsps x_{i}}{1 \leq i \leq m}, \\
  \below \{ x_{1}, \dots, x_{m} \} \sps a & \assign \bigcup \set{x_{i} \sps a}{1 \leq i \leq m}.
\end{align*}
  Then in $\Id \alg{M}$
\begin{align*}
  b \bsps (a \bsps c) & = (a \cdot b) \bsps c, & (c \sps b) \sps a & = c \sps (a \cdot b).
\end{align*}
  If $\alg{M}$ has a multiplicative unit $\1$, then $\below \1 \bsps \below a = \below a = \below a \sps \below \1$ in $\Id \alg{M}$.
\end{fact}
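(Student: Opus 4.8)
The plan is to reduce everything to a closed-form description of the ideal residuals. First I would record that, for $a, c \in \alg{M}$, the ideal residual $a \bsps c$ is just the downset $\set{x \in \alg{M}}{a \cdot x \leq c}$: a downset is determined by which elements it contains, equivalently by which principal downsets $\below x$ it contains, and the defining adjunction $\below x \subseteq a \bsps c \iff a \cdot x \leq c$ says precisely that $x \in a \bsps c \iff a \cdot x \leq c$; dually $c \sps b = \set{x \in \alg{M}}{x \cdot b \leq c}$.

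Next I would unwind the extended operations. For $X = \below\{x_1, \dots, x_m\} \in \Id \alg{M}$ one has $a \bsps X = \bigcup_{i} (a \bsps x_i) = \bigcup_{i} \set{y}{a \cdot y \leq x_i} = \set{y \in \alg{M}}{a \cdot y \in X}$, the last step because $X = \below\{x_1, \dots, x_m\}$. This incidentally shows that $a \bsps X$ does not depend on the chosen finite generating set and that it lies again in $\Id \alg{M}$, being a non-empty finite union of members of $\Id \alg{M}$; so the operation is well defined. The mirror computation gives $X \sps a = \set{y \in \alg{M}}{y \cdot a \in X}$.

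With these closed forms in hand the two displayed identities become one-line equivalence chases using associativity of $\cdot$ and the defining property of $\bsps$ and $\sps$. For the first, $y \in b \bsps (a \bsps c) \iff b \cdot y \in a \bsps c \iff a \cdot (b \cdot y) \leq c \iff (a \cdot b) \cdot y \leq c \iff y \in (a \cdot b) \bsps c$, so the two sides coincide as downsets and hence as elements of $\Id \alg{M}$; symmetrically $y \in (c \sps b) \sps a \iff y \cdot a \in c \sps b \iff (y \cdot a) \cdot b \leq c \iff y \cdot (a \cdot b) \leq c \iff y \in c \sps (a \cdot b)$. For the unit equations I would fall back on the single-generator case: $\below \1 \bsps \below a = \1 \bsps a = \set{x}{\1 \cdot x \leq a} = \set{x}{x \leq a} = \below a$, and likewise $\below a \sps \below \1 = a \sps \1 = \set{x}{x \cdot \1 \leq a} = \below a$.

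There is essentially no obstacle here: the only point needing a moment's care is the well-definedness of the extended residuals together with their closed form $\set{y}{a \cdot y \in X}$, after which everything is formal. The argument invokes nothing about the order of $\alg{M}$ beyond the already-noted fact that a non-empty finite union of non-empty finitely generated downsets is again one, so it applies verbatim to the commutative variant as well.
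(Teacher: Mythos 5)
Your proof is correct: the closed forms $a \bsps X = \set{y \in \alg{M}}{a \cdot y \in X}$ and $X \sps a = \set{y \in \alg{M}}{y \cdot a \in X}$ (which also settle well-definedness of the extended operations) reduce both displayed identities to associativity of $\cdot$, and the unit equations to $\1 \cdot x \leq a \iff x \leq a \iff x \cdot \1 \leq a$. The paper states this fact without proof, treating it as routine, and your verification is exactly the intended one.
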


  A pomonoid may fail to be residuated for several reasons. The downset $R \assign \set{x \in \alg{M}}{a \cdot x \leq c}$ may be empty for some $a, c \in \alg{M}$. If it is non-empty, $R$ may contain an element not lying below any maximal element of~$R$. If $R$ is generated as a downset by its maximal elements, there may still be infinitely many of them. Finally, $R$ may have finitely many maximal elements but no largest element. All things considered, ideally residuated pomonoids are therefore very close to being residuated: the only thing that they are missing are finite joins commuting with multiplication.

\begin{fact}
  Each ideally residuated s$\ell$-monoid is in fact residuated.
\end{fact}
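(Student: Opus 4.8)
The plan is to show that ideal residuation already pins down the residuals once the ambient structure is an s$\ell$-monoid, the point being that in an s$\ell$-monoid multiplication distributes over binary joins, so the (finitely generated, non-empty) downset of solutions of $a \cdot x \leq c$ is closed under the binary joins it contains and hence is principal. Concretely, I would show that the downsets $\set{x \in \alg{M}}{a \cdot x \leq c}$ and $\set{x \in \alg{M}}{x \cdot a \leq c}$ each have a largest element, which then serves as $a \bs c$ and $c / a$ respectively, so that $\alg{M}$ satisfies the residuation law.

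First I would fix $a, c \in \alg{M}$ and set $R \assign \set{x \in \alg{M}}{a \cdot x \leq c}$. By ideal residuation $R = a \bsps c$ is a non-empty finitely generated downset, say $R = \below \{ x_{1}, \dots, x_{m} \}$ with $m \geq 1$; since each generator $x_{i}$ lies in $R$, we have $a \cdot x_{i} \leq c$ for every $i$. Next I would invoke the s$\ell$-monoid identity $a \cdot (y \vee z) = (a \cdot y) \vee (a \cdot z)$ to compute $a \cdot (x_{1} \vee \dots \vee x_{m}) = (a \cdot x_{1}) \vee \dots \vee (a \cdot x_{m}) \leq c$, so $x_{1} \vee \dots \vee x_{m} \in R$. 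Conversely, any $y \in R = \below \{ x_{1}, \dots, x_{m} \}$ satisfies $y \leq x_{i}$ for some $i$, hence $y \leq x_{1} \vee \dots \vee x_{m}$. Thus $x_{1} \vee \dots \vee x_{m}$ is the greatest element of $R$, which is precisely the residual $a \bs c$. The mirror-image argument, using $(y \vee z) \cdot a = (y \cdot a) \vee (z \cdot a)$, produces the greatest element $c / a$ of $\set{x \in \alg{M}}{x \cdot a \leq c}$, and therefore $\alg{M}$ is residuated.

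I do not expect any genuine obstacle: the entire content is the elementary observation that a finitely generated downset which is closed under the binary joins present in an s$\ell$-monoid must be principal, generated by the join of its (finitely many) generators. This is exactly the converse of the slogan closing the section, namely that an ideally residuated pomonoid differs from a residuated one only in that finite joins need not commute with multiplication, a defect that an s$\ell$-monoid structure repairs by fiat.
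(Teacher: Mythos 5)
Your proof is correct and follows essentially the same route as the paper: take the join of the finitely many generators of $a \bsps c$, use distributivity of multiplication over joins to see that this join again satisfies $a \cdot x \leq c$, and observe it is the greatest such element, hence the residual. No gaps.
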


\begin{proof}
  Let $a \bsps b = \below \{ r_1, \dots, r_n \}$. Then $a \cdot r_i \leq b$ for each $r_i$, so $a \cdot r \leq b$ for $r \assign \bigvee_{i} r_{i}$. Thus $x \leq r$ implies $a \cdot x \leq b$. Conversely, if $a \cdot x \leq b$, then $x \leq r_i \leq r$ for some $r_i$.
\end{proof}

  Free unital nuclear preimages of pomonoids inherit ideal residuation from~$\alg{M}$. In~order to~prove this, it will suffice to find ideal residuals of the forms $[a] \bsps w$ and $w \sps [a]$, since each word is a product of singleton~words.

\begin{fact}
  If $\alg{M}$ is ideally residuated, then so is $\FreeUMon{\alg{M}}$:
\begin{align*}
  [a] \circ u \sqleq [b] \circ v \iff & \text{ either } (u \sqleq [r] \circ v \text{ for some } r \in a \bsps b) \\
  & \text{ or } (\1 \leq b \text{ and } [a] \circ u \sqleq v),
\end{align*}
\begin{align*}
  u \circ [a] \sqleq v \circ [b] \iff & \text{ either } (u \sqleq v \circ [r] \text{ for some } r \in b \sps a) \\
  & \text{ or } (\1 \leq b \text{ and } u \circ [a] \sqleq v).
\end{align*}
  If $\alg{M}$ is an integral residuated pomonoid, then so is $\FreeUMon{\alg{M}}$:
\begin{align*}
  [a] \circ u \sqleq [b] \circ v \iff & u \sqleq [a \bs b] \circ v, \\
  u \circ [a] \sqleq v \circ [b] \iff & u \sqleq v \circ [b / a].
\end{align*}
  If $\alg{M}$ is commutative and ideally residuated, then so is $\CFreeRedUMon{\alg{M}}$:
\begin{align*}
  [a] \circ u \sqleq v \iff & \text{ there is some } b \in \alg{M} \text{ and some } w \in \CFreeRedUMon{\alg{M}} \text{ such that } \\
  & \text{ either } (v = [b] \circ w \text{ and } u \sqleq [r] \circ w \text{ for some } r \in a \bsps b = b \sps a) \\
  & \text{ or } (\1 \leq b \text{ and } u \circ [a] \sqleq w).
\end{align*}
\end{fact}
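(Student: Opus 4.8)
The plan is to establish the left-handed order formula $[a] \circ u \sqleq [b] \circ v \iff (\dots)$ by dissecting a witnessing decomposition, then bootstrap from it to full ideal residuation, and finally read off the integral and commutative specialisations. For the formula itself, write the right-hand word as $[b] \circ v = [b_1, \dots, b_n]$ with $b_1 = b$ and suppose $[a] \circ u \sqleq [b] \circ v$ is witnessed by a decomposition $[a] \circ u = w_1 \circ \dots \circ w_n$ with $\gamma(w_i) \leq b_i$ for each~$i$. The leading letter $a$ lies in the first non-empty block. If $w_1 \neq \emptyword$, write $w_1 = [a] \circ w_1'$; stripping the common prefix $[a]$ leaves $u = w_1' \circ w_2 \circ \dots \circ w_n$, and from $a \cdot \gamma(w_1') = \gamma(w_1) \leq b$ we get $\gamma(w_1') \in a \bsps b$, so that $u \sqleq [\gamma(w_1')] \circ v$: this is the first disjunct, with $r \assign \gamma(w_1')$. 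If instead $w_1 = \emptyword$, then $\1 = \gamma(w_1) \leq b$ and $[a] \circ u = w_2 \circ \dots \circ w_n \sqleq v$: this is the second disjunct. Conversely each disjunct returns $[a] \circ u \sqleq [b] \circ v$ by isotonicity, using $[a] \circ [r] \sqleq [a \cdot r] \sqleq [b]$ (as $a \cdot r \leq b$) in the first case and $v = [\1] \circ v \sqleq [b] \circ v$ (as $\1 \leq b$) in the second. The right-handed formula $u \circ [a] \sqleq v \circ [b]$ is proved symmetrically.

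To deduce that $\FreeUMon{\alg{M}}$ is ideally residuated, write $p \bsps q$ for the downset $\set{x}{p \circ x \sqleq q}$ of $\FreeUMon{\alg{M}}$ and argue by induction on the length of $p$ that it is non-empty and finitely generated; the crux is the singleton case $p = [a]$, since for $p = [a] \circ p''$ the set $p \bsps q$ is the union of the finitely many sets $p'' \bsps y$ taken over the generators $y$ of $[a] \bsps q$. The singleton case follows from the core formula by a sub-induction on the length of $q$: for $q = [b]$ one has $[a] \bsps [b] = \set{x}{\gamma(x) \in a \bsps b} = \bigcup \set{\below [c]}{c \text{ a generator of } a \bsps b}$, which is non-empty and finitely generated because $a \bsps b$ is; and for $q = [b] \circ v$ with $v$ non-empty, the core formula exhibits $[a] \bsps q$ as $\bigcup \set{\below([c] \circ v)}{c \text{ a generator of } a \bsps b}$, augmented by $[a] \bsps v$ in case $\1 \leq b$, the latter being finitely generated by the sub-inductive hypothesis. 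The right residuals $q \sps p$ are handled symmetrically.

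Finally, if $\alg{M}$ is integral residuated then $a \bsps b$ equals the principal downset $\below (a \bs b)$, and, normalising the right-hand word to a $\1$-reduced representative (so that its leading letter $b$ equals $\1$ only in the trivial case $[b] \circ v = [\1]$), the second disjunct collapses into the first, yielding the clean equivalence $[a] \circ u \sqleq [b] \circ v \iff u \sqleq [a \bs b] \circ v$; all the residuals constructed above are now principal, so $\FreeUMon{\alg{M}}$ is genuinely residuated, and it is integral since $\gamma(w) \leq \1$ forces $w \sqleq [\1]$ for every word~$w$. In the commutative case the word $[a] \circ u$ is a multiset, so the pivot letter~$a$ may be moved into any block; running the same dissection against each of the finitely many ways of writing $v = [b] \circ w$ yields the stated disjunction, and finite generation of the ideal residuals then follows by the same inductive scheme. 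I expect the main obstacle to be the bookkeeping around the empty word: an empty block in a witnessing decomposition is precisely what produces the second, ``$\1 \leq b$'', disjunct, and one has to verify that this disjunct is genuinely redundant in the integral case once the right-hand side has been $\1$-reduced --- without that normalisation the clean formula already fails, e.g.\ when $b = \1$. Keeping the two nested inductions well-founded in the presence of empty blocks, and of the freedom to permute letters in the commutative case, is the part that needs care; the underlying algebraic content is just repeated use of the residuation law of $\alg{M}$.
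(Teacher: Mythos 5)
Your proposal is correct and follows essentially the same route as the paper: you derive the displayed equivalences by locating the pivot letter $a$ in a witnessing decomposition (the paper phrases this via product distributivity, splitting $[a]\circ u$ as $u_1\circ u_2$ with $u_1\sqleq[b]$, or via an empty first block giving the $\1\leq b$ disjunct), and your explicit double induction on the lengths of the two factors is just an unpacking of the paper's one-sentence observation that iterating the equivalences produces a finite set of attainable bounds. Your handling of the integral case, normalising the right-hand side to a $\1$-reduced word so that $b<\1$ unless the target is $[\1]$, matches exactly the paper's own caveat, so there is no gap.
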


\begin{proof}
  First observe that repeatedly applying these equivalences will give us a non-empty finite set of attainable upper bounds for $u$, therefore they suffice to establish ideal residuation.

  If $[a] \circ u \sqleq [b] \circ v$, then either $\emptyword \sqleq [b] \circ v_{1}$ and $[a] \circ u \sqleq v_{2}$ for some $v_{1} \circ v_{2} = v$ or $[a] \circ u_{1} \sqleq [b]$ and $u_{2} \sqleq v$ for some $u_{1} \circ u_{2} = u$. In the former case $\emptyword \sqleq [b]$, so $\1 \leq b$ and $[a] \circ u \sqleq v$. Conversely, if $\1 \leq b$ and $[a] \circ u \sqleq v$, then $\emptyword \sqleq [b]$ and $[a] \circ u \sqleq [b] \circ v$. In the latter case, $u_{1} \sqleq [r]$ for some $r \in a \bsps b$, so $u = u_{1} \circ u_{2} \sqleq r \circ v$. Conversely, if $u \sqleq r \circ v$ for some $r \in a \bsps b$, then $[a] \circ u \sqleq [a] \circ [r] \circ v \sqleq [a \cdot r] \circ v \sqleq [b] \circ v$. The proof in the commutative case is entirely analogous.

  For integral pomonoids, each word in $\FreeUMon{\alg{M}}$ is either equivalent to $[\1]$ (and $[a] \circ u \sqleq [\1]$ for each word $u$) or it is equivalent to a word of the form $[b] \circ v$ for $b < \1$. In the latter case $[a] \circ u \sqleq [b] \circ v$ if and only if $u \sqleq [a \bs b] \circ v$. Similarly for the other residual.
\end{proof}

  The pomonoids $\FreeMon{\alg{M}}$ and $\CFreeRedMon{\alg{M}}$ are never ideally residuated: there is no word $v$ such that $u \circ v \sqleq \emptyword$ or $v \circ u \sqleq \emptyword$ if $u$ is non-empty.

  One difference between the posemigroup case and the pomonoidal case is that $\FreeSem{\alg{S}}$ does in fact inherit full residuation from $\alg{S}$. Unfortunately, this does not extend to the commutative case.

\begin{fact}
  If $\alg{S}$ is ideally residuated, then so is $\FreeSem{\alg{S}}$:
\begin{align*}
  [a] \circ u \sqleq [b] \circ v \iff & u \sqleq [r] \circ v \text{ for some } r \in a \bsps b, \\
  u \circ [a] \sqleq v \circ [b] \iff & u \sqleq v \circ [r] \text{ for some } r \in b \sps a.
\end{align*}
  If $\alg{S}$ is residuated, then so is $\FreeSem{\alg{S}}$:
\begin{align*}
  [a] \circ u \sqleq [b] \circ v \iff & u \sqleq [a \bs b] \circ v, \\
  u \circ [a] \sqleq v \circ [b] \iff & u \sqleq v \circ [b / a].
\end{align*}
  If $\alg{S}$ is commutative and ideally residuated, then so is $\CFreeSem{\alg{S}}$:
\begin{align*}
  [a] \circ u \sqleq v \iff & \text{ there is some } b \in \alg{S} \text{ and some } w \in \CFreeSem{\alg{S}} \text{ such that } \\
  & ~ v = [b] \circ w \text{ and } u \sqleq [r] \circ w \text{ for some } r \in a \bsps b = b \sps a.
\end{align*}
\end{fact}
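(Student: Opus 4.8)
The plan is to mimic, essentially line for line, the proof of the preceding Fact about $\FreeUMon{\alg{M}}$, the one structural difference being that in $\FreeSem{\alg{S}}$ every decomposition witnessing an inequality $u \sqleq [a_{1}, \dots, a_{n}]$ consists of \emph{non-empty} words. This is exactly why the extra disjunct of the pomonoidal statement, namely ``$\1 \leq b$ and $[a] \circ u \sqleq v$'', does not appear here: no block of a witnessing decomposition can collapse to the empty word, so the leading letter $a$ is genuinely present in the first block of any witness for $[a] \circ u \sqleq [b] \circ v$.

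First I would set up the reduction. As in the $\FreeUMon{\alg{M}}$ case, it suffices to verify the two displayed equivalences, since iterating them strips the leading (respectively trailing) letter off the right-hand word one at a time; after finitely many steps one reaches the base case $u \sqleq [c]$, which by the Fact that $u \sqleq [c]$ iff $\gamma(u) \leq c$ becomes a condition inside $\alg{S}$. The upper bounds for $u$ accumulated along the way form a finite set $\{ w_{1}, \dots, w_{k} \}$, so $\set{y \in \FreeSem{\alg{S}}}{z \circ y \sqleq w} = \below \{ w_{1}, \dots, w_{k} \}$ is a non-empty finitely generated downset and $\FreeSem{\alg{S}}$ is ideally residuated. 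When $\alg{S}$ is residuated, each ideal residual $a \bsps b$ is the principal downset $\below(a \bs b)$, so these unions collapse to single principal downsets $\below([a \bs b] \circ v)$ and $\FreeSem{\alg{S}}$ is fully residuated. For $\CFreeSem{\alg{S}}$ the collapse fails: there the block containing the leading letter may be paired with \emph{any} letter $b$ of the right-hand word, and the residual $a \bsps b = b \sps a$ varies with $b$; this existential survives even for residuated $\alg{S}$, so only ideal residuation is obtained in the commutative case, which is why the $\CFreeSem{\alg{S}}$ clause is stated only under the ideally-residuated hypothesis.

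The core of the proof is the forward direction of the key equivalence for $[a] \circ u \sqleq [b] \circ v$. Writing $v = [b_{1}, \dots, b_{n}]$, take a witness $[a] \circ u = w_{0} \circ w_{1} \circ \dots \circ w_{n}$ into non-empty blocks with $\gamma(w_{0}) \leq b$ and $\gamma(w_{i}) \leq b_{i}$. Since $w_{0}$ is non-empty it begins with $a$, so $w_{0} = [a]$ or $w_{0} = [a] \circ u_{1}$ with $u_{1} \in \FreeSem{\alg{S}}$ (the case $w_{0} = [a]$ being the delicate one, addressed below); in the latter case $a \cdot \gamma(u_{1}) = \gamma(w_{0}) \leq b$ gives $\gamma(u_{1}) \in a \bsps b$ (equivalently $\gamma(u_{1}) \leq a \bs b$ when $\alg{S}$ is residuated), hence $u_{1} \sqleq [r]$ with $r \assign \gamma(u_{1}) \in a \bsps b$, and $u = u_{1} \circ w_{1} \circ \dots \circ w_{n} \sqleq [r] \circ v$. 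The converse is the routine direction: from $u \sqleq [r] \circ v$ with $r \in a \bsps b$ (i.e.\ $a \cdot r \leq b$) one gets $[a] \circ u \sqleq [a] \circ [r] \circ v \sqleq [a \cdot r] \circ v \sqleq [b] \circ v$, using isotonicity of $\circ$, the nucleus inequality $[\gamma](x) \circ [\gamma](y) \sqleq [\gamma](x \circ y)$ with $x = [a]$ and $y = [r]$, and $a \cdot r \leq b$. The trailing-residual equivalence is symmetric, with $\sps$ in place of $\bsps$; the commutative case runs the same way except that the block of $[a] \circ u$ carrying the letter $a$ aligns with a letter $b$ of the right-hand word chosen freely, and by commutativity $a \bsps b = b \sps a$, which is what the extra existential in the $\CFreeSem{\alg{S}}$ clause records.

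The hard part will be making the forward direction airtight against the non-empty-words restriction: one has to confirm that \emph{every} witnessing decomposition of $[a] \circ u$ is subsumed by ``$u \sqleq [r] \circ v$ for some $r \in a \bsps b$'' — in particular the degenerate witnesses in which the block carrying the leading letter is the singleton $[a]$ (forcing $a \leq b$) and, in the commutative case, the degenerate alignments of blocks — and this is precisely the point at which non-emptiness of the ideal residual $a \bsps b$ is used. This is the semigroup analogue of the empty-word case analysis in the $\FreeUMon{\alg{M}}$ proof; with empty blocks outlawed the bookkeeping is shorter, but for that very reason the base cases of the peeling induction in the reduction step should be checked explicitly rather than imported wholesale from the pomonoidal argument.
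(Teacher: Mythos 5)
Your easy steps are fine: the converse direction, the case $w_{0} = [a] \circ u_{1}$ with $u_{1}$ non-empty, and the general peeling strategy all go through as in the unital proof. The genuine gap is exactly the case you defer, $w_{0} = [a]$. There you only know $a \leq b$ and $u = w_{1} \circ \dots \circ w_{n} \sqleq v$, and no appeal to the non-emptiness of $a \bsps b$ can convert this into $u \sqleq [r] \circ v$: by the Fact that $[a_{1}, \dots, a_{m}] \sqleq [b_{1}, \dots, b_{n}]$ in $\FreeSem{\alg{S}}$ forces $m \geq n$, a word is never below a strictly longer word, and $[r] \circ v$ is strictly longer than $v$. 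In the unital setting this case is rescued by the unit: one takes $r = \1 \in a \bsps b$ (legitimate since $a \cdot \1 = a \leq b$) and uses $[\1] \circ v = v$ in $\FreeUMon{\alg{M}}$; that move has no analogue in the semigroup case. Concretely, with $b = a$ and $u = v = [c]$ a singleton, the left-hand side $[a] \circ [c] \sqleq [a] \circ [c]$ holds by reflexivity, while $[c] \sqleq [r] \circ [c]$ fails for every $r$ because $[c]$ admits no decomposition into two non-empty blocks. So the forward implication you are trying to prove fails, and your opening claim that the extra disjunct of the pomonoidal statement ``does not appear here'' is precisely where the argument breaks: the semigroup analogue needs the disjunct ``$a \leq b$ and $u \sqleq v$'' (and corresponding disjuncts in the trailing and commutative versions).

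With that disjunct added, your peeling argument does deliver ideal residuation, since each peeling step contributes only finitely many upper bounds. But the claimed collapse to a single principal bound when $\alg{S}$ is residuated no longer follows by this route: the solutions of $[a] \circ x \sqleq [b] \circ v$ now lie below the two words $[a \bs b] \circ v$ and (when $a \leq b$) $v$ itself, which are in general incomparable. For instance, over the residuated posemigroup $\mathbb{Z}$ (an $\ell$-group, with $a \bs c = c - a$) the solutions of $[0] \circ x \sqleq [1] \circ [5]$ form the downset $\below \{ [5], [1,5] \}$, and $[5] \nsqleq [1,5]$ for length reasons while $[1,5] \nsqleq [5]$ since $1 + 5 \nleq 5$; so this solution set is not principal. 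Hence the full-residuation half of the statement cannot be obtained by your argument either, and your step ``these unions collapse to single principal downsets'' is exactly the assertion that fails. What your approach can legitimately establish is the ideal-residuation claims, with the displayed equivalences amended by the extra disjunct; the case $w_{0} = [a]$ must be recorded as a separate alternative rather than absorbed into ``$u \sqleq [r] \circ v$ for some $r \in a \bsps b$''.
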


  Starting with a (commutative) residuated s$\ell$-monoid $\alg{M}$, in general $\FreeUMon{\alg{M}}$ ($\CFreeRedUMon{\alg{M}}$) will thus only be ideally residuated. We now show that under favorable order-theoretic conditions, full residuation is recovered at the level of $\Id \FreeUMon{\alg{M}}$ ($\Id \CFreeRedUMon{\alg{M}}$).

\begin{lemma}
  Let $\alg{M}$ be an ideally residuated pomonoid. Then the two residuals $\below x \bs \below \{ z_{1}, \dots, z_{n} \}$ and $\below \{ z_{1}, \dots, z_{n} \} / \below x$ exist in $\Id \alg{M}$ for $x, z_{1}, \dots, z_{n} \in \alg{M}$.
\end{lemma}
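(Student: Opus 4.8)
The plan is to produce the residuals explicitly: I claim that
\[
  \below x \bs \below\{z_1, \dots, z_n\} = \bigcup_{i=1}^{n} (x \bsps z_i)
  \qquad\text{and}\qquad
  \below\{z_1, \dots, z_n\} / \below x = \bigcup_{i=1}^{n} (z_i \sps x),
\]
the right-hand sides being precisely the extended ideal residuals $x \bsps \below\{z_1,\dots,z_n\}$ and $\below\{z_1,\dots,z_n\} \sps x$ of the preceding Fact. Two routine observations carry the argument. First, since each $x \bsps z_i$ is by hypothesis a non-empty finitely generated downset, say $x \bsps z_i = \below F_i$ with $F_i$ finite and non-empty, the union $\bigcup_{i=1}^{n} (x \bsps z_i) = \below(F_1 \cup \dots \cup F_n)$ is again a non-empty finitely generated downset, hence an element of $\Id\alg{M}$. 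Second, for any $W = \below\{w_1, \dots, w_k\} \in \Id\alg{M}$ we have $\below x \ast W = \below(\below x \cdot W) = \below\{x w_1, \dots, x w_k\}$, using that $W$ is downward closed and that multiplication in $\alg{M}$ is isotone.

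With these in hand I would check the defining property of the residual directly. For $W = \below\{w_1, \dots, w_k\} \in \Id\alg{M}$, the computation above gives
\[
  \below x \ast W \subseteq \below\{z_1, \dots, z_n\}
  \iff x w_j \in \below\{z_1,\dots,z_n\} \text{ for each } j
  \iff \text{for each } j,\ x w_j \leq z_i \text{ for some } i,
\]
where the equivalences use that membership in a finitely generated downset means lying below one of its generators. By the definition of the ideal residual this last condition says $w_j \in x \bsps z_i$ for some $i$, i.e.\ $w_j \in \bigcup_{i=1}^{n} (x \bsps z_i)$ for each $j$; since $\bigcup_{i=1}^{n} (x \bsps z_i)$ is a downset and $W$ is generated by the $w_j$, this is in turn equivalent to $W \subseteq \bigcup_{i=1}^{n} (x \bsps z_i)$. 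Thus $\bigcup_{i=1}^{n} (x \bsps z_i)$ is the greatest element of $\Id\alg{M}$ whose $\ast$-product with $\below x$ is contained in $\below\{z_1, \dots, z_n\}$, which is exactly to say that it equals $\below x \bs \below\{z_1, \dots, z_n\}$. The case of $\below\{z_1, \dots, z_n\} / \below x$ is handled symmetrically, replacing left multiplication and $\bsps$ by right multiplication and $\sps$.

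No serious obstacle is expected: once the two bookkeeping facts are in place — that $\below x \ast W$ is generated by the elements $x w_j$, and that a finite union of non-empty finitely generated downsets is again one — the statement reduces to a one-line universal-property check. The only mild subtlety is recalling that an inclusion into a finitely generated downset decomposes as a disjunction over the generators, which is what turns $\below x \ast W \subseteq \below\{z_1,\dots,z_n\}$ into the finitary condition that each $x w_j$ lies below some $z_i$.
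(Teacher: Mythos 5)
Your proposal is correct and follows essentially the same route as the paper: both identify the residual explicitly as the finitely generated downset $\bigcup_{i}(x \bsps z_{i})$ and verify the defining equivalence $\below x \ast W \subseteq \below\{z_{1},\dots,z_{n}\} \iff W \subseteq \bigcup_{i}(x \bsps z_{i})$ for arbitrary $W \in \Id\alg{M}$. Your write-up is in fact a somewhat more streamlined version of the paper's argument, which reaches the same conclusion via an auxiliary choice function on the generators.
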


\begin{proof}
  Let $c \assign \below \{ z_{1}, \dots, z_{p} \}$ Then $\below x \cdot b \leq c$ in $\Id \alg{M}$ for $b = \below \{ y_1, \dots, y_{n} \} = \below y_{1} \vee \dots \vee \below y_n$ if and only if $\below x \cdot \below y_j \leq c$ in $\Id \alg{M}$ for each~$y_j$, that is, if and only if for each $y_j$ there is some $z_k$ such that $x \cdot y_j \leq z_k$ in~$\alg{M}$. By ideal residuation, this is equivalent to $y_j \in x \bsps z_{k}$. That is, for each $y_j$ there is a $z_k$ such that $y_j \leq r_j$ for some $r_j \in x \bsps z_{k}$. Thus $\below x \cdot b \leq c$ if and only if there is a function $f\colon \{1, \dots, n \} \to B$ such that $y_j \leq f(j)$ in $\alg{M}$ for each $y_j$, where $B \assign \bigcup \set{x \bsps z_{k}}{1 \leq k \leq p}$.

  For each such function $f$ there is a largest tuple $y_1, \dots, y_n$ satisfying these constraints, namely the elements $f(j)$ themselves. Thus $x \cdot b \leq c$ in $\Id \alg{M}$ if and only if there is a function $f\colon \{ 1, \dots, n \} \to B$ such that $b \leq \below f(1) \vee \dots \vee \below f(n)$. This is equivalent to $b \leq \below \{ u_1, \dots, u_n \}$ for some finite subset $\{ u_1, \dots, u_n \}$ of $B$. But there is a largest finite subset of $B$, namely $B$ itself, so this is equivalent to $b \leq \below B$. (Note that the set $B$ is given by our choice of $\below x$ and $c$, while $b$ and thus also~$n$ are variables in this argument.) An analogous argument proves the existence of the residual~$c / a$.
\end{proof}

\begin{lemma}
  If $\alg{M}$ is ideally residuated and $\Id \alg{M}$ a lattice, then $\Id \alg{M}$ is a residuated lattice.
\end{lemma}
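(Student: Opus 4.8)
The plan is to reduce the general case to the preceding lemma, which already supplies the residuals $\below x \bs Z$ and $Z / \below x$ whenever the numerator is a principal downset. Since $\Id \alg{M}$ is a lattice by hypothesis and a pomonoid (being an s$\ell$-monoid), by the definition of a residuated lattice it suffices to exhibit the two residuals for arbitrary elements of $\Id \alg{M}$.

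First I would use that every element of $\Id \alg{M}$ is a finite join of principal downsets, $\below \{ x_{1}, \dots, x_{m} \} = \below x_{1} \vee \dots \vee \below x_{m}$, together with the fact that multiplication in the s$\ell$-monoid $\Id \alg{M}$ distributes over finite joins. Hence, for $X = \below \{ x_{1}, \dots, x_{m} \}$ and arbitrary $Y, Z \in \Id \alg{M}$, we have $X \cdot Y \leq Z$ in $\Id \alg{M}$ if and only if $\below x_{i} \cdot Y \leq Z$ for each $1 \leq i \leq m$. Since $Z$ is a non-empty finitely generated downset, the preceding lemma provides the residual $\below x_{i} \bs Z$ in $\Id \alg{M}$, so $\below x_{i} \cdot Y \leq Z$ is equivalent to $Y \leq \below x_{i} \bs Z$. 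The lattice hypothesis now guarantees that the finite meet $(\below x_{1} \bs Z) \wedge \dots \wedge (\below x_{m} \bs Z)$ exists in $\Id \alg{M}$, and we obtain $X \cdot Y \leq Z$ if and only if $Y$ lies below this meet; that is, $X \bs Z \assign (\below x_{1} \bs Z) \wedge \dots \wedge (\below x_{m} \bs Z)$ is the left residual. The right residual $Z / X \assign (Z / \below x_{1}) \wedge \dots \wedge (Z / \below x_{m})$ is obtained by the symmetric argument, using the residuals $Z / \below x_{i}$ furnished by the preceding lemma. This shows that $\Id \alg{M}$ is a residuated lattice.

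I do not anticipate a genuine obstacle here: the argument is essentially the standard observation that a residual turns a join in its numerator into a meet. The only two points that require attention are that the preceding lemma applies in exactly the form needed --- principal numerator, arbitrary denominator from $\Id \alg{M}$ --- and that the lattice hypothesis is precisely what allows the finitely many residuals $\below x_{i} \bs Z$ to be glued into a single residual by taking their meet.
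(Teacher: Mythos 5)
Your proposal is correct and follows essentially the same route as the paper: decompose the left argument as a finite join of principal downsets, invoke the preceding lemma for each $\below x_{i} \bs Z$ and $Z / \below x_{i}$, and use the lattice hypothesis to take the meet of these finitely many residuals. The paper's own proof is just a terser statement of exactly this observation.
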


\begin{proof}
  It suffices to observe that $\below \{ x_{1}, \dots, x_{m} \} = \bigvee_{1 \leq i \leq m} \below x_{i} \vee \dots \vee \below x_{m}$, so $\below \{ x_{1}, \dots, x_{m} \} \bs \below \{ y_{1}, \dots, y_{n} \} = \bigwedge_{1 \leq i \leq m} (\below x_{i} \bs \below \{ y_{1}, \dots, y_{n} \})$.
\end{proof}

  In general, $\Id P$, for an arbitrary poset $P$, might fail to be a lattice for two separate reasons. Firstly, there may be two principal downsets whose intersection is empty. This occurs if and only if $P$ fails to be down-directed. Secondly, there may be two principal downsets whose intersection is not finitely generated. The simplest way to avoid this is to assume that each downset of $P$ is finitely generated, which is one of several equivalent definitions of a dually well-partially-ordered poset. A more common definition is that a poset is a \emph{dual well-partial-order (dual wpo)} if it has no infinite ascending chains and no infinite antichains.

\begin{lemma}
  Let $\alg{M}$ be a (commutative) down-directed pomonoid. Then $\FreeUMon{\alg{M}}$ ($\CFreeRedUMon{\alg{M}}$) is down-directed.
\end{lemma}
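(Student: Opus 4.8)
The plan is to exhibit, for any two non-empty words $u = [a_{1}, \dots, a_{m}]$ and $v = [b_{1}, \dots, b_{n}]$ representing elements of $\FreeUMon{\alg{M}}$, an explicit common lower bound built from a single well-chosen element of $\alg{M}$. The guiding observation is that down-directedness lets us pick an element lying below every letter occurring in $u$ and $v$ \emph{and} below the unit $\1$; being below $\1$ is exactly what makes the powers of this element stay small, which is what the decomposition condition defining $\sqleq$ requires.

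First I would apply down-directedness of $\alg{M}$ to the finite set $\{ \1, a_{1}, \dots, a_{m}, b_{1}, \dots, b_{n} \}$ to obtain an element $d \in M$ with $d \leq \1$, with $d \leq a_{i}$ for $1 \leq i \leq m$, and with $d \leq b_{j}$ for $1 \leq j \leq n$. Since multiplication is isotone and $d \leq \1$, an easy induction gives $d^{k} \leq d$ for every $k \geq 1$ (as $d^{k+1} = d \cdot d^{k} \leq d \cdot d \leq \1 \cdot d = d$). Then I would set $k \assign \max(m, n)$ and let $w$ be the word consisting of $k$ copies of $d$; this word is non-empty, hence represents an element of $\FreeUMon{\alg{M}}$.

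It remains to check $w \sqleq u$ and $w \sqleq v$. For the first, split $w$ into $m$ non-empty blocks $w = w_{1} \circ \dots \circ w_{m}$, which is possible because $k \geq m$; each $w_{i}$ consists of some number $k_{i} \geq 1$ of copies of $d$, so $\gamma(w_{i}) = d^{k_{i}} \leq d \leq a_{i}$, and this decomposition witnesses $w \sqleq u$. The argument for $w \sqleq v$ is symmetric, using $k \geq n$. Hence $w$ is a common lower bound of $u$ and $v$, so $\FreeUMon{\alg{M}}$ is down-directed. For $\CFreeRedUMon{\alg{M}}$ the same argument applies verbatim, working with the multiset of $k$ copies of $d$ and splitting it into $m$ (respectively $n$) non-empty sub-multisets.

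I do not anticipate a real obstacle. The only points that require care are, first, to include $\1$ when choosing $d$ — a common lower bound of the letters alone would not suffice, since then a block product $d^{k_{i}}$ could sit far above $a_{i}$ — and, second, to use \emph{non-empty} blocks in the decompositions, since an empty block would contribute $\gamma(w_{i}) = \1$ and thus spuriously demand $\1 \leq a_{i}$.
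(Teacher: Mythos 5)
Your proof is correct. It differs from the paper's argument only in how the common lower bound is built: the paper equalizes lengths by padding the shorter word $u$ with copies of $\1$ (giving a word $w$ of length $n$ with $w \sqleq u$) and then applies down-directedness componentwise to get $d_i \leq c_i$ and $d_i \leq b_i$, so the lower bound $[d_1,\dots,d_n]$ is obtained purely from the componentwise order, with no need to control powers. You instead take a single element $d$ below $\1$ and below every letter of both words, use the observation $d^k \leq d$ (which is exactly where $d \leq \1$ is needed), and take the constant word $[d,\dots,d]$ of length $\max(m,n)$, split into non-empty blocks. Both routes use the same two ingredients (down-directedness plus the unit), and your side remarks about why $\1$ must be included among the elements being bounded and why the blocks should be non-empty are apt. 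The only point you leave implicit is that pairwise down-directedness yields a lower bound for the whole finite set $\{\1, a_1,\dots,a_m,b_1,\dots,b_n\}$, which is a routine induction; the paper's version sidesteps even this by only ever invoking down-directedness for pairs. The paper's construction also yields a componentwise (hence generally larger) lower bound, but for the purpose of this lemma the two are interchangeable, and your argument transfers to the commutative case just as the paper's does.
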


\begin{proof}
  Given any two non-empty words $u = [a_1, \dots, a_m]$ and $v = [b_1, \dots, b_n]$, where without loss of generality $m \leq n$, there is a word $w = [c_{1}, \dots, c_{n}]$ such that $w \sqleq u$. For example, we may pad $u$ by $[\1, \dots, \1]$ on the left. By down-directedness there are $d_{i}$ such that $d_{i} \leq b_{i}$ and $d_{i} \leq c_{i}$, so $[d_{1}, \dots, d_{n}]$ is a lower bound of $u$ and $v$.
\end{proof}

\begin{lemma}
  Let $\alg{S}$ be a (commutative) down-directed posemigroup such that for each $c \in \alg{S}$ there are $a, b \in \alg{S}$ with $a \cdot b \leq c$. Then $\FreeSem{\alg{S}}$ ($\CFreeSem{\alg{S}}$) is down-directed.
\end{lemma}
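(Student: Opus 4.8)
The plan is to mimic the proof of the preceding lemma for monoids. There one takes two non-empty words $u = [a_{1}, \dots, a_{m}]$ and $v = [b_{1}, \dots, b_{n}]$ with $m \leq n$, produces a word $w = [c_{1}, \dots, c_{n}]$ of length exactly $n$ with $w \sqleq u$ (by padding $u$ with copies of $\1$), and then uses down-directedness of $\alg{S}$ componentwise to obtain $d_{i} \in \alg{S}$ with $d_{i} \leq c_{i}$ and $d_{i} \leq b_{i}$, so that $[d_{1}, \dots, d_{n}]$ is a common lower bound of $u$ and $v$. The one step that does not transfer verbatim is the padding: in $\FreeSem{\alg{S}}$ there is no empty word and no distinguished unit letter, so we cannot lengthen $u$ for free.

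This is exactly where the extra hypothesis comes in. Using the assumption that every $c \in \alg{S}$ admits $a, b \in \alg{S}$ with $a \cdot b \leq c$, I would first show by induction on $k \geq 1$ that for every $c \in \alg{S}$ there is a word $w'$ of length exactly $k$ with $\gamma(w') \leq c$: for $k = 1$ take $w' = [c]$, and for the inductive step factor the first letter of a length-$k$ witness. Applying this with $c \assign a_{1}$ and $k \assign n - m + 1 \geq 1$ gives a word $w'$ of length $n - m + 1$ with $\gamma(w') \leq a_{1}$; then $w \assign w' \circ [a_{2}] \circ \dots \circ [a_{m}]$ has length exactly $n$, and the decomposition of $w$ into the blocks $w', [a_{2}], \dots, [a_{m}]$ witnesses $w \sqleq u$.

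With $w = [c_{1}, \dots, c_{n}]$ in hand the argument finishes as in the monoidal case: down-directedness of $\alg{S}$ yields $d_{i} \in \alg{S}$ with $d_{i} \leq c_{i}$ and $d_{i} \leq b_{i}$ for each $i$, and by the Fact characterising $\sqleq$ on words of equal length in $\FreeSem{\alg{S}}$, together with transitivity of $\sqleq$, the word $[d_{1}, \dots, d_{n}]$ lies below both $u$ and $v$. The commutative case $\CFreeSem{\alg{S}}$ is handled identically, using the corresponding Facts for $\CFreeSem{\alg{S}}$; commutativity only simplifies the bookkeeping. I expect the only genuinely new point over the monoidal lemma to be the induction producing words of arbitrary length below a given element, which is precisely where the absence of a unit has to be compensated for; everything else is a transcription of the previous proof with all references to $\1$ removed.
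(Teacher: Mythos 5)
Your proposal is correct and follows essentially the same route as the paper: the paper also replaces the unit-padding step by choosing $c_{1}, \dots, c_{k}$ with $c_{1} \cdots c_{k} \leq a_{1}$ for $k = n-m+1$ and taking $w = [c_{1}, \dots, c_{k}, a_{2}, \dots, a_{m}] \sqleq u$, then finishing componentwise as in the monoidal lemma. Your explicit induction merely spells out the iterated use of the hypothesis that the paper leaves implicit.
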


\begin{proof}
  The additional assumption ensures that for each $u = [a_1, \dots, a_m]$ and $m \leq n$ there is some word $w \assign [a_{b}, \dots, a_{n}] \sqleq u$. For example, we may take $w$ to be $[c_{1}, \dots, c_{k}, a_{2}, \dots, a_{m}]$ where $k = n - m + 1$ and $c_{1} \cdot \ldots \cdot c_{k} \leq a_{1}$.
\end{proof}

  In particular, this holds for each (ideally) residuated down-directed po\-semi\-group, and for each posemigroup with a bottom element such that $\bot \cdot \bot = \bot$.
 
  The pomonoids $\FreeMon{\alg{M}}$ and $\CFreeRedMon{\alg{M}}$ are never down-directed: the empty word is a minimal element but not the smallest element (unless $\alg{M}$ is trivial).

  As with residuation, in general the property of being a dual wpo is not fully inherited by free partially ordered nuclear preimages beyond the integral case. If an integral (commutative) pomonoid $\alg{M}$ is a dual wpo, then $\FreeMon{\alg{M}}$ and $\FreeUMon{\alg{M}}$ ($\CFreeRedMon{\alg{M}}$ and $\CFreeRedUMon{\alg{M}}$) are also dual wpos, by the observation of Galatos and Hor\v{c}\'{i}k~\mbox{\cite[Lemma~4.2]{galatos+horcik13}} that an integral pomonoid generated by a dually wpo subset is a dual wpo. Beyond the integral case, however, the free partially ordered nuclear preimage of a finite pomonoid might fail to be a dual wpo. If $\1 < a < a^2$ in $\alg{M}$, then $[a] < [a,a] < [a,a,a] < \dots$ is an infinite ascending chain in $\FreeMon{\alg{M}}$. Similarly, if $\1, a, a^2 = a^3$ is a three-element antichain in $\alg{M}$, then $\{ [a], [a,a], [a,a,a], \dots \}$ is an infinite antichain in $\FreeMon{\alg{M}}$. We therefore need to consider a weaker property.

\begin{lemma}
  Let $\alg{M}$ be a (commutative) dually wpo pomonoid. Then for each $n \in \omega$ the restriction of $\FreeMon{\alg{M}}$ and $\FreeUMon{\alg{M}}$ ($\CFreeRedMon{\alg{M}}$ and $\CFreeRedUMon{\alg{M}}$) to words of length at most $n$ is a dual wpo.
\end{lemma}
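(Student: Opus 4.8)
The plan is to reduce the statement to standard closure properties of dual well-partial-orders: finite products, finite disjoint unions, sub-posets, and monotone images all preserve the property of being a dual wpo. I would first recall the sequence characterization --- a poset is a dual wpo if and only if every infinite sequence $(x_{n})$ of its elements admits indices $i < j$ with $x_{i} \geq x_{j}$, equivalently it has no infinite strictly ascending chain and no infinite antichain --- since this makes the monotone-image step immediate: if $f \colon P \to Q$ is a monotone surjection with $P$ a dual wpo, then any infinite sequence in $Q$ lifts to one in $P$, and a good pair upstairs maps under $f$ to a good pair downstairs.

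The core observation is that the restriction of the free pomonoid $\alg{F}(\alg{M})$ to words of length at most $n$ is, with the order inherited from $\alg{F}(\alg{M})$, nothing but the finite disjoint union $\bigsqcup_{k = 0}^{n} \alg{M}^{k}$ of the finite powers of $\alg{M}$, because words of distinct lengths are incomparable in $\alg{F}(\alg{M})$. Since $\alg{M}$ is assumed to be a dual wpo, each $\alg{M}^{k}$ is a dual wpo (finite products of dual wpos are dual wpos, a classical fact), and hence so is this finite disjoint union.

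Next I would exhibit the restriction of $\FreeMon{\alg{M}}$ to words of length at most $n$ --- that is, the sub-poset of those $\sqleq$-equivalence classes possessing a representative of length $\leq n$ --- as the monotone surjective image of $\bigsqcup_{k = 0}^{n} \alg{M}^{k}$ under the map sending a word to its equivalence class; monotonicity here is exactly the already-recorded fact that $u \leq v$ in $\alg{F}(\alg{M})$ implies $u \sqleq v$. By the monotone-image lemma this sub-poset is a dual wpo. Precisely the same argument handles $\CFreeRedMon{\alg{M}}$, since the map from words of length $\leq n$ to their classes in $\CFreeRedMon{\alg{M}}$ is again a monotone surjection onto the relevant sub-poset. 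Finally, the restrictions of $\FreeUMon{\alg{M}}$ and $\CFreeRedUMon{\alg{M}}$ to words of length at most $n$ are sub-posets of the corresponding restrictions of $\FreeMon{\alg{M}}$ and $\CFreeRedMon{\alg{M}}$, so they too are dual wpos.

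I do not anticipate a serious obstacle. The one point requiring care is that $\sqleq$ is strictly coarser than the componentwise order, so one cannot argue directly that an infinite ascending $\sqleq$-chain lifts to an infinite ascending chain in $\alg{F}(\alg{M})$; routing everything through the monotone-image lemma --- which uses the sequence characterization and so handles chains and antichains at once --- is exactly what sidesteps this. The only other thing to pin down is the reading of ``words of length at most $n$'' as ``equivalence classes having a representative of length $\leq n$'', which is what makes the quotient maps above surjective onto the intended sub-posets.
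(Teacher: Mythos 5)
Your proof is correct, but it follows a genuinely different route from the paper's. The paper treats the two halves of the dual wpo condition separately: infinite antichains are excluded because $\sqleq$ restricted to words of length exactly $n$ extends the componentwise order on $\alg{M}^{n}$, while infinite strictly ascending chains are excluded by an induction on $n$ that analyses the decompositions witnessing $u_{i} \sqleq u_{i+1}$ --- showing that a chain admitting no interpolants of smaller length must consist of words of length exactly $n$ compared componentwise, and so cannot be infinite. You instead route everything through the good-pair (sequence) characterization of dual wpos together with the standard fact that a monotone surjective image of a dual wpo is a dual wpo: the restriction of $\alg{F}(\alg{M})$ to words of length at most $n$ is the finite disjoint union of the powers $\alg{M}^{k}$, $k \leq n$, hence a dual wpo, and the quotient map onto the classes with a representative of length at most $n$ is monotone and surjective; the unital structures $\FreeUMon{\alg{M}}$ and $\CFreeRedUMon{\alg{M}}$ are then handled as subposets. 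This is shorter and more conceptual --- it in effect proves the general statement that the skeleton of any preorder extending a dual-wpo partial order on the same set is again a dual wpo, so no analysis of how $\sqleq$-inequalities are witnessed is needed --- at the cost of invoking the Ramsey-type equivalence between the chain/antichain formulation and the good-pair formulation. The paper's argument is more hands-on and stays entirely within the chain/antichain formulation, and as a by-product it isolates the structural fact that a strictly ascending $\sqleq$-chain of words of maximal length $n$ must ascend componentwise. You correctly identified the one pitfall (an ascending $\sqleq$-chain need not lift to an ascending chain of $\alg{F}(\alg{M})$), which is precisely what the paper's induction is there to handle and what your use of the sequence characterization sidesteps; you also pinned down the right reading of ``words of length at most $n$'' as classes possessing a representative of that length, which is what the paper's later application (to lower bounds of bounded length) requires.
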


\begin{proof}
  A crucial observation is that when restricted to words of length precisely~$n$, the preorder on $\FreeMon{\alg{M}}$ extends the componentwise order on $\alg{M}^{n}$. Because the product of finitely many dual wpos is known to be a dual wpo, this componentwise order is a dual wpo. In particular, there is no infinite antichain of words of length precisely $n$ in $\alg{M}^{n}$, and therefore there is also no infinite antichain of words of length precisely $n$ in~$\FreeMon{\alg{M}}$. It follows that for each $n$ there is no infinite antichain in $\FreeMon{\alg{M}}$ of words of length at most~$n$.

  It remains to prove, by induction over $n$, that there is no infinite ascending chain $u_{1} < u_{2} < \dots$ of words of length at most $n$ in $\FreeMon{\alg{M}}$. If $n = 1$, this is impossible because $\alg{M}$ is dually wpo. Now suppose that this claim holds for some $n-1$ and there is an infinite ascending chain $u_{1} < u_{2} < \dots$ of words of length at most $n$. If for infinitely many $u_{i}$ there are words $v_{i}$ of length less than~$n$ with $u_{i} \sqleq v_{i} \sqleq u_{i+1}$, the claim follows from the inductive hypothesis, since the elements $v_{i}$ would form at infinite ascending chain of words of length less than $n$. We may therefore assume that for each $u_{i}$ there is no word $v$ of length less than $n$ with $u_{i} \sqleq v \sqleq u_{i+1}$. In particular, each of the words $u_{i}$ has length $n$. But this implies that the decomposition of $u_{i}$ witnessing that $u_{i} \sqleq u_{i+1}$ contains no instances of the empty word. (Otherwise, there are decompositions $u_{i} = u'_{i} \circ \emptyword \circ u''_{i}$ and $u_{i+1} = u'_{i+1} \circ [a] \circ u''_{i+1}$ such that $u'_{i} \sqleq u'_{i+1}$, $\emptyword \sqleq [a]$, $u''_{i} \sqleq u''_{i+1}$. But then we have a word $v_{i} = u'_{i+1} \circ u''_{i+1}$ of length $n-1$ with $u_{i} \sqleq v_{i} \sqleq u_{i+1}$, contradicting our assumption.) Because the decomposition contains no instances of the empty word and $u_{i}$ and $u_{i+1}$ have the same length, this implies that for some $a_{1}, \dots, a_{n}, b_{1}, \dots, b_{n} \in \alg{M}$ we have $u_{i} = [a_{1}, \dots, a_{n}]$ and $u_{i+1} = [b_{1}, \dots, b_{n}]$ with $a_{i} \leq b_{i}$ for $1 \leq i \leq n+1$. But this is precisely the componentwise order on $\alg{M}^{n}$, which is a dual wpo, therefore no such infinite ascending chain exists.
\end{proof}

\begin{lemma}
  Let $\alg{S}$ be a (commutative) dually wpo posemigroup. Then for each $n \in \omega$ restricting $\FreeSem{\alg{S}}$ ($\CFreeSem{\alg{S}}$) to words of length precisely $n$ yields a dual wpo.
\end{lemma}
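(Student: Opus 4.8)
The plan is to reduce the statement directly to the facts established above about the restriction of $\sqleq$ to words of a fixed length, together with the fact (already used twice in this section) that a finite product of dual wpos is again a dual wpo. For $\FreeSem{\alg{S}}$: since $[a_1, \dots, a_m] \sqleq [b_1, \dots, b_n]$ in $\FreeSem{\alg{S}}$ forces $m \geq n$, and since $[a_1, \dots, a_n] \sqleq [b_1, \dots, b_n]$ holds exactly when $a_i \leq b_i$ for all $i$, the subposet of $\FreeSem{\alg{S}}$ consisting of words of length precisely $n$ is literally $\alg{S}^n$ under the componentwise order. As $\alg{S}$ is a dual wpo, so is this finite power, which settles the non-commutative case.

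For $\CFreeSem{\alg{S}}$ with $\alg{S}$ commutative, the analogous facts say that the words of length precisely $n$ are the size-$n$ multisets over $\alg{S}$, ordered by domination: $[a_1, \dots, a_n] \sqleq [b_1, \dots, b_n]$ iff $a_i \leq c_i$ for all $i$ for some permutation $[c_1, \dots, c_n]$ of $[b_1, \dots, b_n]$. Hence the map $q$ sending a tuple in $\alg{S}^n$ to its underlying multiset is a surjective monotone map onto this subposet whose image order is precisely the order induced by $q$. I would then invoke the observation that a surjective monotone image of a dual wpo is again a dual wpo: an infinite antichain in the image lifts along $q$ to an infinite antichain in $\alg{S}^n$ (incomparability pulls back along a monotone map), and an infinite strictly ascending chain $q_1 < q_2 < \dots$ in the image lifts to a sequence $p_1, p_2, \dots$ in $\alg{S}^n$ which, by the usual sequence characterization of dual wpos, admits indices $i < j$ with $p_i \geq p_j$, giving $q_i \geq q_j$ and the desired contradiction. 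Since $\alg{S}^n$ is a dual wpo by the previous paragraph, the commutative case follows.

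I do not expect a real obstacle here: the content of the lemma is entirely absorbed by the earlier descriptions of $\sqleq$ on fixed-length words, and the only ingredient not already used in this section is the passage from the componentwise order on $\alg{S}^n$ to the multiset order in the commutative case, handled by closure of dual wpos under surjective monotone images. Should one prefer to avoid the sequence characterization of dual wpos entirely, the same conclusion is obtained by a brief Ramsey argument: from an infinite sequence in the image, pull back an infinite chain subsequence in $\alg{S}^n$, observe it contains an infinite descending subsequence, and transfer this downward to the image sequence.
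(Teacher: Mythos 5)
Your proof is correct and is essentially the argument the paper intends: the lemma is left without an explicit proof because, by the preceding facts, the words of length precisely $n$ in $\FreeSem{\alg{S}}$ carry exactly the componentwise order on $\alg{S}^n$, so the claim reduces to the closure of dual wpos under finite products, already invoked in the pomonoid lemma. Your additional step for $\CFreeSem{\alg{S}}$ — passing to the permutation quotient via closure of dual wpos under surjective monotone images (with the antichain and ascending-chain liftings you describe) — is the standard way to handle the commutative case and is sound.
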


\begin{fact}
  Let $\alg{M}$ be a (commutative) down-directed dually wpo po\-monoid. Then $\Id \FreeUMon{\alg{M}}$ and $\Id \FreeMon{\alg{M}}$ ($\Id \CFreeRedUMon{\alg{M}}$ and $\Id \CFreeRedMon{\alg{M}}$) are lattices.
\end{fact}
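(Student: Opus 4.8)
The plan is to use that $\Id P$ is already a join-semilattice for every poset $P$ (by an earlier Fact), so that it suffices to produce binary meets. Since the meet of two finitely generated downsets, whenever it exists, is forced to be their set-theoretic intersection, and since $\below\{x_1,\dots,x_m\} \cap \below\{y_1,\dots,y_n\} = \bigcup_{i,j}(\below x_i \cap \below y_j)$, while a finite union of finitely generated downsets is finitely generated and a union of non-empty sets is non-empty, the whole statement reduces to a single claim: for all $u, v$ in $P$ --- where $P$ denotes any of the four free nuclear preimages $\FreeMon{\alg{M}}$, $\FreeUMon{\alg{M}}$, $\CFreeRedMon{\alg{M}}$, $\CFreeRedUMon{\alg{M}}$ --- the downset $\below u \cap \below v$ of common lower bounds of $u$ and $v$ is non-empty and finitely generated.

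Non-emptiness is exactly the down-directedness of $P$, already established above (for the unital variants, using that $\alg{M}$ is down-directed). For finite generation I would first bound the length of the maximal common lower bounds by a block-merging argument. Fix shortest representatives $u = [a_1,\dots,a_m]$ and $v = [b_1,\dots,b_n]$ and put $N \assign m+n$ (in the commutative case $N \assign mn$). Let $w$ be a common lower bound of $u$ and $v$, chosen of minimal length in its equivalence class. The inequalities $w \sqleq u$ and $w \sqleq v$ are witnessed by two block decompositions of $w$, into at most $m$ and at most $n$ blocks respectively, whose $\gamma$-images are dominated by the $a_i$ and by the $b_j$. The common refinement of these two decompositions breaks $w$ into at most $N$ blocks, each contained in a single $u$-block and in a single $v$-block. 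If $w$ has more than $N$ letters, some refinement block contains two letters $c, c'$; replacing the factor $[c,c']$ of $w$ by the singleton $[c\cdot c']$ produces a word $w'$ with $w \sqleq w'$ (since $[c,c'] \sqleq [c\cdot c']$ and multiplication is isotone) which is still a common lower bound of $u$ and $v$, because merging two letters inside a single block leaves that block's $\gamma$-image unchanged. As $w'$ is strictly shorter, $w \not\sim w'$ by minimality, so $w \sqsubsetneq w'$ and $w$ is not maximal. Iterating this argument shows that every common lower bound of $u$ and $v$ lies below a common lower bound of length at most $N$, and in particular that maximal common lower bounds have length at most $N$.

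To finish, observe that the common lower bounds of $u$ and $v$ of length at most $N$ form a subset of the restriction of $P$ to words of length at most $N$, which is a dual well-partial-order by the lemma above (using that $\alg{M}$ is dually wpo). A subset of a dual wpo is again a dual wpo, so this set has finitely many maximal elements $M_1,\dots,M_r$ and every member of it lies below one of them. By the previous paragraph every common lower bound of $u$ and $v$ lies below some member of this set, hence below some $M_i$; and each $M_i$ is maximal even among all common lower bounds of $u$ and $v$, since any common lower bound strictly above $M_i$ would itself lie below one of length at most $N$ strictly above $M_i$, contradicting the maximality of $M_i$ in that set. Hence $\below u \cap \below v = \below\{M_1,\dots,M_r\}$ is finitely generated, and $\Id P$ is a lattice.

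The crux --- and essentially the only step needing care --- is the block-merging argument: one must set up the common refinement of the two block decompositions correctly (blocks may be empty, and in the commutative case a ``block'' is a sub-multiset of the letters of $w$), check that merging two letters inside a refinement block preserves both families of defining inequalities, and use the minimal-length choice (equivalently, the uniqueness of shortest representatives proved earlier) to upgrade the length decrease to a \emph{strict} increase in the order. Everything else is routine bookkeeping with the facts already proved about shortest representatives and about dual well-partial-orders of bounded-length words.
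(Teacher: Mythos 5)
Your argument is correct and is essentially the paper's own proof: you bound the length of common lower bounds of $u$ and $v$ by passing to the common refinement of the two witnessing block decompositions and collapsing inside refinement blocks, then invoke the lemma that (classes of) words of bounded length form a dual wpo; the paper does exactly this, except that it collapses each refinement block in a single step, taking $w' \assign [\gamma(u_1), \dots, \gamma(u_k)]$ with $w \sqleq w' \sqleq u, v$, which makes your minimal-length-representative and strictness bookkeeping unnecessary. Like the paper, you leave the non-unital preimages $\FreeMon{\alg{M}}$ and $\CFreeRedMon{\alg{M}}$ essentially untreated (they are not down-directed, so non-emptiness of $\below u \cap \below v$ needs a separate remark when the empty word is involved), so this is not a gap relative to the paper's own proof.
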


\begin{proof}
  If $w \sqleq u \assign [a_1, \dots, a_m]$ and $w \sqleq v \assign [b_1, \dots, b_n]$, then there is a decomposition $w = u_1 \circ \ldots \circ u_{k}$ with $k \leq m \cdot n$ (in fact, with $k \leq m+n-1$ in the non-commutative case) fine enough to witness both of these inequalities in the sense that there is some $i_1 \leq i_2 \leq \dots \leq i_{m}$ and $j_1 \leq j_2 \leq \dots \leq j_n$ such that $u_1 \circ \ldots u_{i_{1}} \sqleq a_1$, $u_{i_{1}+1} \circ \ldots \circ u_{i_{2}} \sqleq a_2$, etc.\ and likewise $u_1 \circ \ldots u_{j_{1}} \sqleq b_1$, $u_{j_{1}+1} \circ \ldots \circ u_{j_{2}} \sqleq b_2$, etc. Taking $w' \assign [\gamma(u_1), \dots, \gamma(u_k)]$, we have $w \sqleq w' \sqleq u, v$. That is, each lower bound of both $u$ and $v$ lies below a lower bound of length at most $m \cdot n$ (in fact, at most $m+n-1$ in the non-commutative case). The fact that each such lower bound is below one of finitely many maximal lower bounds now follows from the fact that the restriction of the free nuclear preimage to words of length at most $m \times n$ is a sectional dual wpo. (If $u$ or $v$ is the empty word, then $w = \emptyword$.)
\end{proof}

\begin{fact}
  Let $\alg{S}$ be a (commutative) down-directed dually wpo posemigroup where for each $c \in \alg{S}$ there are $a, b \in \alg{S}$ with $a \cdot b \leq c$. Then $\Id \FreeSem{\alg{S}}$ ($\Id \CFreeSem{\alg{S}}$) is a lattice.
\end{fact}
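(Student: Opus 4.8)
The plan is to reduce to showing that binary meets exist, since $\Id \FreeSem{\alg{S}}$ is automatically a join semilattice with joins given by union. A binary meet $X \wedge Y$ of two non-empty finitely generated downsets exists in $\Id \FreeSem{\alg{S}}$ (resp.\ $\Id \CFreeSem{\alg{S}}$) precisely when the intersection $X \cap Y$ is again a non-empty finitely generated downset, in which case $X \wedge Y = X \cap Y$. Writing $X = \below \{ u_{1}, \dots, u_{k} \}$ and $Y = \below \{ v_{1}, \dots, v_{l} \}$ we have $X \cap Y = \bigcup_{i,j} (\below u_{i} \cap \below v_{j})$, so it suffices to treat two principal downsets $\below u$ and $\below v$: the task is to show that the set of common lower bounds of $u$ and $v$ in $\FreeSem{\alg{S}}$ (resp.\ $\CFreeSem{\alg{S}}$) is non-empty and is the downset generated by a \emph{finite} set.

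Non-emptiness is immediate from the preceding lemma: under the stated hypotheses on $\alg{S}$ the posemigroup $\FreeSem{\alg{S}}$ (and $\CFreeSem{\alg{S}}$) is down-directed, so $u$ and $v$ have a common lower bound. For finite generation I would argue exactly as for $\Id \FreeUMon{\alg{M}}$. Given any common lower bound $w$ of $u = [a_{1}, \dots, a_{m}]$ and $v = [b_{1}, \dots, b_{n}]$, a decomposition of $w$ witnessing $w \sqleq u$ can be interleaved with one witnessing $w \sqleq v$ to obtain a single decomposition $w = w_{1} \circ \dots \circ w_{r}$, all blocks non-empty, with $r \leq m + n - 1$ (resp.\ $r \leq m \cdot n$ in the commutative case), which simultaneously witnesses both inequalities; setting $w' \assign [\gamma(w_{1}), \dots, \gamma(w_{r})]$ gives $w \sqleq w' \sqleq u$ and $w \sqleq w' \sqleq v$ with $w'$ of length at most $N \assign m + n - 1$ (resp.\ $m \cdot n$). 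Hence every common lower bound of $u$ and $v$ lies below a common lower bound of length at most $N$.

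It then remains to see that among common lower bounds of length at most $N$ there are only finitely many maximal ones and that each such lower bound lies below one of them; this maximality is in fact maximality among \emph{all} common lower bounds, because $x \sqleq y$ forces $\mathrm{len}(x) \geq \mathrm{len}(y)$ in $\FreeSem{\alg{S}}$, so nothing of length at most $N$ sits above another word of length at most $N$ without being equal to it (recall the preorder here is antisymmetric). This follows once I know that the restriction of $\FreeSem{\alg{S}}$ (resp.\ $\CFreeSem{\alg{S}}$) to words of length at most $N$ is a dual wpo. The preceding lemma gives this for words of length \emph{precisely} $n$ for each $n$, and length-monotonicity of $\sqleq$ upgrades it: an infinite antichain of words of length $\leq N$ would, by pigeonhole over the finitely many admissible lengths, contain an infinite antichain inside a single length class, while an infinite ascending chain would have weakly decreasing, hence eventually constant, lengths and so would contain an infinite ascending chain inside a single length class — both impossible. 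Putting the pieces together, $\below u \cap \below v$ equals the downset generated by the finitely many maximal common lower bounds of length at most $N$, so $\Id \FreeSem{\alg{S}}$ (and $\Id \CFreeSem{\alg{S}}$) has all binary meets and is therefore a lattice.

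The main obstacle is the interleaving step in the second paragraph: producing, from two separate witnessing decompositions of a common lower bound $w$, a single decomposition of bounded length witnessing both inequalities at once. This is the genuine combinatorial content and runs parallel to the pomonoidal case; the only real differences are that here all words in sight are non-empty, so the length bound is $m + n - 1$ with no empty blocks to excise, and that the passage from the ``length precisely $n$'' dual wpo lemma to the ``length at most $N$'' version must invoke the semigroup length-monotonicity, which was not needed in the pomonoid argument.
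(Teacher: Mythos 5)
Your proposal is correct and follows essentially the same route as the paper: the paper leaves this semigroup case to the argument it gives for $\Id \FreeUMon{\alg{M}}$, namely interleaving the two witnessing decompositions into a common refinement of length at most $m+n-1$ (resp.\ $m \cdot n$), applying $[\gamma]$ blockwise, and then invoking the dual-wpo property of the bounded-length restriction together with down-directedness from the preceding lemma. Your extra observation that the ``length precisely $n$'' lemma upgrades to ``length at most $N$'' via the length-monotonicity of $\sqleq$ in $\FreeSem{\alg{S}}$ is exactly the small adjustment the semigroup case needs, and it is handled correctly.
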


  Summing up, applying the free semilattice-ordered nuclear preimage to a finite, or more generally dually wpo, residuated lattice yields a residuated lattice.

\begin{theorem}
  If $\alg{M}$ is a dually wpo (commutative) [integral] residuated lattice, then $\Id \FreeMon{\alg{M}}$ ($\Id \CFreeRedMon{\alg{M}}$) is a (commutative) [integral] residuated lattice.
\end{theorem}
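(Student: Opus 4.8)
The plan is to establish the two defining features of a residuated lattice---that $\Id \FreeMon{\alg{M}}$ (respectively $\Id \CFreeRedMon{\alg{M}}$) is a lattice and that its multiplication $\ast$ is residuated---after which the commutative and integral tags transfer essentially for free. For the lattice structure I would simply invoke the Fact that $\Id \FreeMon{\alg{M}}$ ($\Id \CFreeRedMon{\alg{M}}$) is a lattice whenever its base is a down-directed dually wpo pomonoid: a residuated lattice $\alg{M}$ is a lattice and hence down-directed, and it is dually wpo by hypothesis, so both hypotheses hold. Joins are unions and meets are intersections of finitely generated downsets, and distributivity comes from the Fact that $\Id P$ is always a distributive semilattice; thus we even obtain a distributive lattice. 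Commutativity passes from $\CFreeRedMon{\alg{M}}$ to $\Id \CFreeRedMon{\alg{M}}$, while the integral case is governed by the behaviour of the empty word discussed below.

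For residuation the natural tool is the Lemma that $\Id \alg{N}$ is a residuated lattice once $\alg{N}$ is ideally residuated and $\Id \alg{N}$ is a lattice; unfortunately this is unavailable here, since $\FreeMon{\alg{M}}$ is never ideally residuated. I would therefore construct the residuals in $\Id \FreeMon{\alg{M}}$ directly. Using $\below\{x_1,\dots,x_m\} = \bigvee_i \below x_i$ together with the identity $\bigl(\bigvee_i a_i\bigr)\bs c = \bigwedge_i (a_i \bs c)$, it suffices to produce each principal residual $\below x \bs Z$ and its mirror image, where $Z = \below\{z_1,\dots,z_p\}$. A short computation identifies
\[
  \below x \bs Z = \{\, w : x \circ w \in Z \,\} = \bigcup_{1 \le k \le p} \{\, w : x \circ w \sqleq z_k \,\},
\]
a downset, and for each nonempty generator $z_k$ the slice $\{\, w : x \circ w \sqleq z_k \,\}$ is finitely generated: this follows from the explicit ideal-residual formulas for $\FreeUMon{\alg{M}}$ (built from the partial residuals $u \bs [a] = [\gamma(u) \bs a]$) together with the Lemma bounding dual-wpo behaviour on words of length at most $n$, which caps the solutions to finitely many maximal ones. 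A finite union of finitely generated downsets is finitely generated once we know $\Id \FreeMon{\alg{M}}$ is a lattice, so $\below x \bs Z$ lands in $\Id \FreeMon{\alg{M}}$ and is readily verified to be the largest $Y$ with $\below x \ast Y \subseteq Z$. Residuation against every nonempty target is thereby secured, and the identical computation in the free commutative preimage handles $\Id \CFreeRedMon{\alg{M}}$.

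The main obstacle is the empty word. Because $\emptyword$ is a minimal but not least element of $\FreeMon{\alg{M}}$, the slice $\{\, w : x \circ w \sqleq \emptyword \,\}$ degenerates to $\{\, w : x \circ w = \emptyword \,\}$, which is $\{\emptyword\}$ when $x = \emptyword$ but empty when $x$ is nonempty---exactly the failure of ideal residuation flagged earlier. Consequently the delicate point is the residual $\below x \bs \below\emptyword$ against the principal downset of the empty word, and more generally any target whose generators all reduce to $\emptyword$. The crux is to show that this degenerate slice never actually empties the residual, equivalently that the presence of the minimal element $\emptyword$ is compatible with full residuation; I expect this to be where the integral reduction and the special status of $\below\emptyword$ as the monoidal unit must be exploited, and it is precisely the reason the statement demands care (and the reason the unital preimage $\FreeUMon{\alg{M}}$, which discards $\emptyword$ outright, is the smoother object). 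Once these empty-word residuals are accounted for, combining them with the join-decomposition above yields the full residuated-lattice structure.
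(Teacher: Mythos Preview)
Your diagnosis of the empty-word obstacle is exactly right, but the obstacle is fatal rather than merely delicate. For any nonempty word $x$ there is no nonempty downset $Y$ of $\FreeMon{\alg{M}}$ with $\below x \ast Y \subseteq \below \emptyword$, because $x \circ y$ is always a nonempty word and $u \sqleq \emptyword$ forces $u = \emptyword$. Hence the residual $\below x \bs \below \emptyword$ simply fails to exist in $\Id \FreeMon{\alg{M}}$, and no appeal to integrality or to the unit status of $\below \emptyword$ can repair this. (Indeed $\Id \FreeMon{\alg{M}}$ already fails to be a lattice: $\below \emptyword \cap \below [a] = \emptyset$ whenever $\1 \nleq a$, so the meet is missing.) Your instinct that ``the unital preimage $\FreeUMon{\alg{M}}$, which discards $\emptyword$ outright, is the smoother object'' is the correct conclusion to draw, not a side remark.

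The statement as printed should be read with $\FreeUMon{\alg{M}}$ and $\CFreeRedUMon{\alg{M}}$ in place of $\FreeMon{\alg{M}}$ and $\CFreeRedMon{\alg{M}}$; every subsequent application in the paper in fact uses the unital preimage. With that correction the paper's argument is much shorter than your hand-construction: $\alg{M}$ residuated implies $\alg{M}$ ideally residuated, hence $\FreeUMon{\alg{M}}$ is ideally residuated by the earlier Fact; $\alg{M}$ being a dually wpo lattice makes $\Id \FreeUMon{\alg{M}}$ a lattice by the other Fact; and then the Lemma ``ideally residuated plus $\Id$ a lattice implies $\Id$ is a residuated lattice'' finishes immediately. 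Your explicit description of $\below x \bs Z$ is essentially a re-derivation of that Lemma in this special case, so once you pass to $\FreeUMon{\alg{M}}$ you can simply invoke it.
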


  Observe that a dually wpo residuated s$\ell$-monoid is a residuated lattice provided that it is down-directed and the inequalities $a \cdot x \leq b$ and $y \cdot a \leq b$ have at least one solution for each $a, b$. No substantial generality is thus to be gained in the above theorem by extending it to from residuated lattices $\alg{M}$ to s$\ell$-monoids.

\section{Nuclear images of cancellative pomonoids}

  Having described several variants of the free nuclear preimage construction and shown that under suitable assumptions they yield structures of the appropriate types, from pomonoids to residuated lattices, in the rest of the paper we deal with the problem of ensuring that the resulting structures are cancellative. Naturally, we start with the partially ordered case.

\begin{definition}
  A pomonoid is \emph{cancellative} if it satisfies
\begin{align*}
  a \cdot x \leq b \cdot x & \implies a \leq b, &
  x \cdot a \leq x \cdot b & \implies a \leq b.
\end{align*}
\end{definition}

  More explicitly, this notion should perhaps be called order cancellativity. Ordinary cancellativity is in fact usually defined by the weaker implications
\begin{align*}
  a \cdot x = b \cdot x & \implies a = b, & x \cdot a = x \cdot b & \implies a = b.
\end{align*}
  However, we shall have no use for this weaker notion of cancellativity, which is less appropriate in the study of pomonoids. The two definitions coincide for s$\ell$-monoids: if $a \cdot x \leq b \cdot x$, then $(a \vee b) \cdot x = (a \cdot x) \vee (b \cdot x) = b \cdot x$, so $a \vee b = b$.

  The free nuclear preimage $\FreeMon{\alg{M}}$ is never cancellative: $\emptyword \circ [\1] = [\1] = [\1] \circ [\1]$ in $\FreeMon{\alg{M}}$, but $\emptyword$ and $[\1]$ are distinct words. The same holds for $\CFreeRedMon{\alg{M}}$ if $\alg{M}$ is commutative. The appropriate question to ask is therefore under what conditions $\FreeUMon{\alg{M}}$ and $\CFreeRedUMon{\alg{M}}$ are cancellative. We follow the terminology of~\cite{gil-ferez+lauridsen+metcalfe20}.

\begin{definition}
  A posemigroup is \emph{integrally closed} if it satisfies
\begin{align*}
  a \cdot x \leq a & \implies x \cdot b \leq b, &
  x \cdot a \leq a & \implies b \cdot x \leq b.
\end{align*}
\end{definition}

  Observe that these are simple quasi-inequations in the sense of the previous section. For pomonoids they simplify to
\begin{align*}
  a \cdot x \leq a & \implies x \leq \1, &
  x \cdot a \leq a & \implies x \leq \1.
\end{align*}
  For residuated posemigroups these implications are equivalent to the equation $x \bs x = y / y$. In residuated pomonoids we have $x \bs x = \1 = x / x$.

\begin{proposition}
  Let $\alg{M}$ be a (commutative) pomonoid. Then the pomonoid $\FreeUMon{\alg{M}}$ ($\CFreeRedUMon{\alg{M}}$) is cancellative if and only if $\alg{M}$ is integrally closed.
\end{proposition}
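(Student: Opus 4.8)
The plan is to argue both implications directly at the level of words over $\alg{M}$ and the preorder $\sqleq$, using throughout the observation that $u \sqleq [b]$ in $\FreeUMon{\alg{M}}$ if and only if $\gamma(u) \leq b$ in $\alg{M}$.

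For the easy direction, suppose $\FreeUMon{\alg{M}}$ is cancellative and that $a \cdot x \leq a$ holds in $\alg{M}$. Then $\gamma([a] \circ [x]) = a \cdot x \leq a$, so $[a] \circ [x] \sqleq [a] = [a] \circ [\1]$, and left cancellativity in $\FreeUMon{\alg{M}}$ yields $[x] \sqleq [\1]$, i.e.\ $x \leq \1$. The implication $x \cdot a \leq a \implies x \leq \1$ follows symmetrically via right cancellativity. Hence $\alg{M}$ is integrally closed. In the commutative case the same computation takes place in $\CFreeRedUMon{\alg{M}}$, where the two halves of integral closedness coincide.

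For the converse, assume $\alg{M}$ is integrally closed. First reduce to cancellation by single-letter words: since $u \circ [c_1, \dots, c_k] = (u \circ [c_1, \dots, c_{k-1}]) \circ [c_k]$, right cancellation by an arbitrary non-empty word follows by iterating right cancellation by single letters, and likewise on the left. So fix non-empty words $u, v$ and $c \in \alg{M}$ with $u \circ [c] \sqleq v \circ [c]$, say $v = [b_1, \dots, b_n]$, and fix a decomposition $u \circ [c] = z_1 \circ \dots \circ z_{n+1}$ witnessing this, so $\gamma(z_i) \leq b_i$ for $1 \leq i \leq n$ and $\gamma(z_{n+1}) \leq c$. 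Since $z_{n+1}$ is a suffix of the word $u \circ [c]$, it is either empty, or equal to $[c]$, or of the form $u'' \circ [c]$ with $u''$ a non-empty suffix of $u$.

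If $z_{n+1} = [c]$, then $z_1 \circ \dots \circ z_n = u$ directly witnesses $u \sqleq v$. If $z_{n+1} = u'' \circ [c]$, write $u = u' \circ u''$; then $\gamma(u'') \cdot c = \gamma(z_{n+1}) \leq c$, so integral closedness forces $\gamma(u'') \leq \1$, i.e.\ $u'' \sqleq [\1]$, while $z_1 \circ \dots \circ z_n = u'$ witnesses $u' \sqleq v$ (if $u'$ is empty this reads $\1 \leq b_i$ for each $i$ and $u = u'' \sqleq [\1] \sqleq v$); hence $u = u' \circ u'' \sqleq v \circ [\1] = v$. If $z_{n+1} = \emptyword$, then $u \circ [c] = z_1 \circ \dots \circ z_n$ witnesses $u \circ [c] \sqleq v$, so $u \sqleq u \circ [\1] \sqleq u \circ [c] \sqleq v$, the middle step using $\1 = \gamma(z_{n+1}) \leq c$. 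Left cancellation is proved identically by inspecting the first block of a witnessing decomposition of $[c] \circ u$ and invoking the other implication of integral closedness, and the commutative statements for $\CFreeRedUMon{\alg{M}}$ follow by the same argument with multiset decompositions. The only genuine use of the hypothesis is the appeal to integral closedness in the case $z_{n+1} = u'' \circ [c]$; the point that takes care is the reduction to single letters together with the handling of empty blocks, after which everything is bookkeeping with $\sqleq$.
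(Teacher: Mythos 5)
Your forward direction and your non-commutative cancellation argument are correct, and they are essentially the paper's proof in mirror image: the paper cancels a single letter on the left via a case analysis (packaged as an induction on the length of $v$) on which block of a witnessing decomposition absorbs the cancelled letter, while you cancel on the right by inspecting the last block directly; the reduction from arbitrary non-empty words to single letters and the handling of empty blocks are the same. Your three cases ($z_{n+1}$ empty, $z_{n+1}=[c]$, $z_{n+1}=u''\circ[c]$) are exhaustive precisely because $z_{n+1}$ is a suffix of the word $u\circ[c]$, so the appended letter $c$, if absorbed at all, is absorbed by the last block.

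The gap is in the commutative case, which you dismiss with ``the same argument with multiset decompositions.'' In $\CFreeRedUMon{\alg{M}}$ that suffix trichotomy fails: a witness for $u\circ[c]\sqleq[b_{1},\dots,b_{n}]\circ[c]$ is just a partition of the multiset $u\circ[c]$ into blocks $z_{1},\dots,z_{n+1}$ with $\gamma(z_{i})\leq b_{i}$ for $i\leq n$ and $\gamma(z_{n+1})\leq c$, and the occurrence of $c$ may land in some $z_{j}$ with $j\leq n$, say $z_{j}=y\circ[c]$ up to reordering, while $z_{n+1}$ is a non-empty sub-multiset of $u$. None of your three cases covers this, so as written the proposal does not establish the statement for $\CFreeRedUMon{\alg{M}}$. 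The repair is short but genuinely a new case (the paper flags it explicitly: ``in the commutative case, we only have to discuss one more way of witnessing\dots''): replace that occurrence of $c$ inside $z_{j}$ by the block $z_{n+1}$; by commutativity and monotonicity $\gamma(y\circ z_{n+1})=\gamma(y)\cdot\gamma(z_{n+1})\leq\gamma(y)\cdot c=\gamma(z_{j})\leq b_{j}$, so the blocks $z_{1},\dots,z_{j-1},\,y\circ z_{n+1},\,z_{j+1},\dots,z_{n}$ decompose $u$ and witness $u\sqleq v$, with no appeal to integral closedness at all. With this extra case added (your empty case and your $c\in z_{n+1}$ case transfer verbatim), the commutative half goes through.
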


\begin{proof}
  If $a \cdot x \leq a$ but $x \nleq \1$, then $[a] \circ [x] \sqleq [a] = [a] \circ [\1]$ but $[x] \nsqleq [\1]$, so cancellativity fails. Conversely, suppose that the first implication holds in~$\alg{M}$. It suffices to prove that $[a] \circ u \sqleq [a] \circ v$ implies $u \sqleq v$ for each $a \in \alg{M}$.

  We prove this claim by induction over the length of $v$. If~$v = \emptyword$, then $[a] \circ u \sqleq [a]$, so $a \cdot \gamma(u) \leq a$. The assumed implication yields $\gamma(u) \leq \1$, so indeed $u \sqleq [\1] \sqleq \emptyword = v$.

  Now suppose that $v = [b] \circ v'$, i.e.\ $[a] \circ u \sqleq [a] \circ [b] \circ v'$. There are several cases to consider depending on how this inequality is witnessed, in particular on which part of the relevant decomposition of $[a] \circ u$ lies below $[a]$.

  If $[a] \circ u_{1} \sqleq [a]$ and $u_{2} \sqleq [b] \circ v'$ for some decomposition $u = u_{1} \circ u_{2}$, then as before $u_{1} \sqleq \emptyword$, so $u = u_{1} \circ u_{2} \sqleq u_{2} \sqleq [b] \circ v'$. If on the other hand $\emptyword \sqleq [a] \circ [b] \circ v'_{1}$ and $[a] \circ u \sqleq v'_{2}$ for some decomposition $v' = v'_{1} \circ v'_{2}$, then $\emptyword \sqleq [a]$, $\emptyword \sqleq [b]$, and $\emptyword \sqleq v'_{1}$, so $u = \emptyword \circ u \sqleq [a] \circ u \sqleq v'_{2} \sqleq [b] \circ v'_{1} \circ v'_{2} = [b] \circ v'$. If $\emptyword \sqleq [a]$ and $[a] \circ u \sqleq [b] \circ v'$, then $u = \emptyword \circ u \sqleq [a] \circ u \sqleq [b] \circ v'$.

  In the commutative case, we only have to discuss one more way of witnessing that $[a] \circ u \sqleq [a] \circ [b] \circ v'$. If $u_{1} \sqleq [a]$ and $[a] \circ u_{2} \sqleq [b] \circ v'$ for some decomposition $u = u_{1} \circ u_{2}$, then $u = u_{1} \circ u_{2} \sqleq [a] \circ u_{2} \sqleq [b] \circ v'$.
\end{proof}

\begin{proposition}
  Let $\alg{S}$ be a (com\-mutative) posemigroup. Then the posemigroup $\FreeSem{\alg{S}}$ ($\CFreeSem{\alg{S}}$) is cancellative if and only if $\alg{S}$ is integrally closed.
\end{proposition}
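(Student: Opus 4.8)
The plan is to follow the proof of the preceding proposition, adapting it to the semigroup setting: there is no empty word, so every block occurring in a witnessing decomposition must be non-empty. This actually streamlines the argument, eliminating the case distinctions that the pomonoidal proof needed for empty blocks. Throughout I will freely use the fact that $u \sqleq [a]$ in $\FreeSem{\alg{S}}$ ($\CFreeSem{\alg{S}}$) if and only if $\gamma(u) \leq a$ in $\alg{S}$.

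For the \emph{only if} direction, suppose $\FreeSem{\alg{S}}$ is cancellative; it suffices to verify the first implication defining integral closedness, the second being symmetric. Assume $a \cdot x \leq a$ in $\alg{S}$ and fix an arbitrary $b \in \alg{S}$ (if $\alg{S}$ is empty there is nothing to prove). Since $\gamma([a, x]) = a \cdot x \leq a$, we have $[a, x] \sqleq [a]$, hence $[a] \circ [x] \circ [b] \sqleq [a] \circ [b]$; cancelling the leading letter $[a]$ on the left gives $[x, b] \sqleq [b]$, that is, $x \cdot b \leq b$. The commutative case is identical.

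For the \emph{if} direction, suppose $\alg{S}$ is integrally closed. Since every word is a product of singleton words, the two cancellativity implications follow once we can cancel a single letter on each side, so it suffices to show that $[a] \circ u \sqleq [a] \circ v$ implies $u \sqleq v$ (the right-hand case being symmetric, using the other implication of integral closedness). Given a decomposition $[a] \circ u = w_0 \circ w_1 \circ \dots \circ w_n$ into non-empty blocks witnessing $[a] \circ u \sqleq [a] \circ [b_1, \dots, b_n]$, the leading letter sits in $w_0$, so $w_0 = [a] \circ u'$ for a possibly empty prefix $u'$ of $u$, and $u = u' \circ w_1 \circ \dots \circ w_n$. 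If $u'$ is empty, the blocks $w_1, \dots, w_n$ already witness $u \sqleq v$. Otherwise $a \cdot \gamma(u') = \gamma(w_0) \leq a$, so integral closedness gives $\gamma(u') \cdot c \leq c$ for every $c \in \alg{S}$; taking $c \assign \gamma(w_1)$ shows that the blocks $u' \circ w_1, w_2, \dots, w_n$ witness $u \sqleq v$. In the commutative case one argues the same way, with one additional subcase: the relevant occurrence of $a$ may lie in a later block $w_j$ rather than in $w_0$, and then one absorbs $w_0$ into the $a$-reduced form of $w_j$, which still respects the bound $b_j$ since $\gamma(w_0) \leq a$.

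I do not expect a serious obstacle here. The main thing to get right is the bookkeeping of which block absorbs the spare letter in the commutative case, together with the point that, because all blocks are forced to be non-empty, no induction on the length of $v$ is required — in contrast to the pomonoidal proposition, where the unit $\1$ and empty blocks necessitate an inductive argument.
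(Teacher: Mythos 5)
Your proof is correct and follows essentially the same route as the paper's: reduce cancellation to cancelling a single letter, locate the block of the witnessing decomposition containing the distinguished occurrence of $a$, and use integral closedness to absorb the leftover factor (with product below $a$) into an adjacent block, with the extra commutative subcase handled by the same absorption idea that the paper phrases via $u_{1} \circ u_{2} \sqleq [a] \circ u_{2} \sqleq [b] \circ v'$. The only cosmetic difference is that you absorb the residual prefix directly into the next block instead of first reducing to the case $v = [b]$ as the paper does, which indeed removes even that small two-step case analysis.
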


\begin{proof}
  If $a \cdot x \leq a$ but $x \cdot b \nleq b$, then $[a] \circ [x,b] = [a,x,b] \leq [a,b] = [a] \circ [b]$ but $[x,b] \nleq [b]$, so cancellativity fails. Conversely, suppose that the first implication holds in~$\alg{S}$. It suffices to prove that $[a] \circ u \sqleq [a] \circ v$ implies $u \sqleq v$ for each $a \in \alg{S}$. If~$v = [b]$, then $[a] \circ u \sqleq [a] \circ [b]$ implies that either we directly obtain a decomposition witnessing that $u \sqleq [b] = v$ or there is a decomposition $u = u_{1} \circ u_{2}$ such that $[a] \circ u_{1} \sqleq [a]$ and $u_{2} \sqleq [b]$, hence $a \cdot \gamma(u_{1}) \leq a$ and $\gamma(u_{2}) \leq b$. The assumed implication now yields that $\gamma(u_{1}) \cdot \gamma(u_{2}) \leq \gamma(u_{2}) \leq b$ and $u \sqleq [b] = v$.

  If $v = [b] \circ v'$, then either we directly obtain a decomposition witnessing that $u \sqleq [b] \circ v' = v$ or there is a decomposition $u = u_{1} \circ u_{2} \circ u_{3}$ such that $[a] \circ u_{1} \sqleq [a]$ and $u_{2} \sqleq [b]$ and $u_{3} \sqleq v'$. Then $[a] \circ u_{1} \circ u_{2} \sqleq [a] \circ b$, hence by the above $u_{1} \circ u_{2} \sqleq [b]$ and $u = u_{1} \circ u_{2} \circ u_{3} \sqleq [b] \circ v' = v$.

  In the commutative case, we only have to discuss one more way of witnessing that $[a] \circ u \sqleq [a] \circ [b] \circ v'$. If $u_{1} \sqleq [a]$ and $[a] \circ u_{2} \sqleq [b] \circ v'$ for some decomposition $u = u_{1} \circ u_{2}$, then $u = u_{1} \circ u_{2} \sqleq [a] \circ u_{2} \sqleq [b] \circ v'$.
\end{proof}

  The following theorems, which hold both in the commutative and the non-commutative case, now immediately follow. Here we call a posemigroup \emph{integral} if it satisfies the inequalities $x \cdot y \leq x$ and $x \cdot y \leq y$.

  Observe that in all the theorems in the present section describing the nuclear images of certain pomonoids, we may moreover require that the nuclear images in question are upward closed, i.e.\ that the closed elements form an upset.

\begin{theorem} \label{thm: integrally closed pomonoids}
  Integrally closed (integral) [commutative] pomonoids are precisely the nuclear images of (integral) [commutative] cancellative pomonoids.
\end{theorem}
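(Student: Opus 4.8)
The statement is a biconditional, so I would prove the two inclusions separately; in each case the argument is a short assembly of results established above.

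\textbf{Nuclear images of cancellative pomonoids are integrally closed.} First I would record the elementary fact that every cancellative pomonoid is already integrally closed: from $a \cdot x \leq a = a \cdot \1$ the second cancellation law gives $x \leq \1$, and symmetrically from $x \cdot a \leq \1 \cdot a = a$ the first one gives $x \leq \1$. Next I would observe that the defining implications of integrally closed pomonoids are \emph{simple} quasi-inequalities in the sense of the previous section (the right-hand side $a$ of the premise $a \cdot x \leq a$ is a variable), and that integrality, given by $x \leq \1$, and commutativity, given by $x \cdot y \leq y \cdot x$ and $y \cdot x \leq x \cdot y$, are premise-free and hence trivially simple. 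Consequently, if $\alg{M}$ is a cancellative (in addition integral, resp.\ commutative) pomonoid and $\gamma$ is any nucleus on $\alg{M}$, then all of these simple quasi-inequalities are valid in $\langle \alg{M}, \gamma \rangle$, so by Fact~\ref{fact: simple quasi-inequalities} they hold in $\alg{M}_{\gamma}$. Thus the nuclear image of an integral (resp.\ commutative) cancellative pomonoid is an integral (resp.\ commutative) integrally closed pomonoid.

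\textbf{Every integrally closed pomonoid is a nuclear image of a cancellative pomonoid.} For the converse I would use the free unital nuclear preimage. Let $\alg{M}$ be an integrally closed pomonoid. By the Proposition characterizing cancellativity of $\FreeUMon{\alg{M}}$, this pomonoid is cancellative --- precisely here integral closure is used --- and by the theorem summarizing the free nuclear preimage constructions the nuclear image of $\FreeUMon{\alg{M}}$ with respect to $[\gamma]$ is isomorphic to $\alg{M}$ via $a \mapsto [a]$. Hence $\alg{M}$ is the nuclear image of the cancellative pomonoid $\FreeUMon{\alg{M}}$. If moreover $\alg{M}$ is integral, then it is \emph{a fortiori} integrally closed, so $\FreeUMon{\alg{M}}$ is cancellative, and it is itself integral: its unit is $[\1]$, and for every non-empty word $w = [a_{1}, \dots, a_{n}]$ we have $\gamma(w) = a_{1} \cdot \ldots \cdot a_{n} \leq \1$ by integrality of $\alg{M}$, whence $w \sqleq [\gamma(w)] \sqleq [\1]$. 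If $\alg{M}$ is commutative, I would run the identical argument with $\CFreeRedUMon{\alg{M}}$ in place of $\FreeUMon{\alg{M}}$; this structure is commutative by construction and cancellative by the same Proposition. (The refinement to \emph{upward closed} nuclear images noted before the theorem is, in the integral cases, immediate from the earlier Fact that the image of $[\gamma]$ is then an upset of $\FreeUMon{\alg{M}}$, resp.\ $\CFreeRedUMon{\alg{M}}$; the general case requires a small modification of the preimage, which I would treat separately.)

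\textbf{Main obstacle.} Given the development above there is essentially no obstacle left: all the real content is contained in the Proposition on cancellativity of $\FreeUMon{\alg{M}}$ and in Fact~\ref{fact: simple quasi-inequalities}. The only point needing a brief independent verification is that $\FreeUMon{\alg{M}}$ (resp.\ $\CFreeRedUMon{\alg{M}}$) is integral when $\alg{M}$ is, handled by the one-line padding argument above; the rest is bookkeeping, namely keeping the parenthetical and bracketed variants uniform, which they are since each follows by the same template.
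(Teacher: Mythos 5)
Your proposal is correct and follows essentially the same route as the paper: the backward inclusion via the Proposition that $\FreeUMon{\alg{M}}$ (resp.\ $\CFreeRedUMon{\alg{M}}$) is cancellative exactly when $\alg{M}$ is integrally closed, together with the fact that its nuclear image under $[\gamma]$ is isomorphic to $\alg{M}$, and the forward inclusion via the observation that cancellativity implies integral closure and that the integrally closed (and integral, commutative) conditions are simple quasi-inequalities preserved under nuclear images by Fact~\ref{fact: simple quasi-inequalities}. The paper leaves these assembly steps implicit ("now immediately follow"), and your write-up, including the check that $\FreeUMon{\alg{M}}$ is integral when $\alg{M}$ is, fills them in correctly.
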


\begin{theorem}
  Integrally closed (integral) [commutative] posemigroup are precisely the nuclear images of (integral) [commutative] cancellative posemigroups.
\end{theorem}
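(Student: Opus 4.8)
The plan is to run the argument of Theorem~\ref{thm: integrally closed pomonoids} \emph{mutatis mutandis}, replacing the free unital nuclear preimage $\FreeUMon{\alg{M}}$ by the free nuclear preimage $\FreeSem{\alg{S}}$ for posemigroups (and $\CFreeRedUMon{\alg{M}}$ by $\CFreeSem{\alg{S}}$ in the commutative case). The two already-established facts that do all the work are: the Proposition above, which says that $\FreeSem{\alg{S}}$ ($\CFreeSem{\alg{S}}$) is cancellative precisely when $\alg{S}$ is integrally closed; and the earlier theorem identifying the nuclear image of $\FreeSem{\alg{S}}$ ($\CFreeSem{\alg{S}}$) with $\alg{S}$ via $a \mapsto [a]$.

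For the ``$\supseteq$'' inclusion, suppose $\alg{S} = \alg{T}_{\gamma}$ where $\alg{T}$ is cancellative. First note that every cancellative posemigroup is integrally closed: from $a \cdot x \leq a$ isotonicity gives $a \cdot (x \cdot b) \leq a \cdot b$ for every $b$, whence $x \cdot b \leq b$ by left cancellation, and symmetrically the other implication holds. The defining quasi-inequalities of integral closedness are simple quasi-inequalities in the signature of posemigroups (their single premise has a variable on the right), so they hold in $\langle \alg{T}, \gamma \rangle$ and therefore, by Fact~\ref{fact: simple quasi-inequalities} in its posemigroup version, in $\alg{T}_{\gamma} = \alg{S}$. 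In the commutative case the equations for commutativity, and in the integral case the inequalities $x \cdot y \leq x$ and $x \cdot y \leq y$, are likewise simple (indeed premise-free), so they too transfer from $\langle \alg{T}, \gamma \rangle$ to $\alg{S}$; and an integral posemigroup is a fortiori integrally closed.

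For the ``$\subseteq$'' inclusion, let $\alg{S}$ be integrally closed. Then $\FreeSem{\alg{S}}$ is cancellative by the Proposition and has nuclear image isomorphic to $\alg{S}$, so $\alg{S}$ is the nuclear image of a cancellative posemigroup; if $\alg{S}$ is commutative we use $\CFreeSem{\alg{S}}$, which is commutative and cancellative, instead. In the integral case we must further check that $\FreeSem{\alg{S}}$ (resp.\ $\CFreeSem{\alg{S}}$) is itself integral: given $u = [a_{1}, \dots, a_{m}]$ and an arbitrary (non-empty) word $v$, the decomposition of $u \circ v$ into the $m$ non-empty blocks $[a_{1}], \dots, [a_{m-1}], [a_{m}] \circ v$ witnesses $u \circ v \sqleq u$, since $a_{m} \cdot \gamma(v) \leq a_{m}$ by integrality, and symmetrically $u \circ v \sqleq v$. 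Finally, since in $\FreeSem{\alg{S}}$ and $\CFreeSem{\alg{S}}$ the length of a word does not increase along $\sqleq$, any word lying above a singleton is itself (equivalent to) a singleton, so the image of $[\gamma]$ is automatically an upset; this gives the promised strengthening that the nuclear image may be required to be upward closed.

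The only place demanding a moment's attention is the verification in the last paragraph that $\FreeSem{\alg{S}}$ inherits integrality from $\alg{S}$: cancellativity transfers for free via the Proposition, but integrality of the \emph{preimage} is an extra claim, checked directly on words as indicated. Everything else is routine bookkeeping with simple quasi-inequalities and word lengths, so I do not anticipate any real obstacle.
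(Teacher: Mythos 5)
Your proposal is correct and follows essentially the same route as the paper, which treats this theorem as an immediate consequence of the preceding proposition (cancellativity of $\FreeSem{\alg{S}}$, resp.\ $\CFreeSem{\alg{S}}$, iff $\alg{S}$ is integrally closed) together with the fact that the nuclear image of the free preimage is $\alg{S}$ and that the integrally closed (and integrality) conditions are simple quasi-inequalities preserved under nuclear images. The extra checks you spell out -- cancellative implies integrally closed, integrality of $\FreeSem{\alg{S}}$ for integral $\alg{S}$, and upward closure of the image via the length argument -- are exactly the routine details the paper leaves implicit.
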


  This next batch of theorems follows if we take into account that free nuclear preimages always preserve ideal residuation, and that moreover in the integral non-commutative case and the posemigroup case they preserve residuation.

\begin{theorem} \label{thm: integral residuated pomonoids}
  Integrally closed (integral) [commutative] ideally residuated po\-monoids are precisely the nuclear images of (integral) [commutative] cancellative ideally residuated pomonoids. Integral residuated pomonoids are precisely the nuclear images of integral cancellative residuated pomonoids.
\end{theorem}

\begin{theorem}
  Integrally closed (integral) residuated posemigroups are precisely the nuclear images of (integral) cancellative residuated posemigroups.
\end{theorem}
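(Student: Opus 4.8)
The plan is to establish the two inclusions of the equivalence separately, in both the unlabelled and the integral variant, by assembling facts already proved about free nuclear preimages of posemigroups; essentially no new ideas are needed beyond locating the right ingredients.

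For the inclusion from right to left I would argue as follows. Suppose $\alg{S} = \alg{N}_{\gamma}$ for a nucleus $\gamma$ on a cancellative residuated posemigroup $\alg{N}$. Since $\alg{N}$ is residuated, so is $\alg{N}_{\gamma}$, the residuals of a nuclear image being inherited from the ambient structure (this is the posemigroup form of the corresponding remark in the Preliminaries). The key point is that every cancellative posemigroup is integrally closed: from $a \cdot x \leq a$ one gets $a \cdot (x \cdot b) = (a \cdot x) \cdot b \leq a \cdot b$ for every $b$, hence $x \cdot b \leq b$ by left cancellativity, and symmetrically $x \cdot a \leq a$ gives $(b \cdot x) \cdot a = b \cdot (x \cdot a) \leq b \cdot a$, hence $b \cdot x \leq b$ by right cancellativity. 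Integral closure is a conjunction of simple quasi-inequalities, so it is preserved by nuclear images (the posemigroup version of Fact~\ref{fact: simple quasi-inequalities}; alternatively one checks directly that $a \cdot_{\gamma} x \leq a$ in $\alg{N}_{\gamma}$ forces $a \cdot x \leq a$ in $\alg{N}$, hence $x \cdot b \leq b$ in $\alg{N}$, hence $x \cdot_{\gamma} b \leq b$ in $\alg{N}_{\gamma}$); thus $\alg{N}_{\gamma}$ is integrally closed. Integrality, being expressed by the premise-free (hence vacuously simple) inequalities $x \cdot y \leq x$ and $x \cdot y \leq y$, is preserved the same way, which settles the integral case.

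For the inclusion from left to right I would take an integrally closed residuated posemigroup $\alg{S}$ and form its free nuclear preimage $\FreeSem{\alg{S}}$, then observe three things. First, by the Fact giving the residuals of $\FreeSem{\alg{S}}$, the posemigroup $\FreeSem{\alg{S}}$ is residuated whenever $\alg{S}$ is — and here the semigroup setting gives us \emph{full} residuation rather than merely ideal residuation, via $[a] \circ u \sqleq [b] \circ v \iff u \sqleq [a \bs b] \circ v$ and $u \circ [a] \sqleq v \circ [b] \iff u \sqleq v \circ [b / a]$. Second, by the Proposition characterizing cancellativity of free nuclear preimages of posemigroups, $\FreeSem{\alg{S}}$ is cancellative because $\alg{S}$ is integrally closed. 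Third, the nuclear image of $[\gamma]$ on $\FreeSem{\alg{S}}$ is isomorphic to $\alg{S}$ via $a \mapsto [a]$. Hence $\alg{S}$ is the nuclear image of the cancellative residuated posemigroup $\FreeSem{\alg{S}}$. For the integral case I would additionally check that $\FreeSem{\alg{S}}$ is integral when $\alg{S}$ is: for non-empty words $u = [a_{1}, \dots, a_{m}]$ and $v = [b_{1}, \dots, b_{n}]$, the decomposition of $u \circ v$ into $m$ blocks grouping $b_{1}, \dots, b_{n}$ together with $a_{m}$ witnesses $u \circ v \sqleq u$ (using $a_{m} \cdot b_{1} \cdot \ldots \cdot b_{n} \leq a_{m}$), and the decomposition into $n$ blocks grouping $b_{1}$ together with $a_{1}, \dots, a_{m}$ witnesses $u \circ v \sqleq v$ (using $a_{1} \cdot \ldots \cdot a_{m} \cdot b_{1} \leq b_{1}$), both inequalities holding by integrality of $\alg{S}$; moreover in this case the image of $[\gamma]$ is an upset of $\FreeSem{\alg{S}}$, so the nuclear image may additionally be required to be upward closed.

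The main obstacle, such as it is, is minimal: the substantive content is already in the Proposition on cancellativity of $\FreeSem{\alg{S}}$, the Fact on inheritance of residuals, and the identification of the nuclear image of $\FreeSem{\alg{S}}$ with $\alg{S}$. The only steps that demand a moment's attention are the implication ``cancellative $\Rightarrow$ integrally closed'' for posemigroups — which genuinely uses the order-cancellativity formulation with $\leq$ rather than $=$ — and the routine transfer of the monoid-based preservation lemma for simple quasi-inequalities to the semigroup setting.
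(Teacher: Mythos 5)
Your proposal is correct and follows essentially the same route as the paper, which derives this theorem by assembling exactly the ingredients you cite: the cancellativity of $\FreeSem{\alg{S}}$ precisely when $\alg{S}$ is integrally closed, the preservation of full residuation by the posemigroup free nuclear preimage, the identification of its nuclear image with $\alg{S}$, and the preservation of the (simple quasi-inequational) integrally closed and integral conditions under nuclear images for the converse. Your extra verifications (cancellative $\Rightarrow$ integrally closed for posemigroups, and integrality of $\FreeSem{\alg{S}}$ via the explicit block decompositions) are accurate and fill in the routine details the paper leaves implicit.
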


  Every commutative cancellative pomonoid embeds into an Abelian pogroup, namely its pogroup of fractions. We therefore immediately obtain a link with pogroups in the commutative case.

\begin{theorem}
  Integrally closed (integral) commutative pomonoids are precisely the nuclear images of subpomonoids of (negative cones of) Abelian pogroups.
\end{theorem}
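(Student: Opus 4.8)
The plan is to obtain this theorem by feeding into Theorem~\ref{thm: integrally closed pomonoids} the classical fact, recalled immediately above, that every cancellative commutative pomonoid embeds (as a pomonoid) into an Abelian pogroup, namely its pogroup of fractions. Combined with the trivial converse observation that every subpomonoid of an Abelian pogroup is cancellative and commutative, this says that, up to isomorphism, the commutative cancellative pomonoids are exactly the subpomonoids of Abelian pogroups. Substituting this equivalence into Theorem~\ref{thm: integrally closed pomonoids} gives the non-integral part of the statement. For the integral part one checks in addition that the commutative cancellative pomonoids which are integral are, up to isomorphism, exactly the subpomonoids of negative cones of Abelian pogroups, and substitutes this refinement into the integral case of Theorem~\ref{thm: integrally closed pomonoids}.

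In more detail, for the forward inclusion I would note that a subpomonoid $\alg{N}$ of an Abelian pogroup $\alg{G}$ is commutative, and is cancellative because $\alg{G}$ is and cancellativity passes to subpomonoids; hence $\alg{N}_{\gamma}$ is a commutative integrally closed pomonoid by Theorem~\ref{thm: integrally closed pomonoids} for every nucleus $\gamma$ on $\alg{N}$. If furthermore $\alg{N}$ is a subpomonoid of the negative cone $\alg{G}^{-} = \set{g \in G}{g \leq \1}$, then, since $\1 \in \alg{N}$ and every element of $\alg{N}$ lies below $\1$, the unit $\1$ is the top element of $\alg{N}$, so $\alg{N}$ is integral; the integral case of Theorem~\ref{thm: integrally closed pomonoids} then makes $\alg{N}_{\gamma}$ integral as well.

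For the reverse inclusion, given a commutative integrally closed pomonoid $\alg{M}$, the Proposition above together with the structure theorem for $\CFreeRedUMon{\alg{M}}$ show that $\CFreeRedUMon{\alg{M}}$ is a commutative cancellative pomonoid whose nuclear image with respect to $[\gamma]$ is isomorphic to $\alg{M}$. Embedding $\CFreeRedUMon{\alg{M}}$ into its pogroup of fractions $\alg{G}$ and transporting the nucleus $[\gamma]$ along the embedding exhibits $\alg{M}$ (up to isomorphism) as the nuclear image of a subpomonoid of the Abelian pogroup $\alg{G}$. If $\alg{M}$ is moreover integral, then $\CFreeRedUMon{\alg{M}}$ is integral too: for every nonempty word $w = [a_{1}, \dots, a_{n}]$ we have $\gamma(w) = a_{1} \cdot \ldots \cdot a_{n} \leq \1$ by integrality of $\alg{M}$, whence $w \sqleq [\1]$, so $[\1]$ is the top element of $\CFreeRedUMon{\alg{M}}$. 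Consequently the embedding of $\CFreeRedUMon{\alg{M}}$ into $\alg{G}$ lands inside the negative cone $\alg{G}^{-}$, and $\alg{M}$ appears as the nuclear image of a subpomonoid of $\alg{G}^{-}$.

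None of this presents a genuine obstacle: the whole argument is an assembly of facts already in place — Theorem~\ref{thm: integrally closed pomonoids}, the characterization of when $\CFreeRedUMon{\alg{M}}$ is cancellative, and the imported pogroup-of-fractions embedding. The only points that need a moment's care are the bookkeeping that threads the integral case through to the negative cone (integrality of $\CFreeRedUMon{\alg{M}}$ on one side, and $\1$ being the top of any subpomonoid of a negative cone on the other), together with the harmless fact that a nucleus on a subpomonoid of a pogroup need not extend to the ambient pogroup — harmless precisely because Theorem~\ref{thm: integrally closed pomonoids} and the Proposition are already stated at exactly this level of generality.
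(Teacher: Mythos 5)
Your proposal is correct and follows essentially the same route as the paper: the paper obtains this theorem immediately by combining Theorem~\ref{thm: integrally closed pomonoids} (via the cancellativity of $\CFreeRedUMon{\alg{M}}$ for $\alg{M}$ integrally closed) with the embedding of a commutative cancellative pomonoid into its Abelian pogroup of fractions, which is exactly your argument. Your extra bookkeeping for the integral case (integrality of $\CFreeRedUMon{\alg{M}}$ and of subpomonoids of negative cones) is the right way to fill in what the paper leaves implicit.
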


  Beyond the commutative case, determining whether a cancellative pomonoid embeds into a pogroup may be quite difficult. In our particular case, however, it turns out that if $\FreeUMon{\alg{M}}$ is cancellative, it always embeds into a pogroup.

\begin{table} 
\caption{Rules which define the preorder $\sqleq$ on $\FreeGroup{\alg{M}}$}
\label{tab: rules}
\begin{framed}
\textbf{Positive monotonicity}
\begin{align*}
  u & \sqleq [a] \qquad \text{if $u \sqleq [a]$ in $\FreeUMon{\alg{M}}$}
\end{align*}
\textbf{Negative monotonicity}
\begin{align*}
  [a]^{-1} & \sqleq u^{-1} \qquad \text{if $u \sqleq [a]$ in $\FreeUMon{\alg{M}}$}
\end{align*}
\textbf{Contraction}
\begin{align*}
  u^{-1} v w^{-1} & \sqleq x \qquad \text{if $v \sqleq u \circ x \circ w$ in $\FreeUMon{\alg{M}}$ (for $u$, $w$ not both empty)}
\end{align*}
\textbf{Expansion}
\begin{align*}
   x & \sqleq u^{-1} v w^{-1} \qquad \text{if $u \circ x \circ w \sqleq v$ in $\FreeUMon{\alg{M}}$ (for $u$, $w$ not both empty)}
\end{align*}
\textbf{Permutation}
\begin{align*}
  u v^{-1} & \sqleq x^{-1} y \qquad \text{if $x \circ u \sqleq y \circ v$ in $\FreeUMon{\alg{M}}$ (for $v$, $x$ both non-empty)} \\
  u^{-1} v & \sqleq x y^{-1} \qquad \text{if $v \circ y \sqleq u \circ x$ in $\FreeUMon{\alg{M}}$ (for $u$, $y$ both non-empty)}
\end{align*}
\end{framed}
\end{table}

  Let $\FreeGroup{\alg{M}}$ denote the free monoid generated by elements of the form $[a]$ for $a \in \alg{M}$ (\emph{positive letters}) and elements of the form $[a]^{-1}$ for $a \in \alg{M}$ (\emph{negative letters}), where the exponent is a purely formal symbol. Words in $\FreeGroup{\alg{M}}$ will be denoted $\alpha$, $\beta$, $\gamma$, their products will be denoted either $\alpha \circ \beta$ or simply $\alpha \beta$. Words which only contain positive letters are called \emph{positive words}. If $u = [a_{1}, \dots, a_{n}] = [a_{1}] \circ \ldots \circ [a_{n}]$ is a positive word, we use the notation $u^{-1} \assign [a_{n}]^{-1} \circ \ldots \circ [a_{1}]^{-1}$.

  We now define a preorder $\sqleq$ on $\FreeGroup{\alg{M}}$ as the reflexive transitive closure of all instances of the inequalities shown in Table~\ref{tab: rules}. Here $u$, $v$ etc.\ range over all positive words, possibly empty. These are to be interpreted as applying in any context within a word, e.g.\ the contraction inequality is more explicitly the inequality $\alpha u^{-1} v w^{-1} \beta \sqleq \alpha x \beta$ for all $\alpha, \beta \in \FreeGroup{\alg{M}}$.

  It will be useful to view this as a proof system in which we can prove that $\alpha \sqleq \beta$ by applying certain rules. For example, suppose that $[a_{1}] \circ [a_{2}] \sqleq [a]$, $u \sqleq [b] \circ [a_{2}]$, and $v \sqleq [a_{1}] \circ [c]$ hold in $\FreeUMon{\alg{M}}$. Then the sequence of words
\begin{align*}
  u \circ [a]^{-1} \circ v \sqleq u \circ [a_{2}]^{-1} \circ [a_{1}]^{-1} \circ v \sqleq [b] \circ [a_{1}]^{-1} \circ v \sqleq [b] \circ [c]
\end{align*}
  may be seen as a proof that $u \circ [a]^{-1} \circ v \sqleq [b] \circ [c]$ which uses negative monotonicity and two instances of contraction.

  Clearly $\FreeMon{\alg{M}}$ is a submonoid of $\FreeGroup{\alg{M}}$. Moreover, the embedding of $\FreeUMon{\alg{M}}$ into $\FreeGroup{\alg{M}}$ is order preserving. The difficult part is showing that it is order reflecting.

\begin{fact}
  If $u \sqleq v$ in $\FreeUMon{\alg{M}}$, then $u \sqleq v$ in $\FreeGroup{\alg{M}}$.
\end{fact}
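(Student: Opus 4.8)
The plan is to exploit the fact that the \textbf{positive monotonicity} rule of Table~\ref{tab: rules} is tailored precisely to import into $\FreeGroup{\alg{M}}$ every inequality of $\FreeUMon{\alg{M}}$ whose right-hand side is a singleton word, and then to propagate this to arbitrary words using compatibility of the preorder $\sqleq$ with monoid multiplication. So the first step is to record that the preorder $\sqleq$ on $\FreeGroup{\alg{M}}$ is compatible with $\circ$, i.e.\ $\alpha \sqleq \alpha'$ and $\beta \sqleq \beta'$ imply $\alpha \circ \beta \sqleq \alpha' \circ \beta'$. This is immediate from the definition: each generating inequality from Table~\ref{tab: rules} is stipulated to hold in every context $\alpha \cdot ({-}) \cdot \beta$, so the set of generating instances is closed under left and right multiplication by arbitrary words; this closure is inherited by its reflexive--transitive closure, and combining it with transitivity yields the two-sided compatibility.

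The second step uses the characterization of $\sqleq$ on $\FreeUMon{\alg{M}}$ as the smallest preorder compatible with $\circ$ which extends the pointwise order $\leq$ and satisfies $u \sqleq [\gamma(u)]$ for every word $u$. It therefore suffices to verify that the restriction of $\sqleq$ on $\FreeGroup{\alg{M}}$ to (non-empty) positive words has these three closure properties. Compatibility with $\circ$ is the previous paragraph together with the fact that $\FreeUMon{\alg{M}}$ is a submonoid of $\FreeGroup{\alg{M}}$. Extension of $\leq$: if $[a_1, \dots, a_n] \leq [b_1, \dots, b_n]$, then $a_i \leq b_i$, so $[a_i] \sqleq [b_i]$ in $\FreeUMon{\alg{M}}$, hence $[a_i] \sqleq [b_i]$ in $\FreeGroup{\alg{M}}$ by positive monotonicity, and multiplying these $n$ inequalities gives $[a_1, \dots, a_n] \sqleq [b_1, \dots, b_n]$ in $\FreeGroup{\alg{M}}$. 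Finally, $u \sqleq [\gamma(u)]$ holds in $\FreeGroup{\alg{M}}$ directly by positive monotonicity, since it holds in $\FreeUMon{\alg{M}}$. Minimality of $\sqleq$ on $\FreeUMon{\alg{M}}$ now forces it to be contained in $\sqleq$ on $\FreeGroup{\alg{M}}$, which is exactly the claim. (Equivalently, one may argue straight from the decomposition definition: given $u = u_1 \circ \dots \circ u_n \sqleq [b_1, \dots, b_n]$ with $\gamma(u_i) \leq b_i$, positive monotonicity yields $u_i \sqleq [b_i]$ in $\FreeGroup{\alg{M}}$ for each $i$ — including the case $u_i = \emptyword$, since the rule ranges over possibly-empty positive words and $\gamma(\emptyword) = \1 \leq b_i$ — and these multiply to $u \sqleq [b_1, \dots, b_n]$.)

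I do not expect any real obstacle in this Fact; it is the routine half of the embedding statement. The only point worth making explicit is the compatibility of $\sqleq$ with multiplication, which is precisely what lets positive monotonicity — a rule about singletons only — suffice for words of arbitrary length. The substantive work lies in the converse direction, namely that the embedding also \emph{reflects} the order, which is deferred to the subsequent development and amounts to showing that the contraction, expansion, and permutation rules generate no new inequalities between positive words.
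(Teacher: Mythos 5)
Your parenthetical argument from the decomposition definition is in substance the paper's own proof: the paper disposes of this Fact in one line, citing positive monotonicity together with the identification of $[\1]$ with $\emptyword$ inside $\FreeGroup{\alg{M}}$ (it records the chain $[\1] \sqleq [\1] \circ [\1]^{-1} \sqleq \emptyword$, obtained from one expansion and one contraction). So you are on the same route, and the statement is correct. Two points in your write-up deserve attention, however.

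First, the ``smallest preorder'' characterization you invoke is the paper's description of the preorder on $\FreeMon{\alg{M}}$, where $u$ ranges over \emph{all} words including $\emptyword$, so that $\emptyword \sqleq [\1]$ is itself one of the generators. Attributed to $\FreeUMon{\alg{M}}$ and restricted to non-empty words, the characterization is false: every inequality generated by componentwise order, $u \sqleq [\gamma(u)]$, compatibility and transitivity has left-hand side at least as long as its right-hand side, so these generators never produce, say, $[a] \sqleq [a,\1]$, which does hold in $\FreeUMon{\alg{M}}$. To use minimality you must therefore run the verification over all positive words, and then the empty word has to be checked as well. Second, that empty-word check (equivalently, the empty blocks in your direct argument) is the only real content of the Fact, and you settle it by reading positive monotonicity as applicable to $u = \emptyword$; since $\emptyword$ is not an element of $\FreeUMon{\alg{M}}$, this reading is debatable. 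The safe repair, and the one the paper's proof gestures at with its $[\1]$-versus-$\emptyword$ chain, is to derive $\emptyword \sqleq [\1]^{-1} [\1] \sqleq [b]$ whenever $\1 \leq b$ (expansion with side condition $[\1] \sqleq [\1]$, then contraction with side condition $[\1] \sqleq [\1, b]$), and use these inequalities for the empty blocks. With that substitution your argument is complete and coincides with the intended one.
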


\begin{proof}
  This holds by positive monotonicity and the fact that $[\1] \sqleq [\1] \circ [\1]^{-1} \sqleq \emptyword$.
\end{proof}

  The preorder $\sqleq$ is an order congruence of $\FreeGroup{\alg{M}}$: combining a proof of $\alpha_{1} \sqleq \beta_{1}$ and a proof of $\alpha_{2} \sqleq \beta_{2}$ yields a proof of $\alpha_{1} \circ \alpha_{2} \sqleq \beta_{1} \circ \beta_{2}$. The quotient $\FreeGroup{\alg{M}} / {\sqleq}$ is a pomonoid which we denote $\FreeRedGroup{\alg{M}}$.

\begin{fact}
  $\FreeRedGroup{\alg{M}}$ is a pogroup where the inverse of the equivalence class of $u$ is the equivalence class of $u^{-1}$.
\end{fact}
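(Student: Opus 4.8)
The plan is to reduce the statement to the four relations
\begin{align*}
  [a] \circ [a]^{-1} & \sim \emptyword, & [a]^{-1} \circ [a] & \sim \emptyword \qquad (a \in \alg{M}),
\end{align*}
where $\sim$ is the equivalence induced by the preorder $\sqleq$, and then to invoke the fact already established, that $\FreeRedGroup{\alg{M}}$ is a pomonoid. Indeed, $\FreeGroup{\alg{M}}$ is freely generated \emph{as a monoid} by the positive and negative letters, so if each positive letter $[a]$ and each negative letter $[a]^{-1}$ is invertible in the quotient $\FreeRedGroup{\alg{M}}$ — and the two relations above exhibit $[a]$ and $[a]^{-1}$ as mutually inverse — then $\FreeRedGroup{\alg{M}}$ is a monoid generated by invertible elements, hence a group. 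A pomonoid whose monoid reduct is a group is automatically a pogroup, since translation by any fixed element is an order automorphism (its inverse being translation by the group-theoretic inverse). Moreover, in any group $(\ell_{1} \circ \dots \circ \ell_{n})^{-1} = \ell_{n}^{-1} \circ \dots \circ \ell_{1}^{-1}$, and for a positive word $u$ this is precisely the word $u^{-1}$ defined above (the same formula giving inverses of arbitrary words), so the inverse of the class of $u$ is the class of $u^{-1}$.

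It then remains only to verify the four relations, which I would do by exhibiting suitable instances of the rules in Table~\ref{tab: rules}. The inequality $[a] \circ [a]^{-1} \sqleq \emptyword$ is the instance of the contraction rule with $u \assign \emptyword$, $v \assign [a]$, $w \assign [a]$, and $x \assign \emptyword$: its side condition is $[a] \sqleq [a]$ in $\FreeUMon{\alg{M}}$, which holds by reflexivity, and $u$, $w$ are not both empty since $w = [a]$. Symmetrically, $[a]^{-1} \circ [a] \sqleq \emptyword$ is the instance of contraction with $u \assign [a]$, $v \assign [a]$, $w \assign \emptyword$, $x \assign \emptyword$. The reverse inequalities $\emptyword \sqleq [a] \circ [a]^{-1}$ and $\emptyword \sqleq [a]^{-1} \circ [a]$ are the corresponding instances of the expansion rule, with side conditions again reducing to reflexivity of $\sqleq$ on $\FreeUMon{\alg{M}}$.

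I do not expect any genuine obstacle in this argument. The one point requiring attention is the bookkeeping around the degenerate cases of the rules: the positive words $u, v, w, x, y$ are permitted to be empty even though $\FreeUMon{\alg{M}}$ has no empty word, so one must check that the words actually entering the $\FreeUMon{\alg{M}}$-side conditions are non-empty — and in every instance used above that word is simply $[a]$. The substantive work is deferred to the following fact, namely that the embedding of $\FreeUMon{\alg{M}}$ into $\FreeRedGroup{\alg{M}}$ is order-reflecting.
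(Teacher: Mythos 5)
Your proposal is correct and essentially matches the paper's proof: both rest on exhibiting the inverse laws as instances of the contraction and expansion rules (and your verification of the side conditions, including the non-emptiness caveat, is accurate). The only difference is that the paper applies these rules directly to an arbitrary positive word $u$ (e.g.\ contraction with $v \assign u$, $w \assign \emptyword$, $x \assign \emptyword$ gives $u^{-1} u \sqleq \emptyword$ at once), so your detour through single letters, the ``monoid generated by invertible elements is a group'' argument, and the formula for the inverse of a product is harmless but unnecessary.
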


\begin{proof}
  This holds because $u^{-1} u \sqleq \emptyword$ and $u u^{-1} \sqleq \emptyword$ are instances of contraction and $\emptyword \sqleq u^{-1} u$ and $\emptyword \sqleq u u^{-1}$ are instances of expansion.
\end{proof}

\begin{fact}
  $\FreeRedGroup{\alg{M}}$ is the free pogroup over the pomonoid $\FreeUMon{\alg{M}}$.
\end{fact}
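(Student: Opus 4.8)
The plan is to verify the universal property of the free pogroup directly. Write $\iota\colon \FreeUMon{\alg{M}} \to \FreeRedGroup{\alg{M}}$ for the map sending a positive word to its $\sqleq$-equivalence class; by the facts proved just above, $\iota$ is well defined, order preserving, and a homomorphism of pomonoids (it sends the unit $[\1]$ to the unit of $\FreeRedGroup{\alg{M}}$, since $[\1] \sim \emptyword$ in $\FreeGroup{\alg{M}}$). So it remains to show that for every pogroup $\alg{G}$ and every homomorphism of pomonoids $f\colon \FreeUMon{\alg{M}} \to \alg{G}$ there is a unique homomorphism of pogroups $\bar g\colon \FreeRedGroup{\alg{M}} \to \alg{G}$ with $\bar g \circ \iota = f$.

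For existence, I would first extend $f$ to a monoid homomorphism $g\colon \FreeGroup{\alg{M}} \to \alg{G}$ by setting $g([a]) \assign f([a])$ and $g([a]^{-1}) \assign f([a])^{-1}$; this is possible and unique because $\FreeGroup{\alg{M}}$ is a free monoid, and $g$ then agrees with $f$ on every positive word since $f$ is a homomorphism. The heart of the argument is to check that $g$ is monotone with respect to the preorder $\sqleq$ of $\FreeGroup{\alg{M}}$. Since $\sqleq$ is the reflexive--transitive closure of the inequalities of Table~\ref{tab: rules} taken in arbitrary contexts, and $g$ is a monoid homomorphism and hence respects contexts, it suffices to verify the six rule schemas one by one. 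Each reduces to a one-line computation in the pogroup $\alg{G}$ using only that $f$ is order preserving, that left and right translations in $\alg{G}$ are order preserving, and that inversion in $\alg{G}$ is order reversing: for instance, contraction becomes the inference from $f(v) \leq f(u)\,f(x)\,f(w)$ to $g(u^{-1} v w^{-1}) = f(u)^{-1} f(v) f(w)^{-1} \leq f(u)^{-1} f(u) f(x) f(w) f(w)^{-1} = f(x) = g(x)$, and expansion and the two permutation rules go the same way, while positive and negative monotonicity are immediate. Consequently $g$ descends to a monotone monoid homomorphism $\bar g\colon \FreeRedGroup{\alg{M}} \to \alg{G}$; since $\FreeRedGroup{\alg{M}}$ and $\alg{G}$ are both groups, $\bar g$ automatically preserves inverses, so it is a homomorphism of pogroups, and $\bar g \circ \iota = f$ because $g$ and $f$ agree on positive words.

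For uniqueness, I would observe that $\FreeRedGroup{\alg{M}}$ is generated as a group --- a fortiori as a pogroup --- by the classes of the positive letters $[a]$ for $a \in \alg{M}$, since the class of $[a]^{-1}$ is the inverse of the class of $[a]$. Hence any homomorphism of pogroups $h$ with $h \circ \iota = f$ is forced to send the class of $[a]$ to $f([a])$, whence $h = \bar g$. This establishes the required universal property. I expect no serious obstacle: the computation is routine, the only mildly delicate point being the bookkeeping in the permutation and expansion cases, where inverses have to be moved past one another inside the relevant products. Note that the genuinely hard companion statement --- that $\iota$ is order reflecting, so that $\FreeUMon{\alg{M}}$ actually embeds into the pogroup $\FreeRedGroup{\alg{M}}$ --- is not needed for this fact and is handled separately.
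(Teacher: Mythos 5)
Your proof is correct, but it takes a more hands-on route than the paper. The paper's argument is a two-line sandwich: the free pogroup over $\FreeUMon{\alg{M}}$ exists by general category-theoretic considerations, it validates every inequality of Table~\ref{tab: rules} (soundness), and conversely those inequalities already force the quotient $\FreeRedGroup{\alg{M}}$ to be a pogroup receiving a monotone homomorphism from $\FreeUMon{\alg{M}}$, so the two order congruences on $\FreeGroup{\alg{M}}$ coincide. You instead verify the universal property from scratch: extend a pomonoid homomorphism $f\colon \FreeUMon{\alg{M}} \to \alg{G}$ to the free monoid $\FreeGroup{\alg{M}}$ by sending $[a]^{-1}$ to $f([a])^{-1}$, check rule by rule that this extension is monotone for $\sqleq$ (your contraction computation is exactly the needed soundness check, and the other five schemas go the same way), descend to the quotient, and get uniqueness from the fact that the classes of positive letters generate $\FreeRedGroup{\alg{M}}$ as a group. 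The mathematical core is identical --- soundness of the six schemas in an arbitrary pogroup plus the group identities $u^{-1}u \sim \emptyword \sim uu^{-1}$ proved from contraction and expansion --- but your version buys self-containedness (no appeal to the abstract existence of the free pogroup) at the cost of length, while the paper's buys brevity by outsourcing existence and uniqueness of the universal object. Your closing remark is also accurate: order-reflection of $\iota$, i.e.\ the actual embedding of $\FreeUMon{\alg{M}}$, is the hard part and is rightly kept out of this fact.
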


\begin{proof}
  Clearly the free pogroup over the pomonoid $\FreeUMon{\alg{M}}$ (which exists by general category-theoretic considerations) validates all of the inequalities of Table~\ref{tab: rules}. Since conversely these inequalities ensure that the quotient $\FreeRedGroup{\alg{M}}$ is a group, it must be the free pogroup over the pomonoid $\FreeUMon{\alg{M}}$.
\end{proof}

  We now show that each proof in the proof system defined by Table~\ref{tab: rules} may be transformed into a certain normal form. A proof of an inequality $\alpha \sqleq \beta$ is \emph{normal} if it consists of some instances (possibly none) of negative monotonicity, followed by some instances (possibly none) of contraction, followed similarly by permutation, expansion, and positive monotonicity. In other words, disregarding monotonicity, we first apply rules which eliminate (blocks of) inverses, then rules which preserve inverses, and finally rules which introduce inverses.

\begin{theorem}
  Let $\alg{M}$ be an integrally closed pomonoid. Then each inequality which holds in $\FreeGroup{\alg{M}}$ has a normal proof.
\end{theorem}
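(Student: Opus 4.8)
The plan is to prove this by a normalization (cut-elimination style) argument on proofs. Recall that ``the inequality $\alpha \sqleq \beta$ holds in $\FreeGroup{\alg{M}}$'' means precisely that $\alpha \sqleq \beta$ lies in the reflexive--transitive closure of the rules of Table~\ref{tab: rules}, i.e.\ that there is \emph{some} proof of it; our task is to convert an arbitrary such proof into a normal one. View a proof of $\alpha \sqleq \beta$ as a finite list of rule applications $s_1, \dots, s_n$, each performed in a context within a word, and assign to each $s_k$ its \emph{phase}: $1$ for negative monotonicity, $2$ for contraction, $3$ for permutation, $4$ for expansion, $5$ for positive monotonicity. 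A proof is normal precisely when the sequence of phases is weakly increasing. We will show that every non-normal proof can be rewritten, without changing the inequality proved, into one that is strictly smaller in the lexicographic order on the tuple $\langle e, c, p, m, I \rangle$, where $e, c, p, m$ are the numbers of expansion, contraction, permutation, and monotonicity steps respectively, and $I$ is the number of inversions, i.e.\ pairs $k < l$ with $\mathrm{phase}(s_k) > \mathrm{phase}(s_l)$. Since $I = 0$ iff the proof is normal, well-foundedness of this order finishes the argument.

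The engine is a set of local commutation lemmas, one for each adjacent pair $s_k s_{k+1}$ with $\mathrm{phase}(s_k) > \mathrm{phase}(s_{k+1})$, each analyzed according to how the word-position rewritten by $s_{k+1}$ meets the material produced by $s_k$. If the two sites are disjoint, $s_k$ and $s_{k+1}$ simply swap, leaving $e, c, p, m$ fixed and dropping $I$ by one. The monotonicity interactions are handled uniformly: contraction and positive monotonicity output only positive letters, so a negative monotonicity that immediately follows one of them must act on a pre-existing negative letter and hence swaps to the left; expansion, permutation, and negative monotonicity only ever affect the freshly created negative letters inside their output blocks, so a negative monotonicity acting on such a letter is \emph{absorbed} into the preceding step by pre-composing its monotonicity inequality $p \sqleq [a]$ with that step's side condition and using monotonicity of multiplication in $\FreeUMon{\alg{M}}$; symmetrically, a positive monotonicity preceding a contraction, permutation, or expansion that then consumes its output is absorbed into that step (here we use that the parameter $x$ in the contraction and expansion rules ranges over arbitrary positive words, not just single letters). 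Each absorption replaces two steps by one, decrementing $m$ while leaving $e, c, p$ fixed. Consequently all negative monotonicities can be driven to the front and all positive monotonicities to the back, reducing the problem to sorting the middle block built only from contraction, permutation, and expansion steps.

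The substantial cases are the overlaps inside that middle block: expansion immediately followed by contraction ($4$ before $2$), permutation followed by contraction ($3$ before $2$), and expansion followed by permutation ($4$ before $3$). The cleanest is when an expansion turns a positive word $x$ into $u^{-1} \circ v \circ w^{-1}$ via the side condition $u \circ x \circ w \sqleq v$ in $\FreeUMon{\alg{M}}$, and the following contraction collapses exactly that block back to a positive word $x'$ via $v \sqleq u \circ x' \circ w$: chaining gives $u \circ x \circ w \sqleq u \circ x' \circ w$, hence $x \sqleq x'$ by left- and then right-cancellation in $\FreeUMon{\alg{M}}$ --- and this cancellation is available exactly because $\alg{M}$ is integrally closed, by the Proposition above. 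The pair is then replaced by the block of positive-monotonicity steps witnessing $x \sqleq x'$ letter-block by letter-block, which strictly decreases $e$. The remaining sub-cases --- partial overlaps where the contraction's pattern $p^{-1} q r^{-1}$ straddles the boundary between the expansion's output and its context, and the interactions of a permutation with a contraction or an expansion --- are dealt with by a case analysis on which positive and negative runs are shared, recombining the relevant side conditions through multiplication in $\FreeUMon{\alg{M}}$ (and occasionally cancellation again) to produce an equivalent window in weakly increasing phase order that does not increase $e, c$, or $p$: either an expansion/contraction pair is removed, or the contraction (resp.\ the permutation) moves strictly left past its neighbour while $e, c, p, m$ stay fixed and $I$ drops. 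Verifying this last point --- in particular that composing permutation side conditions never forces the introduction of an extra permutation, contraction, or expansion step --- is where the bookkeeping is most delicate, and I expect it to be the main obstacle.

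With the commutation lemmas in hand the proof is immediate: given a non-normal proof, locate any descending adjacent pair and apply the corresponding lemma; by the case analysis the measure $\langle e, c, p, m, I \rangle$ strictly decreases, so after finitely many steps we reach a proof with no descending adjacent pair, i.e.\ a normal proof of the original inequality.
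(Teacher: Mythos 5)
Your overall strategy is exactly the one the paper uses: normalize an arbitrary derivation by locally rewriting adjacent out-of-order pairs of rule instances from Table~\ref{tab: rules}, absorbing positive and negative monotonicity steps into neighbouring contractions, permutations, and expansions, and resolving the genuine overlaps by recombining side conditions in $\FreeUMon{\alg{M}}$ and cancelling --- which is precisely where integral closedness enters, via the cancellativity of $\FreeUMon{\alg{M}}$. Your explicit lexicographic termination measure $\langle e, c, p, m, I \rangle$ is a welcome addition, since the paper leaves termination implicit, and it does in fact survive all the replacements the paper's proof produces (including those that trade an expansion--contraction pair for a single permutation, or replace a bad pair by a correctly ordered pair of the same two rule types).

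There is, however, a genuine gap: the substantive content of the theorem is exactly the case analysis you defer. You verify only the cleanest overlap, where a contraction undoes an expansion on the very same block; the paper's proof consists of roughly two dozen distinct overlap configurations (eight for expansion followed by contraction, eight for expansion followed by permutation, six for permutation followed by contraction, several splitting further according to how the shared positive word decomposes), and in each one a specific recombination of the side conditions, usually followed by one or two applications of cancellativity, must be exhibited and shown to yield replacement rules of the right shapes --- sometimes a single contraction, sometimes a single expansion or permutation, sometimes a correctly ordered pair. That such recombinations always exist and never force extra rule instances is not automatic, and you flag it yourself as ``the main obstacle''; as written, the proposal asserts rather than proves the key commutation lemmas. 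A smaller inaccuracy: your claim that the remaining rewrites ``do not increase $e$, $c$, or $p$'' is false as stated, since one expansion-then-contraction overlap resolves into a permutation (so $p$ increases while $e$ decreases); your lexicographic measure still decreases there, but the invariant you announce is not the one that actually holds.
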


\begin{proof}
  It suffices to show that two successive instances of rules which are not in the correct relative order may be replaced by a sequence of instances of rules which are in the correct order. For example, contraction, expansion, or permutation followed by negative monotonicity may easily be reduced simply to contraction, expansion, or permutation. Similarly, contraction, expansion, or permutation preceded by positive monotonicity may easily be reduced simply to contraction, expansion, or permutation. Of course, neighbouring instances of positive and negative monotonicity may be permuted.

  This leaves contractions, permutations, and expansions to be dealt with in more detail. We shall not explicitly discuss the side conditions requiring that certain words be non-empty, since these are easy to handle. For example, if $u = w = \emptyword$, then instead of applying the contraction rule $u^{-1} v w^{-1} \sqleq x$ we apply positive monotonicity $v \sqleq x$.

  The proof is a tedious case analysis. Throughout, we suppress the context in which rules are applied. For example, the first subcase below could more explicitly be described as transforming an instance of expansion $\alpha p^{-1} u q^{-1} \beta \sqleq \alpha p^{-1} v^{-1} w x^{-1} q^{-1} \beta$ followed by an instance of contraction $\alpha p^{-1} v^{-1} w x^{-1} q^{-1} \beta \sqleq \alpha r \beta$ into a single instance of contraction, $\alpha p^{-1} u q^{-1} \beta \sqleq \alpha r \beta$.

  Expansion followed by contraction:
\begin{enumerate}[(i)]
\item $u \sqleq v^{-1} w x^{-1}$ and $p^{-1} v^{-1} w x^{-1} q^{-1} \sqleq r$: in that case $v u x \sqleq w \sqleq v p r q x$, so by cancellativity $u \sqleq p r q$ and $p^{-1} u q^{-1} \sqleq r$ (contraction).
\item $u \sqleq v_{2}^{-1} v_{1}^{-1} w x_{2}^{-1} x_{1}^{-1}$ and $v_{1}^{-1} w x_{2}^{-1} \sqleq r$: in that case $v_{1} v_{2} u x_{1} x_{2} \sqleq w \sqleq v_{1} r x_{2}$, so by cancellativity $v_{2} u x_{1} \sqleq r$ and $u \sqleq v_{2}^{-1} r x_{1}^{-1}$ (expansion). Recall that $\FreeUMon{\alg{M}}$ is cancellative because $\alg{M}$ is integrally closed.
\item $u \sqleq v^{-1} w x_{2}^{-1} x_{1}^{-2}$ and $p^{-1} v^{-1} w x_{2}^{-1} \sqleq r$: in that case $v u x_{1} x_{2} \sqleq w \sqleq v p r x_{2}$, so by cancellativity $u x_{1} \sqleq p w$ and $p^{-1} u \sqleq r x_{1}^{-1}$ (permutation).
\item $u \sqleq v_{2}^{-1} v_{1}^{-1} w x^{-1}$ and $v_{1}^{-1} w x^{-1} q^{-1} \sqleq r$: analogous to the previous case.
\item $u \sqleq v^{-1} w x_{2}^{-1} x_{1}^{-1}$ and $x_{1}^{-1} p q^{-1} \sqleq r$: in that case $v u x_{1} x_{2} \sqleq w$ and $p \sqleq x_{1} r q$, so $u p q^{-1} \sqleq u x_{1} r \sqleq v^{-1} w x_{2}^{-1} r$ (contraction and expansion).
\item $u \sqleq v_{2}^{-1} v_{1}^{-1} w x^{-1}$ and $p^{-1} q v_{2}^{-1} \sqleq r$: analogous to the previous case.
\item $u \sqleq w x^{-1}$ and $p^{-1} q w x^{-1} y^{-1} \sqleq r$: in that case $u x \sqleq w$ and $q w \sqleq p r y x$, so $q u x \sqleq q w \sqleq p r y x$. Thus by cancellativity $q u \sqleq p r y$ and $p^{-1} q u y^{-1} \sqleq r y y^{-1} \sqleq r$ (contraction).
\item $u \sqleq v_{2}^{-1} v_{1}^{-1} w x^{-1}$ and $p^{-1} q v_{2}^{-1} \sqleq r$: analogous to the previous case.
\end{enumerate}
  Expansion followed by exchange:
\begin{enumerate}[(i)]
\item $u \sqleq v^{-1} w_{1} w_{2} x^{-1} y^{-1}$ and $w_{2} x^{-1} y^{-1} \sqleq p^{-1} q$: in that case $v u y x \sqleq w_{1} w_{2}$ and $p w_{2} \sqleq q y x$. Depending on how the latter inequality is witnessed: (a) if $p = p_{1} p_{2}$ with $p_{1} \sqleq q$ and $p_{2} w_{2} \sqleq yx$, then $v u p_{2} w_{2} \sqleq vuyx \sqleq w_{1} w_{2}$ so by cancellativity $v u p_{2} \sqleq w_{1}$ and $u \sqleq v^{-1} w_{1} p_{2}^{-1} \sqleq v^{-1} w_{1} p_{2}^{-1} p_{1}^{-1} q = v^{-1} w_{1} p^{-1} q$ (expansion). On the other hand, (b) if $w_{2} = w_{21} w_{22}$ with $p w_{21} \sqleq q$ and $w_{22} \sqleq yx$, then $v u w_{22} \sqleq vuyx \sqleq w_{1} w_{2} = w_{1} w_{21} w_{22}$, so by cancellativity $v u \sqleq w_{1} w_{21}$ and $u \sqleq v^{-1} w_{1} w_{21} \sqleq v^{-1} w_{1} p^{-1} q$ (expansion).
\item $u \sqleq y^{-1} v^{-1} w_{1} w_{2} x^{-1}$ and $y^{-1} v^{-1} w_{1} \sqleq p q^{-1}$: analogous to the previous case.
\item $u \sqleq v^{-1} w_{1} w_{2} x_{2}^{-1} x_{1}^{-1}$ and $w_{2} x_{2}^{-1} \sqleq p^{-1} q$: in that case $v u x_{1} x_{2} \sqleq w_{1} w_{2}$ and $p w_{2} \sqleq q x_{2}$. Depending on how the latter inequality is witnessed: (a) if $p = p_{1} p_{2}$ with $p_{1} \sqleq q$ and $p_{2} w_{2} \sqleq x_{2}$, then $v u x_{1} p_{2} w_{2} \sqleq v u x_{1} x_{2} \sqleq w_{1} w_{2}$,  so by cancellativity $v u x_{1} p_{2} \sqleq w_{1}$ and $u \sqleq v^{-1} w_{1} p_{2}^{-1} x_{1}^{-1} \sqleq v^{-1} w_{1} p^{-1} q x_{1}^{-1}$ (expansion). On the other hand, (b) if $w_{2} = w_{21} w_{22}$ with $p w_{21} \sqleq q$ and $w_{22} \sqleq x_{2}$, then $v u x_{1} w_{22} \sqleq v u x_{1} x_{2} = v u x \sqleq w$, so by cancellativity $v u x_{1} \sqleq w_{1} w_{21}$ and $u \sqleq v^{-1} w_{1} w_{21} x_{1}^{-1} \sqleq v^{-1} w_{1} p^{-1} q x_{1}^{-1}$ (expansion).
\item $u \sqleq v_{2}^{-1} v_{1}^{-1} w_{1} w_{2} x^{-1}$ and $v_{1}^{-1} w_{1} \sqleq p q^{-1}$: analogous to the previous case.
\item $u \sqleq v^{-1} w x_{2}^{-1} x_{1}^{-1}$ and $x_{1}^{-1} p^{-1} q \sqleq r s^{-1}$: in that case $v u x_{1} x_{2} \sqleq w$ and $q s \sqleq p x_{1} r$, so $u p^{-1} q \sqleq u x_{1} r s^{-1} \sqleq v^{-1} w x_{2}^{-1} r s^{-1}$ (exchange and expansion)
\item $u \sqleq v_{2}^{-1} v_{1}^{-1} w x^{-1}$ and $p q^{-1} v_{2}^{-1} \sqleq r^{-1} s$: analogous to the previous case.
\item $u \sqleq w x^{-1}$ and $p w x^{-1} y^{-1} \sqleq q^{-1} r$: in that case $u x \sqleq w$ and $q p w \sqleq r y x$, so $q p u x \sqleq q p w \sqleq r y x$, so by cancellativity $q p u \sqleq r y$ and $p u y^{-1} \sqleq q^{-1} r$ (permutation).
\item $u \sqleq v^{-1} w$ and $x^{-1} v^{-1} w p \sqleq q r^{-1}$: analogous to the previous case.
\end{enumerate}
  Permutation followed by contraction:
\begin{enumerate}[(i)]
\item $u v^{-1} \sqleq x^{-1} y$ and $p^{-1} x^{-1} y q r^{-1} \sqleq s$: in that case $x u \sqleq y v$ and $y q \sqleq x p s r$. Depending on how the former inequality is witnessed: (a) if $x = x_{1} x_{2}$ with $x_{1} \sqleq y$ and $x_{2} u \sqleq v$, then $x_{1} q \sqleq y q \sqleq x_{1} x_{2} p s r$, so $q \sqleq x_{2} p s r$ by cancellativity and $p^{-1} u v^{-1} q r^{-1} \sqleq p^{-1} x_{2}^{-1} q r^{-1} \sqleq s$ (contraction). On the other hand, (b) if $u = u_{1} u_{2}$ with $x u_{1} \sqleq y$ and $u_{2} \sqleq v$, then $x u_{1} q \sqleq y q \sqleq x p s r$, so by cancellativity $u_{1} q \sqleq p s r$ and $p^{-1} u v^{-1} q r^{-1} = p^{-1} u_{1} u_{2} v^{-1} q r^{-1} \sqleq p^{-1} u_{1} q r^{-1} \sqleq s$ (contraction).
\item $u^{-1} v \sqleq x y^{-1}$ and $p^{-1} q x y^{-1} r^{-1} \sqleq s$: analogous to the previous case.
\item $u v^{-1} \sqleq x_{2}^{-1} x_{1}^{-1} y$ and $x_{1}^{-1} y p q^{-1} \sqleq r$: in that case $x_{1} x_{2} u \sqleq y v$ and $y p \sqleq x_{1} r q$. Depending on how the former inequality is witnessed: (a) if $x_{1} = x_{11} x_{12}$ with $x_{11} \sqleq y$ and $x_{12} x_{2} u \sqleq v$, then $x_{11} p \sqleq y p \sqleq x_{1} r q = x_{11} x_{12} r q$, so by cancellativity $p \sqleq x_{12} r q$ and $u v^{-1} p q^{-1} \sqleq x_{2}^{-1} x_{12}^{-1} p q^{-1} \sqleq x_{2}^{-1} r$ (contraction). On the other hand, (b) if $x_{2} = x_{21} x_{22}$ with $x_{1} x_{21} \sqleq y$ and $x_{22} u \sqleq v$, then $x_{1} x_{21} p \sqleq y p \sqleq x_{1} r q$, so by cancellativity $x_{21} p \sqleq r q$ and $u v^{-1} p q^{-1} \sqleq x_{22}^{-1} p q^{-1} \sqleq x_{22}^{-1} x_{21}^{-1} r = x_{2}^{-1} r$ (permutation). Finally, (c) if $u = u_{1} u_{2}$ with $x_{1} x_{2} u_{1} \sqleq y$ and $u_{2} \sqleq v$, then $x_{1} x_{2} u_{1} p \sqleq y p \sqleq x_{1} r q$, so by cancellativity $x_{2} u_{1} p \sqleq r q$ and $u v^{-1} p q^{-1} = u_{1} u_{2} v^{-1} p q^{-1} \sqleq u_{1} p q^{-1} \sqleq u_{1} p q^{-1} \sqleq x_{2}^{-1} r$ (contraction and permutation).
\item $u^{-1} v \sqleq x y_{2}^{-1} y_{1}^{-1}$ and $p^{-1} q x y_{2}^{-1} \sqleq r$: analogous to the previous case.
\item $u v^{-1} \sqleq x_{2}^{-1} x_{1}^{-1} y$ and $p^{-1} q x_{2}^{-1} \sqleq r$: in that case $x_{1} x_{2} u \sqleq y v$ and $q \sqleq p r x_{2}$, so $p^{-1} q u v^{-1} \sqleq r x_{2} u v^{-1} \sqleq r x_{1}^{-1} y$ (contraction and permutation).
\item $u^{-1} v \sqleq x y_{2}^{-1} y_{1}^{-1}$ and $y_{1}^{-1} p q^{-1} \sqleq r$: analogous to the previous case. \qedhere
\end{enumerate}
\end{proof}

\begin{theorem}
  The pomonoid $\FreeUMon{\alg{M}}$ embeds into the pogroup $\FreeRedGroup{\alg{M}}$ if and only if $\alg{M}$ is integrally closed.
\end{theorem}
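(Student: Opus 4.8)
The plan is to handle the two implications separately, with the substantial one resting on the normal-proof theorem just established. For necessity, suppose $\FreeUMon{\alg{M}}$ embeds as a pomonoid into $\FreeRedGroup{\alg{M}}$. Since $\FreeRedGroup{\alg{M}}$ is a pogroup it is cancellative, and the quasi-inequalities defining cancellativity pass to every sub-pomonoid; hence $\FreeUMon{\alg{M}}$ is cancellative, and the proposition characterising when $\FreeUMon{\alg{M}}$ is cancellative then forces $\alg{M}$ to be integrally closed.

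For sufficiency, assume $\alg{M}$ is integrally closed. The canonical order-preserving monoid homomorphism $\FreeUMon{\alg{M}} \to \FreeRedGroup{\alg{M}}$, sending (the class of) a positive word to its $\sqleq$-class, has already been noted to exist, so it remains to show that it reflects the order, which in particular makes it injective and hence an embedding. Concretely, I would show that for positive words $u$ and $v$, if $u \sqleq v$ holds in $\FreeGroup{\alg{M}}$ then it already holds in $\FreeUMon{\alg{M}}$. Fix a normal proof of $u \sqleq v$: a block of negative-monotonicity steps, then contraction, then permutation, then expansion, then positive-monotonicity steps. The crux is the observation that a normal proof \emph{between two positive words} can only consist of positive-monotonicity steps. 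Indeed, the left-hand pattern of each of negative monotonicity, contraction, and permutation contains at least one negative letter (using the non-emptiness side conditions), so none of these rules can fire on $u$, nor on any word reachable from $u$ by such rules; hence the word is still $u$, and therefore positive, when the expansion block begins. But an expansion step applied to a positive word introduces negative letters, and no subsequent positive-monotonicity step — which merely replaces a positive subword by a single positive letter — can remove them, contradicting that the terminal word $v$ is positive; so the expansion block is empty too. The proof thus reduces to a chain of positive-monotonicity steps $\alpha \circ w \circ \beta \sqleq \alpha \circ [a] \circ \beta$ with $w \sqleq [a]$ in $\FreeUMon{\alg{M}}$, and since multiplication in $\FreeUMon{\alg{M}}$ is isotone each such step is valid there; transitivity then yields $u \sqleq v$ in $\FreeUMon{\alg{M}}$, as required.

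The genuine difficulty, namely the rewriting analysis underlying the normal-proof theorem, has already been carried out, so I do not anticipate further obstacles: what remains is the bookkeeping that positivity of the endpoints collapses a normal proof to pure positive monotonicity, together with the routine translation of such a chain back into an inequality of $\FreeUMon{\alg{M}}$.
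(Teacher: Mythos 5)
Your proposal is correct and follows essentially the same route as the paper: the converse via cancellativity of the pogroup passing to the embedded copy and the earlier proposition characterizing cancellativity of $\FreeUMon{\alg{M}}$, and the forward direction via the normal-proof theorem, observing that a normal proof between two positive words can contain only positive-monotonicity steps and hence already witnesses $u \sqleq v$ in $\FreeUMon{\alg{M}}$. Your more detailed justification of that observation (the other rules require a negative letter to fire, and expansion would introduce one that positive monotonicity cannot remove) is exactly the reasoning the paper leaves implicit.
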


\begin{proof}
  If $u$ and $v$ are words in $\FreeUMon{\alg{M}}$ such that $u \sqleq v$ in $\FreeRedGroup{\alg{M}}$, then there is a normal proof witnessing this. Since $u$ and $v$ are positive words, this proof cannot contain any rule other than positive monotonicity. In other words, this proof shows that $u \sqleq v$ in $\FreeUMon{\alg{M}}$. Conversely, if $\FreeUMon{\alg{M}}$ embeds into a pogroup, then it is cancellative, and therefore its nuclear image $\alg{M}$ is integrally closed.
\end{proof}

\begin{theorem}
  Integrally closed (integral) pomonoids are precisely the nuclear images of subpomonoids of (negative cones of) pogroups.
\end{theorem}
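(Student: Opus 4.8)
The plan is to establish the two directions of the characterization separately, using the preceding theorem (which embeds $\FreeUMon{\alg{M}}$ into a pogroup whenever $\alg{M}$ is integrally closed) for the hard direction. No new proof-theoretic work is needed; the present theorem is essentially an assembly of results already in hand.

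For the easy direction, let $\alg{N}$ be a subpomonoid of a pogroup $\alg{G}$ and let $\gamma$ be a nucleus on $\alg{N}$. Every pogroup is cancellative (multiply $\alpha x \leq \beta x$ on the right by $x^{-1}$), hence so is $\alg{N}$; and a cancellative pomonoid is integrally closed, since $x \cdot a \leq a = \1 \cdot a$ right-cancels to $x \leq \1$ and $a \cdot x \leq a = a \cdot \1$ left-cancels to $x \leq \1$. The integral-closure implications are simple quasi-inequalities, so the class of integrally closed pomonoids is closed under nuclear images by Theorem~\ref{thm: nuclear images of pomonoids}, and thus $\alg{N}_{\gamma}$ is integrally closed. (One may also check this directly: if $a \cdotgamma x \leq a$ in $\alg{N}_{\gamma}$ then $a \cdot x \leq \gamma(a \cdot x) \leq a$ in $\alg{N}$, so $x \leq \1 \leq \gamma(\1)$.) If in addition $\alg{N}$ is a subpomonoid of the negative cone $\alg{G}^{-}$, then $\1$ is the top element of $\alg{N}$, so $\alg{N}$ is integral, and then every $a \in \alg{N}_{\gamma}$ satisfies $a = \gamma(a) \leq \gamma(\1)$; hence $\alg{N}_{\gamma}$ is integral as well.

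For the converse, let $\alg{M}$ be integrally closed. By the proposition on cancellativity of free unital nuclear preimages, $\FreeUMon{\alg{M}}$ is cancellative, and by the preceding theorem the canonical order-preserving homomorphism $\FreeUMon{\alg{M}} \to \FreeRedGroup{\alg{M}}$ is also order-reflecting, hence an isomorphism of $\FreeUMon{\alg{M}}$ onto a subpomonoid $\alg{N}$ of the pogroup $\FreeRedGroup{\alg{M}}$. Transporting the nucleus $[\gamma]$ of $\FreeUMon{\alg{M}}$ along this isomorphism yields a nucleus on $\alg{N}$, and since the nuclear image of $[\gamma]$ on $\FreeUMon{\alg{M}}$ is isomorphic to $\alg{M}$ via $a \mapsto [a]$, so is the nuclear image of $\alg{N}$ under the transported nucleus. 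This exhibits $\alg{M}$ as a nuclear image of a subpomonoid of a pogroup. If moreover $\alg{M}$ is integral, then for every non-empty word $w$ over $\alg{M}$ we have $\gamma(w) \leq \1$ by integrality, so $w \sqleq [\1]$ in $\FreeUMon{\alg{M}}$ (take $w$ itself as the single block of the decomposition), whence $w \sqleq [\1]$ in $\FreeGroup{\alg{M}}$; combined with $[\1] \sqleq \emptyword$ this gives $w \leq \1$ in $\FreeRedGroup{\alg{M}}$. Thus $\alg{N}$ lies inside the negative cone $\FreeRedGroup{\alg{M}}^{-}$, and the same transported nucleus exhibits $\alg{M}$ as a nuclear image of a subpomonoid of a negative cone of a pogroup.

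The substantive content — producing, for an arbitrary integrally closed pomonoid $\alg{M}$, a cancellative pomonoid with $\alg{M}$ as nuclear image that actually embeds into a pogroup — is exactly what the preceding theorem supplies; because generic cancellative pomonoids need not embed into groups, Theorem~\ref{thm: integrally closed pomonoids} on its own would not give this sharper conclusion. The only additional point requiring care is the negative-cone containment verified above, which is immediate once one notes that integrality of $\alg{M}$ forces every word to lie below $[\1]$.
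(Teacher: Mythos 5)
Your proof is correct and follows the same route the paper intends: the theorem is stated there as an immediate consequence of the preceding results, namely that $\FreeUMon{\alg{M}}$ is cancellative and embeds into the pogroup $\FreeRedGroup{\alg{M}}$ exactly when $\alg{M}$ is integrally closed, plus the preservation of the (simple quasi-inequational) integrally closed and integral conditions under nuclear images. Your extra verifications (the direct check for the easy direction and the observation that integrality of $\alg{M}$ forces the image of $\FreeUMon{\alg{M}}$ into the negative cone via $w \sqleq [\1] \sqleq \emptyword$) are exactly the details the paper leaves implicit.
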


  If a commutative cancellative pomonoid is residuated, not only does it embed into its Abelian pogroup of fractions, but it is in fact isomorphic to a conuclear image of this pogroup with respect to the map $a^{-1} b \mapsto a \bs b$. In our particular case, this extends beyond the commutative case. However, one has to take into account that a generic element of $\FreeRedGroup{\alg{M}}$ has a more general form and therefore it can be parsed in different ways as a product of residuals.

 To state the appropriate theorem, we first need to introduce the appropriate generalization of conuclear images. We say that a subposet $Q$ of a poset $P$ is an \emph{ideal} subposet if the intersection of $Q$ with each non-empty finitely generated downset of $P$, or equivalently with each principal downset of $P$, is a non-empty finitely generated downset of $Q$. In other words, there is monotone map $\sigma_{Q}\colon P \to \Id P$ defined by $\sigma_{Q}(a) \assign \below(\below a \cap Q)$. We call such a map an \emph{ideal interior operator} and we call $Q$ the \emph{image} of this ideal interior operator. If the restriction of each principal downset of $P$ to $Q$ is a principal downset, then $\sigma_{Q}$ is in fact an interior operator.

  Equivalently, an ideal interior operator on $P$ may be defined as a monotone map $\sigma\colon P \to \Id P$ such that $\sigma^{\sharp}(\sigma(a)) = \sigma(a) \subseteq \below a$ for each $a \in P$, where $\sigma^{\sharp}\colon \Id P \to \Id P$ is the map
\begin{align*}
  f^{\sharp}( \below \{ x_{1}, \dots, x_{m} \}) & \assign \bigcup \set{f(x_{i})}{1 \leq i \leq m}.
\end{align*}
  Yet another possibility would be to define an ideal conucleus on $P$ as a conucleus on $\Id P$ which commutes with binary joins. Each ideal interior operator $\sigma$ on $P$ determines an ideal subposet $P_{\sigma}$, namely the subposet consisting of the \emph{$\sigma$-open} elements of $P$, i.e.\ those $a \in P$ such that $\below a = \sigma(a)$. Equivalently, $a \in P_{\sigma}$ if and only if $a$ is one of the finitely many maximal elements of $\sigma(b)$ for some $b \in P$.

  A subpomonoid of a pomonoid $\alg{M}$ which is also an ideal subposet will called an ideal subpomonoid. An \emph{ideal conucleus} on a pomonoid $\alg{M}$ is then an ideal interior operator $\sigma$ on~$\alg{M}$ such that for each $a, b \in \alg{M}$
\begin{align*}
  \sigma(a) \ast \sigma(b) & \subseteq \sigma(a \cdot b) & & \text{and} & & \sigma(\1) = \1.
\end{align*}
  The image of $\sigma$ (in the sense explained in the previous paragraph) is in fact a subpomonoid of $\alg{M}$ denoted $\alg{M}_{\sigma}$. We call $\alg{M}_{\sigma}$ an \emph{ideal conuclear image} of~$\alg{M}$.

\begin{fact}
  The map $\sigmaideal\colon \Id \alg{M} \to \Id \Id \alg{M}$ such that
\begin{align*}
  \sigmaideal(\below \{ x_{1}, \dots, x_{n} \}) & \assign \below \{ \below x_{1}, \dots, \below x_{n} \}
\end{align*}
  is an ideal conucleus on $\Id \alg{M}$ and $\alg{M} \iso (\Id \alg{M})_{\sigma}$ via the map $a \mapsto \below a$.
\end{fact}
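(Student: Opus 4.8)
The plan is to check, in turn, the three things packaged into the statement: that $\sigmaideal$ is a well-defined monotone map, that it is an ideal interior operator satisfying the two conuclear conditions, and that its image is a copy of $\alg{M}$. It is worth keeping in mind throughout that $\sigmaideal$ is simply the functorial action of $\Id$ on the canonical embedding $\below\colon \alg{M} \to \Id \alg{M}$, $a \mapsto \below a$; organizing the verification around this observation keeps the two nested applications of $\Id$ straight. First I would record the elementary fact that $\below x \subseteq \below y$ in $\alg{M}$ exactly when $x \leq y$. Hence $\below \{ x_1, \dots, x_m \} \subseteq \below \{ y_1, \dots, y_n \}$ in $\Id \alg{M}$ iff for each $x_i$ there is a $y_j$ with $\below x_i \subseteq \below y_j$, which is precisely the condition for $\below \{ \below x_1, \dots, \below x_m \} \subseteq \below \{ \below y_1, \dots, \below y_n \}$ in $\Id \Id \alg{M}$. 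This gives both that $\sigmaideal$ is well-defined on finitely generated downsets and that it is monotone.

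Next I would verify that $\sigmaideal$ is an ideal interior operator, i.e. $\sigmaideal^{\sharp}(\sigmaideal(a)) = \sigmaideal(a) \subseteq \below a$ for $a = \below \{ x_1, \dots, x_n \} \in \Id \alg{M}$. The inclusion $\sigmaideal(a) \subseteq \below a$ holds because each $\below x_i$ lies below $a$ in $\Id \alg{M}$, so $\below x_i$ is a member of the principal downset of $a$ taken in $\Id \Id \alg{M}$, whence $\below \{ \below x_1, \dots, \below x_n \} \subseteq \below a$. For the fixpoint equation, each $\below x_i$ is a principal downset of $\alg{M}$, so $\sigmaideal(\below x_i) = \below \{ \below x_i \}$, and since the downward closure of a finite subset of $\Id \alg{M}$ is the union of the downward closures of its members,
\begin{align*}
  \sigmaideal^{\sharp}(\sigmaideal(a)) = \bigcup_{i} \sigmaideal(\below x_i) = \bigcup_{i} \below \{ \below x_i \} = \below \{ \below x_1, \dots, \below x_n \} = \sigmaideal(a).
\end{align*}
For the conuclear conditions, the unit is handled by noting that $\sigmaideal(\1) = \sigmaideal(\below \1) = \below \{ \below \1 \}$ equals the principal downset of $\below \1$ in $\Id \alg{M}$, which is the unit of $\Id \Id \alg{M}$. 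For multiplicativity, write $a = \below \{ x_1, \dots, x_m \}$, $b = \below \{ y_1, \dots, y_n \}$; using $a \cdot b = \below \set{x_i y_j}{i, j}$ in $\Id \alg{M}$ and $\below x_i \ast \below y_j = \below (x_i y_j)$ in $\Id \alg{M}$, one computes that both $\sigmaideal(a) \ast \sigmaideal(b)$ and $\sigmaideal(a \cdot b)$ equal $\below \set{\below (x_i y_j)}{1 \leq i \leq m,\ 1 \leq j \leq n}$; in particular $\sigmaideal(a) \ast \sigmaideal(b) \subseteq \sigmaideal(a \cdot b)$.

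Finally I would identify the $\sigma$-open elements. Every principal downset $\below x$ is $\sigma$-open, since $\sigmaideal(\below x) = \below \{ \below x \}$ is precisely the principal downset of $\below x$ in $\Id \alg{M}$; conversely, if $a = \below \{ x_1, \dots, x_n \}$ is not principal, then $a \in \below a$ but $a \notin \sigmaideal(a)$, because $a \subseteq \below x_i$ would force $a = \below x_i$, so $a$ is not $\sigma$-open. Hence $(\Id \alg{M})_{\sigma} = \set{\below a}{a \in \alg{M}}$, and $a \mapsto \below a$ is an order isomorphism onto it (again by $\below x \subseteq \below y \iff x \leq y$) which moreover preserves multiplication and the unit, since $\below a \ast \below b = \below (a \cdot b)$ and $\below \1$ is the unit of $\Id \alg{M}$. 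Thus $\alg{M} \iso (\Id \alg{M})_{\sigma}$. The computations are entirely routine; the only point demanding any attention is the bookkeeping of the two levels of $\Id$, namely consistently distinguishing downward closures taken in $\alg{M}$ from those taken in $\Id \alg{M}$, and this is a matter of notation rather than of mathematical substance.
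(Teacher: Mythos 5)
Your verification is correct, and it is exactly the routine check the paper leaves implicit: this Fact is stated without proof, being regarded as immediate from the definitions of $\Id$, ideal interior operator, and ideal conucleus. Your bookkeeping of the two levels of downward closure, the identification of the $\sigma$-open elements with the principal downsets, and the observation that $\sigmaideal$ is just $\Id$ applied to the embedding $a \mapsto \below a$ all match the intended reading.
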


\begin{fact}
  Each ideal subpomonoid (in particular, each ideal conuclear image) of an ideally residuated pomonoid $\alg{M}$ is an ideally residuated subpomonoid of~$\alg{M}$.
\end{fact}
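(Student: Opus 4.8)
The plan is to transport the ideal residuals of $\alg{M}$ to $\alg{N}$ along the inclusion map. First I would fix $a, c \in \alg{N}$ and record that, because $\alg{N}$ is a subpomonoid of $\alg{M}$, both the multiplication and the order of $\alg{N}$ are the restrictions to $\alg{N}$ of those of $\alg{M}$; hence the set of solutions in $\alg{N}$ of the inequality $a \cdot x \leq c$ is exactly the trace on $\alg{N}$ of the set of its solutions in $\alg{M}$, and likewise for $x \cdot a \leq c$. By the ideal residuation of $\alg{M}$ there are $r_{1}, \dots, r_{n} \in \alg{M}$ with $a \bsps c = \below \{ r_{1}, \dots, r_{n} \}$ in $\Id \alg{M}$, so that
\begin{align*}
  \set{x \in \alg{N}}{a \cdot x \leq c} = \alg{N} \cap \below \{ r_{1}, \dots, r_{n} \} = (\alg{N} \cap \below r_{1}) \cup \dots \cup (\alg{N} \cap \below r_{n}).
\end{align*}

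Next I would invoke the hypothesis that $\alg{N}$ is an ideal subposet of $\alg{M}$: each intersection $\alg{N} \cap \below r_{i}$ is then a non-empty finitely generated downset of $\alg{N}$. Since a finite union of non-empty finitely generated downsets is again a non-empty finitely generated downset --- the union of their generating sets generates the union --- the displayed set is a non-empty finitely generated downset of $\alg{N}$. Running the same argument with the left residual shows $\set{x \in \alg{N}}{x \cdot a \leq c}$ is a non-empty finitely generated downset of $\alg{N}$ as well. Taking these two downsets to be the values of $\bsps$ and $\sps$ on $\alg{N}$ then witnesses that $\alg{N}$ is ideally residuated; that $\alg{N}$ is a subpomonoid of $\alg{M}$ is part of the assumption, and the statement about ideal conuclear images is the instance $\alg{N} = \alg{M}_{\sigma}$, since such an image is by definition an ideal subpomonoid.

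There is no genuine obstacle here; the proof is a short chase through the definitions. The only point that deserves a moment's care is the first one --- that the residual computed inside $\alg{N}$ literally coincides with the restriction to $\alg{N}$ of the residual computed inside $\alg{M}$ --- and this is immediate once one notes that $\alg{N}$ inherits its operations and order from $\alg{M}$ as a substructure.
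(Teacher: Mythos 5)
Your proof is correct: the paper states this fact without proof, treating it as an immediate consequence of the definitions, and your argument is exactly that definition-chase (the solution set in $\alg{N}$ is the trace on $\alg{N}$ of a non-empty finitely generated downset of $\alg{M}$, which the ideal-subposet condition turns into a non-empty finitely generated downset of $\alg{N}$). The only cosmetic remark is that the decomposition into principal downsets is not even needed, since the ideal-subposet condition is stated for arbitrary non-empty finitely generated downsets of $\alg{M}$, not just principal ones.
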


\begin{theorem}
  Let $\alg{M}$ be an ideally residuated integrally closed pomonoid. Then $\FreeUMon{\alg{M}}$ is an ideal conuclear image of $\FreeRedGroup{\alg{M}}$.
\end{theorem}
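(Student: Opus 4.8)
\emph{Proof proposal.} The plan is to realise $\FreeUMon{\alg{M}}$ as an ideal subpomonoid of $\FreeRedGroup{\alg{M}}$ and then check that the induced ideal interior operator satisfies the two conucleus laws. Since $\alg{M}$ is integrally closed, the embedding theorem identifies $\FreeUMon{\alg{M}}$ with the subpomonoid $Q$ of (equivalence classes of) positive words inside $G \assign \FreeRedGroup{\alg{M}}$; note that $\emptyword \in Q$ and that $Q$ is closed under the multiplication of $G$ because a product of positive words is positive. Write $D_{\alpha} \assign \set{u \in Q}{u \sqleq \alpha \text{ in } G}$ and define $\sigma \colon G \to \Id G$ by $\sigma(\alpha) \assign \below D_{\alpha}$. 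Once we know that every $D_{\alpha}$ is a non-empty finitely generated downset of $Q$, it follows that $Q$ is an ideal subpomonoid of $G$ and that $\sigma = \sigma_{Q}$ is an ideal interior operator; moreover its $\sigma$-open elements are exactly the members of $Q$, because $G$ is a pogroup and hence antisymmetric, so an element is $\sigma$-open precisely when it is $\sqleq$-equivalent to (and therefore equal to) a positive word.

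Granting this, the conucleus laws are immediate. We have $\sigma(\1) = \1$ since $\emptyword \in Q$; and $\sigma(\alpha) \ast \sigma(\beta) \subseteq \sigma(\alpha \cdot \beta)$ because for $u \in D_{\alpha}$ and $v \in D_{\beta}$ the word $u \circ v$ is positive and satisfies $u \circ v \sqleq \alpha \cdot \beta$ by isotonicity of multiplication in $G$, so $u \circ v \in D_{\alpha \cdot \beta}$ and hence $\below(D_{\alpha} \cdot D_{\beta}) \subseteq \below D_{\alpha \cdot \beta}$. Thus $\sigma$ is an ideal conucleus on $\FreeRedGroup{\alg{M}}$, its image $G_{\sigma}$ equals $Q \iso \FreeUMon{\alg{M}}$, and this is exactly the assertion of the theorem.

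So everything reduces to showing that $D_{\alpha}$ is a non-empty, finitely generated downset of $Q$ for each $\alpha \in G$, and this is the main obstacle. That $D_{\alpha}$ is downward closed in $Q$ is clear, since the embedding $Q \hookrightarrow G$ is order preserving. For the rest I would represent $\alpha$ in the standard alternating form $q_{0} \circ p_{1}^{-1} \circ q_{1} \circ \dots \circ p_{k}^{-1} \circ q_{k}$ with each $p_{i}$ positive and non-empty and each $q_{i}$ positive, and argue by induction on $k$. If $k = 0$ then $\alpha$ is positive and $D_{\alpha}$ is the principal downset of $\alpha$ in $Q$, by order reflection of the embedding. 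If $k \geq 1$, I would invoke the normal-proof theorem (applicable since $\alg{M}$ is integrally closed): as $u$ is a positive word, a normal proof of $u \sqleq \alpha$ can consist only of a block of expansions followed by a block of positive monotonicities, so $u \sqleq \alpha$ holds exactly when $u$ can be expanded to a word $w_{0} \circ p_{1}^{-1} \circ w_{1} \circ \dots \circ p_{k}^{-1} \circ w_{k}$ with $w_{i} \sqleq q_{i}$ in $\FreeUMon{\alg{M}}$. Carefully analysing the finitely many shapes such an expansion can take — the tedious, combinatorial core of the argument, reflecting the observation flagged in the introduction that a generic group element can be parsed in several ways as an iterated product of residuals — one finds that $D_{\alpha}$ is a finite union of downsets of the form $\below(w_{0} \circ R_{1} \circ w_{1} \circ \dots \circ R_{l} \circ w_{l})$, where the $w_{i}$ are sub-words determined by $\alpha$ and each $R_{i}$ is a value of one of the ideal residuals $\bsps$, $\sps$ of $\FreeUMon{\alg{M}}$. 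Since $\alg{M}$ is ideally residuated, so is $\FreeUMon{\alg{M}}$, whence these residuals are always non-empty and finitely generated; as finite unions of finitely generated downsets are finitely generated, $D_{\alpha}$ is non-empty and finitely generated, closing the induction. (Ideal residuation, and not merely cancellativity, is genuinely needed here: already $D_{[a]^{-1}} = \set{u \in \FreeUMon{\alg{M}}}{\gamma(u) \cdot a \leq \1}$, which can be empty when $\alg{M}$ is only integrally closed, for instance for the natural numbers under addition.)
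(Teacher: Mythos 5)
Your framing is the same as the paper's: after the embedding theorem identifies $\FreeUMon{\alg{M}}$ with the positive words of $\FreeRedGroup{\alg{M}}$, everything reduces to showing that $D_{\alpha} = \set{u \text{ positive}}{u \sqleq \alpha}$ is a non-empty finitely generated downset for every $\alpha$; your verification of the two conucleus laws, the identification of the $\sigma$-open elements with the positive words, and the observation that ideal residuation (not mere cancellativity) is what secures non-emptiness --- including the example $D_{[a]^{-1}}$ over $\langle \mathbb{N}, + \rangle$ --- are all correct and agree with the paper, whose proof likewise amounts to producing finitely many positive words $u_{1}, \dots, u_{k}$ with $u \sqleq \alpha$ iff $u \sqleq u_{i}$ for some $i$, and setting $\sigma(\alpha) \assign \below \{ u_{1}, \dots, u_{k} \}$.

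The genuine gap is that the finite generation of $D_{\alpha}$ --- the entire content of the theorem and the whole body of the paper's proof --- is asserted rather than proved. Your normal-form characterization (above a positive word a normal proof consists of expansions followed by positive monotonicities, so $u \sqleq \alpha$ iff $u$ expands to some $w_{0} \circ p_{1}^{-1} \circ w_{1} \circ \dots \circ p_{k}^{-1} \circ w_{k}$ with $w_{i} \sqleq q_{i}$) is fine, but the following claim, that a ``careful analysis'' exhibits $D_{\alpha}$ as a finite union of downsets $\below(w_{0} \circ R_{1} \circ w_{1} \circ \dots \circ R_{l} \circ w_{l})$ with the $w_{i}$ subwords of $\alpha$ and each $R_{i}$ a single value of $\bsps$ or $\sps$, is exactly the point at issue and is doubtful as stated: a negative block $p_{i}^{-1}$ of $\alpha$ need not be created by one expansion (its letters may be introduced by several expansions at different stages and positions), and the positive words to which later expansions apply contain solutions produced by earlier expansions, so the relevant solution sets are nested residuals of words that are not determined by $\alpha$ alone. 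Moreover, your induction on the number $k$ of negative blocks is never actually invoked in the inductive step, so the induction does not close. The paper disciplines this combinatorics in two moves that you would need (or have to replace): it first commutes the positive-monotonicity instances past the expansions, obtaining a purely expansive proof of $u' \sqleq \alpha$ for some positive $u' \sqgeq u$; it then inducts on the number of negative \emph{letters}, peeling off only the topmost expansion, whose right-hand side is one of the finitely many parsings of a subword of $\alpha$ in the form $p^{-1} q r^{-1}$, and whose possible outcomes are bounded, via the ideal residuation of $\FreeUMon{\alg{M}}$, by the finitely many maximal solutions $x$ of $p \circ x \circ r \sqleq q$; each such replacement strictly lowers the rank, and iterating yields the finite set of positive generators. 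Without this (or an equivalent organization) your inductive step is an assertion rather than an argument.
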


\begin{proof}
  Consider some $\alpha \in \FreeGroup{\alg{M}}$. We show that there are finitely many positive words $u_{1}, \dots, u_{k}$ such that for each positive word $u$ we have $u \sqleq \alpha$ in $\FreeGroup{\alg{M}}$ if and only if $u \sqleq u_{i}$ for in $\FreeUMon{\alg{M}}$ for some $u_{i}$. It will immediately follow that the map $\sigma\colon \alpha \mapsto \below \{ u_{1}, \dots, u_{k} \}$ is an ideal conucleus: if $u \in \sigma(\alpha)$ and $v \in \sigma(\beta)$, i.e. $u \sqleq \alpha$ and $v \sqleq \beta$ in $\FreeGroup{\alg{M}}$, then $u \circ v \sqleq \alpha \circ \beta$, i.e.\ $u \circ v \in \sigma(\alpha \circ \beta)$.

  Let $u$ be a positive word such that $u \sqleq \alpha$. Then there is a normal proof witnessing this. Because $u$ is a positive word and the proof is normal, the proof does not contain any instances of negative monotonicity, contraction, and permutation. It only contains some instances of expansion, followed by some instances of positive monotonicity. We may assume that different instances of positive monotonicity are independent in the sense that below an instance $p \sqleq [a]$ positive monotonicity is never applied to any letter within this instance of $p$. (Otherwise we may collapse the two instances into one.) Let us track each such instance of $p$ through the proof. There are four options.

  (i) No subword of $p$ is a result of expansion. We then remove the monotonicity rule and replace $u$ by $\beta p \gamma$ by $u' \assign \beta [a] \gamma$. Clearly $u \sqleq u' \sqleq \alpha$.

  (ii) The word $p$ appears as a result of an expansion of the form $x \sqleq v^{-1} w_{1} p w_{2} y^{-1}$. We then replace this expansion by $x \sqleq v^{-1} w_{1} [a] w_{2} y^{-1}$ and remove the corresponding instance of the monotonicity rule.

  (iii) The word $p$ appears as a result of an expansion of the form $p_{1} x \sqleq p_{1} p_{2} v w^{-1}$, where $p = p_{1} p_{2}$. We then replace this expansion by $p_{1} x \sqleq [a] v w^{-1}$ and remove the monotonicity rule.

  (iv) The word $p$ appears as a result of an expansion of the form $x p_{2} \sqleq v^{-1} w p_{1} p_{2}$, where $p = p_{1} p_{2}$. We then replace this expansion by $x p_{2} \sqleq v^{-1} w [a]$ and remove the monotonicity rule. We thus obtain a proof of $u' \sqleq \alpha$ for some $u'$ such that $u \sqleq u'$ which only consists of instances of expansion. Let us call such proofs \emph{expansive}.

  We now define the \emph{rank} of a word in $\FreeGroup{\alg{M}}$ as the number of negative letters in it. Let $n$ be the rank of $\alpha$. We prove that there are finitely many words $\alpha_{1}, \dots, \alpha_{m}$ of rank at most $n-1$ such that if $u \sqleq \alpha$ has an expansive proof, then $u \sqleq \alpha_{i}$ for some $\alpha_{i}$. Applying this claim $n$ times will yield the required finite set of positive words $u_{1}, \dots, u_{n}$.

  The claim holds because within $\alpha$ there are only finitely many possible choices for the right-hand side $u^{-1} v w^{-1}$ of an instance of expansion. Moreover, recall that $\FreeUMon{\alg{M}}$ is ideally residuated because $\alg{M}$ is. For each such subword $u^{-1} v w^{-1}$ of $\alpha$ there are therefore finitely many maximal solutions $x$ to the inequality $x \sqleq u^{-1} v w^{-1}$. This yields our finite set of words $\alpha_{1}, \dots, \alpha_{m}$. Because either $u$ or $w$ is non-empty (by the definition of expansion), the rank of these is at most $n-1$.
\end{proof}

  In the following theorem, we call an ideal conucleus $\sigma$ \emph{negative} if $\sigma(x) \subseteq \below \1$.

\begin{theorem}
  Integrally closed (integral) ideally residuated pomonoids are precisely the nuclear images of (negative) ideal conuclear images of pogroups.
\end{theorem}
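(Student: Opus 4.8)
The plan is to assemble the statement from the results already established: the real work is done by the preceding theorems, and what remains is to combine them and to match the parenthetical adjectives (``integral'' on the pomonoid side with ``negative'' on the group side).

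\textbf{Reverse inclusion.} Suppose $\alg{N} \iso (\alg{G}_{\sigma})_{\gamma}$, where $\alg{G}$ is a pogroup, $\sigma$ an ideal conucleus on $\alg{G}$, and $\gamma$ a nucleus on $\alg{G}_{\sigma}$. Since a pogroup is residuated (with $a \bs b = a^{-1} b$ and $b / a = b a^{-1}$), hence ideally residuated, the Fact on ideal subpomonoids shows that $\alg{G}_{\sigma}$ is an ideally residuated subpomonoid of $\alg{G}$; being a subpomonoid of a group, it is cancellative, and cancellativity yields integral closure (from $a \cdot x \leq a = a \cdot \1$ one obtains $x \leq \1$, and symmetrically). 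I would then establish two closure properties of a nuclear image $\alg{M}_{\gamma}$ of an integrally closed ideally residuated pomonoid $\alg{M}$. First, $\alg{M}_{\gamma}$ is integrally closed, because integral closure is expressed by simple quasi-inequalities, which transfer to nuclear images by Fact~\ref{fact: simple quasi-inequalities}. Second, $\alg{M}_{\gamma}$ is ideally residuated: its residuals agree with those of $\alg{M}$, and since $\gamma(a \cdot x) \leq c \iff a \cdot x \leq c$ for $c$ closed, the solution set $\{x \in \alg{M}_{\gamma} : a \cdot_{\gamma} x \leq c\}$ is the intersection with $\alg{M}_{\gamma}$ of the non-empty finitely generated downset $\below \{ r_{1}, \dots, r_{n} \}$ of solutions in $\alg{M}$; one checks that this intersection equals $\below_{\alg{M}_{\gamma}} \{ \gamma(r_{1}), \dots, \gamma(r_{n}) \}$, using $a \cdot \gamma(r_{i}) \leq \gamma(a) \cdot \gamma(r_{i}) \leq \gamma(a \cdot r_{i}) \leq c$ for the inclusion ``$\supseteq$'' and $x = \gamma(x) \leq \gamma(r_{i})$ (when $x$ is closed with $x \leq r_{i}$) for ``$\subseteq$''. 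If $\sigma$ is moreover negative then $\alg{G}_{\sigma} \subseteq {\below \1}$, so $\alg{G}_{\sigma}$ is integral, and $\alg{N}$ is then integral as well, with top element $\gamma(\1)$.

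\textbf{Forward inclusion.} Let $\alg{M}$ be an integrally closed ideally residuated pomonoid and consider $\FreeUMon{\alg{M}}$. By the Proposition characterising cancellativity it is cancellative; by the Fact computing its residuals it is ideally residuated; and by the theorem immediately preceding the present one it is an ideal conuclear image $(\FreeRedGroup{\alg{M}})_{\sigma}$ of the pogroup $\FreeRedGroup{\alg{M}}$. Since the nuclear image of $[\gamma]$ on $\FreeUMon{\alg{M}}$ is isomorphic to $\alg{M}$, this exhibits $\alg{M} \iso ((\FreeRedGroup{\alg{M}})_{\sigma})_{[\gamma]}$ as a nuclear image of an ideal conuclear image of a pogroup. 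For the integral case I would first check that $\FreeUMon{\alg{M}}$ is integral whenever $\alg{M}$ is: given non-empty words $u, v$, absorbing the surplus factors of $u \circ v$ into a single block and using integrality of $\alg{M}$ gives $u \circ v \sqleq u$ and $u \circ v \sqleq v$, with $[\1]$ the top. Because $\FreeRedGroup{\alg{M}}$ is the free pogroup over $\FreeUMon{\alg{M}}$, the unit $[\1]$ of $\FreeUMon{\alg{M}}$ maps to the unit of $\FreeRedGroup{\alg{M}}$, and since the embedding $\FreeUMon{\alg{M}} \hookrightarrow \FreeRedGroup{\alg{M}}$ is order-reflecting (by the normal-form theorem), every element of $\FreeUMon{\alg{M}}$ lies in the negative cone of $\FreeRedGroup{\alg{M}}$. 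As the image of $\sigma$ is exactly $\FreeUMon{\alg{M}}$, we get $\sigma(\alpha) \subseteq {\below \1}$ for all $\alpha$, i.e.\ $\sigma$ is negative, so $\alg{M}$ is a nuclear image of a negative ideal conuclear image of a pogroup.

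The substantive content of this theorem has already been spent in the earlier results — the normal-form argument for $\FreeGroup{\alg{M}}$ and the verification that $\FreeUMon{\alg{M}}$ is an ideal conuclear image of $\FreeRedGroup{\alg{M}}$. The only new ingredient is the (routine) observation that nuclear images preserve ideal residuation, and the only step needing care is the ``integral $\leftrightarrow$ negative'' correspondence, which hinges on $\FreeUMon{\alg{M}}$ sitting inside the negative cone of its free pogroup; that in turn relies on order-reflection, precisely the point where integral closure of $\alg{M}$ is used.
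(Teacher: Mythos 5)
Your proposal is correct and takes essentially the approach the paper intends: the theorem is stated there without proof as a direct assembly of the preceding results (cancellativity of $\FreeUMon{\alg{M}}$ for integrally closed $\alg{M}$, preservation of ideal residuation by the free unital nuclear preimage, the theorem exhibiting $\FreeUMon{\alg{M}}$ as an ideal conuclear image of the pogroup $\FreeRedGroup{\alg{M}}$, and preservation of the simple integrally-closed quasi-inequalities under nuclear images), and you combine exactly these ingredients, additionally writing out the routine check that nuclear images preserve ideal residuation and the integral/negative correspondence. One cosmetic remark: to place $\FreeUMon{\alg{M}}$ inside the negative cone of $\FreeRedGroup{\alg{M}}$ you only need that the canonical map is order-preserving (the trivial direction, via positive monotonicity); order-reflection is what makes it an embedding, not what yields negativity.
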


\begin{theorem}
  Integral residuated pomonoids are precisely the nuclear images of negative ideal conuclear images of pogroups w.r.t.\ an ideal conucleus $\sigma$ such that $\sigma(u^{-1} v)$ and $\sigma(u v^{-1})$ are principal downsets whenever $u$ and $v$ lie in the ideal conuclear image.
\end{theorem}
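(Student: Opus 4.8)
The plan is to derive this as a residuated refinement of the preceding theorem. The point will be that the residuation hypothesis on $\alg{M}$ is exactly what promotes the ideal residuals of $\FreeUMon{\alg{M}}$ to honest residuals, and hence promotes the ideal conucleus that exhibits $\FreeUMon{\alg{M}}$ as an ideal conuclear image of $\FreeRedGroup{\alg{M}}$ to one of the required kind; conversely, negativity together with the principality condition forces the conuclear image to be an integral residuated pomonoid.

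For the right-to-left inclusion, let $\alg{G}$ be a pogroup and $\sigma$ a negative ideal conucleus on $\alg{G}$ such that $\sigma(u^{-1} v)$ and $\sigma(u v^{-1})$ are principal for all $u, v \in \alg{G}_{\sigma}$. I would first check that $\alg{G}_{\sigma}$ is an \emph{integral residuated} pomonoid. Integrality is immediate: $\1 \in \alg{G}_{\sigma}$ (as $\sigma(\1) = \1$) and every $\sigma$-open element lies in some $\sigma(b) \subseteq \below \1$, so $\1$ is the top. For residuation, since the order and multiplication of $\alg{G}_{\sigma}$ are inherited from the pogroup $\alg{G}$, the set $\set{x \in \alg{G}_{\sigma}}{u \cdot x \leq v}$ coincides with $\set{x \in \alg{G}_{\sigma}}{x \leq u^{-1} v} = \sigma(u^{-1} v) \cap \alg{G}_{\sigma}$, which is principal by hypothesis and hence has a largest element, namely the residual $u \bs v$; symmetrically $\set{x \in \alg{G}_{\sigma}}{x \cdot u \leq v} = \sigma(v u^{-1}) \cap \alg{G}_{\sigma}$, and as $u, v$ range over $\alg{G}_{\sigma}$ the downsets $\sigma(v u^{-1})$ range over the same family as $\sigma(u v^{-1})$, so this is principal too. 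Thus $\alg{G}_{\sigma}$ is an integral residuated pomonoid, and since nuclear images preserve residuals (as recalled in the preliminaries) and preserve integrality (because $\gamma(\1) = \1$ whenever $\1$ is the top), every nuclear image of $\alg{G}_{\sigma}$ is an integral residuated pomonoid.

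For the converse, let $\alg{M}$ be an integral residuated pomonoid. Being integral it is integrally closed, so by the Proposition on cancellativity $\FreeUMon{\alg{M}}$ is cancellative; moreover it is integral (each $[a_{1}, \dots, a_{n}] \sqleq [\1]$ since $a_{1} \cdots a_{n} \leq \1$ by integrality) and residuated (by the Fact computing the residuals of $\FreeUMon{\alg{M}}$ in the integral residuated case), and its nuclear image is isomorphic to $\alg{M}$. By the theorem exhibiting $\FreeUMon{\alg{M}}$ as an ideal conuclear image of $\FreeRedGroup{\alg{M}}$, we have $\FreeUMon{\alg{M}} = (\FreeRedGroup{\alg{M}})_{\sigma}$, where $\sigma$ sends $\alpha$ to the downset of positive words below $\alpha$. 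Since $\FreeUMon{\alg{M}}$ is integral, every positive word is $\sqleq [\1] \sim \emptyword$, so $\sigma$ is negative. Finally, for $u, v \in \FreeUMon{\alg{M}}$ I would argue that $w \sqleq u^{-1} v$ in $\FreeGroup{\alg{M}}$ is equivalent, by left multiplication by $u$ together with $u \circ u^{-1} \sqleq \emptyword \sqleq u^{-1} \circ u$, to $u \circ w \sqleq v$; as $u \circ w$ and $v$ are positive words and $\FreeUMon{\alg{M}}$ embeds order-reflectingly into $\FreeRedGroup{\alg{M}}$ (because $\alg{M}$ is integrally closed), this holds iff $u \circ w \sqleq v$ in $\FreeUMon{\alg{M}}$, i.e.\ iff $w \leq u \bs v$. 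Hence $\sigma(u^{-1} v) = \below (u \bs v)$ is principal, and symmetrically $\sigma(u v^{-1}) = \below (v / u)$ is principal. Therefore $\alg{M}$ is the nuclear image of the negative ideal conuclear image $(\FreeRedGroup{\alg{M}})_{\sigma}$ of the pogroup $\FreeRedGroup{\alg{M}}$, with $\sigma$ of the required form.

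The substantive work is all imported from the cited results; the two points needing care are the bookkeeping matching each residual to the correct one of $\sigma(u^{-1} v)$, $\sigma(u v^{-1})$, $\sigma(v u^{-1})$, and the reduction of the group-level inequality $w \sqleq u^{-1} v$ to the $\FreeUMon{\alg{M}}$-level inequality $u \circ w \sqleq v$, which rests on the order-reflectivity of the embedding $\FreeUMon{\alg{M}} \hookrightarrow \FreeRedGroup{\alg{M}}$ and ultimately on the normal-form theorem for $\FreeGroup{\alg{M}}$. I do not anticipate any obstacle beyond these.
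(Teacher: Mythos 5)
Your proposal is correct and follows essentially the same route as the paper: the paper's proof is just the one-line observation that the principality condition on $\sigma(u^{-1}v)$ and $\sigma(uv^{-1})$ is equivalent to the ideal conuclear image being residuated rather than merely ideally residuated, with everything else imported from the preceding theorems (cancellativity and residuation of $\FreeUMon{\alg{M}}$, and its realization as an ideal conuclear image of $\FreeRedGroup{\alg{M}}$), which is exactly what you unwind. The only blemish is a harmless bookkeeping swap at the end: $\sigma(uv^{-1})$ is $\below(u/v)$, not $\below(v/u)$, which does not affect the principality claim.
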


\begin{proof}
  The condition on $\sigma$ is equivalent to the claim that the ideal conuclear image is residuated (rather than merely ideally residuated).
\end{proof}

  Unlike in the lattice-ordered case, we cannot easily replace negative ideal nuclear images here by ideal conuclear images of negative cones here. This is because the negative cone conucleus $a \mapsto \1 \wedge a$ is not available in the partially ordered setting. An ideal conucleus on the negative cone might therefore not decompose, as it does in the lattice-ordered case, into the negative cone conucleus followed by an ideal conucleus on the negative cone.

\section{Nuclear images of cancellative \texorpdfstring{s$\ell$-monoids}{sl-monoids}}

  Describing the nuclear images of cancellative s$\ell$-monoids turns out to be somewhat more difficult. The non-integral non-commutative case will be left open.

\begin{lemma} \label{lemma: cancellativity of id}
  The s$\ell$-semigroup $\Id \alg{S}$ is cancellative if and only if $\alg{S}$ satisfies the following universal sentences for each $n \geq 1$:
\begin{align*}
  x_1 y \leq x_2 z_2 ~ \& ~ \dots ~ \& ~ x_{n-1} y \leq x_n z_n ~ \& ~ x_n y \leq x_1 z_1 \implies y \leq z_i \text{ for some } z_i, \\
  y x_1 \leq z_2 x_2 ~ \& ~ \dots ~ \& ~ y x_{n-1} \leq z_n x_n ~ \& ~ y x_n \leq z_1 x_1 \implies y \leq z_i \text{ for some } z_i,
\end{align*}
\end{lemma}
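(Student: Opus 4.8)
The plan is to prove both implications by translating cancellativity of $\Id\alg{S}$ into a concrete statement about finitely generated downsets, and then recognizing the displayed universal sentences as exactly its combinatorial content. By the left--right duality of multiplication it will suffice to treat \emph{right} cancellativity, which will correspond to the second family of sentences (the one whose premises read $y x_i \leq z_{i+1} x_{i+1}$); left cancellativity and the first family are then handled by the mirror argument. Recall that in an s$\ell$-semigroup the order form of cancellativity, $A \ast C \leq B \ast C \implies A \leq B$, is equivalent to the equational form, so it is enough to work with the order form. I would first record the routine dictionary: writing $A = \below\{a_1,\dots\}$, $B = \below\{b_1,\dots,b_m\}$, $C = \below\{c_1,\dots,c_n\}$, the product $A \ast C$ is generated by the elements $a_i c_j$, and $X \subseteq Y$ in $\Id\alg{S}$ holds precisely when every generator of $X$ lies in $Y$. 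Thus right cancellativity of $\Id\alg{S}$ becomes: whenever for all $i,j$ there exist $k,j'$ with $a_i c_j \leq b_k c_{j'}$, then for all $i$ there exists $k$ with $a_i \leq b_k$; and since everything relevant concerns a single $a_i$ at a time, I may fix $a_i =: y$ and ignore the remaining generators of $A$.

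For the ``if'' direction I would proceed as follows. Fix $y$ and, for each $j \in \{1,\dots,n\}$, choose witnesses $k(j),j'(j)$ with $y c_j \leq b_{k(j)} c_{j'(j)}$. The assignment $j \mapsto j'(j)$ is a self-map of the finite nonempty set $\{1,\dots,n\}$, so iterating it from any point eventually enters a cycle $j_1 \mapsto j_2 \mapsto \dots \mapsto j_p \mapsto j_1$. Setting $x_\ell := c_{j_\ell}$ for $1 \leq \ell \leq p$, and $z_{\ell+1} := b_{k(j_\ell)}$ for $1 \leq \ell \leq p-1$ together with $z_1 := b_{k(j_p)}$, the chosen inequalities along the cycle become exactly the premises $y x_1 \leq z_2 x_2$, \dots, $y x_{p-1} \leq z_p x_p$, $y x_p \leq z_1 x_1$ of the $p$-th sentence of the second family (the case $p = 1$ being plain right cancellativity of $\alg{S}$, i.e.\ the $n=1$ instance). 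The hypothesis then yields $y \leq z_i$ for some $i$, and since each $z_i$ is one of the $b_k$, we obtain $y \leq b_k$ for some $k$, as required.

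For the ``only if'' direction I would assume $\Id\alg{S}$ right cancellative and, given the premises $y x_1 \leq z_2 x_2$, \dots, $y x_{p-1} \leq z_p x_p$, $y x_p \leq z_1 x_1$, form $A := \below y$, $B := \below\{z_1,\dots,z_p\}$, $C := \below\{x_1,\dots,x_p\}$. Each generator $y x_\ell$ of $A \ast C$ lies below the generator $z_{\ell+1} x_{\ell+1}$ of $B \ast C$ (reading indices cyclically, so $\ell = p$ gives $z_1 x_1$), whence $A \ast C \subseteq B \ast C$; right cancellativity then yields $A \subseteq B$, i.e.\ $y \in B$, i.e.\ $y \leq z_i$ for some $i$, which is the conclusion of the sentence. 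The first family of sentences and left cancellativity are dispatched by the mirror-image version of both arguments.

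I expect the only genuinely non-routine point to be the pigeonhole/cycle step in the ``if'' direction: one must notice that finiteness of the generating set of $C$ forces the successor map $j \mapsto j'(j)$ to enter a cycle, and then align the resulting cyclic chain of inequalities against the exact indexing of the $z_i$'s in the statement so that it matches an instance of the listed sentences verbatim — including getting the off-by-one in the $z$-indices right and treating the degenerate case $p = 1$. The remaining ingredients — the translation of $\ast$ and $\subseteq$ in $\Id\alg{S}$ into statements about generators, and the dualization to the other side — are bookkeeping.
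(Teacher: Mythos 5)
Your proposal is correct and follows essentially the same route as the paper's proof: the ``only if'' direction bundles the $x_i$'s and $z_i$'s into finitely generated downsets and cancels the common factor, and the ``if'' direction reduces to a principal downset $\below y$, picks witnesses for each generator of the cancelled factor, and follows the resulting successor map into a cycle to instantiate the listed quasi-inequality (the paper treats the left-handed family explicitly, you the right-handed one, which is just the mirror image). No gaps.
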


\begin{proof}
  Left to right, suppose that the inequalities $x_1 y \leq x_2 z_2$, \dots, $x_n y \leq x_1 z_1$ hold in $\alg{S}$, i.e.\ $\below x_1 \cdot \below y \leq \below x_2 \cdot \below z_2$, \dots, $\below x_n \cdot \below y \leq \below x_1 \cdot \below z_1$ in $\Id \alg{S}$. Taking $a \assign \below x_1 \vee \dots \vee \below x_n$, $b \assign \below y$, and $c \assign \below z_1 \vee \dots \vee \below z_n$ yields $a \cdot b \leq a \cdot c$ in $\Id \alg{S}$, hence $b \leq c$ in $\Id \alg{S}$ by cancellativity. But this means that $\below y \leq \below z_1 \vee \dots \vee \below z_n$ in $\Id \alg{S}$, i.e.\ $y \leq z_i$ in $\alg{S}$ for some $z_i$.

  Right to left, suppose that $a \cdot b \leq a \cdot c$ in $\Id \alg{S}$. We want to prove that $b \leq c$. We may assume without loss of generality that $b$ has the form $\below y$ for some $y \in \alg{S}$. Now consider $a = \below x_1 \vee \dots \vee \below x_n$ and $c = \below z_1 \vee \dots \vee \below z_m$ and suppose that $a \cdot \below y \leq a \cdot c$ in $\Id \alg{S}$. Then $\below (x_{i} \cdot y) = \below x_{i} \cdot \below y \leq (\below x_1 \vee \dots \vee \below x_n) \cdot (\below z_1 \vee \dots \vee \below z_m)$. It follows that for each $x_{i}$ there are $x_{j}$ and $z_{k}$ such that $x_{i} \cdot y \leq x_{j} \cdot z_{k}$ in $\alg{S}$.

  We inductively define $x'_i \in \{ x_1, \dots, x_n \}$ and $z'_j \in \{ z_1, \dots, z_m \}$ as follows. Take $x'_1 \assign x_1$. If $x'_i$ is defined, there are $x_j$ and $z_k$ such that $x'_i y \leq x_j z_k$. Let $x'_{i+1} \assign x_j$ and $z'_{i+1} \assign z_k$. Eventually, we reach a cycle, i.e.\ $x'_{i+1} = x'_{j}$ for some $j \leq i$. Then we apply the implication assumed in~$\alg{S}$ to obtain that $y \leq z'_i$ in $\alg{S}$ for some $z'_{i}$, therefore $b = \below y \leq \below z'_{i} \leq c$ in $\Id \alg{S}$.
\end{proof}

\begin{proposition}
  $\Id \FreeUMon{\alg{M}}$ is cancellative if $\alg{M}$ is an integral s$\ell$-monoid.
\end{proposition}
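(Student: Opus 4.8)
The plan is to apply Lemma~\ref{lemma: cancellativity of id} to $\alg{S} \assign \FreeUMon{\alg{M}}$, which reduces the claim to verifying, for every $n \geq 1$, the two cyclic families of universal sentences displayed there in the pomonoid $\FreeUMon{\alg{M}}$. These two families are interchanged by passing to the opposite s$\ell$-monoid $\alg{M}^{\mathrm{op}}$, which is again an integral s$\ell$-monoid and whose free unital nuclear preimage is (via word reversal) the opposite of $\FreeUMon{\alg{M}}$, so it suffices to treat one of them. Thus I want to show: if $x_{i} \circ y \sqleq x_{i+1} \circ z_{i+1}$ holds in $\FreeUMon{\alg{M}}$ for $1 \leq i \leq n$, with indices read cyclically so that $x_{n+1} = x_{1}$ and $z_{n+1} = z_{1}$, then $y \sqleq z_{i}$ for some $i$.

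First I would carry out some reductions using integrality. Since $\1$ is the top of $\alg{M}$, the word $[\1]$ is simultaneously the unit and the top element of $\FreeUMon{\alg{M}}$; hence if some $z_{i}$ equals $[\1]$ we are done, and if $y = [\1]$ we are done as well (a short length count on $\1$-reduced words shows the premises cannot hold unless every $z_{i}$ equals $[\1]$). The case where some $x_{i}$ equals $[\1]$ can be absorbed by induction on $n$, since $x_{i}$ is then the unit and the cycle effectively shortens. So I may assume that $y$ and all $x_{i}, z_{i}$ are $\1$-reduced words distinct from $[\1]$, i.e.\ genuine non-empty words none of whose letters is $\1$.

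Next, for each $i$ I would fix a decomposition $x_{i} \circ y = P_{i} \circ Q_{i}$ witnessing $x_{i} \circ y \sqleq x_{i+1} \circ z_{i+1}$, so $P_{i} \sqleq x_{i+1}$ and $Q_{i} \sqleq z_{i+1}$. Since $x_{i+1} \circ z_{i+1}$ is $\1$-reduced and $x_{i} \circ y$ is non-empty, the facts about integral pomonoids apply: the blocks of the witnessing decomposition are non-empty, $|P_{i}| \geq |x_{i+1}|$, $|Q_{i}| \geq |z_{i+1}|$, and, writing $a_{i} = |x_{i}|$, $b_{i} = |z_{i}|$, $\ell = |y|$, one gets $a_{i} + \ell \geq a_{i+1} + b_{i+1}$. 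Because $Q_{i}$ is a suffix of $x_{i} \circ y$, either $|Q_{i}| \leq \ell$, so $Q_{i}$ is a suffix of $y$ with $Q_{i} \sqleq z_{i+1}$ (and $Q_{i} = y$ would finish the argument), or $Q_{i} = d_{i} \circ y$ with $d_{i}$ a non-empty suffix of $x_{i}$, which forces $a_{i+1} < a_{i}$. In both cases one extracts the local inequality $a_{i+1} - a_{i} + b_{i+1} \leq \ell$.

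The last step is to combine these local facts around the cycle. Summing $a_{i+1} - a_{i} + b_{i+1} \leq \ell$ and using $\sum_{i}(a_{i+1} - a_{i}) = 0$ gives $\sum_{i} b_{i} \leq n\ell$, so some $z_{j}$ has $|z_{j}| \leq |y|$; the same telescoping forbids $a_{i+1} < a_{i}$ at every step. The crux is then to exploit the block structure at an extremal cycle index — for instance one where $a_{i}$ is largest, which forces the cut of the witnessing decomposition into the $y$-part — so as to promote the conclusion ``a proper suffix of $y$ lies below $z_{i+1}$'' to ``$y$ itself lies below some $z_{j}$''. Here I would also use that $\FreeUMon{\alg{M}}$ is cancellative, hence integrally closed, because $\alg{M}$ is integral and therefore integrally closed, in order to cancel common prefixes and suffixes while reshuffling the witnessing decompositions around the cycle. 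I expect this final step to be the main obstacle: the witnessing decompositions at distinct cycle indices are a priori incompatible, and integrality supplies length‑monotonicity only when the right‑hand side is $\1$-reduced, so a cut cannot be freely slid; making the bookkeeping go through — adjusting the decompositions around the cycle so that the whole word $y$ is transmitted into some $z_{j}$ — is the heart of the argument.
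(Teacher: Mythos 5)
Your reduction is the same as the paper's: you invoke Lemma~\ref{lemma: cancellativity of id} for $\FreeUMon{\alg{M}}$, treat one cyclic implication (the other by symmetry), and analyse where the cut of the witnessing decomposition of $x_i \circ y$ falls. But the proposal stops exactly where the proof has to be made: you explicitly flag the last step as an unresolved ``main obstacle,'' and the step you consider problematic is precisely where integrality does all the work. If for some $i$ the block $Q_i$ is a suffix of $y$, you are already done, with no ``promotion'' of a proper suffix needed: writing $y = u \circ Q_i$, integrality gives $\gamma(u) \leq \1$, hence $u \sqleq [\1]$ and $y = u \circ Q_i \sqleq [\1] \circ Q_i = Q_i \sqleq z_{i+1}$. (Your own telescoping remark, that $a_{i+1} < a_i$ cannot hold at every step of a cycle, shows this favourable case must occur; so the completed version of your sketch would in fact finish, and would even bypass cancellativity of $\FreeUMon{\alg{M}}$ --- but as written you draw neither conclusion, so the argument does not close.)

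For comparison, the paper handles the complementary case (every cut strictly inside $x_i$, so $x_i = p_i \circ q_i$ with $q_i$ non-empty and $p_i \sqleq x_{i+1} = p_{i+1} \circ q_{i+1}$, while $q_i \circ y \sqleq z_{i+1}$) by chaining these inequalities around the cycle to get $p_n \sqleq p_n \circ q_n \circ \cdots \circ q_1$, cancelling (using that $\FreeUMon{\alg{M}}$ is cancellative because $\alg{M}$ is integral, hence integrally closed) to obtain $[\1] \sqleq q_n \circ \cdots \circ q_1$, whence each $q_i$ is equivalent to $[\1]$ and $y \sqleq q_i \circ y \sqleq z_{i+1}$; the easy case where some cut lands in $y$ is dispatched by the integrality observation above. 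Your plan to ``cancel common prefixes and suffixes while reshuffling the witnessing decompositions'' gestures at this cyclic-chain argument but never produces it. So there is a genuine gap: the decisive step --- either the integrality observation or the cancellativity chain --- is missing, and is acknowledged as missing.
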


\begin{proof}
  We need to verify that the above two implications hold in $\FreeUMon{\alg{M}}$.

  Suppose that $x_i \circ y \sqleq x_{i+1} \circ z_{i+1}$ for each $x_i$, taking $n + 1 = 1$. Then there are decompositions $x_i = p_i \circ q_i$ and $y = u_i \circ v_i$ such that either $q_{i} = \emptyword$ or $u_{i} = \emptyword$ and moreover $p_i \circ u_i \sqleq p_{i+1} \circ q_{i+1}$ and $q_i \circ v_i \sqleq z_{i+1}$. Recalling that $w \circ \emptyword = w \circ [\1]$ and $\emptyword \circ w = [\1] \circ w$ in $\FreeUMon{\alg{M}}$, we may equivalently say that either $q_{i} = [\1]$ or $u_{i} = [\1]$. It now suffices to show that $u_{i} \sqleq q_{i}$ for some $i$, since this implies that $y = u_{i} \circ v_{i} \sqleq q_{i} \circ v_{i} \sqleq z_{i}$. In other words, we need to prove that if $p_{i} \circ u_{i} \sqleq p_{i+1} \circ q_{i+1}$ and either $u_{i} = [\1]$ or $q_{i} = [\1]$ for each $i$, then $u_{i} \sqleq q_{i}$ for some~$i$. Because $\alg{M}$ is integral, this is trivially true if $q_{i} = [\1]$ for some $q_{i}$. Otherwise, $u_{i} = [\1]$ for each $u_{i}$, so $p_{i} \sqleq p_{i+1} q_{i+1}$ for each $i$. Combining these inequalities yields $p_{n} \sqleq p_{n} \circ q_{n} \circ \ldots \circ q_{1}$, therefore by cancellativity $[\1] \sqleq q_{n} \circ \ldots \circ q_{1}$ and $[\1] \sqleq q_{i}$ for each $q_{i}$. By integrality, it follows that $u_{i} \sqleq [\1] = q_{i}$ for each~$q_{i}$.
\end{proof}

\begin{proposition}
  $\Id \FreeSem{\alg{S}}$ is cancellative if $\alg{S}$ is an integral s$\ell$-semigroup.
\end{proposition}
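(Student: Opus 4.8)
The plan is to invoke Lemma~\ref{lemma: cancellativity of id}: since $\alg{S}$ is an s$\ell$-semigroup, $\FreeSem{\alg{S}}$ is a posemigroup and $\Id\FreeSem{\alg{S}}$ an s$\ell$-semigroup, and it suffices to verify that $\FreeSem{\alg{S}}$ satisfies, for every $n \geq 1$, the two universal sentences displayed there. I will carry out the argument for the first of these; the second is its mirror image. Two preliminary observations do the work. First, $\FreeSem{\alg{S}}$ is \emph{integral}: since $\alg{S}$ is, we have $\gamma(u)\cdot b \leq b$ and $b\cdot\gamma(u) \leq b$ in $\alg{S}$, so decomposing $u \circ [b_1,\dots,b_n]$ as $(u\circ[b_1])\circ[b_2]\circ\cdots\circ[b_n]$ witnesses $u \circ v \sqleq v$, and symmetrically $u \circ v \sqleq u$, for all (nonempty) words $u,v$. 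Second, I recall from above that $\FreeSem{\alg{S}}$ is (upper) product distributive and that $u \sqleq v$ in $\FreeSem{\alg{S}}$ forces $u$ to be at least as long as $v$.

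Now fix $n \geq 1$, let $X_1,\dots,X_n,Y,Z_1,\dots,Z_n$ be nonempty words, read all indices cyclically, and assume $X_i \circ Y \sqleq X_{i+1} \circ Z_{i+1}$ for each $i$; I must exhibit $j$ with $Y \sqleq Z_j$. Applying product distributivity to $X_i \circ Y \sqleq X_{i+1} \circ Z_{i+1}$ yields nonempty words $A_i, B_i$ with $X_i \circ Y = A_i \circ B_i$, $A_i \sqleq X_{i+1}$, and $B_i \sqleq Z_{i+1}$. Consider where the concatenation point of $A_i$ and $B_i$ falls inside the word $X_i \circ Y$. If it falls at the end of $X_i$ or further to the right, then $B_i$ is a suffix of $Y$ (either $Y$ itself or a proper suffix), so $Y \sqleq B_i \sqleq Z_{i+1}$, using integrality in the case of a proper suffix, and we are done with $j = i+1$. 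Otherwise it falls strictly inside $X_i$, so $A_i$ is a proper prefix of $X_i$; since $A_i \sqleq X_{i+1}$, the length bound forces $X_{i+1}$ to be no longer than $A_i$, hence strictly shorter than $X_i$. Thus, if we never reach the first alternative, the lengths of $X_1,\dots,X_n$ satisfy $|X_1| > |X_2| > \cdots > |X_n| > |X_1|$, which is absurd. Therefore $Y \sqleq Z_j$ for some $j$, as required.

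The second universal sentence is handled identically, now decomposing $Y \circ X_i$, locating the split relative to the \emph{left} end of $X_i$, and invoking the other integrality inequality $u \circ v \sqleq u$ to dispatch the case where the split lies inside $Y$. The one point worth stressing — and the reason the semigroup case is, if anything, smoother than the monoidal case of the preceding proposition — is that decompositions in $\FreeSem{\alg{S}}$ never involve the empty word: that is exactly what makes the ``bad'' case strictly shorten $X_i$ and what allows the ``good'' case to be closed at once by integrality, after which the cyclic length argument finishes the proof.
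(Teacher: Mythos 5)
Your proof is correct, and while it follows the same overall skeleton as the paper's — reduce to Lemma~\ref{lemma: cancellativity of id}, unfold $X_i \circ Y \sqleq X_{i+1} \circ Z_{i+1}$ into a two-block decomposition, and close the case where the split lands at or beyond the end of $X_i$ by the integrality inequality $u \circ v \sqleq v$ (resp.\ $u \circ v \sqleq u$ for the mirrored sentence) — you handle the remaining case by a genuinely different device. The paper transplants its monoidal argument: when every split lands strictly inside $X_i$, writing $X_i = p_i \circ q_i$ with $p_i \sqleq X_{i+1}$ and $q_i \circ Y \sqleq Z_{i+1}$, it chains $p_i \sqleq p_{i+1} \circ q_{i+1}$ around the cycle to get $p_n \sqleq p_n \circ q_n \circ \cdots \circ q_1$, then uses integrality and the cancellativity of $\FreeSem{\alg{S}}$ (available from the earlier proposition, since integral posemigroups are integrally closed) to conclude $Y \sqleq q_i \circ Y \sqleq Z_{i+1}$. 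You instead exploit a feature specific to the semigroup setting: witnessing blocks are never empty, so a proper prefix $A_i$ of $X_i$ with $A_i \sqleq X_{i+1}$ forces $|X_{i+1}| \leq |A_i| < |X_i|$ by the length fact, and the cyclic chain of strict length decreases is impossible — the ``all-bad'' case simply never arises. This is more elementary (no appeal to cancellativity of $\FreeSem{\alg{S}}$ at all), at the cost of being non-uniform with the monoidal case: in $\FreeUMon{\alg{M}}$ blocks may collapse to $[\1]$, the length argument fails, and the corresponding case genuinely occurs, which is exactly where the paper's chain-plus-cancellativity step earns its keep.
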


\begin{proof}
  Essentially the same argument goes through if instead of taking $q_{i} = [\1]$ or $u_{i} = [\1]$ we directly remove $q_{i}$ or $u_{i}$ from the relevant inequalities. In the last step, integrality yields $p_{n} \circ v_{i} \sqleq p_{n} \circ q_{n} \circ \ldots \circ q_{1} \circ v_{i} \sqleq q_{i} \circ v_{i}$, so $v_{i} \sqleq q_{i} \circ v_{i}$ by cancellativity.
\end{proof}

\begin{theorem} \label{thm: integral sl-monoids}
  Integral s$\ell$-monoids (s$\ell$-semigroups) are precisely the nuclear images of [distributive] cancellative integral s$\ell$-monoids (s$\ell$-semigroups).
\end{theorem}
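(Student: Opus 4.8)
The plan is to read this theorem off from the work already done on free nuclear preimages, plus the cancellativity results just established. The key observation is that Theorem~\ref{thm: integral sl-monoids} has two directions, and the forward direction (nuclear images of cancellative integral s$\ell$-monoids are integral s$\ell$-monoids) is the easy one: we know by the earlier facts about $\Id \FreeMon{\alg{M}}$ and more fundamentally by Fact~\ref{fact: simple quasi-inequalities} that integrality, being a simple quasi-inequality (indeed an inequality $xy \leq x$, $xy \leq y$), is preserved by passing to the nuclear image. So if $\pair{\alg{N}}{\gamma}$ has $\alg{N}$ cancellative and integral, then $\alg{N}_\gamma$ is an integral s$\ell$-monoid. (We should double-check that ``integral'' for an s$\ell$-monoid really is equivalent to these simple inequalities in the s$\ell$-signature; it is, since $\1$ is the top element iff $x \vee \1 = \1$ iff $x \leq \1$, and then $xy \leq x\1 = x$ by monotonicity.)

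First I would handle the harder, converse direction: every integral s$\ell$-monoid $\alg{M}$ is the nuclear image of a cancellative integral s$\ell$-monoid. The natural candidate is $\alg{N} := \Id \FreeUMon{\alg{M}}$ with its nucleus $\gammaid$. By the earlier theorem on free semilattice-ordered unital nuclear preimages, $\pair{\alg{N}}{\gammaid}$ is a unital nuclear s$\ell$-monoid whose nuclear image is isomorphic to $\alg{M}$ via $a \mapsto \below [a]$. By the preceding Proposition, $\alg{N} = \Id \FreeUMon{\alg{M}}$ is cancellative precisely because $\alg{M}$ is an integral s$\ell$-monoid. It remains to observe that $\alg{N}$ is integral: this follows because $\alg{M}$ integral implies (by the earlier Fact) that $\FreeUMon{\alg{M}}$ is integral -- its unit $[\1]$ is the top element, with the shortest representatives being $\1$-reduced words -- and taking non-empty finitely generated downsets of an integral pomonoid yields an integral s$\ell$-monoid, with top element $\below[\1]$. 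So $\pair{\alg{N}}{\gammaid}$ is a nuclear cancellative integral s$\ell$-monoid with $\alg{N}_{\gammaid} \iso \alg{M}$, which proves the converse. For the parenthetical semigroup case, we use $\Id \FreeSem{\alg{S}}$ and the corresponding Proposition instead.

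For the bracketed ``[distributive]'' strengthening, I would note that $\Id P$ is always a distributive semilattice (this is stated as a Fact earlier), so $\Id \FreeUMon{\alg{M}}$ is automatically distributive as a join semilattice; this gives the distributivity claim for free. (If ``distributive'' here is meant in the stronger, lattice-distributivity or mixed product-distributivity sense, one checks it separately, but the Katri\v{n}\'ak-style distributivity of $\Id P$ is exactly the notion in play.) Combining the two directions with the isomorphism-closure of the class of integral s$\ell$-monoids gives the theorem, and the same argument with $\FreeSem{\alg{S}}$ in place of $\FreeUMon{\alg{M}}$ gives the s$\ell$-semigroup version.

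The main obstacle is not really an obstacle at this point in the paper: all the heavy lifting has been done in the two Propositions immediately preceding (cancellativity of $\Id \FreeUMon{\alg{M}}$ and $\Id \FreeSem{\alg{S}}$ under integrality) and in the structural facts about $\FreeUMon{\alg{M}}$ being integral when $\alg{M}$ is. The one point that requires a moment's care is confirming that integrality transfers correctly through each layer of the construction -- from $\alg{M}$ to $\FreeUMon{\alg{M}}$ (via $\1$-reduced words) and then from $\FreeUMon{\alg{M}}$ to $\Id \FreeUMon{\alg{M}}$ -- and that the nuclear image genuinely recovers $\alg{M}$ and not some quotient, which is guaranteed by the earlier theorem identifying $(\Id \FreeUMon{\alg{M}})_{\gammaid}$ with $\alg{M}$.
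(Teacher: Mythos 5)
Your proposal is correct and matches the paper's intended argument: the theorem is stated without proof precisely because it follows, exactly as you say, from the two preceding cancellativity propositions for $\Id \FreeUMon{\alg{M}}$ and $\Id \FreeSem{\alg{S}}$, the earlier theorem identifying their nuclear images with $\alg{M}$ (resp.\ $\alg{S}$), the distributivity of $\Id P$, and the preservation of integrality under nuclear images. Your checks that integrality transfers to $\FreeUMon{\alg{M}}$ and then to $\Id \FreeUMon{\alg{M}}$ are the right (routine) verifications.
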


  The above argument fails badly in the commutative case. If~$\alg{M}$ is a finite commutative s$\ell$-monoid, $\Id \CFreeRedUMon{\alg{M}}$ is never cancellative unless $\alg{M}$ is trivial.

\begin{proposition}
  Let $\alg{M}$ be a commutative pomonoid. Then $\Id \alg{M}$ is only cancellative if $\alg{M}$ is the trivial (singleton) pomonoid.
\end{proposition}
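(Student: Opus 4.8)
The plan is to assume $\alg{M}$ is non-trivial and manufacture a witness to the failure of cancellativity in $\Id\alg{M}$. Since cancellativity of $\Id\alg{M}$ amounts to the implication $X \ast Z \subseteq X \ast W \implies Z \subseteq W$ (recall $X \ast Y = \below(X \cdot Y)$ and that joins in $\Id\alg{M}$ are unions), it suffices to exhibit a single finitely generated non-empty downset $X$ together with finitely generated non-empty downsets $Z \nsubseteq W$ such that $X \ast Z \subseteq X \ast W$.

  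First I would fix a non-unit $a \in M$ and distinguish cases according to whether $a \not\leq \1$ or $\1 \not\leq a$ (one of these must hold, as $\leq$ is antisymmetric and $a \neq \1$). In the case $a \not\leq \1$, the idea is to produce a finitely generated downset $X$ absorbed by $a$ on the right, i.e.\ $X \cdot a \subseteq X$; then $X \ast \below\{\1, a\} = \below(X \cup X{\cdot}a) = X = X \ast \below\1$, while $\below\{\1,a\} \neq \below\1$ precisely because $a \notin \below\1$, so cancellativity fails with $Z \assign \below\{\1,a\}$ and $W \assign \below\1$. The cleanest source of such an $X$ is a non-unit idempotent $e$, for which $\below e \ast \below e = \below(e{\cdot}e) = \below e = \below e \ast \below\1$ and $\below e \neq \below\1$; so I would first dispose of the case that $\alg{M}$ has a non-unit idempotent, and then reduce the remaining cases to it. For instance, when $\alg{M}$ is finite every non-unit generates a non-unit idempotent, and when $\alg{M}$ is integral the same computation goes through verbatim even though $a \leq \1$ there; the case $\1 \not\leq a$ is handled dually by finding a downset under which a non-unit expands.

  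The main obstacle is the remaining case: $\alg{M}$ infinite and cancellative with no non-unit idempotent and no finitely generated downset absorbed by (or expanding under) a fixed non-unit. Here one cannot point to a single algebraic witness and must instead extract a contradiction from the whole family of universal sentences in Lemma~\ref{lemma: cancellativity of id}. Taking $n = 1$ there already forces $\alg{M}$ to be cancellative; specializing the $n = 2$ sentence (say $x_1 \assign y$, $x_2 \assign z$, $z_1 \assign z_2 \assign z$, using commutativity to discharge the second premise) forces $y^2 \leq z^2 \implies y \leq z$, hence by iteration $y^{2^k} \leq z^{2^k} \implies y \leq z$ for all $k$; one then has to play these shrinking constraints against non-triviality using larger values of $n$. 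Pushing this to an actual proof of triviality is the crux of the argument, and the step where one must be most careful about exactly which pomonoids are meant to be in scope.
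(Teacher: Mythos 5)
Your proposal does not prove the proposition: the witness-hunting part covers only special configurations (a non-unit idempotent, a finitely generated downset absorbed by a fixed non-unit, finiteness, integrality), and you explicitly leave the general case --- infinite, cancellative, with no such witness --- as ``the crux''. (One of the special cases is also off: in a finite monoid the idempotent power of a non-unit may be $\1$, as in any finite group, so ``every non-unit generates a non-unit idempotent'' fails there and that case needs a different witness, e.g.\ the whole group as a downset.) The paper's proof has a different shape altogether: it never exhibits a single failure of cancellativity, but specializes the entire family of cyclic conditions of Lemma~\ref{lemma: cancellativity of id}. It substitutes $x_{i} \assign p_{i} q_{i}$ and $y \assign u_{i} v_{i}$, with the $v_{i}$ chosen (this is where commutativity enters, e.g.\ $v_{i} \assign \prod_{j \neq i} u_{j}$) so that all products $u_{i} v_{i}$ coincide; then $z_{i+1} \assign q_{i} v_{i}$, cancellativity of $\alg{M}$ (the $n = 1$ instance), $p_{i} \assign \1$, and finally $q_{i+1} \assign u_{i}$ yield: for every $n$ and all $u_{1}, \dots, u_{n}$ there is some cyclic index $i$ with $u_{i+1} \leq u_{i}$.

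Your closing caveat about ``exactly which pomonoids are meant to be in scope'' is exactly the point, and it is why your missing case cannot be closed: it contains genuine counterexamples. The condition extracted above is equivalent to the order of $\alg{M}$ being total (the $n = 2$ instance gives totality; conversely every chain satisfies all instances), not to triviality, and for a nontrivial cancellative chain --- $\langle \mathbb{Z}, \leq, +, 0 \rangle$, or its negative cone, which is even integral --- every finitely generated downset is principal, so $\Id \alg{M} \iso \alg{M}$ is cancellative. This also explains why your iterated implications $y^{2^{k}} \leq z^{2^{k}} \implies y \leq z$ led nowhere: chains satisfy them. So the proposition in this literal generality fails, and the last sentence of the paper's own proof (``this holds only if the pomonoid is trivial'') overshoots what was derived; what the argument actually shows is that $\Id \alg{M}$ can be cancellative only if $\alg{M}$ is a cancellative chain. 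Any correct completion of your plan must therefore either weaken the conclusion to that statement or add a hypothesis excluding chains; note that even the application the paper has in mind needs care, since for the two-element integral commutative s$\ell$-monoid $\alg{M}$ the preimage $\CFreeRedUMon{\alg{M}}$ is itself a chain (isomorphic to the negative cone of $\mathbb{Z}$), so $\Id \CFreeRedUMon{\alg{M}}$ is cancellative there as well.
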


\begin{proof}
  If $\Id \alg{M}$ is cancellative, then $\alg{M}$ satisfies for each $n \geq 1$ the implication
\begin{align*}
  x_1 y \leq x_2 z_2 ~ \& ~ \dots ~ \& ~ x_{n-1} y \leq x_n z_n ~ \& ~ x_n y \leq x_1 z_1 \implies y \leq z_i \text{ for some } z_i.
\end{align*}
  In particular, $\alg{M}$ is cancellative. The pomonoid $\alg{M}$ must also satisfy the following implication: if $u_{i} v_{i} = u_{i+1} v_{i+1}$ and $p_{i} u_{i} \leq p_{i+1} q_{i+1}$ and $q_{i} v_{i} \leq z_{i+1}$ for each $1 \leq i \leq n$, then $u_{i} v_{i} \leq z_{i}$ for some $1 \leq i \leq n$. (Take $x_{i} = p_{i} q_{i}$ and $y = u_{i} v_{i}$ and apply the previous implication.) Taking $z_{i+1} \assign q_{i} v_{i}$ yields the implication: if $u_{i} v_{i} = u_{i+1} v_{i+1}$ and $p_{i} u_{i} \leq p_{i+1} q_{i+1}$ for each $1 \leq i \leq n$, then $u_{i} v_{i} \leq q_{i} v_{i}$ for some $1 \leq i \leq n$. By cancellativity, this conclusion is equivalent to $u_{i} \leq q_{i}$. Taking $p_{i} = \1$ then yields the implication: $u_{i} \leq q_{i+1}$ for each $1 \leq i \leq n$ implies $u_{i} \leq q_{i}$ for some $1 \leq i \leq n$. Finally, taking $q_{i+1} \assign u_{i}$ yields simply the inequality $u_{i+1} \leq u_{i}$. This holds only if the pomonoid is trivial.
\end{proof}

\newlength{\auxlength}
\newlength{\auxlengthtwo}
\auxlength=\abovedisplayskip
\auxlengthtwo=\belowdisplayskip
\abovedisplayskip=8pt
\belowdisplayskip=8pt

  Nevertheless, we may find a cancellative quotient of $\Id \CFreeRedUMon{\alg{M}}$ whose nuclear image is isomorphic to $\alg{M}$. The key to this will be what we call the \emph{square condition}. Informally speaking, consider a finite set of variables and fill in an $n \times n$ table with these variables so that each variable occurs exactly once in each row. A cell may remain empty or it may contain more than one variable. For example, we may obtain the following table in this way:
\begin{align*}
\begin{tabular}{lll}
  $x_{1}$ & $x_{2}$ & $x_{3} x_{4}$ \\
  & $x_{1} x_{3}$ & $x_{2} x_{4}$ \\
  $x_{4}$ & $x_{1} x_{2} x_{3}$ &  
\end{tabular}
\end{align*}
  The square condition then states that if the product of each column is below $a$, then so is the product of each row. In this case,
\begin{align*}
  x_{1} x_{4} \leq y ~ \& ~ x_{1}^2 x_{2}^{2} x_{3}^2 \leq y~ \& ~ x_{2} x_{3} x_{4}^{2} \leq y & \implies x_{1} x_{2} x_{3} x_{4} \leq y,
\end{align*}
  which is a simple pomonoidal quasi-inequation equivalent to the s$\ell$-monoidal equality
\begin{align*}
  x_{1} x_{2} x_{3} x_{4} \leq x_{1} x_{4} \vee  x_{1}^2 x_{2}^{2} x_{3}^2 \vee x_{2} x_{3} x_{4}^{2}.
\end{align*}
  A simpler example of an inequality obtained in this way is $x y \leq x^{2} \vee y^{2}$.

\abovedisplayskip=\auxlength
\belowdisplayskip\auxlengthtwo

\begin{definition} \label{def: square condition}
  A pomonoid $\alg{M}$ satisfies the \emph{square condition} if the following holds: for each $n$-tuple of decompositions of a product of variables $\pi \assign x_{1} \dots x_{n}$ into $n$ subproducts (allowing for empty subproducts)
\begin{align*}
  \pi = \pi_{11} \pi_{12} \dots \pi_{1n} = \pi_{21} \pi_{22} \dots \pi_{2n} = \dots = \pi_{n1} \pi_{n2} \dots \pi_{nn}
\end{align*}
  the following implication holds: 
\begin{align*}
  \pi_{11} \pi_{21} \dots \pi_{n1} \leq y ~ \& ~ \dots ~ \& ~ \pi_{1n} \pi_{2n} \dots \pi_{nn} \leq y \implies x \leq y.
\end{align*}
\end{definition}

  In an s$\ell$-monoid this implication is equivalent to the inequality
\begin{align*}
  \pi \leq \!\! \displaystyle{\bigvee_{1 \leq i \leq n}} \!\! \pi_{1i} \pi_{2i} \dots \pi_{ni}.
\end{align*}

\begin{fact}
  Commutative cancellative s$\ell$-monoids satisfy the implication
\begin{align*}
  x^{n} \leq y^{n} \implies x \leq y.
\end{align*}
\end{fact}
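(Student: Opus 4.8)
The plan is to reduce the claim to one (iterated) application of cancellativity, by multiplying the desired inequality by a suitable power of $x \vee y$. So fix $n \geq 1$ and elements $x, y$ of a commutative cancellative s$\ell$-monoid with $x^{n} \leq y^{n}$, and put $z \assign x \vee y$. First I would expand $z^{n-1}$ using commutativity together with the distributivity of multiplication over joins: a product of $n-1$ factors, each equal to $x$ or $y$, collapses to $x^{k} y^{n-1-k}$, so $z^{n-1} = \bigvee_{k=0}^{n-1} x^{k} y^{n-1-k}$. Multiplying through by $x$ and by $y$ respectively gives
\begin{align*}
  x \cdot z^{n-1} & = \bigvee_{j=1}^{n} x^{j} y^{n-j}, & y \cdot z^{n-1} & = \bigvee_{j=0}^{n-1} x^{j} y^{n-j}.
\end{align*}

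Next I would compare the two joins. They share all components $x^{j} y^{n-j}$ with $1 \leq j \leq n-1$; the only component of $x z^{n-1}$ not obviously below $y z^{n-1}$ is $x^{n}$, and the only component of $y z^{n-1}$ not present in $x z^{n-1}$ is $y^{n}$. Since $y^{n} = y \cdot y^{n-1} \leq y \cdot z^{n-1}$ and $x^{n} \leq y^{n}$ by hypothesis, we get $x^{n} \leq y z^{n-1}$, hence every join-component of $x z^{n-1}$ lies below $y z^{n-1}$, i.e.\ $x \cdot z^{n-1} \leq y \cdot z^{n-1}$. Finally, writing $z^{n-1} = z^{n-2} \cdot z$ and applying order-cancellativity (the right-hand implication in the definition of a cancellative pomonoid) to $(x z^{n-2}) \cdot z \leq (y z^{n-2}) \cdot z$ yields $x z^{n-2} \leq y z^{n-2}$; iterating $n-1$ times gives $x \leq y$. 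When $n = 1$ the element $z$ never appears and the claim is simply the hypothesis itself.

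I do not expect a genuine obstacle here. The argument is elementary once one notices that multiplying by $(x \vee y)^{n-1}$ makes the two sides differ only in the extremal monomials $x^{n}$ and $y^{n}$ — exactly the pair that the hypothesis compares. The only points that need a little care are that commutativity is what lets an arbitrary product of $n-1$ factors from $\{x, y\}$ be rewritten in the form $x^{k} y^{n-1-k}$, and that the relevant notion of cancellativity for s$\ell$-monoids is the order version, so it applies directly to the inequality $x z^{n-1} \leq y z^{n-1}$ without first passing to an equation. (The same pattern, with $n = 2$, also yields the inequality $x y \leq x^{2} \vee y^{2}$ mentioned above: there one multiplies by $z = x \vee y$ and compares $x^{2}y \vee x y^{2}$ with $x^{3} \vee x^{2} y \vee x y^{2} \vee y^{3}$.)
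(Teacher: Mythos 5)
Your proof is correct and follows essentially the same route as the paper: multiply by $(x \vee y)^{n-1}$, expand via distributivity so that the two sides differ only in the monomials $x^{n}$ and $y^{n}$, invoke the hypothesis $x^{n} \leq y^{n}$, and cancel. The only cosmetic difference is that the paper cancels $(x \vee y)^{n-1}$ in a single application of (order-)cancellativity rather than peeling off one factor of $x \vee y$ at a time.
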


\begin{proof}
  This follows by cancellativity from the following inequality: $x (x \vee y)^{n-1} = x^{n} \vee x^{n-1} y \vee \dots \vee x y^{n-1} \leq y^{n} \vee x y^{n-1} \vee \dots \vee x^{n-1} y = y (x \vee y)^{n-1}$.
\end{proof}

\begin{fact}
  $\alg{M}$ satisfies the square condition if and only if $\FreeUMon{\alg{M}}$ satisfies for each $n$ the implication
\begin{align*}
  w^{n} \sqleq [a]^{n} \implies w \sqleq [a].
\end{align*}
\end{fact}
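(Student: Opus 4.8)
For the direction from the square condition to the implications, suppose $\alg{M}$ satisfies the square condition and that $w^{n} \sqleq [a]^{n}$ in $\FreeUMon{\alg{M}}$. By the definition of $\sqleq$ this is witnessed by a factorization $w^{n} = u_{1} \circ \dots \circ u_{n}$ with $\gamma(u_{i}) \leq a$ for each $i$. I would first reduce to the case $|w| = n$: if $|w| \leq n$ we may pad $w$ with copies of the letter $\1$, which changes neither the equivalence class of $w$ in $\FreeUMon{\alg{M}}$ nor the value $\gamma(w)$; and if $|w| > n$ we first raise $w^{n} \sqleq [a]^{n}$ to a power $N$ with $nN \geq |w|$ and work with the exponent $nN$ instead, which is legitimate because the implication is asserted for every exponent. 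So write $w = [b_{1}, \dots, b_{n}]$, and for $1 \leq i, j \leq n$ let $\pi_{ij}$ be the product in $\alg{M}$ of those letters of $u_{i}$ that lie in the $j$-th copy of $w$ inside $w^{n}$ (with $\pi_{ij} = \1$ when this is empty). Since the $u_{i}$ are consecutive factors of $w^{n}$, for each fixed $j$ the words $\pi_{1j}, \dots, \pi_{nj}$ are, in order, the pieces into which the $j$-th copy of $w$ is cut, so each sequence $(\pi_{1j}, \dots, \pi_{nj})$ is a decomposition of $\gamma(w) = b_{1} \dots b_{n}$ into $n$ (possibly empty) subproducts; and for each fixed $i$ the product $\pi_{i1}\pi_{i2}\dots\pi_{in}$ equals $\gamma(u_{i}) \leq a$. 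This table is exactly an instance of the square condition (its $n$ decompositions being the columns just described, with $y$ interpreted as $a$), whence $\gamma(w) = b_{1}\dots b_{n} \leq a$, i.e.\ $w \sqleq [a]$.

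For the converse, suppose $\FreeUMon{\alg{M}}$ satisfies all the implications and fix an instance of the square condition: decompositions $\pi = \pi_{i1}\pi_{i2}\dots\pi_{in}$ of $\pi = x_{1}\dots x_{n}$ for $1 \leq i \leq n$, an interpretation $x_{k} \mapsto b_{k}$, $y \mapsto a$ in $\alg{M}$, and the hypothesis that each column product $\pi_{1j}\pi_{2j}\dots\pi_{nj}$ is $\leq a$. Set $w \assign [b_{1}, \dots, b_{n}]$ and build the word $W \assign u_{1} \circ \dots \circ u_{n}$, where $u_{j} \assign \pi_{1j} \circ \pi_{2j} \circ \dots \circ \pi_{nj}$ is the $j$-th column of the table read off as a word over $\alg{M}$ (each $\pi_{ij}$ being the corresponding subword of $[b_{1}, \dots, b_{n}]$). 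Then $\gamma(u_{j})$ is precisely the $j$-th column product, so $\gamma(u_{j}) \leq a$ and hence $W \sqleq [a]^{n}$. On the other hand each row $(\pi_{i1}, \dots, \pi_{in})$ is a decomposition of $b_{1}\dots b_{n}$, so summing over the $n$ rows, $W$ contains each letter $b_{k}$ exactly $n$ times; thus $W$ and $w^{n}$ have the same letters with the same multiplicities, which in the commutative free unital nuclear preimage forces $W = w^{n}$. Applying the assumed implication to $w^{n} = W \sqleq [a]^{n}$ yields $w \sqleq [a]$, i.e.\ $\gamma(w) = b_{1}\dots b_{n} \leq a$, which is the conclusion of the square-condition instance.

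The main obstacle is the bookkeeping around sizes and around commutativity. In the first direction one must be disciplined about reducing to $|w| = n$ so that the cross-product table has the exact $n \times n$ shape the square condition requires; the $\1$-padding together with the power trick does this, but the reduction has to be carried out before the table is formed. In the converse the crucial step is the identification $W = w^{n}$: the column-by-column word $W$ is a genuine $n$-th power of $w = [b_{1}, \dots, b_{n}]$ only up to reordering of letters, so this is exactly the point at which one needs to be in the commutative setting (the free \emph{commutative} unital nuclear preimage $\CFreeRedUMon{\alg{M}}$, which is the version relevant to the square condition). The rest is routine unwinding of the definition of $\sqleq$: that empty subproducts are permitted, that the column reading of the table really is a decomposition into subproducts, and that the row products are the $\gamma(u_{i})$ while the column products are the terms $\pi_{1j}\dots\pi_{nj}$ appearing in the square condition.
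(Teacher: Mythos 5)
Your first direction is essentially the paper's own proof spelled out: the paper disposes of the Fact in two sentences, observing that a witness for $w^{n} \sqleq [a]^{n}$ is exactly an $n \times n$ table whose rows are decompositions of $\gamma(w)$ and whose column products are the $\gamma(u_{j}) \leq a$, together with the fact that $w \sqleq [a]$ iff $\gamma(w) \leq a$. Your added bookkeeping is warranted rather than pedantic: Definition~\ref{def: square condition} as written ties the number of variables to the size of the table, so the reduction to $|w| = n$ by $\1$-padding and by raising $w^{n} \sqleq [a]^{n}$ to the power $N$ is exactly what is needed to invoke the definition literally (the paper's informal example, with four variables in a $3 \times 3$ table, shows it does not intend the count to be taken too rigidly, but your route is the safe one). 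The only phrasing to tighten is the justification of the power trick: what you use is that the square condition is assumed for every size and that the hypothesis at exponent $n$ entails it at exponent $nN$, not the implication itself.

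The one place you diverge from the literal statement is the converse, where you apply the assumed implication in $\CFreeRedUMon{\alg{M}}$ rather than in $\FreeUMon{\alg{M}}$, and you are right to flag this as the crux. If the decompositions in the square condition are read commutatively -- as the paper's example table, whose rows permute the variables, forces one to -- then the column word $W$ is only a permutation of $w^{n}$, and permuted words are in general incomparable in the non-commutative $\FreeUMon{\alg{M}}$ (already $[b_{1}, b_{2}]$ and $[b_{2}, b_{1}]$ need not be equivalent there), so the encoding $w^{n} \sqleq [a]^{n}$ is simply unavailable: cutting the word $w^{n}$ into consecutive blocks captures only the order-respecting ``staircase'' tables. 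Hence the converse, as you prove it, is a statement about $\CFreeRedUMon{\alg{M}}$ (equivalently, one must read the square condition as ranging over the tables that arise from such cuttings), and the paper's one-line proof does not address this point at all. Since the square condition is only ever used for commutative $\alg{M}$, and the subsequent theorem invokes only the forward direction of the Fact -- which your argument does establish for $\FreeUMon{\alg{M}}$ itself -- your reading is the one that makes the Fact both true and usable; it would just be worth saying explicitly that the ``if'' direction you prove concerns the commutative free preimage.
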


\begin{proof}
  The premises of the square condition are precisely what applying the definition of the preorder $\sqleq$ to $w^{n} \sqleq [a]^{n}$ yields. It now suffices to recall that $w \sqleq [a]$ in $\FreeUMon{\alg{M}}$ if and only if $\gamma(w) \leq a$ in $\alg{M}$, and $\gamma(u \circ v) = \gamma(u) \cdot \gamma(v)$.
\end{proof}

  We remind the reader that our definition of a distributive semilattice differs slightly from what they might expect (see p.~\pageref{page: distributive}). In particular, we allow for semilattices which are not down-directed to be distributive, whereas the more common definition implies down-directedness. The two definitions coincide for down-directed semilattices (in particular, for integral s$\ell$-monoids).

\begin{theorem}
  Commutative integrally closed (integral) s$\ell$-monoids satisfying the square condition are precisely the nuclear images of [distributive] commutative cancellative (integral) s$\ell$-monoids.
\end{theorem}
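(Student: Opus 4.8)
The plan is to prove the two inclusions separately, with the right-to-left inclusion being routine and the left-to-right inclusion requiring a construction in which the square condition is the essential new ingredient.

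\emph{Right to left.} Let $\alg{N}$ be a (distributive) commutative cancellative (integral) s$\ell$-monoid and $\gamma$ a nucleus on it. Commutativity of $\alg{N}_\gamma$ is immediate since $a \cdotgamma b = \gamma(ab) = \gamma(ba) = b \cdotgamma a$, and if $\alg{N}$ is integral then $\gamma(\1) = \1$ and every $\gamma$-closed element lies below $\1$, so $\alg{N}_\gamma$ is integral. Integral closure holds because every cancellative pomonoid is integrally closed (cancel $a$ from $a \cdot x \leq a = a \cdot \1$), integral closure is a conjunction of simple quasi-inequalities, and simple quasi-inequalities pass to nuclear images by Fact~\ref{fact: simple quasi-inequalities} in its s$\ell$-monoidal form. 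For the square condition I would first check that it holds in every commutative cancellative s$\ell$-monoid: given decompositions $\pi = \pi_{i1} \cdots \pi_{in}$ of $\pi = x_1 \cdots x_n$ for $1 \leq i \leq n$, commutativity gives $\prod_{j=1}^{n} (\pi_{1j} \cdots \pi_{nj}) = \prod_{i=1}^{n} (\pi_{i1} \cdots \pi_{in}) = \pi^{n}$, so writing $c_j \assign \pi_{1j} \cdots \pi_{nj}$ we obtain $\bigl(\bigvee_{j} c_j\bigr)^{n} \geq c_1 \cdots c_n = \pi^{n}$, and the Fact that commutative cancellative s$\ell$-monoids satisfy $x^n \leq y^n \implies x \leq y$ then yields $\pi \leq \bigvee_j c_j$, which is the square condition in s$\ell$-monoidal form. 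As each instance of the square condition is a simple quasi-inequality, it again passes to nuclear images.

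\emph{Left to right.} Let $\alg{M}$ be a commutative integrally closed (integral) s$\ell$-monoid satisfying the square condition. The construction starts from $\Id \CFreeRedUMon{\alg{M}}$: by the preceding sections this is a commutative (and, in the integral case, unital) nuclear s$\ell$-monoid whose nuclear image is isomorphic to $\alg{M}$ via $a \mapsto \below [a]$, it is distributive as a semilattice, and its underlying pomonoid $\CFreeRedUMon{\alg{M}}$ is cancellative because $\alg{M}$ is integrally closed. Since $\Id$ of a nontrivial commutative pomonoid is never cancellative, I would pass to the quotient $\alg{N} \assign \Id \CFreeRedUMon{\alg{M}} / {\leq_\theta}$, where $\leq_\theta$ is the least congruence making the quotient cancellative; then $\alg{N}$ lies in the target class, and it remains to show that the induced nucleus on $\alg{N}$ has nuclear image isomorphic to $\alg{M}$. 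By Lemma~\ref{lemma: restriction on free} together with Lemmas~\ref{lemma: witnessed by simple} and~\ref{lemma: witness transfer}, in their unital semilattice-ordered forms, and using that $\Id \CFreeRedUMon{\alg{M}}$ is distributive while $\CFreeRedUMon{\alg{M}}$ is commutatively product distributive, this reduces to showing that $\alg{M}$ satisfies every simple s$\ell$-monoidal quasi-inequality valid in all commutative cancellative (integral) s$\ell$-monoids --- equivalently, that the identifications of singleton words forced when passing to $\leq_\theta$ never collapse the image of the nucleus beyond the order already present in $\alg{M}$. This is exactly the point at which the square condition is used: combining Lemma~\ref{lemma: cancellativity of id}, which describes the identifications forced by cancellativity of $\Id(-)$, with the Fact that the square condition is equivalent to $\CFreeRedUMon{\alg{M}}$ satisfying $w^{n} \sqleq [a]^{n} \implies w \sqleq [a]$, one transfers each cancellativity-driven inequality back to $\alg{M}$. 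To obtain moreover a \emph{distributive} cancellative witness, I would realise $\alg{N}$ as (the image of $\Id \CFreeRedUMon{\alg{M}}$ in) the free Abelian $\ell$-group over $\CFreeRedUMon{\alg{M}}$; being a sub-s$\ell$-monoid of an $\ell$-group it is automatically distributive, and the square condition is again precisely the hypothesis that keeps the canonical map from identifying $\below [a]$ with $\below [b]$ for incomparable $a, b$.

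The step I expect to be the main obstacle is this last verification in the left-to-right direction: showing that $\alg{M}$ inherits every simple quasi-inequality holding in commutative cancellative (integral) s$\ell$-monoids, i.e.\ that the least cancellative congruence on $\Id \CFreeRedUMon{\alg{M}}$ does not overcollapse singleton words. The square condition, in the form of the Fact about $n$-th powers of singleton words in $\CFreeRedUMon{\alg{M}}$, is indispensable here, since without control over powers the image of the nucleus could shrink. Obtaining distributivity of the cancellative witness via an $\ell$-group of fractions is a secondary but genuine additional point; the rest is bookkeeping assembled from the tools developed earlier in the paper.
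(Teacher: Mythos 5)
Your overall strategy coincides with the paper's: handle the easy direction via simple quasi-inequalities (integral closure and each instance of the square condition are simple, hence pass to nuclear images, and cancellativity gives the square condition via $x^n \leq y^n \implies x \leq y$), and for the hard direction quotient the free semilattice-ordered commutative unital nuclear preimage by the least cancellative congruence $\leq_\theta$ and show the nuclear image survives. The easy direction in your write-up is complete and correct.

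The gap is in the hard direction, which you describe but do not actually prove. Reducing the problem to ``$\alg{M}$ satisfies every simple s$\ell$-monoidal quasi-inequality valid in all commutative cancellative (integral) s$\ell$-monoids'' is, by the machinery of Lemmas~\ref{lemma: restriction on free}--\ref{lemma: witness transfer}, just a reformulation of the statement to be proved; saying that Lemma~\ref{lemma: cancellativity of id} and the power Fact ``transfer each cancellativity-driven inequality back to $\alg{M}$'' names the right ingredients but omits the argument that makes them fit together. The paper's proof supplies exactly this missing content: (i) an explicit description of the least $\class{K}$-congruence on $\Id \FreeUMon{\alg{M}}$, namely $u \leq_\theta v$ iff $w \circ u \sqleq w \circ v$ for some $w$ (this uses commutativity and the semilattice structure and is not a consequence of Lemma~\ref{lemma: cancellativity of id}, which only characterizes when $\Id$ itself is cancellative); (ii) the reduction of ``$u \leq_\theta v$ implies $\gamma(u) \leq \gamma(v)$'' to ``$w \circ u \sqleq w \circ [a]$ implies $u \sqleq [a]$''; (iii) writing $u = \bigvee_i u_i$ and $w = \bigvee_j w_j$ as joins of words, extracting a cycle $w_1 \circ u_i \sqleq w_2 \circ [a]$, \dots, $w_n \circ u_i \sqleq w_1 \circ [a]$, multiplying around the cycle and cancelling in $\FreeUMon{\alg{M}}$ (cancellativity holding precisely because $\alg{M}$ is integrally closed) to get $u_i^{n} \sqleq [a]^{n}$; and only then (iv) invoking the Fact that the square condition is equivalent to $w^{n} \sqleq [a]^{n} \implies w \sqleq [a]$. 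Without steps (i)--(iii) the square condition has nothing to act on, so as written your left-to-right direction is a plan rather than a proof. Separately, your route to the bracketed distributivity claim is unsound as stated: a sub-join-semilattice of a distributive lattice need not be a distributive semilattice, and you give no argument that the canonical map from $\Id \CFreeRedUMon{\alg{M}}$ to the free Abelian $\ell$-group over $\CFreeRedUMon{\alg{M}}$ induces exactly the congruence $\leq_\theta$ or leaves the nuclear image intact, so the $\ell$-group detour does not by itself deliver either the distributive witness or the isomorphism with $\alg{M}$.
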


\begin{proof}
  Each commutative cancellative s$\ell$-monoid satisfies the square condition: multiplying the premises yields $x^{n} \leq y^{n}$, therefore $x \leq y$. Because the square condition is a set of simple quasi-inequations, its validity is preserved under nuclear images. Conversely, let $\class{K}$ be the quasivariety of commutative cancellative s$\ell$-monoids and let $\leqK$ be the smallest $\class{K}$-congruence on $\Id \FreeUMon{\alg{M}}$. Then $u \leqK v$ for $u, v \in \Id \FreeUMon{\alg{M}}$ if and only if there is some $w \in \Id \FreeUMon{\alg{M}}$ such that $w \circ u \sqleq w \circ v$. (This holds because the relation $\leqK$ is the smallest cancellative pomonoidal congruence and moreover it is an s$\ell$-monoidal congruence.) It suffices to show that $u \leqK v$ implies $\gamma(u) \leq \gamma(v)$: this implies both that $\leqK$ is an order congruence with respect to the nucleus $[\gamma]$ and that its restriction to the nuclear image is simply the same as the restriction of $\sqleq$.

  The claim that $u \leqK v$ implies $\gamma(u) \leq \gamma(v)$ is equivalent to: $w \circ u \sqleq w \circ v$ and $v \sqleq [a]$ imply $u \sqleq [a]$. This in turn simplifies to: $w \circ u \sqleq w \circ [a]$ implies $u \sqleq [a]$. Let $u = \bigvee_{i \in I} u_{i}$ and $w = \bigvee_{j \in J} w_{j}$, where the elements $u_{i}$ and $w_{j}$ are words in $\FreeUMon{\alg{M}}$. Then the last implication states that $u_{i} \sqleq [a]$ for each $i \in I$ provided that for each $i \in I$ and $j \in J$ there is a $k \in J$ such that $w_{j} \circ u_{i} \sqleq w_{k} \circ [a]$. Let us fix an $i \in I$. We show that $u_{i} \sqleq [a]$ provided that for each $j \in J$ there is $k \in J$ such that $w_{j} \circ u_{i} \sqleq w_{k} \circ [a]$. Among the premises we may find a circle of inequalities $w_{1} \circ u_{i} \sqleq w_{2} \circ [a]$, \dots, $w_{n} \circ u_{i} \sqleq w_{1} \circ [a]$. Multiplying these and applying commutativity yields $w_{1} \circ \ldots \circ w_{n} \circ u_{i}^{n} \sqleq w_{1} \circ \ldots \circ w_{n} \circ [a]^{n}$, so $u_{i}^{n} \sqleq [a]^{n}$ by the cancellativity of $\FreeUMon{\alg{M}}$. The square condition now yields $u_{i} \sqleq [a]$.
\end{proof}

\begin{theorem} \label{thm: commutative integrally closed square sl-monoids}
  Commutative integrally closed (integral) s$\ell$-monoids satisfying the square condition are precisely the nuclear images of sub-s$\ell$-monoids of (negative cones of) Abelian $\ell$-groups.
\end{theorem}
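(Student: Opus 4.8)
The plan is to obtain this theorem as a corollary of the preceding one — commutative integrally closed s$\ell$-monoids satisfying the square condition are exactly the nuclear images of commutative cancellative s$\ell$-monoids — bridged by the classical fact that \emph{every commutative cancellative s$\ell$-monoid embeds, as a sub-s$\ell$-monoid, into an Abelian $\ell$-group}. Granting that lemma, one inclusion is immediate: a sub-s$\ell$-monoid $\alg{N}$ of an Abelian $\ell$-group is a commutative cancellative s$\ell$-monoid (cancellativity is inherited from the group), so by the previous theorem every nuclear image of $\alg{N}$ is a commutative integrally closed s$\ell$-monoid satisfying the square condition; and if $\alg{N}$ lies in the negative cone of the $\ell$-group, then $\alg{N}$ is integral (its unit $\1$ is its top element), whence its nuclear images are integral by the integral case of that theorem.

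For the converse, I would start from a commutative integrally closed s$\ell$-monoid $\alg{M}$ satisfying the square condition, invoke the previous theorem to write $\alg{M} \iso \alg{L}_{\gamma}$ for some commutative cancellative s$\ell$-monoid $\alg{L}$ (integral whenever $\alg{M}$ is), and then embed $\alg{L}$ into an Abelian $\ell$-group. The embedding is built on the group of fractions $\alg{G}$ of the commutative cancellative monoid underlying $\alg{L}$, ordered by $a^{-1} b \leq c^{-1} d$ iff $b \cdot c \leq a \cdot d$ in $\alg{L}$; cancellativity of $\alg{L}$ makes this relation well defined and antisymmetric, and it plainly makes $\alg{G}$ an Abelian po-group whose order restricts to that of $\alg{L}$. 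The key point is that $\alg{G}$ is in fact a lattice: binary joins are computed as $a^{-1} b \vee c^{-1} d = (a \cdot c)^{-1}(b \cdot c \vee a \cdot d)$, where $b \cdot c \vee a \cdot d$ is the join in $\alg{L}$ — the $\geq$ direction being trivial and the $\leq$ direction using exactly the s$\ell$-monoidal distributivity law $(x \vee y) \cdot z = x \cdot z \vee y \cdot z$. Meets then come for free (in a po-group $x \wedge y = (x^{-1} \vee y^{-1})^{-1}$), so $\alg{G}$ is an Abelian $\ell$-group, and the natural map $\alg{L} \to \alg{G}$ is an order embedding which preserves binary joins (set $a = c = \1$ in the join formula). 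Hence $\alg{L}$ is a sub-s$\ell$-monoid of $\alg{G}$; if $\alg{L}$ is integral, every element of $\alg{L}$ lies below $\1$ in $\alg{G}$, so $\alg{L}$ is a sub-s$\ell$-monoid of the negative cone of $\alg{G}$. Composing the embedding with $\gamma$ then exhibits $\alg{M}$ as a nuclear image of a sub-s$\ell$-monoid of (the negative cone of) an Abelian $\ell$-group.

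The only genuine content beyond the previous theorem is the embedding lemma, and I expect its sole delicate point to be the join formula for $\alg{G}$; everything else — that the induced relation is a partial order, that it is group-compatible, that the unit and commutativity transfer — is routine bookkeeping. I do not anticipate a serious obstacle, since the lemma is nothing more than the lattice-ordered refinement of the standard passage from a commutative cancellative monoid to its group of fractions, and the s$\ell$-monoidal distributivity axiom is precisely what is needed to make the join computation succeed. The one thing I would be careful to phrase uniformly is the ``negative cone'' clause, which reduces to the observation that an integral sub-s$\ell$-monoid of an $\ell$-group is the same thing as a sub-s$\ell$-monoid of its negative cone.
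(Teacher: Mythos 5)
Your proposal is correct and follows exactly the route the paper intends: the theorem is an immediate consequence of the preceding one (nuclear images of commutative cancellative s$\ell$-monoids) together with the fact that every commutative cancellative s$\ell$-monoid embeds as a sub-s$\ell$-monoid into its Abelian $\ell$-group of fractions, just as the pomonoid version earlier in the paper is deduced via the pogroup of fractions. Your verification of the join formula $a^{-1}b \vee c^{-1}d = (a\cdot c)^{-1}(b\cdot c \vee a\cdot d)$ and the handling of the integral/negative-cone clause are exactly the details the paper leaves implicit.
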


\section{Nuclear images of cancellative RLs}

  Finally, the theorems proved for s$\ell$-monoids in the previous sections immediately lift to residuated lattices, provided that we restrict to dually wpo nuclear images. We only make one additional observation in this section, namely that in the non-commutative case $\Id \FreeUMon{\alg{M}}$ satisfies $x (y \wedge z) = xy \wedge xz$ and $(x \wedge y) z = xz \wedge yz$.

  In the following lemma, $\IdAll \alg{M}$ denotes the lattice of \emph{all} downsets of $\alg{M}$ equipped with the multiplication $X \ast Y \assign \below \set{x \cdot y}{x \in X \text{ and } y \in Y}$. We know already that $\Id \FreeUMon{\alg{M}}$ is a lattice where meets and joins coincide with inter\-sections and unions if $\alg{M}$ is dually wpo. It follows that $\Id \FreeUMon{\alg{M}}$ also satisfies these equations in that case.

\begin{lemma}
  Let $\alg{M}$ be an integral pomonoid. Then $\IdAll \FreeUMon{\alg{M}}$ satisfies the equations $x (y \wedge z) = xy \wedge xz$ and $(x \wedge y) z = xz \wedge yz$.
\end{lemma}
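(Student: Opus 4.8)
The plan is to verify the non-trivial inclusion $x(y \wedge z) \supseteq xy \wedge xz$ directly in terms of downsets, since the reverse inclusion holds in any $\IdAll$ of a pomonoid by isotonicity of multiplication. Fix downsets $X, Y, Z$ of $\FreeUMon{\alg{M}}$; writing $\wedge$ for intersection in $\IdAll$, we must show $(X \ast Y) \cap (X \ast Z) \subseteq X \ast (Y \cap Z)$. So suppose $p$ is a word in $\FreeUMon{\alg{M}}$ with $p \sqleq u_1 \circ v_1$ for some $u_1 \in X$, $v_1 \in Y$ and $p \sqleq u_2 \circ v_2$ for some $u_2 \in X$, $v_2 \in Z$. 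The goal is to produce a single witness: a word $u \in X$ and a word $v \in Y \cap Z$ with $p \sqleq u \circ v$.

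First I would record the key structural fact that makes the \emph{integral non-commutative} case work: when $\alg{M}$ is integral, each element of $\FreeUMon{\alg{M}}$ has a unique $\1$-reduced shortest representative, and (by the Facts preceding the statement) an inequality $p \sqleq u \circ v$ in $\FreeUMon{\alg{M}}$ between non-empty words, with the right-hand side $\1$-reduced, forces a decomposition of $p$ into \emph{non-empty} blocks. The decisive point is that the relative position at which $p$ "splits" is then essentially determined: if $p \sqleq u_1 \circ v_1$, the $u_1$-part consumes an initial segment of the blocks of $p$ and the $v_1$-part consumes the rest. Using integrality I would argue that one may always split $p$ at a \emph{common cut point} — concretely, if $p \sqleq u_1 \circ v_1$ and $p \sqleq u_2 \circ v_2$ with, say, the cut for the first lying to the left of the cut for the second, then writing $p = p_0 \circ p_1 \circ p_2$ accordingly and using that in an integral pomonoid $a \cdot b \leq a$ whenever $b$ is a product of elements one can absorb, one obtains $p_0 \circ p_1 \sqleq u_2$ (this is where integrality of $\alg{M}$, hence of $\FreeUMon{\alg{M}}$, is used: the "extra" middle part can be reabsorbed into $u_2$) and $p_1 \circ p_2 \sqleq v_1$ as well as $p_2 \sqleq v_2$, whence $p_1 \circ p_2 \sqleq v_1$ and $p_1 \circ p_2 \sqleq v_2$ give $p_1 \circ p_2 \in Y \cap Z$; then $u := p_0 \in X$ need not hold, so instead one keeps $u := u_1$ (or $u_2$) and adjusts the right factor. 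I would make this precise by choosing the cut to be the \emph{leftmost} among all valid cuts for $p \sqleq u_i \circ v_i$, $i = 1, 2$; this gives a single $v$ that simultaneously witnesses membership in $Y$ and in $Z$, and the corresponding left factor is $\sqleq u_i$ for the relevant $i$, hence lies in the downset $X$ since $X$ is downward closed and contains $u_i$.

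The key steps, in order, are: (1) reduce to the inclusion $(X \ast Y) \cap (X \ast Z) \subseteq X \ast (Y \cap Z)$ on representatives; (2) pass to $\1$-reduced representatives of all words involved, so that every witnessing decomposition splits $p$ into non-empty blocks indexed by a set of "cut points"; (3) show that the set of cut points witnessing $p \sqleq u_i \circ v_i$ is, thanks to integrality, upward-robust — if a cut at position $k$ works, so does any cut at position $\leq k$ for a possibly smaller left factor and larger right factor, using that integral pomonoids absorb extra factors on either side; (4) take the minimum over $i=1,2$ of the maximal working cut points, producing one position $k^\ast$ such that the suffix of $p$ after $k^\ast$ lies below $v_1$ and below $v_2$ simultaneously, and the prefix lies below $u_1$ (resp. $u_2$); (5) conclude that the suffix lies in $Y \cap Z$ and the prefix in $X$, giving $p \in X \ast (Y \cap Z)$.

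The main obstacle I expect is step (3)/(4): making the "common cut point" argument airtight in the presence of empty blocks and of the identification $w \circ [\1] = w = [\1] \circ w$ in $\FreeUMon{\alg{M}}$, and verifying that the prefix below $u_1$ is genuinely a word in the downset $X$ (not merely below $u_1$ after discarding part of it) — this is exactly where integrality is indispensable and where the non-commutative case could go wrong if one were not careful about which side absorbs the surplus. Once the cut-point bookkeeping is set up correctly, the rest is routine manipulation of the preorder $\sqleq$ using the Facts already established for integral pomonoids.
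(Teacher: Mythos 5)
Your proposal is correct and follows essentially the same route as the paper: after reducing to a single word with two witnessing factorizations, you take the leftmost cut and use integrality to absorb the middle segment into the right-hand factor (which then lies below both $v_1$ and $v_2$), keeping the corresponding left factor — exactly the paper's argument, with your reduction phrased directly for arbitrary downsets where the paper uses complete distributivity and principal downsets. The bookkeeping you defer (empty blocks, the identification $w \circ [\1] = w$, and fixing one literal representative of $p$ so that both cuts live in the same word of the free monoid) is precisely the subtlety the paper addresses in its footnote, and it goes through as you expect; just note that integrality is needed only for the right factor (a longer suffix stays below $v_i$), not for re-absorbing anything into $u_2$, and that the prefix below $u_1$ automatically lies in the downset $X$.
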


\begin{proof}
  The equation $x (y \wedge z) = xy \wedge xz$ is equivalent in $\IdAll \FreeUMon{\alg{M}}$ to the inequality $xy \wedge xz \leq x (y \wedge z)$, which in turn is equivalent to $x_1 y \wedge x_2 z \leq (x_1 \vee x_2) (y \wedge z)$: in one direction we substitute $x_1 \vee x_2$ for $x$, in other we take $x_1 = x_2$. Because $\IdAll \FreeUMon{\alg{M}}$ is completely distributive and each of its elements is a join of principal downsets, it in fact suffices to prove that the inequality $x_1 y \wedge x_2 z \leq (x_1 \vee x_2) (y \wedge z)$ holds in $\Id \FreeUMon{\alg{M}}$ whenever $x_1, x_2, y, z$ are principal downsets, say $x_1 \assign \below u_1$, $x_2 \assign u_2$, $y \assign \below v$, $z \assign \below w$. For this, it suffices to prove that for each word $t$
\begin{align*}
  \below t \leq \below u_1 \cdot \below v ~ \& ~ \below t \leq \below u_2 \cdot \below w & \implies \below t \leq (\below u_1 \vee \below u_2) \cdot (\below v \wedge \below w).
\end{align*}
  Finally, to prove this implication it suffices to show the following in $\FreeUMon{\alg{M}}$:
\begin{align*}
  t \sqleq u_{1} \circ v ~ \& ~ t \sqleq u_{2} \circ w \implies t \sqleq u_{1} \circ z \text{ or } t \sqleq u_{2} \circ z \text{ for some } z \sqleq v, z \sqleq w.
\end{align*}
  There are decompositions $t = t_{1} \circ t_{2} = t_{3} \circ t_{4}$ such that $t_{1} \sqleq u_{1}$, $t_{2} \sqleq v$, $t_{3} \sqleq u_{2}$, $t_{4} \sqleq w$.\footnote{There is a slight subtlety involved here. For the most part, we have been ignoring the distinction between the preordered structure $\langle M^{+}, \sqleq, \circ, \emptyword \rangle$ and the pomonoid $\FreeUMon{\alg{M}}$. In this case, it matters that the decompositions $t_{1} \circ t_{2}$ and $t_{3} \circ t_{4}$ are decompositions in the preordered structure, i.e.\ in the monoid of words rather than its quotient $\FreeUMon{\alg{M}}$. This is because in the next sentence we take advantage of a special property of this monoid.} Because $t_{1} \circ t_{2} = t_{3} \circ t_{4}$ in the monoid of words over $M$, there is some $y$ such that either $t_{1} \circ y = t_{3}$ and $t_{2} = y \circ t_{4}$ or $t_{1} = t_{3} \circ y$ and $y \circ t_{2} = t_{4}$. Without loss of generality, let us consider the former option. Then by integrality $t_{2} \sqleq t_{4} \sqleq w$, so we may take $z \assign t_{2}$, since $t = t_{1} \circ t_{2} \sqleq u_{1} t_{2}$.
\end{proof}

  By an observation of Galatos and Hor\v{c}\'{i}k~\mbox{\cite[Lemma~4.2]{galatos+horcik13}}, an integral pomonoid generated by a dually wpo subset is a dual wpo, hence $\FreeUMon{\alg{M}}$ is dually wpo if $\alg{M}$ is integral and dually wpo. In that case, every downset of $\FreeUMon{\alg{M}}$ is finitely generated, so $\Id \FreeUMon{\alg{M}}$ is in fact the lattice of all non-empty downsets of $\FreeUMon{\alg{M}}$.

  Let us call a distributive lattice which is isomorphic to the lattice of all non-empty downsets of some poset \emph{almost perfect}, by analogy with perfect distributive lattices, which are isomorphic to the lattice of all downsets of some poset. In other words, an almost perfect lattice is one which can be made perfect by appending a bottom element. (The reason why we exclude lattices with a bottom element is that this is of course incompatible with cancellativity.)

\begin{theorem} \label{thm: finite integral cancellative rls}
  The following classes of residuated lattices (RLs) coincide:
\begin{enumerate}[(i)]
\item dually wpo integral RLs,
\item dually wpo nuclear images of cancellative RLs,
\item dually wpo nuclear images of integral cancellative RLs,
\item dually wpo nuclear images of (almost perfect) distributive integral cancellative RLs satisfying the equations $x (y \wedge z) = xy \wedge xz$ and $(x \wedge y) z = xz \wedge yz$.
\end{enumerate}
\end{theorem}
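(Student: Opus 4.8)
The plan is to establish the cyclic chain of inclusions $(\mathrm{iii}) \subseteq (\mathrm{ii}) \subseteq (\mathrm{i}) \subseteq (\mathrm{iii})$. The inclusion $(\mathrm{iii}) \subseteq (\mathrm{ii})$ is immediate, since a distributive integral cancellative residuated lattice satisfying $x(y \wedge z) = xy \wedge xz$ and $(x \wedge y)z = xz \wedge yz$ is in particular an integral cancellative residuated lattice. For $(\mathrm{ii}) \subseteq (\mathrm{i})$, let $\alg{M}$ be an integral cancellative residuated lattice and $\gamma$ a nucleus on $\alg{M}$. The preliminaries give that $\alg{M}_{\gamma}$ is again a residuated lattice, and cancellativity forces $\gamma$ to be unital: from $\gamma(\1) \cdot \gamma(\1) \leq \gamma(\1 \cdot \1) = \gamma(\1) = \1 \cdot \gamma(\1)$ and cancellativity (cancelling $\gamma(\1)$ on the right) we obtain $\gamma(\1) \leq \1$, hence $\gamma(\1) = \1$. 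Thus $\1$ is $\gamma$-closed, and being the top element of $\alg{M}$ it is also the top element of the subposet $\alg{M}_{\gamma}$, so $\alg{M}_{\gamma}$ is integral; it is dually wpo by hypothesis.

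The content of the theorem is the inclusion $(\mathrm{i}) \subseteq (\mathrm{iii})$, which I would prove by exhibiting a dually wpo integral residuated lattice $\alg{M}$ as a nuclear image of $\alg{L} \assign \Id \FreeUMon{\alg{M}}$. Since $\alg{M}$ is integral it is integrally closed and down-directed. By the results of the two preceding sections, $\FreeUMon{\alg{M}}$ is then cancellative, an integral residuated pomonoid (hence ideally residuated), and down-directed, and since $\alg{M}$ is a dually wpo integral pomonoid, $\FreeUMon{\alg{M}}$ is a dual wpo by the cited observation of Galatos \& Hor\v{c}\'{i}k. It follows that $\alg{L} = \Id \FreeUMon{\alg{M}}$ is a lattice, and being the ideal completion of an ideally residuated pomonoid that happens to be a lattice, it is a residuated lattice. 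Its unit $\below [\1]$ is its top element, because $[\1]$ is the top element of $\FreeUMon{\alg{M}}$, so $\alg{L}$ is integral; it is cancellative by the Proposition on $\Id \FreeUMon{\alg{M}}$, as $\alg{M}$ is an integral s$\ell$-monoid; it is distributive, being a lattice that is a distributive semilattice; and it satisfies $x(y \wedge z) = xy \wedge xz$ and $(x \wedge y)z = xz \wedge yz$, these equations holding in $\IdAll \FreeUMon{\alg{M}}$ by the Lemma of the present section and transferring to $\Id \FreeUMon{\alg{M}}$ once the latter is a lattice.

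To finish, I would invoke that $\gammaid$ is a unital nucleus on $\alg{L}$ whose nuclear image is isomorphic to $\alg{M}$ via the map $a \mapsto \below [a]$. Because the nuclear image of a residuated lattice is again a residuated lattice and its meets and residuals are inherited from the ambient structure, this order isomorphism is in fact an isomorphism of residuated lattices, so $\alg{M} \iso \alg{L}_{\gammaid}$; as $\alg{M}$ is dually wpo, it is a dually wpo nuclear image of the distributive integral cancellative residuated lattice $\alg{L}$ satisfying the two equations, i.e.\ $\alg{M}$ lies in class $(\mathrm{iii})$. I do not anticipate a serious obstacle: the difficult parts — the cancellativity of $\Id \FreeUMon{\alg{M}}$ in the integral case and the multiplicative-meet distributivity of $\IdAll \FreeUMon{\alg{M}}$ — have already been done in the earlier sections, so the argument is essentially a careful assembly of the relevant facts. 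The one place calling for mild care is checking that the isomorphism $\alg{L}_{\gammaid} \iso \alg{M}$ respects the full residuated-lattice signature rather than only its s$\ell$-monoid reduct, which follows from the fact that the meets and residuals of a nuclear image coincide with those of the underlying algebra.
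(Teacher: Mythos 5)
Your proposal is correct and follows essentially the same route the paper intends: the content is the inclusion (i) $\subseteq$ (iii), established by taking $\alg{L} \assign \Id \FreeUMon{\alg{M}}$ and assembling the earlier results (cancellativity of $\Id \FreeUMon{\alg{M}}$ for integral $\alg{M}$, the dual wpo and down-directedness facts giving a residuated lattice, distributivity of $\Id P$, the $\IdAll \FreeUMon{\alg{M}}$ lemma for the two equations, and the adjunction isomorphism $\alg{M} \iso \alg{L}_{\gammaid}$ via $a \mapsto \below [a]$). Your detour through cancellativity to get $\gamma(\1) = \1$ in the step (ii) $\subseteq$ (i) is correct but not needed, since $\gamma(\1)$ is automatically both the unit and the top of $\alg{M}_{\gamma}$ when $\alg{M}$ is integral.
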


  The last item of this theorem suggests that $\ell$-groups may also be implicated in this equivalence. These additional conditions (distributivity and the two equations) are satisfied by every conuclear image of an $\ell$-group with respect to a conucleus $\sigma$ such that $\sigma(x \wedge y) = \sigma(x) \wedge \sigma(y)$. This raises the following question: is each dually wpo integral residuated lattice the nuclear image of a negative conuclear image of an $\ell$-group with respect to a conucleus $\sigma$ such that $\sigma(x \wedge y) = \sigma(x) \wedge \sigma(y)$?

  While we do not know whether each integral residuated lattice is a nuclear image of an integral cancellative one, the finite embeddability property implies that at least each integral residuated lattice embeds into such a nuclear image.

\begin{theorem}
  Each integral residuated lattice embeds into a nuclear image of some distributive integral cancellative residuated lattice which satisfies the equations $x (y \wedge z) = xy \wedge xz$ and $(x \wedge y) z = xz \wedge yz$.
\end{theorem}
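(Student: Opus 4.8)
The plan is to combine the finite embeddability property of integral residuated lattices with Theorem~\ref{thm: finite integral cancellative rls} by passing to a suitable ultraproduct.

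Fix an integral residuated lattice $\alg{A}$ and let $I$ be the directed set of finite subsets of $A$. For each $S \in I$ consider the partial subalgebra of $\alg{A}$ on the set $S$, where each of the operations $\wedge$, $\vee$, $\cdot$, $\bs$, $/$ and the constant $\1$ is declared only when its value lies in $S$; this is a finite partial algebra of the appropriate signature. By the finite embeddability property for integral residuated lattices (Blok and van Alten) there is a \emph{finite} integral residuated lattice $\alg{D}_{S}$ and an injection $f_{S}\colon S \to D_{S}$ preserving every operation that is defined. Extend $f_{S}$ arbitrarily to a map $h_{S}\colon A \to D_{S}$. The sets $\set{T \in I}{T \supseteq S}$ for $S \in I$ have the finite intersection property, so they generate a proper filter on $I$, which we extend to an ultrafilter $U$. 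Then $a \mapsto [h_{S}(a)]_{U}$ is an embedding of $\alg{A}$ into $\prod_{U} \alg{D}_{S}$: it is a homomorphism because any single equation $\omega(a_{1}, \dots, a_{k}) = a$ valid in $\alg{A}$ is witnessed by $h_{T}$ for every $T$ containing $a_{1}, \dots, a_{k}, a$, and these $T$ form a set in $U$; it is injective because for $a \neq b$ the map $h_{T}$ is injective on $\{a,b\}$ for every $T \supseteq \{a,b\}$.

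Each $\alg{D}_{S}$ is finite, hence dually wpo, so by Theorem~\ref{thm: finite integral cancellative rls} we may choose a distributive integral cancellative residuated lattice $\alg{C}_{S}$ satisfying $x(y \wedge z) = xy \wedge xz$ and $(x \wedge y)z = xz \wedge yz$, together with a nucleus $\gamma_{S}$ on $\alg{C}_{S}$, such that $\alg{D}_{S} \iso (\alg{C}_{S})_{\gamma_{S}}$. Put $\alg{C} \assign \prod_{U} \alg{C}_{S}$ and $\gamma \assign [\gamma_{S}]_{U}$, a unary operation on $\alg{C}$. Being a residuated lattice, being integral, being distributive, being cancellative, and satisfying the two multiplication--meet equations are all expressible by equations and quasi-equations, hence are inherited by $\alg{C}$. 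Likewise the defining conditions of a nucleus, namely $x \leq \gamma(x)$, $\gamma(\gamma(x)) = \gamma(x)$, monotonicity of $\gamma$, and $\gamma(x)\gamma(y) \leq \gamma(xy)$, are universal Horn conditions, so $\gamma$ is a nucleus on $\alg{C}$.

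It remains to identify $\alg{C}_{\gamma}$. Its underlying set consists of the $\gamma$-closed elements, which by the theorem of {\L}o\'{s} is precisely $\prod_{U}$ of the sets of $\gamma_{S}$-closed elements; and the operations of the nuclear image, namely $a \cdot_{\gamma} b = \gamma(a \cdot b)$, $a \vee_{\gamma} b = \gamma(a \vee b)$, the unit $\gamma(\1)$, the meet inherited from $\alg{C}$, and the residuals inherited from $\alg{C}$, are all term operations in the signature of $\alg{C}$ expanded by $\gamma$, hence are computed componentwise. Therefore $\alg{C}_{\gamma} \iso \prod_{U} (\alg{C}_{S})_{\gamma_{S}} \iso \prod_{U} \alg{D}_{S}$, and composing with the embedding of the previous paragraph exhibits $\alg{A}$ as a subalgebra of the nuclear image $\alg{C}_{\gamma}$ of the distributive integral cancellative residuated lattice $\alg{C}$, which satisfies the required equations. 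The one step demanding care is this last identification of $\alg{C}_{\gamma}$ with the ultraproduct of the $\alg{D}_{S}$; once one notes that the nuclear image is obtained from the nuclear pomonoid by restricting to a first-order definable subset and re-expressing the operations as term operations, it is a routine application of {\L}o\'{s}'s theorem.
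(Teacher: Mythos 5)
Your proposal is correct and follows essentially the same route as the paper: invoke the Blok--van Alten finite embeddability property to embed the given integral residuated lattice into an ultraproduct of finite integral residuated lattices, represent each finite factor as a nuclear image via Theorem~\ref{thm: finite integral cancellative rls}, and observe that nuclear images commute with ultraproducts. You merely spell out in detail (partial subalgebras, the ultrafilter, and the {\L}o\'{s}-theorem verification) what the paper's terse proof leaves as routine.
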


\begin{proof}
  A result of Blok and van Alten~\cite{blok+alten05} states that the variety of integral residuated lattices is generated as a universal class by its finite algebras. In other words, each integral residuated residuated lattice embeds into an ultraproduct of finite integral residuated lattices. But the nuclear image of an ultraproduct of nuclear residuated lattices is isomorphic to the corresponding ultraproduct of nuclear images of these nuclear residuated lattices.
\end{proof}

  In the following theorem, by a \emph{negative conuclear image} we mean a conuclear image with respect to some $\sigma$ such that $\sigma(x) \leq \1$. Instead of talking about negative conuclear images, we may equivalently talk about conuclear images of negative cones of Abelian $\ell$-groups.

\begin{theorem} \label{thm: finite integral commutative square rls}
  The following classes of commutative residuated lattices (CRLs) coincide:
\begin{enumerate}[(i)]
\item dually wpo integral CRLs satisfying the square condition,
\item dually wpo nuclear images of cancellative CRLs,
\item dually wpo nuclear images of integral cancellative CRLs,
\item dually wpo nuclear images of (negative) conuclear images of Abelian $\ell$-groups.
\end{enumerate}
\end{theorem}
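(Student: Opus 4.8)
The plan is to establish the three inclusions $\text{(iii)} \subseteq \text{(ii)} \subseteq \text{(i)} \subseteq \text{(ii)} = \text{(iii)}$, so that all three classes coincide. Almost all of this is bookkeeping on top of the work already done for s$\ell$-monoids; the single substantial point is the last inclusion $\text{(i)} \subseteq \text{(ii)}$, and inside it the verification that the cancellative s$\ell$-monoid extracted from a dually wpo integral commutative residuated lattice is in fact a residuated lattice.

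First I would dispose of $\text{(ii)} = \text{(iii)}$. By the theorem of Montagna and Tsinakis~\cite{montagna+tsinakis10}, the integral conuclear images of Abelian $\ell$-groups are exactly the integral commutative cancellative residuated lattices. A conucleus $\sigma$ on an $\ell$-group has an integral image precisely when $\sigma(x) \leq \1$ for every $x$, i.e.\ precisely when it is negative; hence the negative conuclear images of Abelian $\ell$-groups are exactly the integral commutative cancellative residuated lattices, and applying the nuclear image construction to the two descriptions (and restricting to dually wpo outputs) gives $\text{(ii)} = \text{(iii)}$. Next, for $\text{(ii)} \subseteq \text{(i)}$, let $\alg{A} = \alg{L}_{\gamma}$ be a dually wpo nuclear image of an integral commutative cancellative residuated lattice $\alg{L}$. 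Since nuclear images of (commutative) residuated lattices are (commutative) residuated lattices, $\alg{A}$ is a commutative residuated lattice, and it is dually wpo by hypothesis. Integrality of $\alg{L}$ is the inequality $x \leq \1$, and, being a commutative cancellative s$\ell$-monoid, $\alg{L}$ satisfies the square condition: multiplying the $n$ premises of an instance of the square condition yields $\pi^{n} \leq y^{n}$, and commutative cancellative s$\ell$-monoids satisfy $x^{n} \leq y^{n} \Rightarrow x \leq y$, so $\pi \leq y$. Both integrality and the square condition are expressed by simple quasi-inequalities, so by Fact~\ref{fact: simple quasi-inequalities} they hold in $\alg{A}$ as well. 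Thus $\alg{A}$ lies in class (i).

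It remains to prove $\text{(i)} \subseteq \text{(ii)}$, which is the heart of the matter. Let $\alg{M}$ be a dually wpo integral commutative residuated lattice satisfying the square condition. Viewing $\alg{M}$ as an integral commutative s$\ell$-monoid, the proof of (the cancellative s$\ell$-monoid analogue preceding) Theorem~\ref{thm: commutative integrally closed square sl-monoids} realizes $\alg{M}$ as the nuclear image of the commutative cancellative integral s$\ell$-monoid $\alg{N} := (\Id \CFreeRedUMon{\alg{M}})/{\leq_{\theta}}$, where $\leq_{\theta}$ is the smallest congruence that makes the quotient cancellative, concretely $u \leq_{\theta} v$ iff $w \circ u \sqleq w \circ v$ for some $w$. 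Since $\alg{M}$ is integral and dually wpo, $\CFreeRedUMon{\alg{M}}$ is a dually wpo integral down-directed pomonoid (the integral-case observation of~\cite{galatos+horcik13}, together with preservation of down-directedness), so $\Id \CFreeRedUMon{\alg{M}}$ is a lattice; and since $\alg{M}$, being residuated, is ideally residuated, so is $\CFreeRedUMon{\alg{M}}$, whence $\Id \CFreeRedUMon{\alg{M}}$ is a residuated lattice. Moreover $\leq_{\theta}$ is an order congruence, so it has convex classes and is compatible with $\cdot$, $\vee$, and — using that $\cdot$ distributes over $\wedge$ in $\Id \CFreeRedUMon{\alg{M}}$ because $\alg{M}$ is integral — also with $\wedge$; a convex congruence of the lattice-ordered monoid reduct of a residuated lattice is a residuated lattice congruence, so $\alg{N}$ is a residuated lattice. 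Hence $\alg{M}$ is a dually wpo nuclear image of the integral commutative cancellative residuated lattice $\alg{N}$, which puts it in class (ii) and finishes the proof that (i), (ii), (iii) coincide.

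The main obstacle is thus concentrated in the last paragraph: it is exactly dual well-partial-orderedness of $\alg{M}$ that upgrades $\Id \CFreeRedUMon{\alg{M}}$ from a merely ideally residuated s$\ell$-monoid to a lattice-ordered, residuated structure, and hence lets the cancellative quotient $\alg{N}$ inherit a residuated lattice structure; without it, only the s$\ell$-monoid statements survive. Everything else reduces to the preservation of simple quasi-inequalities under nuclear images and to the Montagna–Tsinakis description of negative conuclear images of Abelian $\ell$-groups.
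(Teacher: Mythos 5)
Your outer structure matches the paper: (ii) $=$ (iii) is exactly the Montagna--Tsinakis description of (negative) conuclear images of Abelian $\ell$-groups, and (ii) $\subseteq$ (i) is the preservation of integrality and the square condition as simple quasi-inequalities under nuclear images. The gap is in the step you yourself identify as the heart, (i) $\subseteq$ (ii), where you argue that the cancellative quotient $\alg{N} = (\Id \CFreeRedUMon{\alg{M}})/{\leq_\theta}$ is a residuated lattice. Your first supporting claim, that multiplication distributes over meet in $\Id \CFreeRedUMon{\alg{M}}$ because $\alg{M}$ is integral, is false: the paper proves this only for the non-commutative preimage $\Id \FreeUMon{\alg{M}}$, and its proof uses Levi's lemma for the free monoid (from $t_1 \circ t_2 = t_3 \circ t_4$ one factor is a prefix of the other), which fails in the free commutative monoid; this is precisely why the paper states the observation ``in the non-commutative case'' and why the distributivity clauses appear in Theorem~\ref{thm: finite integral cancellative rls}(iii) but not in Theorem~\ref{thm: finite integral commutative square rls}(iii). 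Concretely, let $\alg{M}$ be the eight-element Boolean algebra (a finite integral CRL satisfying the square condition) with atoms $e_1, e_2, e_3$, and in $\Id \CFreeRedUMon{\alg{M}}$ put $x \assign {\below [e_1 \vee e_2]}$, $y \assign {\below [e_2 \vee e_3]}$, $z \assign {\below [e_1 \vee e_3]}$. Then $y \wedge z = {\below [e_3]}$, and the word $[e_1, e_2]$ lies in $xy \wedge xz$ (assign $e_1 \mapsto e_1 \vee e_2$, $e_2 \mapsto e_2 \vee e_3$, respectively $e_2 \mapsto e_1 \vee e_2$, $e_1 \mapsto e_1 \vee e_3$), but it does not lie in $x(y \wedge z) = {\below [e_1 \vee e_2,\, e_3]}$: the only sub-multiset of $\{e_1, e_2\}$ whose product lies below $e_3$ is the whole multiset, which would force an empty block for $e_1 \vee e_2$, impossible since $\1 \nleq e_1 \vee e_2$. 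So your argument that $\leq_\theta$ is compatible with $\wedge$ does not go through.

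Your second supporting claim, that a convex congruence of the lattice-ordered monoid reduct of a residuated lattice is a residuated lattice congruence, is also false in general: on $[0,1]$ with ordinary multiplication, collapsing $[0, 1/2]$ to a point is a convex congruence of the $\ell$-monoid reduct, yet $0.1 \bs 0.1 = 1$ while $0.2 \bs 0.1 = 1/2$, so the residual is not respected. (What you actually need is weaker, namely that the quotient poset $\alg{N}$ is lattice-ordered and residuated in its own right, but the quoted principle would not give that either, and it is not automatic: for instance the up-directed family of residuals $(w \circ u) \bs (w \circ v)$ in $\Id \CFreeRedUMon{\alg{M}}$ need not stabilize modulo $\leq_\theta$, since $\Id \CFreeRedUMon{\alg{M}}$ itself is not dually wpo.) This is exactly where the commutative case is harder than Theorem~\ref{thm: finite integral cancellative rls}: there $\Id \FreeUMon{\alg{M}}$ is itself cancellative and no quotient is needed, whereas here one must still produce an integral cancellative commutative \emph{residuated lattice}, not merely a cancellative s$\ell$-monoid, having $\alg{M}$ as a nuclear image. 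As written, your argument establishes (i) $\subseteq$ (ii) only at the level of s$\ell$-monoids, i.e.\ the content of Theorem~\ref{thm: commutative integrally closed square sl-monoids}, and the lift to residuated lattices remains unproved.
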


  One might wish for an analogous theorem for integrally closed residuated lattices but each dually wpo residuated lattice has a top element and each integrally closed pomonoid with a top element is in fact integral. To obtain a theorem that is genuinely about integrally closed residuated lattices rather than integral ones, one would need to relax the condition of being dually wpo. Inspecting the proof of the above theorem shows that we do not in fact require the entire free nuclear preimage to be dually wpo. It would suffice for each principal downset of $\FreeUMon{\alg{M}}$ to be dually wpo. However, we are not aware of a suitable sufficient codition on $\alg{M}$ which would ensure this. The problem lies in ensuring that there is no infinite antichain of words $[a,b]$ such that $[a] \circ [b] \sqleq [c]$, i.e.\ $a \cdot b \leq c$ with forcing $\alg{M}$ itself to be dually wpo.

  Again, while we do not know whether each integral commutative residuated lattice is a nuclear image of an integral cancellative one, the finite embeddability property yields the following weaker result.

\begin{theorem}
  Each integral commutative residuated lattice embeds into a nuclear image of a distributive integral commutative cancellative residuated lattice.
\end{theorem}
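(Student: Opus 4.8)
The plan is to reproduce verbatim the argument used for the non-commutative analogue, replacing the appeal to Theorem~\ref{thm: finite integral cancellative rls} by one to Theorem~\ref{thm: finite integral commutative square rls}. First, by the theorem of Blok \& van Alten~\cite{blok+alten05}, the variety of integral commutative residuated lattices is generated as a universal class by its finite members; hence the given integral commutative residuated lattice $\alg{A}$ embeds into an ultraproduct $\prod_{\class{U}} \alg{A}_{i}$ of finite integral commutative residuated lattices $\alg{A}_{i}$. The goal is then to exhibit each $\alg{A}_{i}$ as a nuclear image $(\alg{L}_{i})_{\gamma_{i}}$ of a distributive integral cancellative commutative residuated lattice $\alg{L}_{i}$, and to push this through the ultraproduct.

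For the second step one invokes Theorem~\ref{thm: finite integral commutative square rls}: since $\alg{A}_{i}$ is finite, hence dually well-partially-ordered, it suffices to know that $\alg{A}_{i}$ satisfies the square condition, and then a suitable cancellative quotient of $\Id \CFreeRedUMon{\alg{A}_{i}}$ (which is distributive, integral, cancellative, and a residuated lattice because $\alg{A}_{i}$ is a down-directed dually wpo ideally residuated pomonoid) serves as $\alg{L}_{i}$, with $\gamma_{i}$ the induced nucleus whose nuclear image is $\alg{A}_{i}$. For the third step, note that cancellativity is quasi-equational while distributivity, integrality, and commutativity are equational, so $\prod_{\class{U}} \alg{L}_{i}$ is again a distributive integral cancellative commutative residuated lattice; moreover $\gamma \assign \prod_{\class{U}} \gamma_{i}$ is a nucleus on it, and — exactly as in the non-commutative proof — the nuclear image of an ultraproduct of nuclear residuated lattices is isomorphic to the ultraproduct of their nuclear images, so $(\prod_{\class{U}} \alg{L}_{i})_{\gamma} \iso \prod_{\class{U}} (\alg{L}_{i})_{\gamma_{i}} = \prod_{\class{U}} \alg{A}_{i}$. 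Composing with the embedding from the first step yields the desired embedding of $\alg{A}$ into a nuclear image of a distributive integral cancellative commutative residuated lattice.

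The main obstacle is the second step, and specifically the square condition. In the non-commutative case every finite integral residuated lattice is already a nuclear image of an integral cancellative one (Theorem~\ref{thm: finite integral cancellative rls}) with no side condition, so the analogous argument is immediate; here, by contrast, one genuinely needs each finite integral commutative residuated lattice to satisfy the square condition. This is the crucial point to establish: the square condition is an $\class{s}\ell$-monoidal (hence equational) property of residuated lattices, so if it holds in all finite integral commutative residuated lattices it holds in all of them, and the argument above goes through unchanged; conversely, if it failed in some finite integral commutative residuated lattice, that lattice could not embed into \emph{any} residuated lattice satisfying the square condition — in particular not into any nuclear image of a commutative cancellative integral residuated lattice — and the statement would need a different route. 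I would therefore concentrate the work on verifying that integral commutative residuated lattices satisfy the square condition, reducing as usual to finite subdirectly irreducible algebras.
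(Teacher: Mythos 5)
Your reconstruction of the skeleton (Blok--van Alten's finite embeddability property, the finite case, and an ultraproduct of nuclear residuated lattices) is exactly the argument the paper's one-sentence proof has in mind, and your first and third steps are unproblematic. The genuine gap is the one you yourself flag and then defer: the second step needs every finite integral commutative residuated lattice to satisfy the square condition, since by Theorem~\ref{thm: finite integral commutative square rls} (and already by the necessity half of the square-condition theorem for s$\ell$-monoids) the square condition is necessary for being a nuclear image of a commutative cancellative s$\ell$-monoid. That needed lemma is not merely missing; it is false. Let $\alg{N}$ be the seven-element commutative monoid with elements $\1, a, b, a^{2}, ab, b^{2}, 0$, where every product of degree at least three equals $0$, ordered by divisibility (so $\1$ is the top and $0$ the bottom). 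Then $\IdAll \alg{N}$, the downsets of $\alg{N}$ with $X \ast Y \assign \below (XY)$, is a finite integral commutative residuated lattice in which $\below a \ast \below b = \below (ab)$ is not contained in $(\below a)^{2} \vee (\below b)^{2} = \below (a^{2}) \cup \below (b^{2})$, so the instance $xy \leq x^{2} \vee y^{2}$ of the square condition fails. Hence no reduction to finite subdirectly irreducibles can close the gap: the square condition is a set of simple quasi-inequalities (so it passes to nuclear images by Fact~\ref{fact: simple quasi-inequalities}) and a set of s$\ell$-monoid equations (so it passes to subalgebras), and therefore $\IdAll \alg{N}$ does not embed into a nuclear image of \emph{any} commutative cancellative residuated lattice.

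So the second horn of your closing dichotomy is the one that actually obtains, and your diagnosis of where the weight of the argument falls is correct. Since the paper's own proof is precisely the skeleton you reproduce (it silently uses the commutative finite case), the obstacle lies in the statement as much as in your attempt. What the FEP argument does deliver is the preceding theorem: every integral commutative residuated lattice embeds into a nuclear image of a distributive integral cancellative (not necessarily commutative) residuated lattice, via Theorem~\ref{thm: finite integral cancellative rls}. To obtain a commutative cancellative preimage one must add the square condition to the hypothesis, and even then the ultraproduct route requires approximating the given algebra by \emph{finite} integral commutative residuated lattices that themselves satisfy the square condition, which Blok--van Alten's theorem alone does not guarantee. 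As written, your proposal (like the paper's proof) cannot be completed.
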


\begin{proof}The variety of integral commutative residuated lattices is generated as a universal class by its finite algebras~\cite{blok+alten05}.
\end{proof}

  Let us end the paper with an open problem. We have managed to describe the finite nuclear images of integral commutative cancellative residuated lattices. One might further wish to describe the unit intervals of such residuated lattices (i.e.\ nuclear images with respect to nuclei of the form $\gamma_{u}(x) \assign u \vee x$). The interest of this problem derives from a result of Young~\cite{young14}, who proved in an unpublished manuscript that the unit intervals of integral commutative cancellative residuated lattices (i.e.\ of conuclear images of negative cones of Abelian $\ell$-groups) are precisely the conuclear images of MV-algebras (i.e.\ of unit intervals of negative cones of Abelian $\ell$-groups). Given that Heyting algebras are precisely the conuclear images of Boolean algebras by the classical result of McKinsey and Tarski~\cite{mckinsey+tarski46}, a solution of this problem would yield a class of residuated lattices which are related to MV-algebras as Heyting algebras are to Boolean algebras.

\section*{Acknowledgements}

  The author would like to thank Constantine Tsinakis for his valuable feedback on earlier versions of this manuscript and for posing the (still open) question from which this research stemmed, namely how to describe the unit intervals of commutative cancellative residuated lattices. The author is also grateful to the anonymous referee for a thorough reading of the manuscript and helpful suggestions for improving it, and to Thomas Vetterlein for drawing his attention to the papers~\cite{dvurecenskij+vetterlein01a,dvurecenskij+vetterlein01b}.

\end{document}